\documentclass[12pt,a4paper]{amsart}
\usepackage[english]{babel} 
\usepackage[utf8]{inputenc}
\usepackage{amssymb}
\usepackage{float}
\usepackage{amscd}
\usepackage{amsthm}
\usepackage{mathtools}
\usepackage[top=2cm, bottom=1.9cm, left=2cm, right=2cm,twoside=false]{geometry}
\usepackage{array}
\usepackage{longtable}
\usepackage{graphicx}
\usepackage{url} 
\usepackage{wrapfig}
\usepackage{color}
\usepackage[usenames,x11names]{xcolor}
\usepackage[all]{xy}
\usepackage{enumerate}
\usepackage{multirow}
\usepackage{units} 
\usepackage{hyperref} 
\hypersetup{linkcolor  =DodgerBlue3, citecolor  = teal, urlcolor   = teal, colorlinks = true, hyperfootnotes =false}
\usepackage{yfonts}
\usepackage{mathrsfs}
\usepackage{caption}
\usepackage{subcaption}
\usepackage{multicol}

\usepackage{lscape}

\renewcommand{\to}{\longrightarrow}

\newcommand{\into}{\hookrightarrow}

\def\Spec{\operatorname{Spec}}

\newcommand{\F}{\mathbb{F}}
\newcommand{\N}{\mathbb{N}}

\newcommand{\C}{\mathbb{C}}

\newcommand{\Q}{\mathbb{Q}}

\renewcommand{\P}{\mathbb{P}}

\renewcommand{\O}{\mathcal{O}}
\renewcommand{\tilde}{\widetilde}

\renewcommand{\epsilon}{\varepsilon}
\renewcommand{\phi}{\varphi}
\renewcommand{\theta}{\vartheta}
\renewcommand{\d}{\partial}
\newcommand{\id}{\mathrm{id}}

\newcommand{\ftip}[1]{\mathrm{tip}^{+}(#1)}
\newcommand{\ltip}[1]{\mathrm{tip}^{-}(#1)}

\newcommand{\floor}[1]{\lfloor #1 \rfloor}
\newcommand{\de}{\coloneqq}

\def\gcd{\operatorname{gcd}}
\def\NS{\operatorname{NS}}
\def\Exc{\operatorname{Exc}}

\newcommand{\redd}{_{\mathrm{red}}}
\newcommand{\rev}[1]{#1^{t}}
\newcommand{\s}{^{\dagger}\ \!\!}

\renewcommand{\leq}{\leqslant}
\renewcommand{\geq}{\geqslant}

\newcommand{\FZa}{\mathcal{FZ}_{1}}  
\newcommand{\cA}{\mathcal{A}} 
\newcommand{\cB}{\mathcal{B}} 
\newcommand{\cC}{\mathcal{C}} 
\newcommand{\cD}{\mathcal{D}} 
\newcommand{\cE}{\mathcal{E}}
\newcommand{\cF}{\mathcal{F}}
\newcommand{\cG}{\mathcal{G}}
\newcommand{\ORa}{\mathcal{OR}_{1}}
\newcommand{\ORb}{\mathcal{OR}_{2}}

\captionsetup[subfigure]{labelformat=empty}

\theoremstyle{plain}
\newtheorem{tw}{Theorem}[section]
 \makeatletter \@addtoreset{step}{tw}\makeatother

\theoremstyle{definition}

\newtheorem{lem}[tw]{Lemma}
\newtheorem{prop}[tw]{Proposition}
\newtheorem{wn}[tw]{Corollary}
\newtheorem{prz}[tw]{Example}

\newtheorem{conjecture}[tw]{Conjecture}
\newtheorem{notation}[tw]{Notation}
\newtheorem{rem}[tw]{Remark} 

\theoremstyle{remark}
\newtheorem*{uw}{Remark} 
\newtheorem{claim}{Claim}
\newtheorem*{claim*}{Claim}

\makeatletter \def\subsection{\@startsection{subsection}{3}%
   \z@{.5\linespacing\@plus.7\linespacing}{.5\linespacing}%
   {\bfseries\itshape}} \makeatother  

\makeatletter \def\@tocline#1#2#3#4#5#6#7{\relax \ifnum #1>\c@tocdepth \else \par \addpenalty\@secpenalty\addvspace{#2} \begingroup \hyphenpenalty\@M \@ifempty{#4}{\@tempdima\csname r@tocindent\number#1\endcsname\relax}{\@tempdima#4\relax} \parindent\z@ \leftskip#3\relax \advance\leftskip\@tempdima\relax \rightskip\@pnumwidth plus4em \parfillskip-\@pnumwidth #5\leavevmode\hskip-\@tempdima \ifcase #1 \or\or \hskip 1em \or \hskip 2em \else \hskip 3em \fi #6\nobreak\relax \dotfill\hbox to\@pnumwidth{\@tocpagenum{#7}}\par \nobreak \endgroup  \fi}
\makeatother

\makeatletter \renewenvironment{proof}[1][\proofname]{
  \par\pushQED{\qed}\normalfont
  \topsep6\p@\@plus6\p@\relax
  \trivlist\item[\hskip\labelsep\bfseries#1\@addpunct{.}]
  \ignorespaces}{
  \popQED\endtrivlist\@endpefalse} \makeatother

\def\:{\colon}

\numberwithin{equation}{section}
\def\8{\infty}

\begin{document}
\title[Planar rational cuspidal curves\\ I. $\C^{**}$-fibrations]{Classification of planar rational cuspidal curves\\ I. $\C^{**}$-fibrations}
\author{Karol Palka}
\author{Tomasz Pełka}
\address{Institute of Mathematics, Polish Academy of Sciences, ul. Śniadeckich 8, 00-656 Warsaw, Poland}
\email{palka@impan.pl}
\email{tpelka@impan.pl}
\thanks{This research project has begun as a part of the Homing Plus programme of the Foundation for Polish Science, cofinanced from the EU, and has been finalized as a part of the research project 2015/18/E/ST1/00562, National Science Centre, Poland}

\begin{abstract}{To classify complex rational cuspidal curves $E\subseteq \mathbb{P}^2$ it remains to classify the ones with complement of log general type, i.e. the ones for which $\kappa(K_X+D)=2$, where $(X,D)$ is a log resolution of $(\mathbb{P}^2,E)$. It is conjectured that $\kappa(K_X+\frac{1}{2}D)=-\infty$ and hence $\mathbb{P}^2\setminus E$ is $\mathbb{C}^{**}$-fibered, where $\mathbb{C}^{**}=\mathbb{C}^1\setminus\{0,1\}$, or $-(K_X+\frac{1}{2}D)$ is ample on some minimal model of $(X,\frac{1}{2}D)$. Here we classify, up to a projective equivalence, those rational cuspidal curves for which the complement is $\mathbb{C}^{**}$-fibered. From the rich list of known examples only very few are not of this type. We also discover a new infinite family of bicuspidal curves with unusual properties.}

\end{abstract}

\maketitle

\section{Main result}

All varieties considered are complex algebraic. By a \emph{curve} we mean an irreducible and reduced variety of dimension $1$. Let $\bar{E}\subseteq \P^{2}$ be a rational curve which is \emph{cuspidal}, that is, its singularities are locally analytically irreducible. One can equivalently view $\bar{E}$ as the image of an injective morphism $\P^{1}\to \P^{2}$.  The literature on such curves is rich (see \cite{FLMN_cusps_and_open_surfaces} and \cite{Moe-cuspidal_MSc} for a summary). A basic invariant of $\bar{E}\subseteq \P^{2}$ is the logarithmic Kodaira-Iitaka dimension $\kappa\de \kappa(\P^{2}\setminus \bar{E})$. Cases with $\kappa\neq 2$ have already been classified (see Lemma \ref{lem:kappa<=1}). In particular, their complement is $\C^{1}$- or $\C^{*}$-fibered, where $\C^*=\C^1\setminus\{0\}$. 

We will therefore assume that $\kappa=2$, that is, $\P^{2}\setminus \bar{E}$ is \emph{of log general type}. There are many examples, including several infinite series depending on up to three discrete parameters constructed by various authors, but there is no classification. In fact not many general properties have been proved so far, mostly because the existing theory of log surfaces does not give efficient methods in case $\kappa=2$. However, recently M. Koras and the first author proved the Coolidge-Nagata conjecture \cite{Palka-Coolidge_Nagata1}, \cite{KoPa-CooligeNagata2}, which asserts that all rational cuspidal curves are Cremona equivalent to a line. The proof uses, among others, a modification of the Log Minimal Model Program with half-integral coefficients, as developed in \cite{Palka-minimal_models}, and which is based on the generalization of the notion of \emph{almost minimality} (cf.\ \cite[\S 2.3.11]{Miyan-OpenSurf}). We continue this approach. It turns out that the geometry of $\bar E\subseteq \P^2$ is closely related to the geometry of the pair $(\P^2,\frac{1}{2}\bar E)$. In particular, we have the following conjecture which strengthens the Coolidge-Nagata conjecture and the Flenner-Zaidenberg Weak Rigidity Conjecture (see\ Sec. \ref{subsection:QHP}). Put $\C^{**}=\C^1\setminus\{0,1\}$.

\begin{conjecture}[Negativity Conjecture, \cite{Palka-minimal_models} 4.7]\label{conj}
If $(X,D)$ is a smooth completion of a $\Q$-acyclic surface then $\kappa(K_{X}+\frac{1}{2}D)=-\infty$. In particular, if $X\setminus D$ is of log general type then it has a $\C^{**}$-fibration or the pushforward of $-(K_X+\frac{1}{2}D)$ is ample on some minimal model of $(X,\frac{1}{2}D)$.
\end{conjecture}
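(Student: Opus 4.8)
The statement has two parts: the vanishing $\kappa(K_X + \frac{1}{2}D) = -\infty$, which is the substantive assertion, and the structural dichotomy under log general type, which I expect to follow formally once the vanishing is in hand. Since $\kappa(K_X + \frac{1}{2}D)$ is a birational invariant of the pair $(X, \frac{1}{2}D)$, the plan is to work throughout with the half-integral log minimal model program of \cite{Palka-minimal_models}, first normalizing $D$ to be SNC and recording the numerical constraints forced by $\Q$-acyclicity: $b_2(S) = 0$ and $e(S) = 1$, so that the components of $D$ span $\NS(X)_{\Q}$, $\rho(X) = \#D$, and $e(X) = e(D) + 1$. These identities are what single out the coefficient $\frac{1}{2}$, as they tie the self-intersections of the boundary to the Chern numbers of $X$.

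Granting the vanishing, I would deduce the dichotomy as follows. By log abundance in dimension two, $\kappa(K_X+\frac{1}{2}D)=-\infty$ means $K_X+\frac{1}{2}D$ is not pseudo-effective, so running the $(\frac{1}{2}D)$-program terminates in a log Mori fiber space $\pi\colon X' \to B$ on which $-(K_{X'} + \frac{1}{2}D')$ is $\pi$-ample, with $D'$ still reduced and of coefficient $\frac{1}{2}$. If $\dim B = 0$ then $-(K_{X'} + \frac{1}{2}D')$ is ample, the second alternative. If $\dim B = 1$, then rationality of $S$ gives $B \cong \P^1$, and a general fiber $F \cong \P^1$ satisfies $(K_{X'} + \frac{1}{2}D')\cdot F < 0$; since $K_{X'}\cdot F = -2$ this reads $D'\cdot F \le 3$. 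The induced fibration of the open part has general fiber $\P^1$ minus $D'\cdot F$ points, so by Lemma \ref{lem:kappa<=1} the cases $D'\cdot F \le 2$ yield a $\C^{1}$- or $\C^{*}$-fibration and force $\kappa(S) \le 1$, contradicting log general type. Hence $D'\cdot F = 3$ and $S$ is $\C^{**}$-fibered. The only point requiring care is that the contractions preserve rationality and restrict faithfully to the open surface; this is routine.

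The main obstacle, and the reason the statement remains conjectural, is the vanishing itself, i.e.\ excluding that $K_X + \frac{1}{2}D$ is pseudo-effective. The plan is to assume it is, run the program to an (almost) minimal model $(X', \frac{1}{2}D')$ with $K_{X'} + \frac{1}{2}D'$ nef, so that $(K_{X'} + \frac{1}{2}D')^2 \ge 0$, and then contradict this by proving the reverse strict inequality $(K_{X'} + \frac{1}{2}D')^2 < 0$ from the orbifold (logarithmic) Bogomolov--Miyaoka--Yau inequality for the fractional pair, combined with the $\Q$-acyclicity identities above, which control the orbifold Euler characteristic and the discrepancy corrections of the boundary chains. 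The difficulty is that this inequality is not by itself strict enough when $\kappa(K_X+D)=2$: one must analyze the negative part and the fractional contributions of the boundary, and exclude the configurations in which the estimate degenerates to equality. I expect a proof in full generality to demand a delicate combinatorial study of boundary chains and to be genuinely open; the realistic goal here is therefore to establish the conjecture---and with it the $\C^{**}$-fibered alternative---for complements of rational cuspidal curves (which are $\Q$-acyclic, since $H_1$ is finite cyclic and the curve generates $H_2(\P^2;\Q)$), where the Coolidge--Nagata theorem \cite{Palka-Coolidge_Nagata1, KoPa-CooligeNagata2} and the explicit local geometry of the cusps supply the control that is missing in the general case.
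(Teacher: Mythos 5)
The statement you are asked about is the Negativity Conjecture, and the paper does not prove it: it is stated as Conjecture~\ref{conj}, attributed to \cite{Palka-minimal_models}, and the entire article is written \emph{conditionally} on it (more precisely, it classifies the curves falling into the $\C^{**}$-fibered alternative, which is meaningful independently of the conjecture). So there is no proof in the paper to compare yours against, and your proposal correctly recognizes this. Your derivation of the ``in particular'' clause from the vanishing $\kappa(K_X+\tfrac12 D)=-\infty$ is sound and is essentially identical to the sketch the authors give in Section~\ref{subsection:QHP}: non-pseudo-effectivity of $K_X+\tfrac12 D$ yields a log Mori fiber space; over a point one gets ampleness of the pushforward of $-(K_X+\tfrac12 D)$; over a curve the general fiber $f\cong\P^1$ satisfies $f\cdot(2K_X+D)<0$, hence $f\cdot D\le 3$, and $f\cdot D\le 2$ would give a $\C^1$- or $\C^*$-fibration contradicting log general type, so $f\cdot D=3$ and the open part is $\C^{**}$-fibered. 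Your proposed attack on the vanishing itself (assume $K_X+\tfrac12 D$ pseudo-effective, pass to an almost minimal model, and contradict nefness via a fractional Bogomolov--Miyaoka--Yau inequality together with the $\Q$-acyclicity identities) is in the spirit of \cite{Palka-minimal_models} and of how Lemma~\ref{lem:Qhp_has_no_lines} is obtained, but, as you yourself note, it is not carried out and the inequality is not strict enough without further structural input; this is exactly why the statement remains open. In short: you have not proved the conjecture, but neither does the paper, and your account of what follows formally from it and of where the genuine difficulty lies is accurate.
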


The $\C^{**}$-fibration comes as a restriction of a log Mori fibration over a curve of the minimal model of $(X,\frac{1}{2}D)$. Note also that $\P^2\setminus\bar E$ is $\Q$-acyclic. The goal of the current and a forthcoming article is to obtain a classification of rational cuspidal curves (with complements of log general type) up to a projective equivalence, under the assumption that Conjecture \ref{conj} holds. 

Here we work out the most technical part, that is, we classify possible singularities of these $\bar E\subseteq \P^2$ for which $\P^2\setminus \bar E$ admits a $\C^{**}$-fibration. A vast majority of known examples turns out to share this property. It is then no wonder that to achieve a classification we need to go through a detailed combinatorial analysis, even if the general ideas are clear. Let us call an injective morphism with a singular image a \emph{singular embedding}. We obtain the following geometric description.

\begin{tw}[Rational cuspidal curves with $\C^{**}$-fibered complements]\label{thm:geometric}
Let $\bar{E}\subseteq \P^{2}$ be a rational cuspidal curve with complement of log general type. If $\P^{2}\setminus \bar{E}$ is $\C^{**}$-fibered then one of the following holds (see Fig.\ \ref{fig:thm_geometric}):
\begin{enumerate}[(a)]
\item $\bar{E}$ is projectively equivalent to one of the Orevkov unicuspidal curves $C_{4k}$, $C^*_{4k}$ for some $k\geq 1$ \cite[Thm.\ C]{OrevkovCurves}.

\item $\bar E$ has two cusps and the line $\ell$ joining them has no more common points with $\bar E$, hence $\bar{E}\setminus \ell \subseteq \P^{2}\setminus \ell$ is a closed embedding of $\C^*$ into $\C^2$.

\item $\bar E$ has two cusps and the line $\ell$ tangent to one of them meets $\bar E$ in one more point, transversally, hence $\bar{E}\setminus \ell \subseteq \P^{2}\setminus \ell$ is a singular embedding of $\C^*$ into $\C^2$. The cusp of $\bar E\setminus \ell$ has multiplicity $2$.

\item $\bar{E}$ is projectively equivalent to one of the Flenner-Zaidenberg tricuspidal curves of the first kind \cite[3.5]{FLZa-_class_of_cusp}.
The line $\ell$ through two cusps, including the one with highest multiplicity, has no more common points with $\bar E$, hence $\bar{E}\setminus \ell \subseteq \P^{2}\setminus \ell$ is a singular embedding of $\C^*$ into $\C^2$. The cusp of $\bar E\setminus \ell$ has multiplicity $2$. 
\end{enumerate}
\end{tw}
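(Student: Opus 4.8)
The plan is to convert the $\C^{**}$-fibration of $U\de\P^2\setminus\bar E$ into a $\P^1$-fibration of a completion and then read the admissible singularity configurations off the combinatorics of its fibres. First I would fix a minimal SNC completion $(X,D)$ of $U$, with $\tilde E\subseteq D$ the strict transform of $\bar E$, and extend the $\C^{**}$-fibration to a $\P^1$-fibration $p\colon X\to\P^1$; the base is $\P^1$ because $U$ is rational and $\Q$-acyclic. Since a generic fibre $F\cong\P^1$ satisfies $F\cap U\cong\C^{**}$, we have $F\cdot D=3$, so the horizontal part $D^{h}$ of $D$ meets $F$ in exactly three points. I would then show that $\tilde E$ is one of the horizontal components: were it vertical, $\bar E$ would lie in a single fibre and the induced fibration of $U$ would have generic fibre $\C^{*}$ or $\C^{1}$ rather than $\C^{**}$, which is excluded for $\kappa=2$ by Lemma \ref{lem:kappa<=1}.

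The combinatorial core is the analysis of $D^{h}$. The relation $D^{h}\cdot F=3$ forces the degrees of the horizontal components over the base to form one of the partitions $(1,1,1)$, $(2,1)$ or $(3)$, with $\tilde E$ one of them of degree $d=\tilde E\cdot F\in\{1,2,3\}$. The decisive step is to bound the number of cusps. Each cusp of $\bar E$ is the centre of a resolution chain attached to $\tilde E$ and forces a degenerate fibre in which the three boundary marks on $F$ collide or meet vertical components. I would control the number of such fibres through additivity of the topological Euler characteristic $e$: from $e(U)=1$ and $e(\C^{**})=-1$, the fibration identity $1=-e(B')+\sum_{b\in S}\bigl(e(p^{-1}(b)\cap U)+1\bigr)$, where $B'\subseteq\P^1$ is the open part carrying nonempty fibres and $S$ is the finite set of special fibres, leaves room for at most three degenerate fibres with nonnegative interior Euler characteristic. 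Matching this count with the tree structure of $D$ then yields that $\bar E$ has at most three cusps, and running the three horizontal configurations case by case should produce precisely the four combinatorial types (a)--(d), indexed by the number of cusps.

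With the combinatorial type fixed, I would recover the geometric statements about $\ell$. In the multicuspidal cases one vertical fibre descends to a line $\ell\subseteq\P^2$, and removing it identifies $\bar E\setminus\ell$ with an image of $\C^{*}$ inside $\C^{2}=\P^2\setminus\ell$, the two ends of $\C^{*}$ corresponding to the points of $\bar E\cap\ell$ at which the fibration degenerates. When $\tilde E$ is a $2$-section the residual cusp of $\bar E\setminus\ell$ sits at a tangency of $\tilde E$ with a fibre and has multiplicity exactly $2$, which is the source of that assertion in (c) and (d); whether $\ell$ is tangent to a cusp or passes through two cusps, and whether it meets $\bar E$ again transversally, is read off from how many of the three marks collide over $\ell$. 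For the unicuspidal case (a) and the tricuspidal case (d) the surviving combinatorial types are then matched against the Orevkov curves $C_{4k},C^{*}_{4k}$ and the Flenner-Zaidenberg tricuspidal curves of the first kind by comparing multiplicity sequences, which pins down the projective equivalence class.

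The hard part will be the middle step: the exhaustive bookkeeping of degenerate fibres. One must enumerate every arrangement of vertical $(-1)$-curves, boundary chains, and collisions of the three horizontal marks that is compatible with SNC, $\Q$-acyclicity, and the rationality and cuspidality of $\bar E$, and then verify that each surviving arrangement is realized by exactly one of (a)--(d) and by no spurious configuration. Here the Euler characteristic alone is too weak; it is the log general type hypothesis $\kappa=2$ that discards the infinitely many $\C^{*}$- and $\C^{1}$-fibred degenerations, so the genuine difficulty is to carry $\kappa$, rather than merely $e$, through the entire reduction.
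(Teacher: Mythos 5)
Your outline reproduces the broad strategy of the paper's Section \ref{sec:classification_of_sing} (extend to a $\P^1$-fibration with $F\cdot D=3$, count degenerate fibres via Euler characteristics, then case-analyse), but two of your concrete steps are wrong or missing, and the third --- the ``exhaustive bookkeeping'' --- is where essentially all of the content lies. First, your claim that $\tilde E$ must be horizontal (``were it vertical, the induced fibration of $U$ would have generic fibre $\C^{*}$ or $\C^{1}$'') is false: if $E$ is vertical, the general fibre still meets $D$ in three points lying on horizontal components of $D-E$, i.e.\ on components of the exceptional divisors $Q_j$, so the restriction to $\P^{2}\setminus\bar E$ is still a $\C^{**}$-fibration. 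The paper in fact proves the opposite reduction: in the main case of three $1$-sections one shows (Lemmas \ref{lem:E_is_vert_if_c=1} and \ref{lem:E_is_vert_for_c>=2}) that $E$ may be assumed \emph{vertical}, and the classification is then read off the degenerate fibre containing $E$ together with the contractibility of the $Q_j$'s. Second, you assume the $\C^{**}$-fibration extends to a morphism on the minimal log resolution $X$; it need not (Example \ref{ex:Cstst_fibr_with_base_points}), and a separate argument (Proposition \ref{prop:some_Cstst_fibration_extends}) is needed to replace it by one without base points on $X$.

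Beyond that, the paper does not prove Theorem \ref{thm:geometric} directly by this fibration analysis: it first establishes the complete list of Hamburger--Noether types (Theorem \ref{thm:possible_HN-types}), which requires as essential inputs the contractibility of each $Q_j$ to a smooth point, the negative definiteness constraints, and the numerical equations of Lemma \ref{lem:HN-equations} --- none of which appear in your outline, and without which the Euler-characteristic count together with the tree structure of $D$ still admits infinitely many spurious configurations. Theorem \ref{thm:geometric} is then deduced from that list by Bezout-type arguments on the multiplicity data: the line $\ell$ exists because the cusp multiplicities sum to $\deg\bar E$ (cases (b), (d)) or because a tangent line at a cusp of type $\cG$ meets $\bar E$ with the right multiplicity (case (c)); it is not obtained by descending a vertical fibre to a line. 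As written, your argument has a genuine gap at the step ``matching this count with the tree structure of $D$ then yields that $\bar E$ has at most three cusps'': neither the cusp bound nor the identification of the four geometric configurations follows from the Euler characteristic of the complement and $F\cdot D=3$ alone.
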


\begin{figure}[H]
 \begin{subfigure}{0.3\textwidth}
 \includegraphics[scale=0.15]{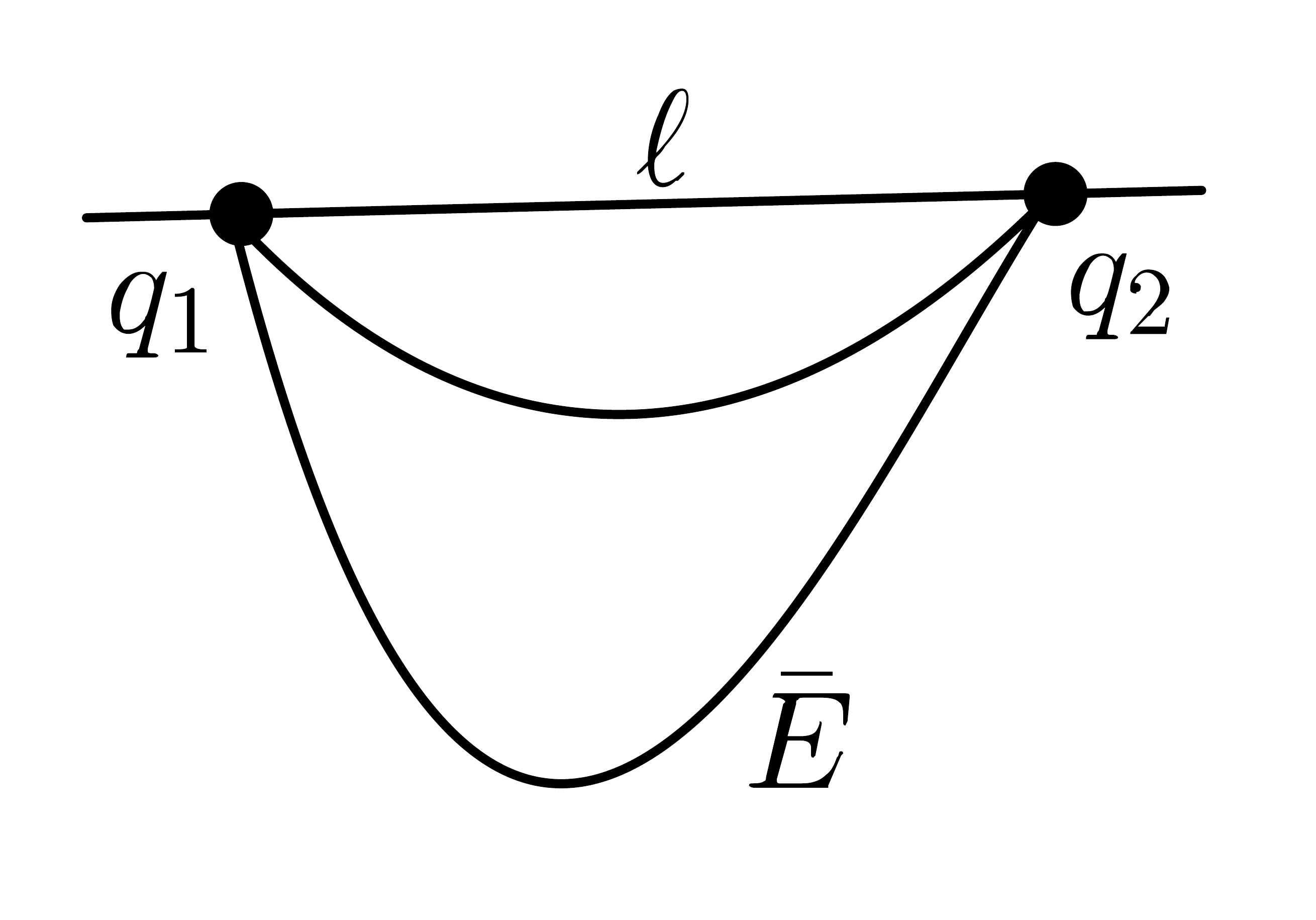}
 \caption{\ref{thm:geometric}(b)}
 \end{subfigure}
 ~
 \begin{subfigure}{0.3\textwidth}
 \includegraphics[scale=0.15]{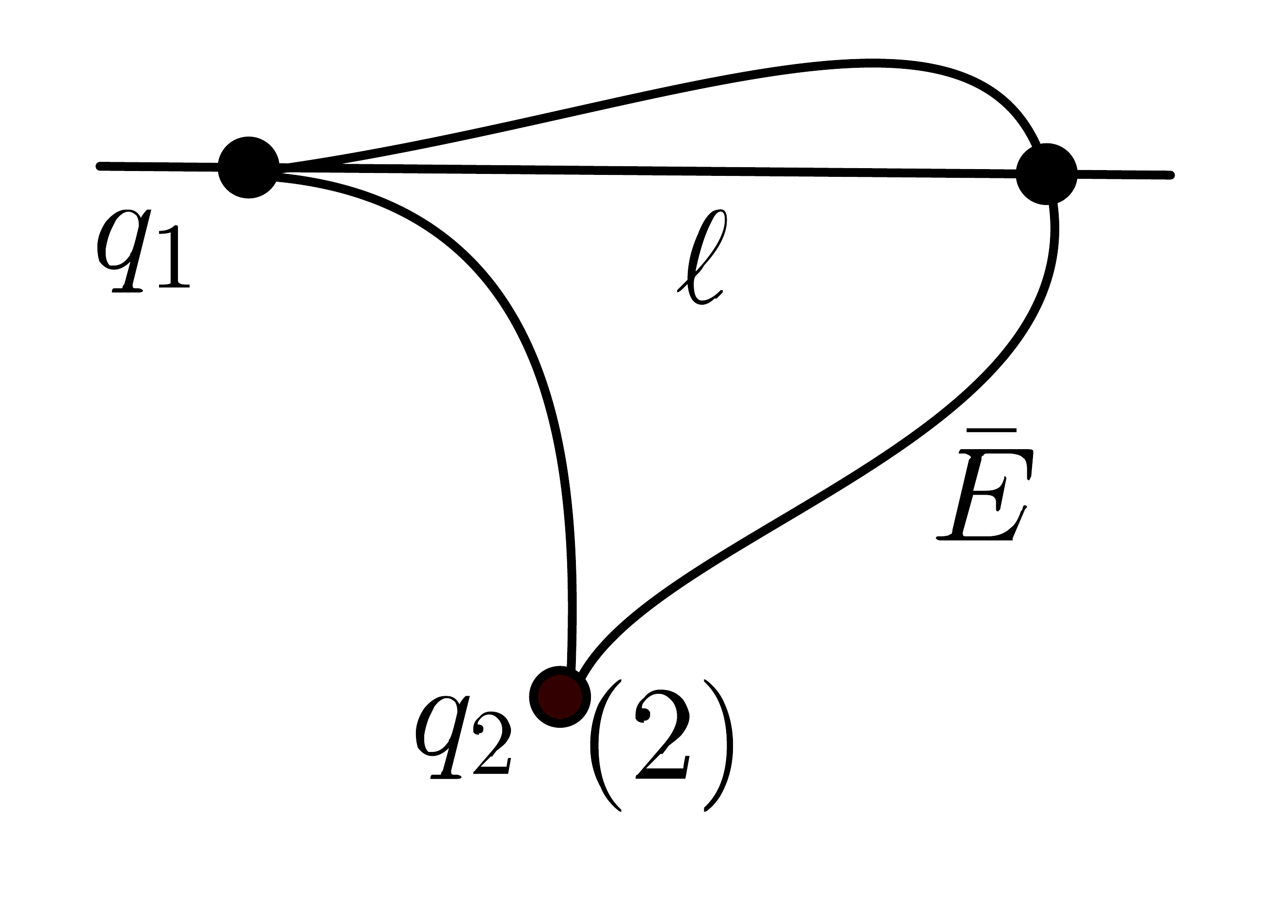}
 \caption{\ref{thm:geometric}(c)}
 \end{subfigure}
 ~
 \begin{subfigure}{0.3\textwidth}
 \includegraphics[scale=0.15]{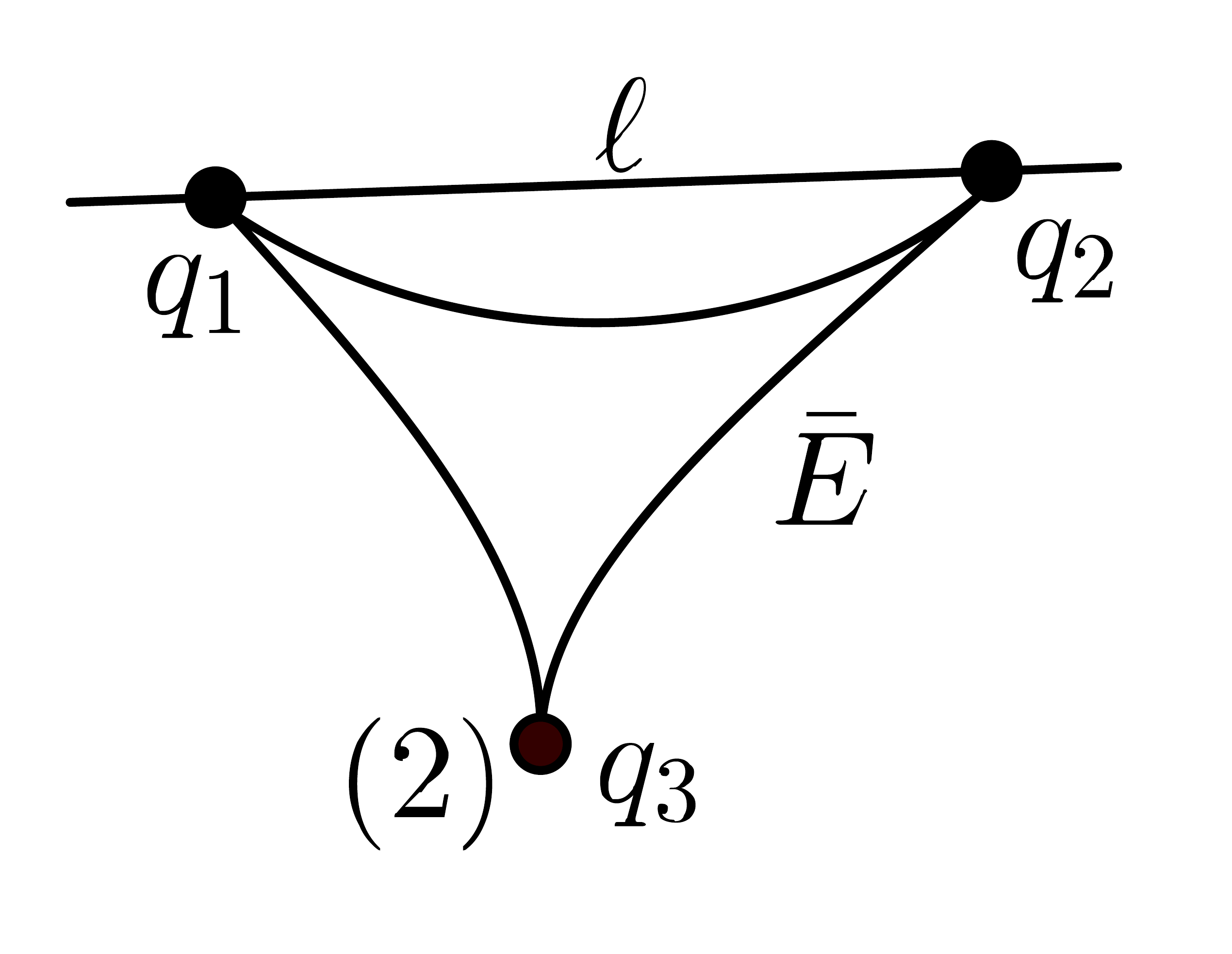}
 \caption{\ref{thm:geometric}(d)}
 \end{subfigure}
 \caption{Configuration $(\P^{2}, \bar{E}+\ell)$ from Theorem \ref{thm:geometric}.}
 \label{fig:thm_geometric}
\end{figure}

A detailed classification is given in Theorem \ref{thm:possible_HN-types} and we obtain Theorem \ref{thm:geometric} as a corollary. For the definition of Hamburger-Noether pairs, which give a convenient way of describing singularities, see Section \ref{sec:cusps}. Recall that the Fibonacci sequence is defined by $F_{0}=0$, $F_{1}=1$ and $F_{j+2}=F_{j+1}+F_{j}$. 

\begin{tw}[Classification of rational cuspidal curves with $\C^{**}$-fibered complements]\label{thm:possible_HN-types}
Let $\bar{E}\subseteq \P^{2}$ be a rational cuspidal curve. Denote by $c$ the number of cusps of $\bar E$ and by $E$ the proper transform of $\bar{E}$ under the minimal log resolution of $(\P^{2},\bar{E})$. Then $\P^2\setminus \bar E$ is a $\C^{**}$-fibered surface of log general type if and only if the cusps of $\bar E$ have one of the following sequences of Hamburger-Noether pairs (the corresponding multiplicity sequences are listed in Table \ref{table:fibrations}): 

\smallskip \textbf{Case $c=3$:}
\begin{enumerate}
\item[($\FZa$)] 
$\tbinom{2k+1}{2}$, $\tbinom{d-1}{d-2}$ and $\tbinom{2(d-2-k)+1}{2}$ for some integers $d-3\geq k \geq \tfrac{1}{2}d-1 \geq 1$.  In particular, $\deg\bar{E}=d$ and $E^{2}=-(d-2)$.
\end{enumerate}

\textbf{Case $c=2$:}
\begin{enumerate}
\item[($\cA$)] $\tbinom{ps(\gamma +1)}{ps\gamma }\tbinom{ps}{p}\tbinom{p}{1}$ and $\tbinom{\gamma (ps+1)+p(s-1)+1}{ps+1}$ 
for some integers $\gamma ,p,s$ satisfying $p\geq 2$, $\gamma ,s\geq 1$ and $(\gamma,p)\neq (1,2)$. 
In particular, $\deg\bar{E}=(\gamma +1)ps+1$ and $E^2=-\gamma$.

\item[($\cB$)]  $\tbinom{(ps-1)(\gamma+1)}{(ps-1)\gamma}\tbinom{ps-1}{p}$ and $\tbinom{p(\gamma s+s -1)}{ps}\tbinom{p}{1}$ 
for some integers $\gamma ,p,s$ satisfying $p,s\geq 2$, $\gamma\geq 1$ and $(\gamma,p)\neq (1,2)$. 
In particular, $\deg\bar{E}=(\gamma+1)ps-\gamma$ and $E^2=-\gamma$.

\item[($\cC$)] $\tbinom{p(\gamma s+s+1)}{p(\gamma s+1)}\tbinom{p}{1}$ and $\tbinom{(\gamma +1)(ps+1)+p}{ps+1}$ 
for some integers $p\geq 2$ and $\gamma ,s\geq 1$. 
In particular, $\deg\bar{E}=(\gamma s+s+1)p+1$ and $E^2=-\gamma$. 

\item[($\cD$)] $\tbinom{(\gamma +1)(ps-1)+p}{\gamma (ps-1)+p}$ and $\tbinom{p(\gamma s+s+1)}{ps}\tbinom{p}{1}$ 
for some integers $p\geq 2$ and $\gamma ,s\geq 1$. 
In particular, $\deg\bar{E}=(\gamma s+s+1)p-\gamma $ and $E^2=-\gamma$.

\item[($\cE$)] $\tbinom{8k+8}{4k+2}\tbinom{2}{1}$ and $\tbinom{8k+4}{4k+4}\tbinom{4}{1}$ 
for some integer $k\geq 1$. 
In particular, $\deg\bar{E}=8k+6$ and $E^{2}=-2$.

\item[($\cF$)] $\tbinom{8k}{4k+2}\tbinom{2}{1}$ and $\tbinom{8k+4}{4k}\tbinom{4}{1}$ 
for some integer $k\geq 1$.
In particular, $\deg\bar{E}=8k+2$ and $E^{2}=-2$.

\item[($\cG$)] $\tbinom{4\gamma-3}{\gamma-1}$ and $\tbinom{2\gamma-1}{2}$ 
for some integer $\gamma\geq 3$. 
In particular, $\deg\bar{E}=2\gamma-1$ and $E^2=-\gamma$.
\end{enumerate}

\textbf{Case $c=1$:}
\begin{enumerate}
\item[($\ORa$)] $\tbinom{F_{4k+4}}{F_{4k}}\tbinom{3}{1}$ 
for some integer $k\geq 1$.
In particular, $\deg\bar{E}=F_{4k+2}$ and $E^{2}=-2$.

\item[($\ORb$)] $\tbinom{2F_{4k+4}}{2F_{4k}}\tbinom{6}{1}$ 
for some integer $k\geq 1$. 
In particular, $\deg\bar{E}=2F_{4k+2}$ and $E^{2}=-2$.
\end{enumerate}

\vspace{0.5em}
\noindent
All the above HN-types are different from each other and each (for every admissible value of $d$, $k$, $\gamma$, $p$, $s$) is realized by a rational cuspidal planar curve, which is unique up to a projective equivalence.
\end{tw}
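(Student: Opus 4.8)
The plan is to split the statement into four tasks of very unequal difficulty: the ``only if'' (classification) direction, the ``if'' (existence) direction, uniqueness up to projective equivalence, and pairwise distinctness of the listed types. The classification direction is by far the deepest, and I would build everything on the fibration. Extend the given $\C^{**}$-fibration of $U\de\P^2\setminus\bar E$ to a morphism $\bar p\colon X\to\bar B$ on a log resolution $(X,D)$ dominating the minimal one, on which the defining pencil is base-point-free; here $D=E+\Exc$ and $\bar B$ is a smooth completion of the base. Since the smooth completion of $\C^{**}=\C\setminus\{0,1\}$ is $\P^1$ minus three points, the general fibre $F$ of $\bar p$ is a $\P^1$ meeting $D$ in three points. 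Writing $D=D_h+D_v$ for its horizontal and vertical parts, we get $D_h\cdot F=3$, so $D_h$ is either three sections, a section plus a $2$-section, or a single $3$-section; and since $U$ is rational and $\Q$-acyclic, $\bar B\cong\P^1$. I would then feed this into an Euler-characteristic count: $e(U)=1$ while $e(\C^{**})=-1$, which forces a small, explicit budget of degenerate fibres. Combined with the half-integral LMMP and almost-minimality machinery of \cite{Palka-minimal_models} underlying Conjecture \ref{conj} (recall that the fibration is a restriction of a log Mori fibration of a minimal model of $(X,\tfrac12 D)$), this caps the number of cusps at $c\le 3$ and organizes the analysis by the type of $D_h$.

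The core is then the fibre-by-fibre combinatorics. Each cusp of $\bar E$ resolves into a chain of rational curves that sits inside the degenerate fibres as vertical twigs, and the Hamburger--Noether pairs of Section \ref{sec:cusps} are exactly a bookkeeping device for these chains. I would treat each degenerate fibre as a contractible tree of $\P^1$'s and use the standard constraints on $\P^1$-fibrations---fibre Euler characteristics, the placement of $E$ and of the sections/multisections comprising $D_h$, and minimality of the resolution---to enumerate the admissible twig configurations. Translating the resulting dual graphs into HN-pairs should produce precisely the parametric families $\FZa$, $\cA$--$\cG$, $\ORa$, $\ORb$, with the $\deg\bar E$ and $E^2$ formulas falling out of the blow-up data. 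Theorem \ref{thm:geometric} is then read off from the horizontal structure: the distinguished line $\ell$ is the image of the relevant section or fibre component.

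For existence I would reverse the construction: for each type, start from a minimal model (a Hirzebruch surface or $\P^2$ with a distinguished pencil) and perform the blow-up sequence dictated by the HN-data, then contract the superfluous divisors to recover a cuspidal $\bar E$ of the stated degree; it remains to verify $\kappa(\P^2\setminus\bar E)=2$ and that the fibration descends to a genuine $\C^{**}$-fibration, which is routine once the configuration is built. Uniqueness up to projective equivalence follows because the combinatorial type rigidifies everything: the pencil, the special fibres, and the sections are determined up to $\Aut(\P^1)$ acting on base and fibre, and this freedom is absorbed by a coordinate change on $\P^2$. Distinctness is then a matter of separating the families by their discrete invariants---$c$, $\deg\bar E$, $E^2$, and the multiplicity sequences of Table \ref{table:fibrations}.

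I expect the main obstacle to be the forward combinatorial analysis: guaranteeing completeness, so that no admissible degenerate-fibre configuration is overlooked, while correctly reading off the HN-pairs in the multisection cases, where $E$ or a boundary twig fails to be a section and the fibre trees genuinely branch. This case explosion---mirrored in the length of the list---is where the bulk of the technical effort lies and where the classification is most delicate.
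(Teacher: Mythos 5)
Your outline reproduces the paper's general architecture (extend the fibration, Euler-characteristic bookkeeping via Lemma \ref{lem:Suzuki} and Lemma \ref{lem:fibrations-Sigma-chi}, then fibre-by-fibre combinatorics organized by the horizontal part of $D$), but two of your steps would fail as stated. First, you propose to kill base points by passing to a log resolution \emph{dominating} the minimal one. All of the subsequent combinatorics depends on working on the \emph{minimal} log resolution: there the $C_j$ are the only $(-1)$-curves in $D-E$, every component of $D-E$ has $\beta_D\leq 3$, and the divisors $Q_j$ are contractible to smooth points --- these facts drive every fibre enumeration. On a blown-up model this structure is destroyed. What is actually needed (and what the paper proves in Proposition \ref{prop:some_Cstst_fibration_extends}, motivated by Example \ref{ex:Cstst_fibr_with_base_points} showing the issue is real) is that one may replace the given fibration by a possibly \emph{different} $\C^{**}$-fibration that is already base-point-free on the minimal resolution; this replacement is an inductive construction, not a mere resolution of indeterminacy. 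Second, you invoke ``the half-integral LMMP and almost-minimality machinery underlying Conjecture \ref{conj}'' to cap $c$ and to view the fibration as a restriction of a log Mori fibration. Theorem \ref{thm:possible_HN-types} is unconditional, and Conjecture \ref{conj} is unproven; the hypothesis is only that some $\C^{**}$-fibration exists, with no Mori-fibre-space provenance. The bounds $c\leq 3$ (and $c\leq 2$ when $D_h$ consists of three sections) must be, and in the paper are, extracted from the fibration combinatorics itself (Corollary \ref{cor:Cstst-fibers}, Corollary \ref{cor:case_h=2}, Lemma \ref{lem:E_is_vert_for_c>=2}), together with Tono's bounds on $E^2$.

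The existence and uniqueness halves are also substantially underestimated. ``Reverse the blow-up sequence and contract'' does not by itself produce a curve in $\P^2$ of the right degree with irreducible image, and ``the combinatorial type rigidifies everything'' is not an argument: a priori the configuration could move in moduli, and one must show that an abstract isomorphism of the open surfaces (or an equality of weighted graphs) promotes to a projective equivalence. The paper does this by three genuinely different mechanisms --- identification of types $\cA$--$\cD$ with closures of $\C^*$-embeddings with a good asymptote and an extension argument for the resulting isomorphism of pairs (Lemma \ref{lem:type_AD}), a contraction of types $\cE$--$\cF$ onto a rigid conic-plus-three-points configuration in $\P^2$ (Lemma \ref{lem:type_EF}), and a uniquely determined Cremona transformation linking $\cG(\gamma)$ to $\cD(\gamma-3,2,1)$ (Lemma \ref{lem:type_G}) --- none of which is recovered by your sketch.
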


The HN-type $\FZa$ with parameters $d$, $k$ will be denoted by $\FZa(d,k)$. The  HN-type $\cA$ with parameters $\gamma$, $p$, $s$ will be denoted as $\cA(\gamma,p,s)$, similarly for $\cB$, $\cC$ and $\cD$. The HN-type $\cE$ with parameter $k$ will be denoted by $\cE(k)$, similarly for the remaining HN-types. All families except $\cG$ have already appeared in literature. For a detailed discussion see Section \ref{sec:existence_of_fibrations}. The bicuspidal curves $\cG$ are new (see however the discussion in Sec.\ \ref{sec:construction_(h)}). As we will see, these are exactly the curves in Theorem \ref{thm:geometric}(c). They are geometrically different than other bicuspidal curves listed in Theorem \ref{thm:possible_HN-types} in the sense that the line joining the cusps meets $\bar E\setminus\{q_1, q_2\}$. Like all bicuspidal curves above, they are closures of (singular or smooth) $\C^*$-embeddings into $\C^2$, namely the embeddings \cite[(b)]{BoZo-annuli} with parameters $m,k$ both equal to $\gamma-1$. However, opposite to other cases, one needs to make a specific choice of coordinates on $\C^2$, different than the one in loc.\ cit., to obtain them. We explain the construction in Lemma \ref{lem:type_G}, see also Section \ref{rem:BoZo}. 

We will address the problem of classification of the remaining rational cuspidal curves (the log del Pezzo case) in a forthcoming article.

\begin{rem}[Non-standard HN-types]\label{rem:special_HN_cases} The HN-types listed above are standard (see Sec.\ \ref{sec:cusps} for definitions) except for $\cA(\gamma\geq 2,p,1)$, $\cD(\gamma,p,1)$, $\cA(1,p,s)$, $\cB(1,p,s)$, $\cF(1)$, $\ORa(1)$ and $\ORb(1)$. We did not convert them to standard HN-pairs to keep some homogeneity of notation. In the first two cases the standard HN-pairs are $\binom{p(\gamma+1)}{p\gamma}\binom{p}{p+1}$  for the first cusp in $\cA(\gamma,p,1)$, $\binom{(\gamma +2)p+1}{p}$ for the second cusp in $\cD(\gamma,p,1)$, $\binom{2ps+p}{ps}\binom{p}{1}$ for the first cusp in $\cA(1,p,s)$ provided $s\neq 1$ and $\binom{3p+1}{p}$ otherwise, $\binom{2(ps-1)+p}{ps-1}$ for the first cusp in $\cB(1,p,s)$, $\binom{13}{4}$ for the second cusp in $\cF(1)$, $\binom{22}{3}$ for $\ORa(1)$ and $\binom{43}{6}$ for $\ORb(1)$.
\end{rem}	 

As explained above, our approach is based on the Log MMP run for $(X,\frac{1}{2}D)$, which strongly motivates the study of $\C^{**}$-fibrations, as they are related to the log Mori Fiber Space structures over curves (see the remarks after Lemma \ref{lem:Qhp_has_no_lines}). But in the current article the proofs are more elementary in spirit. The list is obtained based mostly on the study of degenerate fibers of the minimal completing $\P^{1}$-fibration and the interplay with the geometry of the contractible divisor $D-E$. The key part, classification of singularities, is done in Section \ref{sec:classification_of_sing}. First, in Proposition \ref{prop:some_Cstst_fibration_extends} we prove in a constructive way that one may choose a $\C^{**}$-fibration of $\P^2\setminus\bar E$ which has no base points on the minimal log resolution. Interestingly, this property holds for all $\Q$-homology planes of log general type which admit some $\C^{**}$-fibration (an analogue for $\C^*$-fibrations holds too, see Proposition \ref{prop:some_Cst_fibration_extends}). We note here that many results on fibers of $\C^{**}$-fibrations of $\Q$-homology planes have been proved in \cite{MiySu-Cstst_fibrations_on_Qhp}. However, we need a much more detailed analysis. Then, given the extension of the fibration to $X$, we study the specific cases when $D$ contains a full fiber (Lemma \ref{lem:case_nu=1}) or a multiple section (Lemma \ref{lem:case_h=2}, Corollary \ref{cor:case_h=2}). The case into which most of the curves fall is when $D$ contains three sections but no full fiber. To obtain a classification in this situation we show that there are at most three degenerate fibers and all of them are rational chains (Lemma \ref{lem:fibers_are_chains}), and then that one may assume $E$ is vertical (Lemmas  \ref{lem:E_is_vert_if_c=1} and \ref{lem:E_is_vert_for_c>=2}). We use also the results of Tono, who analyzed some special cases ($c=1$, $E^2=-2$ and $c=2$, $E^2=-1$, see Lemma \ref{lem:Tono_E2=-1,-2}). 

In Section \ref{ssec:existence_of_fibrations} we show that, conversely, if $\bar{E}\subseteq \P^{2}$ is of one of the HN-types listed in Theorem \ref{thm:possible_HN-types} then its complement is $\C^{**}$-fibered and of log general type. Finally, in Sections \ref{ssec:AD} - \ref{sec:construction_(h)} we prove the existence and uniqueness of curves of each HN-type listed in Theorem \ref{thm:possible_HN-types}. For families $\FZa$ and $\ORa$, $\ORb$ this result follows from the original constructions \cite{FlZa-rational_curves_and_singularities,OrevkovCurves}. We prove that the curves of HN-types $\cA$ - $\cF$ arise as closures of embeddings of $\C^{*}$ into $\C^{2}$ and we show how to deduce their uniqueness from the classification of such embeddings obtained in  \cite{CKR-Cstar_good_asymptote, KoPa-SporadicCstar2}. The existence and uniqueness of the new curves of HN-types $\cG(\gamma)$ is proven in Lemma \ref{lem:type_G}.

The proof of the implication \ref{thm:possible_HN-types}$\implies$\ref{thm:geometric} is given in Section \ref{sec:1.2=>1.1}. In the Appendix (Section \ref{sec:appendix}) we show how various numerical invariants  of cusps, including the sequences of HN-pairs, can be computed one from another.

\tableofcontents

\section{Preliminaries}\label{sec:preliminaries}

\subsection{Log surfaces.}

For a smooth projective surface $X$ we denote by $\NS(X)$ the Néron-Severi group of Cartier divisors on $X$ modulo numerical equivalence. Put $\NS_{\Q}(X)=\NS(X)\otimes \Q$ and $\rho(X)=\dim_{\Q}(\NS_{\Q}(X))$. A smooth rational projective curve $C$ on $X$ with $C^{2}=n$ is called an \emph{$n$-curve}. By the adjunction formula $K_X\cdot C=-2-n$ for such a curve, where $K_X$ denotes the canonical divisor.

Let $D$ be an effective divisor on $X$. By a \emph{component} of $D$ we always mean an irreducible component of $D$ (not a connected component). The number of components of $D$ is denoted by $\#D$. We call $D$ a \emph{simple normal crossing} divisor (\emph{snc} for short) if its components are smooth  and its only singularities are nodes. We call $(X,D)$ a \emph{smooth pair} if $X$ is a smooth projective surface and $D$ is a reduced snc-divisor on $X$. For every effective divisor $D$ on $X$ there is a sequence of blow-ups $(X',D')\to (X,D)$, such that the total transform $D'$ of $D$ is snc: such a process is called a \emph{log resolution of singularities}. For every smooth surface $V$ there exists a smooth pair $(X,D)$, such that $X\setminus D\cong V$. Every such pair is called a \emph{smooth completion} of $V$. If $(X,D)$ is a smooth pair with $D\neq 0$ then a blowup $X'\to X$ at a point $p\in D$ is called \emph{outer} for $D$ if $p$ is a smooth point of $D$, otherwise it is \emph{inner} for $D$.

Assume that $D$ is a reduced snc-divisor. A \emph{dual graph} of $D$ has vertices representing components of $D$ with weights being their self-intersection numbers, and an edge between two vertices represents a point of intersection of the corresponding components. If $D_{0}\leq D$ is an effective subdivisor of $D$ we define its \emph{branching number} as:
\begin{equation*}
\beta_{D}(D_{0})=D_{0}\cdot (D-D_{0}).
\end{equation*}
If $L$ is a component of $D$ with $\beta_{D}(L)\leq 1$ we say that $L$ is a \emph{tip} of $D$. If $\beta_{D}(L)\geq 3$ we say that $L$ is \emph{branching} in $D$. We say that an effective snc-divisor $T$ is a \emph{tree} if the dual graph of $T\redd$ is connected and contains no loop. Further, $T$ is a \emph{rational} tree if additionally all components of $T$ are rational curves.
 
A rational tree $T$ with no branching component is called a \emph{chain}. It is \emph{ordered} if it has a distinguished \emph{first} tip. If $T$ is ordered and reduced we can write $T=L_{1}+\dots+L_{r}$, where $L_{i}$'s are irreducible and $L_{i}\cdot L_{i+1}=1$ (hence $L_{i}\cdot L_{j}=0$ for $|i-j|>1$). Assuming $L_{1}$ is the distinguished tip of $T$ we write also $T=[a_{1},\dots, a_{r}]$, where $a_{i}=-L_{i}^{2}$. If $A=[a_{1},\dots, a_{r}]$ we write $\rev{A}=[a_{r},\dots, a_{1}]$ for the same chain with opposite order. If a rational tree $T$ has only one branching component $B$ and $\beta_{T\redd}(B)=3$ then $T$ is called a \emph{fork}.

An effective subdivisor $T_{0}$ of an effective divisor $T$ is called a \emph{twig} of $T$ if it is a chain with components $L_{1},\dots, L_{r}$, such that $\beta_{T\redd}(L_{1})=1$ and $\beta_{T\redd}(L_{r})=2$. By convention we assume that $T_{0}$ is ordered so that the tip $L_{1}$ is its first component. A twig is \emph{maximal} if it is maximal in the set of twigs ordered by inclusion of supports.

A $(-1)$-curve $L$ in $T$ is called \emph{superfluous} if it intersects at most two  components of $T\redd-L$, each in one point and transversally. Equivalently, after its contraction the image of $T$ is an snc-divisor. An snc-divisor is \emph{snc-minimal} if it has no superfluous $(-1)$-curves.

We put $d(0)=1$ and for a nonzero reduced divisor $T$ with components $L_{1},\dots, L_{r}$ we put $d(T)=\det[-L_{i}\cdot L_{j}]_{1\leq i,j\leq r}$. Elementary properties of determinants imply the following:
\begin{lem}[discriminants]
\label{lem:discriminants}
Let $(X,T)$ be a smooth pair.
\begin{enumerate}[(a)]
\item Let $(X',T')\to(X,T)$ be a blowup of a point on $T$, where $T'$ is the reduced preimage of $T$. Then $d(T')=d(T)$. In particular, if $T$ contracts to a smooth point then $d(T)=1$.
\item Let $T_{1}$ and $T_{2}$ be reduced snc-divisors on $X$ for which $T_{1}\cdot T_{2}=1$ and let $C_{1}$, $C_{2}$ be the unique components of $T_{1}$ and $T_{2}$ which meet. Then:
\begin{equation*}
d(T_{1} + T_{2}) = d(T_{1})d(T_{2})-d(T_{1}-C_{1})d(T_{2}-C_{2}).
\end{equation*}
\end{enumerate}
\end{lem}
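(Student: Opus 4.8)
The plan is to reduce everything to linear algebra, writing $d(T)=\det Q(T)$ where $Q(T)=[-L_i\cdot L_j]_{i,j}$ is the negated intersection matrix of the components $L_1,\dots,L_r$ of $T$, and then tracking how $\det$ transforms under the two operations. For (a), I first note that since $T$ is snc and $p\in T$, the point $p$ lies either on a single component (a smooth point of $T$) or on exactly two components (a node). In both cases the blowup $(X',T')\to(X,T)$ changes the intersection data uniformly: the new exceptional curve $E$ has $-E^2=1$; every component $L$ through $p$ has its self-intersection drop by one (so its diagonal entry in $Q$ increases by one) and acquires $L\cdot E=1$; and if $p$ is a node, the two components through it get separated, so their mutual entry changes from $-1$ to $0$. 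Putting $v=\sum_{L\ni p}e_L$, the sum of standard basis vectors over the one or two components through $p$, all of these changes are encoded by
\begin{equation*}
Q(T')=\begin{pmatrix} Q(T)+vv^{\top} & -v \\ -v^{\top} & 1 \end{pmatrix}.
\end{equation*}
Since the bottom-right block $[1]$ is invertible, the Schur complement formula gives
\begin{equation*}
\det Q(T')=\det\!\big(Q(T)+vv^{\top}-(-v)(1)^{-1}(-v^{\top})\big)=\det\!\big(Q(T)+vv^{\top}-vv^{\top}\big)=\det Q(T),
\end{equation*}
which is exactly $d(T')=d(T)$. For the ``in particular'' clause I would realize the contraction of $T$ to a smooth point as a sequence of blowups starting at that point: the first produces a single $(-1)$-curve with $d=\det[1]=1$, and every subsequent blowup is centered at a point of the current reduced exceptional divisor, so part (a) keeps $d$ constant, yielding $d(T)=1$.

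For (b), order the components so that $C_1$ is the last component of $T_1$ and $C_2$ the first of $T_2$. As $T_1\cdot T_2=1$ with $C_1\cdot C_2=1$ the only coupling between the two blocks, the matrix of $T_1+T_2$ is
\begin{equation*}
Q(T_1+T_2)=\begin{pmatrix} Q(T_1) & -e\,f^{\top} \\ -f\,e^{\top} & Q(T_2) \end{pmatrix},
\end{equation*}
where $e$ is the $C_1$-selecting (last) basis vector in the $T_1$-block and $f$ the $C_2$-selecting (first) basis vector in the $T_2$-block. Assuming momentarily that $Q(T_2)$ is invertible, the Schur complement with respect to the $T_2$-block gives $\det Q(T_1+T_2)=\det Q(T_2)\cdot\det\big(Q(T_1)-(Q(T_2)^{-1})_{11}\,e e^{\top}\big)$, because $e f^{\top}Q(T_2)^{-1}f e^{\top}=(Q(T_2)^{-1})_{11}\,ee^{\top}$. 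By Cramer's rule $(Q(T_2)^{-1})_{11}=d(T_2-C_2)/d(T_2)$, so this is a rank-one update of $Q(T_1)$ in the $C_1$-diagonal slot; the matrix determinant lemma together with $(\operatorname{adj} Q(T_1))_{r_1,r_1}=d(T_1-C_1)$ then turns it into $d(T_1)-\tfrac{d(T_2-C_2)}{d(T_2)}d(T_1-C_1)$, and multiplying back by $d(T_2)$ yields precisely $d(T_1)d(T_2)-d(T_1-C_1)d(T_2-C_2)$.

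The main subtlety, and the only place the argument is not completely formal, is that the Schur-complement step in (b) presupposes $d(T_2)\neq 0$, which can fail for a general reduced snc-divisor. I would dispose of this by viewing the diagonal entries $-L_i^2$ as indeterminates: the asserted identity is then a polynomial identity in these variables, valid on the Zariski-dense locus $\{d(T_2)\neq 0\}$, hence everywhere. Alternatively one avoids inversion entirely by expanding $\det Q(T_1+T_2)$ along the $C_1$-row via multilinearity, splitting that row into its $T_1$-part and the single coupling entry $-1$: the first summand contributes $d(T_1)d(T_2)$, and the second, after a further cofactor expansion along the $C_2$-column, contributes $-d(T_1-C_1)d(T_2-C_2)$. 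Apart from this, the only care required is the correct bookkeeping of the node case in (a), which the unified vector $v$ handles cleanly.
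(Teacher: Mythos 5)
Your proof is correct: the block-matrix/Schur-complement computation in (a), the induction for the ``in particular'' clause, and the rank-one-update (or multilinearity) argument in (b), including the careful treatment of the case $d(T_2)=0$ by polynomial identity, all check out. The paper offers no proof at all --- it merely asserts that ``elementary properties of determinants imply'' the lemma --- and your argument is precisely the kind of elementary determinant manipulation being alluded to, so there is nothing to contrast.
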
 

Following \cite{Tono_nie_bicuspidal}, for two chains $A=[a_{1},\dots a_{r}]$, $B=[b_{1},\dots, b_{s}]$ we put $A*B\de [a_{1},\dots, a_{r-1},a_{r}+b_{1}-1,b_{2},\dots, b_{s}]$. We have $(A*B)*C=A*(B*C)$ for  chains $A,B,C$. If $A=[a_{1},\dots, a_{r}]$ is a chain we define $A^{*}=[(2)_{a_{r}-1}]*\dots * [(2)_{a_{1}-1}]$. It is easy to see that a chain $[A,1,A']$ can be contracted to a $0$-curve if and only if $A'=A^{*}$. Moreover, in the latter case $d(A)=d(A')$ (see \cite[4.8]{Fujita-noncomplete_surfaces}).

\smallskip
Let $\phi\colon X \to X'$ be a birational morphism between smooth projective surfaces. Then we can write $\phi$ as a composition of blowups $\phi=\phi_{z}\circ \dots \circ \phi_{1}$ for some $z\geq 0$. If $L$ is a curve on $X$ then we say that a blow-up does not \emph{touch} $L$ if it is an isomorphism in some neighborhood of $L$. We say that $\phi$ \emph{touches $L$ $n$ times} for some $n\geq 0$ if exactly $n$ of the blowups $\pi_{1},\dots, \pi_{z}$ touch $L$ and each exceptional divisor meets the respective proper transform of $L$ transversally in one point. Note that in this case $(\pi_{*}L)^{2}=L^{2}+n$.

\subsection{\texorpdfstring{$\Q$}{Q}-acyclic surfaces and related conjectures.}\label{subsection:QHP}

If $\bar{E}\subseteq \P^{2}$ is a rational cuspidal curve then by the Poincar\'{e}-Lefschetz duality \cite[8.3]{Bredon_top_and_geom} the surface $S=\P^{2}\setminus \bar{E}$ is \emph{$\Q$-acyclic}, that is, the Betti numbers $b_{i}(\P^{2}\setminus \bar{E})=\dim H_{i}(\P^{2}\setminus \bar{E};\Q)$ vanish for $i>0$. All smooth $\Q$-acyclic surfaces are affine \cite[2.5]{Fujita-noncomplete_surfaces} and rational \cite{GuPrad-rationality_3(smooth_II)}. By the work of many authors $\Q$-acyclic surfaces which are not of log general are classified (see \cite[\S3.4]{Miyan-OpenSurf}). The case of log general type is much more complicated. We need the following result.

\begin{lem}[No affine lines, \cite{MiTs-lines_on_qhp}, \cite{Miyan-OpenSurf} II.3.1]\label{lem:Qhp_has_no_lines}
A smooth $\Q$-acyclic surface of log general type contains no curves isomorphic to $\C^{1}$. In particular, every its smooth completion $(X,D)$ for which $D$ is snc-minimal is almost minimal.
\end{lem}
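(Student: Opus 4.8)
The plan is to establish the non-existence of affine lines first and then read off almost minimality. For the main assertion I argue by contradiction: assume $S$ is $\Q$-acyclic with $\kappa(S)=2$ and that $S$ contains a closed curve $C\cong\C^{1}$. Recall that $S$ is affine and rational and that its boundary in any smooth completion is a rational tree. Fix such a completion $(X,D)$ and let $\bar C\subseteq X$ be the closure of $C$. Since $C\cong\C^{1}=\P^{1}\setminus\{\mathrm{pt}\}$ and $S$ is affine, $\bar C$ is a rational curve meeting $D$ only over the point deleted from $\bar C$; after blowing up at $\bar C\cap D$ (points of $D$, so $S$ is unchanged) I may assume $\bar C$ is smooth and meets $D$ transversally in a single point. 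The goal is then to exhibit a $\P^{1}$-fibration of a blow-up of $X$ having $\bar C$ as a reduced irreducible fibre.

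Granting such a fibration $p\colon X'\to\P^{1}$ with fibre $F_{0}=\bar C$, the contradiction is immediate. No vertical component of $D$ can meet $\bar C$ (one in a fibre $\neq F_{0}$ is disjoint from it, and none lies in $F_{0}=\bar C$ since $\bar C\not\le D$), so $\bar C$ meets only the horizontal part $D_{h}$ of $D$. As $\bar C\cdot D=1$, there is exactly one horizontal component $H$ with $H\cdot F_{0}=1$, whence $D_{h}\cdot F=1$ for a general fibre $F$. A general $F\cong\P^{1}$ avoids the vertical components of $D$, so $F\cap S=\P^{1}\setminus\{\mathrm{pt}\}\cong\C^{1}$; thus $p$ restricts to a $\C^{1}$-fibration of $S$ and $\kappa(S)=-\infty$, contradicting $\kappa(S)=2$. (A $\C^{*}$-fibration, giving $\kappa(S)\le1$, would contradict log general type just as well, via Lemma~\ref{lem:kappa<=1}.)

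The heart of the matter is producing the fibration. When $\bar C^{2}\ge0$ this is clean: a string of blow-ups at $\bar C\cap D$ lowers $\bar C^{2}$ to $0$ while keeping $\bar C\cdot D=1$, and then $h^{0}(\O_{X}(\bar C))=2$ by Riemann--Roch on the rational surface $X$, so the pencil $|\bar C|$ has $\bar C$ as a member; since distinct members satisfy $\bar C\cdot\bar C'=\bar C^{2}=0$ they are disjoint, so $|\bar C|$ is base-point free along $\bar C$ and realises $\bar C$ as a fibre. The only gap is the case $\bar C^{2}<0$ that cannot be raised to $0$ by admissible modifications of the completion; there I would instead use that $\Pic(S)$ is finite (as $S$ is $\Q$-acyclic), so $mC$ is linearly trivial for some $m\ge1$ and there is $f\in\O(S)$ with $\operatorname{div}_{S}(f)=mC$, and complete $f$ to a morphism $X'\to\P^{1}$ realising $C$ in a fibre. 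This re-introduces the need to control the general fibre, and showing that it has genus $0$ with at most two punctures is the main obstacle: it is exactly here that $\Q$-acyclicity is indispensable, through the rationality of $S$, the identity $e(S)=1$, and the vanishing $H^{1}(S;\Q)=0$ (controlling the monodromy on $H^{1}$ of the fibre). This is in essence the Miyanishi--Tsunoda argument.

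Finally I deduce almost minimality. Let $(X,D)$ be a smooth completion with $D$ snc-minimal and put $D^{\#}=D-\Bk(D)$. By the peeling criterion (Miyanishi 2.3.11) the pair fails to be almost minimal only if there is an irreducible curve $A\not\le D$ with $(K_{X}+D^{\#})\cdot A<0$ whose union with the bark components it meets is negative definite; such an $A$ is necessarily a $(-1)$-curve meeting $D$ transversally, the negative-definiteness bounds the number of maximal twigs it can meet by two, and $(K_{X}+D)\cdot A=A\cdot D-1$, so that $A\cdot D\in\{0,1,2\}$. If $A\cdot D=0$ then $A\cong\P^{1}$ is a complete curve in the affine surface $S$, impossible. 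If $A\cdot D=1$ then $A\setminus D\cong\C^{1}$ is an affine line in $S$, excluded by the first part. If $A\cdot D=2$ then $A$ meets two maximal twigs of $D$ at their tips with bark coefficients summing to more than $1$ and $A\setminus D\cong\C^{*}$; building a $\P^{1}$-fibration with $A$ in a fibre exactly as above yields a $\C^{*}$-fibration of $S$, forcing $\kappa(S)\le1$ against log general type (Lemma~\ref{lem:kappa<=1}). Hence no such $A$ exists and $(X,D)$ is almost minimal.
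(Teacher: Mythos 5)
This lemma is not proved in the paper: it is quoted from Miyanishi--Tsunoda (\cite{MiTs-lines_on_qhp}, \cite{Miyan-OpenSurf} II.3.1), and the paper explicitly remarks that the known proof rests on the logarithmic Bogomolov--Miyaoka--Yau inequality. Your proposal attempts an independent proof, and it has a genuine gap exactly where you flag "the main obstacle". The fibration construction is fine only when $\bar C^{2}\ge 0$ after modification; in the typical case $\bar C^{2}<0$ (which cannot be raised by blow-ups) your fallback via torsion of $\Pic(S)$ does produce a morphism $f\colon S\to\C^{1}$ with $f^{-1}(0)_{\mathrm{red}}=C$, but nothing you say controls the general fibre. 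The Suzuki--Zaidenberg formula (Lemma \ref{lem:Suzuki}) is of no help here: with $\chi(S)=1$, $\chi(\C^{1})=1$ and $\chi(F_{0})=\chi(C)=1$ the identity reads $1=\chi(f)+(1-\chi(f))+\sum_{b\neq 0}(\chi(F_{b})-\chi(f))$, which is satisfied for \emph{every} value of $\chi(f)$; so the general fibre could a priori have arbitrary genus and number of punctures, and no $\C^{1}$- or $\C^{*}$-fibration results. The appeal to "monodromy on $H^{1}$ of the fibre" is not an argument ($H^{1}(S;\Q)=0$ is compatible with a fixed-point-free monodromy action on a large $H^{1}$ of the fibre), and it is not "in essence the Miyanishi--Tsunoda argument". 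The actual proof works by an entirely different mechanism: one passes to $S\setminus C$, notes $e(S\setminus C)=e(S)-e(C)=0$ while $\kappa(S\setminus C)=2$, and contradicts the logarithmic BMY inequality, which forces $e>0$ for an (almost minimal model of an) open surface of log general type via $0<(K+D^{\#})^{2}\le 3e$. No elementary fibration argument is known to replace this.

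The deduction of almost minimality is also affected: your case $A\cdot D=2$ again requires putting the $(-1)$-curve $A$ into a fibre "exactly as above", which runs into the same unproved step since $A^{2}=-1<0$; in the literature this case, too, is disposed of with the BMY-type inequality rather than by an explicit fibration. The bookkeeping reducing to $A\cdot D\in\{0,1,2\}$ via the peeling coefficients is correct, and the cases $A\cdot D=0$ (affineness) and $A\cdot D=1$ (the first part of the lemma) are handled properly.
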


We note that this result uses the logarithmic version of the Bogomolov-Miyaoka-Yau inequality. Almost minimality means that when passing to the minimal model of $(X,D)$ in the sense of Mori we contract only curves contained in $D$ and its images. Unfortunately, even in case $S$ is of type $\P^2\setminus \bar E$ this fact gives rather weak restrictions on the singularities of $\bar E\subseteq \P^2$. 

In this situation, the first author proposed studying the minimal model of $(X,\frac{1}{2}D)$. In \cite[\S 3]{Palka-minimal_models} he described the creation of the latter and made the Negativity Conjecture \ref{conj} for $\Q$-acyclic surfaces, which is in fact the main motivation for the current article. This conjecture implies the Flenner-Zaidenberg Weak Rigidity Conjecture \cite{FZ-deformations} (equivalent to the vanishing of $h^0(2K_X+D)$; see \cite[2.5(2)]{Palka-minimal_models} for a discussion), and hence the Coolidge-Nagata conjecture. It is better suited for the log MMP approach. For instance, once it is proven, the log MMP for $(X,\frac{1}{2}D)$ implies that some minimal model of $(X,\frac{1}{2}D)$ has a structure of a Log Mori Fiber Space. If the base is a point then the pushforward of $-(K_X+\frac{1}{2}D)$ is ample on it. If the base is a curve then, since the general fiber $f$ is isomorphic to $\P^1$, we get that $f\cdot (2K_X+D)<0$, so $f\cdot D\leq 3$, and hence $f\cdot D=3$, as otherwise $X\setminus D$ would not be of log general type. The latter fibration induces a $\C^{**}$-fibration on $X\setminus D$, where $\C^{**}=\C^1\setminus \{0,1\}$.

\subsection{\texorpdfstring{$\P^{1}$}{P1}-fibrations.}\label{ssec:fibrations}

A \emph{$\P^{1}$-fibration} of a smooth projective surface $X$ is a dominating morphism $p\colon X\to B$ with general (scheme-theoretic) fibers isomorphic to $\P^{1}$. A fiber non-isomorphic to $\P^{1}$ is called a \emph{degenerate fiber}. All fibers of $p$ are numerically equivalent and have self-intersection $0$. Conversely (see \cite[V.4.3]{BHPV_complex_surfaces}), for any $0$-curve $F$ on $X$ or, more generally, for any effective divisor $F$ with $F^{2}=0$ and $p_{a}(F)=0$, there is a $\P^{1}$-fibration of $X$ with $F$ as one of the fibers.

For a given $\P^{1}$-fibration with general fiber $f$, we say that an (irreducible) curve $C$ is \emph{vertical} (resp. \emph{horizontal}) if $C\cdot f=0$ (resp. $C\cdot f\neq 0$). Every divisor $T$ can be uniquely decomposed as $T=T_{v}+T_{h}$, where all components of $T_{v}$ are vertical and all components of $T_{h}$ are horizontal. A horizontal curve $C$ with $C\cdot f=n$ is called an \emph{$n$-section}. If $C$ is vertical and irreducible then by $\mu(C)$ we denote the multiplicity of $C$ in the fiber containing $C$; such a fiber will be usually denoted by $F_{C}$. 

Every degenerate fiber of a $\P^{1}$-fibration can be contracted to a $0$-curve by iterated contractions of $(-1)$-curves. Therefore, by induction one easily gets the following (see \cite[4.14]{Fujita-noncomplete_surfaces}):
\begin{lem}[Degenerate fibers]\label{lem:singular_P1-fibers} Let $F$ be a degenerate fiber of a $\P^{1}$-fibration of a smooth projective surface. Then $F$ is a rational tree and its $(-1)$-curves are non-branching in $F\redd$. Furthermore:
\begin{enumerate}[(a)]
\item If a $(-1)$-curve $L$ is a component of $F$ and $\mu(L)=1$ then $\beta_{F\redd}(L)=1$ and $F$ contains another $(-1)$-curve.
\item 
Assume that $F$ has a unique $(-1)$-curve $L$ and let $U,V$ be the connected components of $F\redd -L$. Let $U_{0}$, $V_{0}$ be some fixed components of $U$ and $V+L$ respectively. Then:
\begin{enumerate}[($\textrm{b}_1$)]
\item If $\mu(U_{0})=1$ and $\mu(V_{0})=1$ then $U_{0}, V_{0}$ are tips of $F$ and $F$ is a chain $[U,1,U^{*}]$. Moreover, $d(U)=d(U^{*})$.
\item If $\mu(U_{0})=1$ and $\mu(V_{0})=2$ then either $F$ is a chain like above, or $F$ is a fork with maximal twigs $U_{0}=[2]$, $U_{0}'=[2]$ and $T$, such that $V_{0}$ is the first tip of $T$ and $T$ contracts to a smooth point (see Fig.\ \ref{fig:fork}).
\end{enumerate}
\item Assume that $F$ contains only two $(-1)$-curves $L_{1}$ and $L_{2}$, $\mu(L_{1})=1$ and both connected components of $F\redd - (L_{1}+L_{2})$ contain components of multiplicity $1$. Then $F$ is a chain of type $[1,(2)_{\alpha}*U,1,U^{*}]$ for some $\alpha \geq 1$.

\begin{figure}[h]
\centering
\includegraphics[scale=0.45]{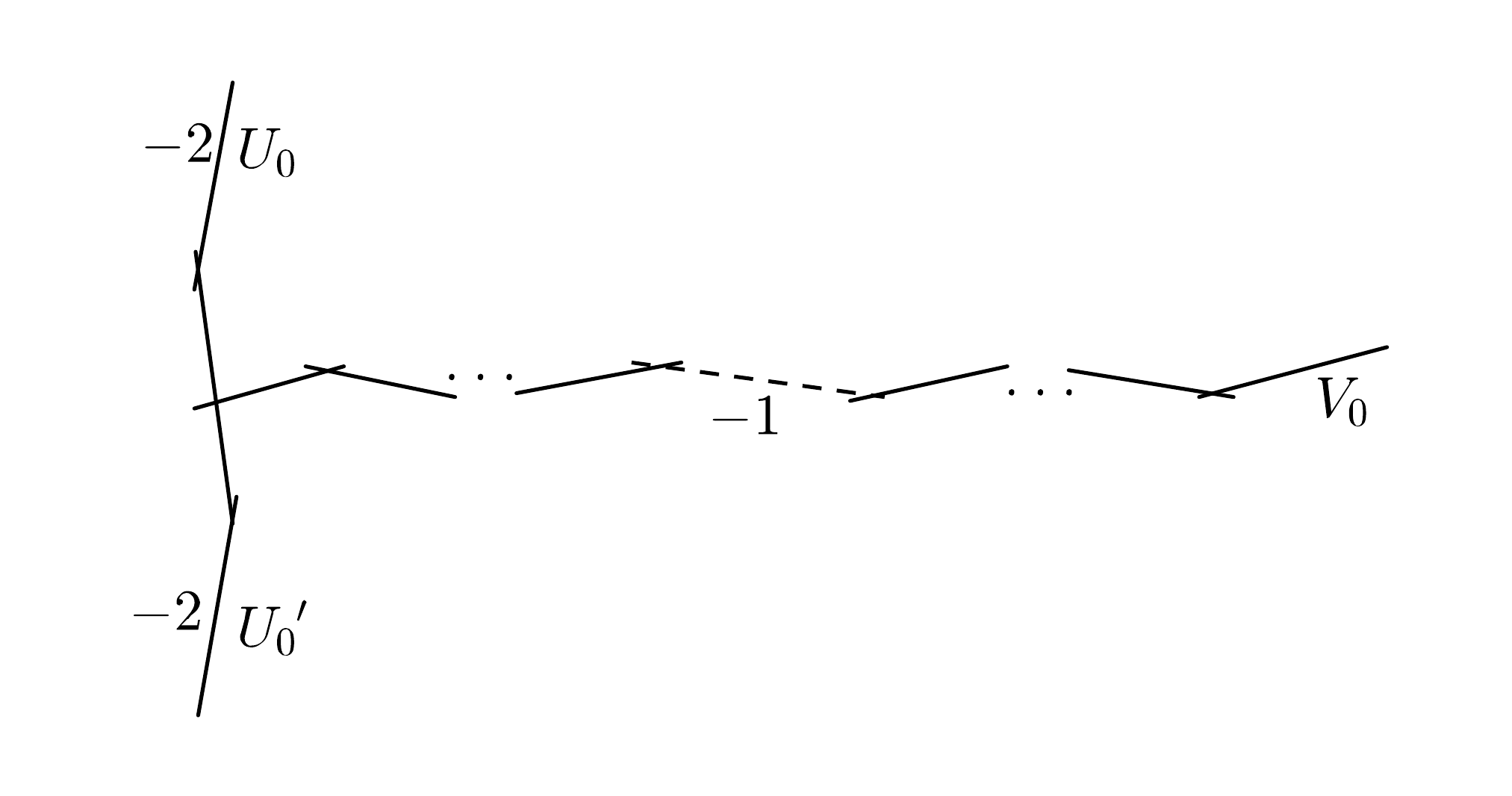}
\caption{A \emph{special fork} from Lemma \ref{lem:singular_P1-fibers}$(\mathrm{b_{2}})$. The tips $U_{0},U_{0}'$ are the only components of multiplicity $1$; $\mu(V_{0})=2$.} \label{fig:fork}
\end{figure}

\end{enumerate}
\end{lem}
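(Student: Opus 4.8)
The statement to prove concerns degenerate fibers of $\P^1$-fibrations. Let me sketch the proof plan.

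---The plan is to argue by induction on the number $\#F$ of components, exploiting the fact recalled above that $F$ contracts to a $0$-curve by successive contractions of $(-1)$-curves. Two bookkeeping rules drive everything. First, for any component $C$ of $F$ one has $F\cdot C=0$; writing $\mu_i$ for the multiplicity of the $i$-th component this reads $-\mu_C C^2=\sum_{i\neq C}\mu_i(C_i\cdot C)$. In particular every component satisfies $C^2\leq -1$ (a component with $C^2\geq 0$ would be numerically isolated, contradicting connectedness of the degenerate fiber), and $F$ always contains a $(-1)$-curve, namely the first one contracted. Second, contracting a $(-1)$-curve $L$ preserves the multiplicities of the surviving components and raises the self-intersection of each neighbour of $L$ by $1$. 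The preliminary assertions come from the reverse, forward induction on the number of blow-ups producing $F$ from a smooth fiber: a blow-up keeps the support a connected rational tree with only nodal crossings, and the sole new $(-1)$-curve it creates is the exceptional one, which meets the at most two branches through the blown-up point; since a blow-up never turns a proper transform into a $(-1)$-curve (that would require an intermediate $0$-curve component), every $(-1)$-curve of $F$ is non-branching.

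For (a), the relation with $L^2=-1$ and $\mu(L)=1$ becomes $1=\sum_{i\neq L}\mu_i(C_i\cdot L)$, a sum of non-negative integers in which each surviving $\mu_i\geq 1$; hence exactly one term equals $1$ and the rest vanish, so $L$ meets a single component (of multiplicity $1$) transversally and $\beta_{F\redd}(L)=1$. For the existence of a second $(-1)$-curve I would argue by contradiction and induction on $\#F$: if $L$ were the unique $(-1)$-curve, contracting it would leave its neighbour $C$ (still of multiplicity $1$) with self-intersection raised to $-1$ while all other components keep self-intersection $\leq -2$; thus $C$ becomes the unique, multiplicity-$1$ $(-1)$-curve of the smaller degenerate fiber $\pi_{*}F$, contradicting the inductive hypothesis, the base $\#F=2$ being $F=[1,1]$.

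For (b), part (a) forbids $\mu(L)=1$ for the unique $(-1)$-curve $L$, so $\mu(L)=\sum_{i\neq L}\mu_i(C_i\cdot L)\geq 2$; moreover $L$ must be interior (a tip $(-1)$-curve would, upon iterated contraction of the tips it creates, force a two-component fiber different from $[1,1]$, which is impossible), whence $F\redd-L$ splits into the two arms $U,V$, each a chain of $(\leq-2)$-curves. Contracting $L$ reduces $\#F$, and I would run the induction using the preliminary fact that a chain $[A,1,A']$ contracts to a $0$-curve if and only if $A'=A^{*}$, and then $d(A)=d(A')$. In case $(\mathrm{b_1})$, tracking the multiplicity-$1$ components $U_0,V_0$ through the contractions shows they descend to the ends of the eventual $0$-curve, forcing them to be the outer tips of the chain $F=[U,1,U^{*}]$, with $d(U)=d(U^{*})$ read off from the cited fact. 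Case $(\mathrm{b_2})$, with $\mu(V_0)=2$, is where the real work lies: the arm $V$ need not collapse linearly, and one must separate the genuinely chain-like outcome from the special fork of Fig.\ \ref{fig:fork}, whose only multiplicity-$1$ components are the two $[2]$-tips and whose third arm $T$ contracts to a smooth point; the dichotomy reflects whether building the $V$-side introduces a branch point.

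Finally, (c) reduces to (b): since $\mu(L_1)=1$, part (a) makes $L_1$ a tip meeting a multiplicity-$1$ component, and contracting $L_1$ together with the string of $(-2)$-curves that successively become contractible peels off a prefix recorded by $(2)_{\alpha}$; the residual fiber then has $L_2$ as its single $(-1)$-curve and falls under $(\mathrm{b_1})$, giving $[U,1,U^{*}]$, which reassembles via the $*$-operation into $F=[1,(2)_{\alpha}*U,1,U^{*}]$ with $\alpha\geq 1$. The main obstacle throughout is the combinatorial control in $(\mathrm{b_2})$ and (c): one must check that the multiplicity hypotheses on $U_0,V_0,L_1$ propagate correctly under each contraction, so that the arms are forced to be exactly the conjugate chains $U$ and $U^{*}$ and no branching other than the single admissible fork can occur. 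Sustaining this bookkeeping cleanly across the induction is the delicate point.
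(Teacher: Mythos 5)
The paper offers no proof of this lemma: it is quoted from \cite[4.14]{Fujita-noncomplete_surfaces} with the one-line remark that it follows by induction along the contraction of $F$ to a $0$-curve, which is indeed the strategy you adopt, and your preliminary assertions and part (a) are argued correctly. The real problem is in part (b). You assert that the unique $(-1)$-curve $L$ ``must be interior'' and that $F\redd-L$ splits into two arms, \emph{each a chain} of $(\leq-2)$-curves. Both assertions are false, and the counterexample is exactly the object that $(\mathrm{b_2})$ is designed to capture: take $F=U_0+U_0'+2B+2L$ with $B^2=-2$ branching, $U_0=U_0'=[2]$ and $T=[1]=L$ (obtained from a $0$-curve by blowing up a point, then the node of $[1,1]$, then a free point of the middle component of $[2,1,2]$). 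One checks $F\cdot C=0$ for every component, $L$ is the unique $(-1)$-curve, it is a \emph{tip} of $F\redd$ with $\mu(L)=2$, and $F\redd-L=[2,2,2]$ is a single connected component. For longer $T$, say $T=[2,1,3]$, $L$ is interior but one of the two connected components of $F\redd-L$ is a fork, not a chain. Your parenthetical justification (``a tip $(-1)$-curve would force a two-component fiber different from $[1,1]$'') is therefore wrong, and the structural platform on which you build (b) excludes precisely the special fork that the lemma must allow.

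Beyond this, the parts carrying the actual content are described rather than proved. For $(\mathrm{b_1})$, ``tracking the multiplicity-$1$ components through the contractions'' is not an argument that $U_0,V_0$ end up as tips and that no branching occurs; for $(\mathrm{b_2})$ you explicitly defer the whole case (``the dichotomy reflects whether building the $V$-side introduces a branch point'' restates the statement); and for (c) you invoke a ``string of $(-2)$-curves that successively become contractible'' without showing that the first neighbour of $L_1$ need not even be a $(-2)$-curve is handled by the $*$-operation, that the contractions never touch $L_2$, that the hypothesis on multiplicity-$1$ components in both connected components of $F\redd-(L_1+L_2)$ is actually used, or why $\alpha\geq 1$. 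To repair the proof, drop the claim that $L$ is interior and instead classify fibers with a unique $(-1)$-curve by reversing the blow-up process from $[0]$, keeping track of which blow-ups create a second $(-1)$-curve or a second multiplicity-one branch and are therefore forbidden under the hypotheses; the chain/special-fork dichotomy then falls out of which blow-up centres remain admissible.
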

A fiber like in Lemma \ref{lem:singular_P1-fibers}($\mathrm{b_{2}}$) will be called a \emph{special fork}. 

\smallskip Assume that $V$ is a smooth quasi-projective surface. A morphism to a smooth curve $p\colon V\to B$ is called a \emph{$\C^{(n*)}$-fibration} if there exists a nonempty Zariski-open subset $U\subseteq B$, such that for every $b\in U$ there exist $n$ points $p_{1},\dots, p_{n}\in \C^{1}$, such that the scheme-theoretic fiber $p^{*}(b)$ is isomorphic to $\C^{1}\setminus \{p_{1},\dots, p_{n}\}$. Call $U_{0}$ the maximal subset of $B$ with the above property. A fiber $F_{b}=p^{-1}(b)$ for $b\not \in U_{0}$ is called a \emph{degenerate fiber} of $p$.

For a $\C^{(n*)}$-fibration $p\colon V\to B$ there exist smooth completions $(X,D)$ of $V$ and $\bar{B}$ of $B$, such that $p$ extends to a $\P^{1}$-fibration $\bar{p}\colon X\to \bar{B}$ with $F\cdot D=n$ for every fiber $F$ of $\bar{p}$. Any triple $(X,D,\bar{p})$ as above will be called a \emph{completion} of a fibration $p\colon V\to B$. We will use the following:

\begin{notation}\label{not:fibrations_h_and_nu}
Let $(X,D)$ be a smooth pair and $\bar{p}\colon X\to \bar{B}$ a $\P^{1}$-fibration. Then $h=\#D_{h}$ is the number of horizontal components of $D$, $\nu$ is the number of fibers contained in $D$ and $\sigma(F_{b})$ for $b\in \bar{B}$ denotes the number of components of $F_{b}=\bar{p}^{-1}(b)$ not contained in $D$.
\end{notation}

\begin{lem}[{\cite[9.2]{Suzuki-chi_for_fibrations}}, {\cite[3.2]{Zaid_isotrivial_curves_on_surfaces}}, cf. {\cite[III.1.8]{Miyan-OpenSurf}}]\label{lem:Suzuki}
Let $V$ be a smooth affine surface and let $p\colon V\to B$ be a $\C^{(n*)}$-fibration with general fiber $f$. Then:
\begin{enumerate}[(a)]
\item For every degenerate fiber $F$ of $p$ we have $\chi(F)\geq \chi(f)$ and if the equality holds then $F_{\mathrm{red}}\cong f \cong \C^{1}$ or $F_{\mathrm{red}}\cong f \cong \C^{*}$.
\item
\begin{equation*}
\chi(V)= \chi(B)\cdot \chi(f)+\sum_{b\in B}(\chi(F_{b})-\chi(f)).
\end{equation*}
\end{enumerate}
\end{lem}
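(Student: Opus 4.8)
\emph{Approach.} Both parts rest on the additivity of the topological Euler characteristic $\chi$, which for complex algebraic varieties agrees with its compactly supported version and hence satisfies $\chi(W)=\chi(W_0)+\chi(W\setminus W_0)$ for a closed subvariety $W_0\subseteq W$. Part (b) is then essentially formal, while for part (a) I would pass to a completion and read off the Euler characteristic of a degenerate fibre from the dual graph of its closure, the point being that affineness of $V$ is exactly what forces the inequality to be (almost always) strict.

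\emph{Part (b).} I would first invoke generic topological local triviality of algebraic morphisms (Verdier/Thom): there is a dense open $U_1\subseteq B$ over which $p$ restricts to a locally trivial fibre bundle with fibre $f$, so that $\chi(p^{-1}(U_1))=\chi(U_1)\cdot\chi(f)$. Decomposing $V=p^{-1}(U_1)\sqcup\bigsqcup_{b\notin U_1}F_b$ and using additivity gives $\chi(V)=\chi(U_1)\chi(f)+\sum_{b\notin U_1}\chi(F_b)$. Since $B$ is a curve, $\chi(U_1)=\chi(B)-\#(B\setminus U_1)$; substituting and regrouping the finitely many special terms yields $\chi(V)=\chi(B)\chi(f)+\sum_{b\notin U_1}(\chi(F_b)-\chi(f))$. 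As $\chi(F_b)=\chi(f)$ for $b\in U_1$, the sum extends over all of $B$, which is the asserted formula.

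\emph{Part (a), inequality.} I would fix a completion $(X,D,\bar p)$ of $p$ as in Section \ref{ssec:fibrations}, chosen so that $D$ is snc; then $\bar p\colon X\to\bar B$ is a $\P^1$-fibration and the general fibre $\bar f\cong\P^1$ meets $D$ in $n+1$ points, so $\chi(f)=2-(n+1)=1-n$. For a degenerate fibre over $b_0$ let $\bar F=\bar p^{-1}(b_0)$, so that $\bar F\redd$ is a rational tree by Lemma \ref{lem:singular_P1-fibers}. Colour its components \emph{white} if they are not contained in $D$ and \emph{black} if they lie in $D_v$, and write $a$ for the number of white components (so $a\ge 1$), $c_w,c_b$ for the numbers of connected white, resp.\ black clusters, and $h_w$ for the number of points where white components meet $D_h$. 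Since $\bar F\redd$ is a tree, contracting each monochromatic cluster shows that the number of white--black incidences equals $c_w+c_b-1$; combined with additivity and $F=\bar F\redd\setminus D$ this gives the clean formula $\chi(F)=a+1-c_b-h_w$. The numerical identity $D_h\cdot\bar F=n+1$ bounds $h_w$ together with the contribution of the black clusters to $D_h\cdot\bar F$, and here affineness enters decisively: because $V$ is affine the boundary $D$ is connected, so every black (vertical) cluster must meet $D_h$ (its only possible neighbours in $D$), contributing at least $c_b$ to $D_h\cdot\bar F$. This yields $h_w\le n+1-c_b$, whence $\chi(F)\ge a-n\ge 1-n=\chi(f)$.

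\emph{Part (a), equality, and the main obstacle.} Equality forces $a=1$ and saturates every estimate; tracing this back, the unique white component $C_0$ satisfies $C_0\cdot D_h=0$, so $F\redd=C_0\setminus D\cong f$, while degeneracy of $F$ forces $C_0$ to occur with multiplicity $\mu_0\ge 2$ in $\bar F$ (otherwise $F\cong f$ and $b_0$ would be a general point). It then remains to show that a non-reduced fibre of the shape $\mu_0C_0+(\text{boundary chains})$, with each chain meeting $D_h$ exactly once, can be a genuine degenerate fibre of a $\P^1$-fibration only when $C_0$ meets at most two such chains: the relations $\bar F\cdot C=0$, i.e.\ integrality of the multiplicities around $C_0$, exclude three or more chains, forcing $n+1\le 2$ and hence $f\cong\C^1$ or $f\cong\C^*$. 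I expect this final integrality/contraction step, together with the bookkeeping needed to justify the transversality (snc) assumptions used in deriving $\chi(F)=a+1-c_b-h_w$, to be the most delicate part of the argument.
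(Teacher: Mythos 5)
This lemma is quoted in the paper from Suzuki and Zaidenberg without proof, so your argument can only be judged on its own terms. Part (b) is correct and standard. The inequality in part (a) is also correct: the count $\chi(F)\geq a+1-c_b-h_w$ (an equality under suitable transversality, and an inequality in the favourable direction otherwise), combined with the observation that affineness of $V$ makes $D$ connected and hence forces every black cluster to meet $D_h$, gives $c_b+h_w\leq D_h\cdot\bar F=n+1$ and so $\chi(F)\geq a-n\geq 1-n=\chi(f)$. In the equality analysis there is a harmless ordering slip: $C_0\cdot D_h=0$ does not follow from saturation alone, only after you know $\mu_0\geq 2$ (then $n+1\geq\mu_0h_w+c_b\geq 2h_w+(n+1-h_w)$ forces $h_w=0$); but $F\redd\cong f$ already follows from $c_b+h_w=n+1$, so nothing is lost.

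The genuine gap is the final step, which you flag but whose proposed mechanism fails. Local integrality of the multiplicities around $C_0$, i.e.\ the relations $\bar F\cdot C=0$, does \emph{not} exclude three or more branches carrying multiplicity-one components: for instance $\bar F=2C_0+A_1+\dots+A_4$ with $C_0^2=-2$ and each $A_i$ a $(-2)$-curve of multiplicity $1$ meeting only $C_0$ satisfies $\bar F\cdot C=0$ for every component and $\bar F^2=0$, yet it is not a fiber (it has $p_a=1$, equivalently it cannot be blown down to a $0$-curve). One genuinely needs a global structural fact about fibers. A clean route: build $\bar F$ up from a $0$-curve by blowups and note that the multiplicity of a component is fixed at its creation; any branch of $C_0$ attached \emph{after} $C_0$ is created (necessarily by blowing up a free point of $C_0$) starts with a component of multiplicity $\mu_0$ and all its later components have multiplicity divisible by $\mu_0\geq 2$, so it contains no multiplicity-one component. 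Hence only the at most two branches present when $C_0$ is created can carry multiplicity-one components. Since in the equality case each of the $c_b=n+1$ black clusters contributes exactly $1$ to $D_h\cdot\bar F$ and therefore contains a component of multiplicity $1$, this yields $n+1\leq 2$, completing the equality case. Without some argument of this kind (or an appeal to the cited sources) the statement that equality forces $f\cong\C^1$ or $\C^*$ — which is exactly what Corollary \ref{cor:Cstst-fibers} uses — is not established.
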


\begin{lem}[{\cite[4.16]{Fujita-noncomplete_surfaces}}, cf. {\cite[Lemma 2]{Palka-AMS_LZ}}]\label{lem:fibrations-Sigma-chi}
Let $X,D,\bar{p}$ be as in Notation \ref{not:fibrations_h_and_nu}. Then:
\begin{equation*}h+\nu+\rho(X)=\#D+2+\sum_{b\in B}(\sigma(F_{b})-1).\end{equation*}
In particular, if the components of $D$ generate $\NS_{\Q}(X)$ freely, e.g.\ when $X\setminus D$ is $\Q$-acyclic, then
\begin{equation*}h+\nu-2=\sum_{b\in B}(\sigma(F_{b})-1)\geq 0.\end{equation*}
\end{lem}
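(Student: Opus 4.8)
The plan is to reduce the whole statement to a single structural identity for $\P^{1}$-fibrations,
\[
\rho(X)=2+\sum_{b\in\bar B}(\#F_{b}-1),
\]
where $\#F_{b}$ denotes the number of irreducible components of the fiber $F_{b}=\bar p^{-1}(b)$ (only finitely many summands are nonzero, since a general fiber is an irreducible $0$-curve). To prove this identity I would induct on the number of blowups needed to pass to a relatively minimal model of $\bar p$. By the remark preceding Lemma \ref{lem:singular_P1-fibers}, every degenerate fiber can be contracted to a $0$-curve by successively contracting $(-1)$-curves lying in fibers; doing this turns $\bar p$ into a geometrically ruled surface (a $\P^{1}$-bundle) over $\bar B$, which has $\rho=2$ and only irreducible fibers, so the identity holds there with the sum equal to $0$. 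Running the blowups backwards, each blowup raises $\rho$ by one and adds exactly one new component, namely the exceptional curve, to exactly one fiber, hence raises $\sum_{b}(\#F_{b}-1)$ by one as well. Thus $\rho(X)-\sum_{b\in\bar B}(\#F_{b}-1)$ is a blowup invariant, equal to its value $2$ on the ruled model.

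The rest is bookkeeping. For each $b$ I would split the components of $F_{b}$ into those not contained in $D$, counted by $\sigma(F_{b})$, and those contained in $D$, so that
\[
\#F_{b}-1=(\sigma(F_{b})-1)+\#\{\text{components of }F_{b}\text{ contained in }D\}.
\]
Summing the last term over all $b\in\bar B$ counts each vertical component of $D$ exactly once, so it equals $\#D_{v}=\#D-h$. Substituting into the structural identity gives
\[
\rho(X)=2+\sum_{b\in\bar B}(\sigma(F_{b})-1)+\#D-h.
\]
It remains to pass from $\bar B$ to $B$: identifying $B$ with $\bar p(X\setminus D)$, a point $b$ lies in $\bar B\setminus B$ precisely when $\bar p^{-1}(b)\cap(X\setminus D)=\emptyset$, i.e.\ when $F_{b}\subseteq D$. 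By definition there are exactly $\nu$ such points, and each has $\sigma(F_{b})=0$, so $\sum_{b\in\bar B}(\sigma(F_{b})-1)=\sum_{b\in B}(\sigma(F_{b})-1)-\nu$. Combining the two displays yields $h+\nu+\rho(X)=\#D+2+\sum_{b\in B}(\sigma(F_{b})-1)$, the asserted formula.

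For the ``in particular'' part, if the components of $D$ freely generate $\NS_{\Q}(X)$ then their number equals the rank, i.e.\ $\#D=\rho(X)$ (this holds when $X\setminus D$ is $\Q$-acyclic); substituting this into the formula cancels $\rho(X)$ and gives $h+\nu-2=\sum_{b\in B}(\sigma(F_{b})-1)$. Finally, for $b\in B$ the fiber $F_{b}$ is not contained in $D$, so it has at least one component outside $D$ and $\sigma(F_{b})\geq 1$; hence every summand is nonnegative and the sum is $\geq 0$. The only genuinely nontrivial step is the structural identity for $\rho(X)$, which rests on the reduction to a relatively minimal ruled surface; the main point requiring care afterwards is the $B$ versus $\bar B$ accounting, that is, correctly attributing the $-\nu$ coming from the $\nu$ fibers contained in $D$.
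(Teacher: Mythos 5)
Your proof is correct; the paper itself does not prove this lemma but only cites Fujita [4.16] and Palka [Lemma 2], and your argument is essentially the standard one given in those references: the invariance of $\rho(X)-\sum_{b}(\#F_{b}-1)$ under blowups combined with reduction to a relatively minimal ruled model, followed by the bookkeeping that splits fiber components into those in $D_{v}$ and those counted by $\sigma$, with the $-\nu$ correction for fibers contained in $D$. All steps, including the final observation that $\sigma(F_{b})\geq 1$ for $b\in B$, check out.
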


In order to apply Lemma \ref{lem:Suzuki} to $\C^{**}$-fibrations we need the following:
\begin{lem}[Curves with $\chi\geq 0$] \label{lem:when_chi(fiber)>=0}
Let $F$ be a, possibly reducible, affine curve such that $\chi(F)\geq 0$. Assume that $F$ contains no component $C$ homeomorphic (in the complex topology) to $\C^{1}$, such that $C\cap (F-C)$ consists of at most one point. Then $F$ is a disjoint union of curves homeomorphic to $\C^{*}$.
\end{lem}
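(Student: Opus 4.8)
The plan is to read off $\chi(F)$ from the normalization and to use that a smooth affine curve has Euler characteristic at most $1$. Let $\nu\colon\tilde F\to F$ be the normalization; then $\tilde F=\bigsqcup_i\tilde C_i$, where $\tilde C_i$ normalizes the $i$-th irreducible component $C_i$. Each $\tilde C_i$ is a smooth connected affine curve, hence of the form $\bar C_i\setminus\{n_i\text{ points}\}$ with $\bar C_i$ smooth projective of genus $g_i$ and $n_i\geq 1$, so $\chi(\tilde C_i)=2-2g_i-n_i\leq 1$, with equality exactly for $\tilde C_i\cong\C^1$ and with $\chi(\tilde C_i)=0$ exactly for $\tilde C_i\cong\C^*$. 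Since $\nu$ is a homeomorphism away from the finitely many points of $F$ with more than one local branch, additivity of the Euler characteristic gives
\begin{equation*}
\chi(F)=\sum_i\chi(\tilde C_i)-\delta,\qquad \delta=\sum_p(b_p-1)\geq 0,
\end{equation*}
where $b_p$ is the number of local branches of $F$ at $p$; note that $\delta=0$ precisely when the $C_i$ are disjoint and each is unibranch.

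Next I would pass to a single connected component $G$ of $F$. In every application $F$ is a subcurve of a degenerate fiber of a $\P^1$-fibration, so by Lemma \ref{lem:singular_P1-fibers} its components are smooth rational curves meeting transversally and its dual graph is a forest; in particular $\tilde C_i\cong C_i$, and $G$ is a tree, say on $m_G$ vertices with $m_G-1$ intersection points. The displayed formula then reads $\chi(G)=\sum_{C_i\subseteq G}(\chi(C_i)-1)+1$. The hypothesis now enters at the tips: a leaf of the tree, or an isolated vertex, is a component $C$ for which $C\cap(F-C)$ is at most one point, hence by assumption $C$ is not homeomorphic to $\C^1$; being smooth rational this forces $\chi(C)\leq 0$, i.e.\ $\chi(C)-1\leq -1$.

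I would then combine these observations. If $G$ is a single vertex, that component is not $\C^1$, so $\chi(G)=\chi(C)\leq 0$; if $G$ has at least two vertices, a tree has at least two leaves, each contributing $\chi(C)-1\leq -1$ while all remaining vertices contribute $\leq 0$, so $\chi(G)\leq -2+1=-1$. Thus $\chi(G)\leq 0$ always, and $\chi(G)=0$ can hold only when $G$ is a single component $C$ with $\chi(C)=0$, i.e.\ $C\cong\C^*$. Finally $\chi(F)=\sum_G\chi(G)\geq 0$ with every summand $\leq 0$ forces each $\chi(G)=0$, so every connected component of $F$ is a single curve homeomorphic to $\C^*$, as claimed.

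The delicate point is the interplay between the hypothesis and the correction term $\delta$. One must check that a tip meeting the rest in at most one point really is barred from being $\C^1$ by the assumption, and---crucially---that in the extremal case $\chi(G)=0$ no identifications survive, so that $G$ is a genuine smooth $\C^*$ rather than one of the other curves with $\chi=0$, such as two affine lines glued along two points or a nodal rational curve. These last configurations are excluded only by the tree (snc, loop-free) structure inherited from the fiber, so making that structural input explicit, and bookkeeping $\delta$ so that the boundary case is pinned down exactly, is where the real work lies.
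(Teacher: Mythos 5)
Your computation is internally consistent, but it does not prove the lemma as stated. The decisive step is where you write that ``in every application $F$ is a subcurve of a degenerate fiber of a $\P^{1}$-fibration'' and then invoke Lemma \ref{lem:singular_P1-fibers} to assume that the components are smooth rational curves meeting transversally with forest dual graph. None of this is a hypothesis of the lemma: the statement concerns an arbitrary, possibly reducible and possibly singular, affine curve, and a proof may not import structure from the places where the lemma happens to be applied. The paper's own argument stays in that generality by inducting on the number of components: for any component $C$ meeting $F-C$ in a finite set of $d$ points one has $\chi(F)=\chi(F-C)+\chi(C)-d$ by inclusion--exclusion, with no smoothness or transversality needed, and the normalization bound gives $\chi(C)\leq 1$ for every irreducible affine curve, with equality exactly when $C$ is homeomorphic to $\C^{1}$ --- the case the hypothesis kills, since then $d\geq 2>\chi(C)$. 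This yields $0\leq\chi(F)\leq\chi(F-C)$, the inductive hypothesis is applied to $F-C$, and one concludes $d=0$ and $\chi(C)=0$ for the component that was split off. Your leaf count over the dual tree is a clean repackaging of the same Euler-characteristic bookkeeping, but it only becomes available after the tree structure has been assumed.

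That said, the ``delicate point'' you flag at the end is real: for an irreducible affine curve, $\chi(C)=0$ does not by itself force $C$ to be homeomorphic to $\C^{*}$ (a rational curve with one node and one place at infinity also has $\chi=0$ and is not excluded by the hypothesis on $\C^{1}$-components), and this is precisely what your snc/tree assumption is doing the work of ruling out. The paper's proof also passes over this case rather quickly in its last sentence. So your diagnosis of where the difficulty sits is accurate; what is missing from your write-up is an argument that operates under the lemma's actual hypotheses rather than under the stronger ones borrowed from Corollary \ref{cor:Cstst-fibers}, where the closure of the fiber is already known to be a rational tree.
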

\begin{proof}
We argue by induction on $\#F$. The case $\#F=1$ is obvious. Let $C$ be a component of $F$ intersecting $F-C$ in $d$ points. We have $C\cong \bar{C} \setminus \{r \text{\ points} \}$ for some $r\geq 1$, where $\bar{C}$ is a projective curve. Let $C^{\nu}\to \bar C$ be its normalization. We have $$\chi(C)=\chi(\bar{C})-r\leq\chi(C^\nu)-r=2-2g(C^\nu)-r\leq 1,$$ hence $\chi(C)\leq 1$. Moreover, the inequality is an equality if and only if $\chi(\bar C)=\chi(C^\nu)$, $g(C^\nu)=0$ and $r=1$, that is, $C$ is homeomorphic to $\C^{1}$, so in this case $\chi(C)<d$ by assumption. It follows that in every case $$0\leq \chi(F)=\chi(F-C)+\chi(C)-d\leq \chi(F-C).$$ Thus by the inductive hypothesis $F-C$ is a disjoint union of curves homeomorphic to $\C^{*}$. Then $\chi(F-C)=0$, so $d=\chi(C)\leq 1$ and the argument above implies that the inequality is strict. Thus $d=0$, $C$ is homeomorphic to $\C^{*}$ and is disjoint from $F-C$, as claimed.
\end{proof}

\begin{wn}[$\C^{**}$-fibrations]\label{cor:Cstst-fibers} Let $p\colon V\to B$ be a $\C^{**}$-fibration of a smooth $\Q$-acyclic surface of log general type. Then, with the notation as in Notation \ref{not:fibrations_h_and_nu}:
\begin{enumerate}[(a)]
\item $p$ has $3-\nu$ degenerate fibers, each of them is disjoint union of curves isomorphic to $\C^{*}$,
\item $\sum_{F}\sigma(F)=h+1$, where the sum runs over all degenerate fibers of $p$,
\item $\nu\leq 1$.
\end{enumerate}\end{wn}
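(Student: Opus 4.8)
The plan is to combine the two Euler-characteristic identities of Lemmas~\ref{lem:Suzuki} and~\ref{lem:fibrations-Sigma-chi} with the structural input of Lemmas~\ref{lem:when_chi(fiber)>=0} and~\ref{lem:Qhp_has_no_lines}. First I would fix the relevant numbers. Writing $f$ for the general fibre, we have $f\cong\C^{**}$, hence $\chi(f)=-1$, and $\chi(V)=1$ because $V$ is $\Q$-acyclic. Passing to a completion $(X,D,\bar{p})$ with $\bar{p}\colon X\to\bar{B}$, rationality of $V$ gives $\bar{B}\cong\P^{1}$, and a point of $\bar{B}\setminus B$ is precisely one whose entire $\bar{p}$-fibre is contained in $D$; thus $\#(\bar{B}\setminus B)=\nu$ and $B\cong\P^{1}\setminus\{\nu\text{ points}\}$, so $\chi(B)=2-\nu$.

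For (a), I would substitute these into Lemma~\ref{lem:Suzuki}(b). Since only degenerate fibres contribute to the sum, the identity rearranges to
\[
\sum_{F}\bigl(\chi(F)+1\bigr)=3-\nu,
\]
the sum running over the degenerate fibres of $p$. By Lemma~\ref{lem:Suzuki}(a) each degenerate fibre has $\chi(F)\geq\chi(f)=-1$, and equality is impossible here, as it would force $f\cong\C^{1}$ or $f\cong\C^{*}$, contrary to $f\cong\C^{**}$; hence $\chi(F)\geq0$ and every summand is $\geq1$. The crux is to upgrade this to $\chi(F)=0$, which I would do with Lemma~\ref{lem:when_chi(fiber)>=0}: the fibre $F=p^{-1}(b)$ is an affine curve with $\chi(F)\geq0$, and its hypothesis holds because $\bar{F}\redd$ is an snc rational tree (Lemma~\ref{lem:singular_P1-fibers}), so every component of $F=\bar{F}\cap V$ is a \emph{smooth} rational affine curve; such a component homeomorphic to $\C^{1}$ would be isomorphic to $\C^{1}$, which $V$ forbids by Lemma~\ref{lem:Qhp_has_no_lines}. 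Thus $F$ is a disjoint union of curves isomorphic to $\C^{*}$, so $\chi(F)=0$, every summand equals $1$, and the number of degenerate fibres is exactly $3-\nu$.

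For (b), I would feed (a) into Lemma~\ref{lem:fibrations-Sigma-chi}. As $V$ is $\Q$-acyclic the components of $D$ generate $\NS_{\Q}(X)$ freely, so that lemma gives $h+\nu-2=\sum_{F}(\sigma(F)-1)$ over the degenerate fibres; since there are $3-\nu$ of them,
\[
\sum_{F}\sigma(F)=(h+\nu-2)+(3-\nu)=h+1.
\]

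For (c), the quickest route is topological. Over a Zariski-open $B^{\circ}\subseteq B$ avoiding the degenerate fibres, $\bar{p}$ is a $\P^{1}$-bundle by Ehresmann's theorem, and removing $D_{h}$, which meets each fibre transversally in three points, presents $p$ over $B^{\circ}$ as a locally trivial bundle with connected fibre $\C^{**}$. Connectedness of the fibre makes $\pi_{1}(p^{-1}(B^{\circ}))\to\pi_{1}(B^{\circ})$ surjective, and since the omitted fibres have real codimension two this yields that $p_{*}\colon H_{1}(V;\Q)\to H_{1}(B;\Q)$ is surjective. As $H_{1}(V;\Q)=0$ we obtain $b_{1}(B)=0$; since $B\cong\P^{1}\setminus\{\nu\text{ points}\}$ has $b_{1}(B)=\max(\nu-1,0)$, this forces $\nu\leq1$. (Combinatorially one can instead note that $\Q$-acyclicity makes $D$ a tree: then $\nu=3$ is excluded by (b), which would give $h=-1$, and $\nu=2$ is excluded because a single horizontal $3$-section meeting a full fibre in $D$ transversally in three points would create loops in $D$.) The main obstacle is part (a): verifying the no-$\C^{1}$-component hypothesis of Lemma~\ref{lem:when_chi(fiber)>=0}, which rests on the smoothness of fibre components in the completion together with the absence of affine lines; once this is in place the remaining identities are pure bookkeeping.
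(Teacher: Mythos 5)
Your proof is correct. Parts (a) and (b) follow the paper's own argument essentially verbatim: the same substitution of $\chi(V)=1$, $\chi(f)=-1$, $\chi(B)=2-\nu$ into Lemma \ref{lem:Suzuki}(b), the same use of Lemma \ref{lem:Suzuki}(a) to get $\chi(F)\geq 0$ for degenerate fibers, and the same combination of Lemmas \ref{lem:Qhp_has_no_lines} and \ref{lem:when_chi(fiber)>=0} to conclude each degenerate fiber is a disjoint union of $\C^*$'s (you are in fact slightly more careful than the paper in checking the hypothesis of Lemma \ref{lem:when_chi(fiber)>=0}, noting that components of $F$ are smooth rational affine curves so that ``homeomorphic to $\C^1$'' upgrades to ``isomorphic to $\C^1$''); part (b) is then the identical bookkeeping with Lemma \ref{lem:fibrations-Sigma-chi}. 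Part (c) is where you genuinely diverge: the paper argues that two distinct fibers $F_1\sim F_2$ contained in $D$ would make the classes of the components of $D$ linearly dependent in $\NS_{\Q}(X)$, contradicting (via Poincar\'e--Lefschetz duality) the $\Q$-acyclicity of $X\setminus D$ --- a one-line argument in the same Néron--Severi spirit as Lemma \ref{lem:fibrations-Sigma-chi}. Your route instead uses the surjectivity of $\pi_1$ for a locally trivial bundle with connected fiber together with $H_1(V;\Q)=0$ to force $b_1(B)=0$; this is correct and self-contained, at the cost of invoking Ehresmann/isotopy for the pair and the codimension-two removal argument. Your parenthetical combinatorial alternative also works, though for $\nu=2$ the cleanest contradiction is simply that (a) and Lemma \ref{lem:fibrations-Sigma-chi} force $h=0$, which is impossible since a general fiber satisfies $f\cdot D_h=3$; the ``loops in $D$'' observation is not needed.
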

\begin{proof}
(a) Let $(X,D,\bar{p})$ be a minimal completion of $p$. Since $X$ is rational, $B$ is isomorphic to an open subset of $\P^{1}$ with $\#(\P^{1}\setminus B)=\nu$. The closure of a  fiber $F$ in $X$ is a rational tree. By Lemma \ref{lem:Qhp_has_no_lines} $F$ has no component isomorphic to $\C^{1}$ and by Lemma \ref{lem:Suzuki}(a), $\chi(F)\geq \chi(\C^{**})+1=0$. Then Lemma \ref{lem:when_chi(fiber)>=0} shows that every degenerate fiber is a disjoint union of $\C^*$'s. By Lemma \ref{lem:Suzuki}(b) $1=(2-\nu)\cdot (-1)+\sum_{F}(\chi(F)+1)$, so $\sum_{F}1=3-\nu$.

(b) Because the components of $D$ generate $\NS_{\Q}(X)$ freely, from Lemma \ref{lem:fibrations-Sigma-chi} and part (a) we have $h+1=3-\nu+\sum_{F}(\sigma(F)-1)=\sum_{F}\sigma(F)$.

(c) If $F_{1}\neq F_{2}$ are fibers contained in $D$ then the linear equivalence $F_{1}\sim F_{2}$ implies that the components of $D$ are not linearly independent in $\NS_{\Q}(X)$, hence the natural homomorphism $H_2(D,\Q)\to H_2(X,\Q)$ is not surjective, so by the Poincaré-Lefschetz duality $H^2(X\setminus D)=H_2(X,D)\neq 0$; a contradiction.
\end{proof}

Note that if $(X,D,\bar p)$ is a minimal completion of $p$ then the closure of a non-degenerate fiber of $p$ is a non-degenerate fiber of $\bar p$, which follows easily from the fact that none of the components of $D$ contained in the completed fiber can be a $(-1)$-curve. On the other hand, a completion of a degenerate fiber of $p$ may very well be a non-degenerate fiber of $\bar p$.

\subsection{Log resolutions and Hamburger-Noether pairs.} \label{sec:cusps}

The are many ways of describing the geometry of cusps and exceptional divisors of their resolutions. We use the \emph{Hamburger-Noether pairs} (\emph{HN} for short), because on one hand they are a compact way of keeping track of sums of multiplicities and squares of multiplicities of singular points on successive proper transforms of the cusp and on the other hand they are directly related to weights of dual graphs (see Lemma \ref{lem:HN_for_chains}). For more details we refer the reader to \cite[Appendix]{KR-C*_actions_on_C3} and \cite{Russell_HN_pairs}. The relation with the  multiplicity sequence is explained in Lemma \ref{lem:Hn_gives_multiplicities}. Relations with other numerical characteristics (Puiseux pairs, Zariski pairs) are explained in Section \ref{sec:appendix}. 

Let $(\chi,q)$ be an analytically irreducible germ (a cusp) of a planar curve singular at the point $q$. Let $Q$ be the exceptional divisor of its minimal log resolution and let $C$ be the unique $(-1)$-curve in $Q$ intersecting the proper transform of $\chi$. Since $Q$ is created from a smooth point by a sequence of blowups, its structure is easy to understand. For instance, every component of $Q$ meets at most three others and if it meets three then it meets a maximal twig of $Q$ (see Fig.\ \ref{fig:treeQ}). Since the log resolution is minimal, $C$ is not a tip of $Q$.
\begin{figure}[h]
\setlength{\unitlength}{0.5cm}
\begin{picture}(16,5)
\put(0,1){\circle{0.2}}
\put(0.2,1){\line(1,0){0.6}}
\put(1.0,0.9){$\dots$}
\put(2.0,1){\line(1,0){0.6}}
\put(2.8,1){\circle{0.2}}
\put(3.8,1.2){\line(0,1){0.6}}
\put(3.8,2){\circle{0.2}}
\put(3.8,2.2){\line(0,1){0.6}}
\put(3.7,3.0){$\vdots$}
\put(3.8,3.6){\line(0,1){0.6}}
\put(3.8,4.4){\circle{0.2}}
\put(3.0,1){\line(1,0){0.6}}
\put(3.8,1){\circle{0.2}}
\put(4,1){\line(1,0){0.6}}
\put(4.8,0.9){$\dots$}
\put(5.8,1){\line(1,0){0.6}}
\put(6.6,1){\circle{0.2}}
\put(6.8,1){\line(1,0){0.6}}
\put(7.6,1){\circle{0.2}}
\put(7.8,1){\line(1,0){0.6}}
\put(8.6,0.9){$\dots\dots$}
\put(7.6,1.2){\line(0,1){0.6}}
\put(7.6,2){\circle{0.2}}
\put(7.6,2.2){\line(0,1){0.6}}
\put(7.5,3.0){$\vdots$}
\put(7.6,3.6){\line(0,1){0.6}}
\put(7.6,4.4){\circle{0.2}}
\put(10.5,1){\line(1,0){0.6}}
\put(11.3,1){\circle{0.2}}
\put(11.5,1){\line(1,0){0.6}}
\put(12.3,0.9){$\dots$}
\put(13.3,1){\line(1,0){0.6}}
\put(14.1,1){\circle{0.2}}
\put(11.3,1.2){\line(0,1){0.6}}
\put(11.3,2){\circle{0.2}}
\put(11.3,2.2){\line(0,1){0.6}}
\put(11.2,3.0){$\vdots$}
\put(11.3,3.6){\line(0,1){0.6}}
\put(11.3,4.4){\circle{0.2}}
\put(14.3,1){\line(1,0){0.6}}
\put(14.95,0.7){$*_{\ C}$}
\put(15.1,1.2){\line(0,1){0.6}}
\put(15.1,2){\circle{0.2}}
\put(15.1,2.2){\line(0,1){0.6}}
\put(15,3.0){$\vdots$}
\put(15.1,3.6){\line(0,1){0.6}}
\put(15.1,4.4){\circle{0.2}}
\end{picture}
\caption{The graph of $Q$. The star represents the unique $(-1)$-curve $C$.}
\label{fig:treeQ}
\end{figure}
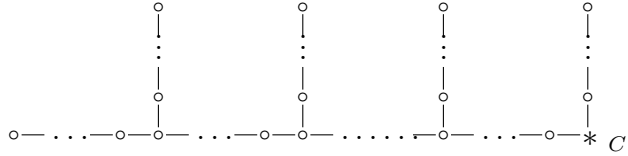
Letting $Z$ vary through smooth germs at $q$, we note that the intersection numbers $(Z\cdot \chi)_{q}$ at $q$ are bounded from above. Any smooth germ $Z$ for which the maximum is reached is said to be \emph{maximally tangent} to $\chi$ at $q$. We denote the multiplicity of the singular point $q\in \chi$ by $\mu(q)$.

For the beginning of the induction let $(Z_{1},q_1)$ be a smooth germ maximally tangent to $(\chi_1,q_1):=(\chi,q)$. For the inductive step assume we are given a triple $(\chi_j, Z_j,q_j)$ such that $(\chi_j,q_j)$ and $(Z_j,q_j)$ are respectively an analytically irreducible and a smooth germ of planar curves. If $(\chi_j,q_j)$ is smooth and meets $Z_{j}$ transversally then we stop, otherwise we put $c_j=(Z_j\cdot \chi_j)_{q_j}$ and we choose a smooth germ $(Y_j,q_j)$ transversal to $Z_j$ at $q_j$ and such that 
\begin{equation}\label{HN:convention}
\mbox{ if } c_j=\mu(q_j) \mbox{  then  } Y_j \mbox{  is maximally tangent to  } (\chi_j,q_j).
\end{equation}
We put $p_j=(Y_j\cdot \chi_j)_{q_j}$. We now blow up over $q_j$ until the proper transform $(\chi_{j+1},q_{j+1})$ of $(\chi_{j},q_j)$ intersects the total transform of $Z_{j}+Y_{j}$ not in a node and we denote by $Z_{j+1}$ the last exceptional curve. This way we obtain a sequence $$\binom{c_{1}}{p_{1}}\ldots \binom{c_{h}}{p_{h}}$$ of the \emph{standard HN-pairs} of the cusp $(\chi,q)$. By the Euclidean algorithm and by the transversality of $Z_j, Y_j$ we get for $j\geq 1$ 
\begin{equation}\label{eq:HN_c_j+1}c_{j+1}=\gcd(c_{j},p_{j}) \text{\ \ \ and\ \ } \min\{c_j,p_j\}=\mu(q_j)
\end{equation} 
respectively. Our choices of $Z_1$ and $Y_j$ give respectively 
\begin{equation}\label{eq:HN_our_convention} p_1\nmid c_1,\  p_1\leq c_1 \text{\ \ and\ \ } c_j\neq p_j.
\end{equation} 
With these choices the sequence of HN-pairs is unique and $Q$ has exactly $h-1$ branching components. One can also show that with the convention \eqref{HN:convention} $(\chi,q)$ has exactly $h$ Puiseux pairs, see Lemma \ref{lem:HN_vs_Puiseux}.

\begin{prz}[Semi-ordinary cusps]\label{ex:ordinary_cusp}A cusp of HN-type $\binom{2m+1}{2}$ for $m\geq 1$ is called \emph{semi-ordinary}. It has multiplicity $2$ and is locally analytically isomorphic to the singular point of $x^{2}=y^{2m+1}$ at $0\in \Spec \C[x,y]$. The exceptional divisor of its minimal log resolution is a chain $[3,1,(2)_{m-1}]$. It is often called a cusp of \emph{type $A_{2m}$}. A semi-ordinary cusp for $m=1$ (type $A_2$) is called \emph{ordinary}.
\end{prz}

\begin{rem}[Non-standard HN pairs]\label{HN:discussion}
It is possible, and in some situations more convenient to allow more general choices of $Z_1$ or $Y_j$ for $j\geq 1$ which do not satisfy \eqref{eq:HN_our_convention} (running inductive arguments more smoothly is one of the reasons). For instance, instead of \eqref{eq:HN_our_convention} we could ask $Y_j$ to satisfy $p_j\leq c_j$ (as it is done in \cite{KoPaRa-SporadicCstar1}), which results in replacing all pairs $\binom{c_j}{p_j}$ where $p_j>c_j$ with sequences $\binom{c_j}{c_j}_{r_j}\binom{c_j}{p_j-r_j c_j}$, where $r_j$ is the integer part of $p_j/c_j$ and $\binom{c}{c}_r$ means $\binom{c}{c}$ repeated $r$ times. Additionally, we could allow a more general choice of $Z_1$, which will relax the condition $p_1\nmid c_1$. Converting the so-computed non-standard HN-type to the standard one results with replacing the initial part \begin{equation}\label{eq:HN-equivalence}\binom{c_1}{p_1}\binom{p_1}{p_1}_r\binom{p_1}{p_2}\ p_2\nmid p_1 \text{\ \ with\ \ } \binom{c_1+rp_1+p_2}{p_1}.\end{equation}
The reader can check that this does not affect the basic equations listed in Lemma \ref{lem:HN-equations}. In particular, it is sometimes convenient to replace the initial germ $Z_{1}$ with the germ of the line tangent to $\bar{E}\subseteq \P^{2}$ at the cusp. We we will use such \emph{tangent HN-types} in future articles. Note however, that although they carry more information than the local topology of the singularity, they makes sense only for planar projective curves.
\end{rem}

The multiplicities of the preimage of $q$ on the proper transforms of $\chi$ under consecutive blowups form the \emph{multiplicity sequence} of $q\in \chi$. Since we rely on HN-pairs, we need a recipe to compute one from another. For examples see Table \ref{table:fibrations}.

\begin{lem}[HN-pairs vs the multiplicity sequence]\label{lem:Hn_gives_multiplicities}
Let $(\chi,q)$ be an analytically irreducible germ of a planar curve.
\begin{enumerate}[(a)]
\item Let $\binom{c}{p}=\binom{c_j}{p_j}$ be some HN-pair which is an element of some (not necessarily standard) HN-sequence of $(\chi,q)$. Then the part of the multiplicity sequence of $(\chi,q)$ corresponding to this HN-pair is
\begin{equation*}
((\min\{p,c\})_{u_1},(m_2)_{u_2}, \dots, (m_v)_{u_v}),
\end{equation*} 
where, putting $m_0=\max\{p,c\}$, the numbers $u_{k}$ and $m_{k+1}$ for $k\geq 1$ are respectively the quotient and remainder of dividing $m_{k-1}$ by $m_k$. The index $v\geq 1$ is the first $k$ such that $m_k|m_{k-1}$.

\item Let $((\mu_{1})_{u_1},\ldots,(\mu_{g})_{u_g})$ with $\mu_1>\ldots>\mu_g=1$ be the multiplicity sequence of $(\chi,q)$. Then $(\chi,q)$ is of standard HN-type
$$\binom{u_{1}\mu_{1}+\mu_{2}}{\mu_{1}}\binom{c_2}{p_2}\ldots \binom{c_{h}}{p_{h}},$$ where $c_{j},p_{j}$ for $j\geq 2$ are defined inductively as follows: $c_j=\gcd (c_{j-1},p_{j-1})$; if $c_j=1$ then we put $h=j-1$ and we stop. Otherwise we put $p_{j}=\mu_{k_{j}+1}+u_{k_{j}}\mu_{k_{j}}-\mu_{k_{j}-1}$, where $k_{j}$ is the unique integer such that $\mu_{k_{j}}=c_j$.
\end{enumerate}
\end{lem}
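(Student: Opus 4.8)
The plan is to reduce both parts to a single local computation — the behaviour of the multiplicity and of intersection numbers with smooth germs under one blowup — and then to run the Euclidean algorithm. The basic references for the local behaviour are \cite[Appendix]{KR-C*_actions_on_C3} and \cite{Russell_HN_pairs}.

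\textbf{The local step.} Fix a germ $(\chi,q)$ together with two smooth germs $Z,Y$ transversal to each other at $q$, and put $c=(Z\cdot\chi)_q$, $p=(Y\cdot\chi)_q$. Choosing local coordinates with $Z=\{x=0\}$ and $Y=\{y=0\}$, the germ $\chi$ admits a parametrization $x=t^{c}\cdot(\text{unit})$, $y=t^{p}\cdot(\text{unit})$, so its multiplicity at $q$ is $\min\{c,p\}$ (the second equality in \eqref{eq:HN_c_j+1}). I would then blow up $q$ and compute in the two affine charts: when $c>p$ the proper transform $\chi'$ meets the exceptional curve $\mathcal E$ at the point $q'$ lying on the strict transform $\tilde Z$ of $Z$, with $(\mathcal E\cdot\chi')_{q'}=p$ and $(\tilde Z\cdot\chi')_{q'}=c-p$; the case $c<p$ is symmetric, interchanging $Z,Y$; and when $c=p$ the point $q'$ lies on $\mathcal E$ away from $\tilde Z\cup\tilde Y$, so $\chi'$ meets the total transform of $Z+Y$ off a node and the pair terminates, with $(\mathcal E\cdot\chi')_{q'}=c=\gcd\{c,p\}$. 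Thus one blowup records the multiplicity $\min\{c,p\}$ and replaces the unordered pair $\{c,p\}$ by $\{\min\{c,p\},\,|c-p|\}$, a single step of the subtractive Euclidean algorithm, the pair being exhausted exactly when the two numbers coincide.

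\textbf{Part (a).} Within one Hamburger--Noether pair $\binom{c}{p}=\binom{c_j}{p_j}$ the two transversal germs tracked at each stage are the last exceptional curve and the strict transform of the previously tracked germ, so I would iterate the local step. Reading off the recorded multiplicities produces exactly the subtractive Euclidean sequence on $\{c,p\}$; grouping consecutive equal subtractions into divisions with remainder turns this into $((\min\{p,c\})_{u_1},(m_2)_{u_2},\dots,(m_v)_{u_v})$ with $u_k,m_{k+1}$ as in the statement, terminating at $m_v=\gcd\{c,p\}$ (the first index with $m_v\mid m_{v-1}$). Since the local step never used the normalization \eqref{eq:HN_our_convention}, this is valid for an arbitrary, not necessarily standard, HN-sequence; moreover the block ends with the value $\gcd\{c_j,p_j\}=c_{j+1}$, in agreement with \eqref{eq:HN_c_j+1}.

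\textbf{Part (b).} The multiplicity sequence of $(\chi,q)$ is the concatenation of the blocks of (a), one per pair, and I would invert the process inductively. The first block begins with the global maximum $\mu_1$, so $\min\{c_1,p_1\}=\mu_1$; under $p_1\le c_1$ this forces $p_1=\mu_1$, and the first division $c_1=u_1\mu_1+\mu_2$ gives the first pair $\binom{u_1\mu_1+\mu_2}{\mu_1}$. For $j\ge2$ the number $c_j=\gcd(c_{j-1},p_{j-1})$ is already known; write $c_j=\mu_{k_j}$. The delicate point is that the run of the value $\mu_{k_j}$ in the global sequence is split across a block boundary: the last division of block $j-1$ has remainder $0$, i.e.\ $\mu_{k_j-1}=(\mu_{k_j-1}/\mu_{k_j})\mu_{k_j}$, so block $j-1$ contributes exactly $\mu_{k_j-1}/\mu_{k_j}$ copies of $\mu_{k_j}$, while block $j$ contributes the remaining $u_{k_j}-\mu_{k_j-1}/\mu_{k_j}$ copies followed by the next remainder $\mu_{k_j+1}$. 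The first division of block $j$ then yields
\[
p_j=\Bigl(u_{k_j}-\tfrac{\mu_{k_j-1}}{\mu_{k_j}}\Bigr)\mu_{k_j}+\mu_{k_j+1}=\mu_{k_j+1}+u_{k_j}\mu_{k_j}-\mu_{k_j-1},
\]
as claimed, the recursion halting exactly when $c_j=\gcd=1$.

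\textbf{Main obstacle.} The substantive difficulty is not the local blowup computation but the boundary bookkeeping in (b): one must verify the run-splitting formula uniformly, including the degenerate configurations — when $p_j<c_j$ the run does not merge and block $j$ contributes $0$ copies of $\mu_{k_j}$ (so $p_j=\mu_{k_j+1}$), when a block consists of a single distinct value, and at the final pair where $\mu_{k_j+1}$ plays the role of the terminal multiplicity $1$. I would handle these by carrying, as an induction invariant, the identity that the number of copies of $\mu_{k_j}$ produced strictly before block $j$ equals $\mu_{k_j-1}/\mu_{k_j}$, which is precisely the content of the vanishing remainder in the last division of block $j-1$.
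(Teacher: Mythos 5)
Your proposal follows essentially the same route as the paper: part (a) is a single-blowup induction realizing the subtractive Euclidean algorithm on $\{c,p\}$, and part (b) inverts it by tracking how the run of $\mu_{k_j}$ splits across the block boundary, with block $j-1$ contributing exactly $\mu_{k_j-1}/\mu_{k_j}$ copies and block $j$ supplying the rest, yielding the same identity $p_j=(u_{k_j}-\mu_{k_j-1}/\mu_{k_j})\mu_{k_j}+\mu_{k_j+1}$. The one point where the paper is more explicit is in excluding the degenerate case $c_j\mid p_j$ when $p_j>c_j$ (which would make your first remainder vanish and break the run-splitting invariant at the next step): there this is deduced from the maximal tangency of $Y_j$ imposed by convention \eqref{HN:convention}, a configuration you flag among the "degenerate cases" but do not actually rule out.
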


\begin{proof}
(a) Blow up once on $q$. If $c=p$ this is the end of the sequence of blowups described by $\binom{c}{p}$, so the claim is clear. We may assume, say, $p<c$. The remaining sequence is described by the HN-pair $\binom{c-p}{p}$, which is a part of some sequence of HN-pairs for the proper transform of $\chi$. The result follows by the Euclidean algorithm.

(b) Let $\binom{c_{1}}{p_{1}}\dots\binom{c_{h}}{p_{h}}$ be the standard HN-type of $(\chi,q)$. By assumption $\mu_1=p_1<c_1$ and $p_1\nmid c_1$. Blow up $u_{1}$ times over $q$. The proper transform of $(\chi,q)$ has (not necessarily standard) HN-type $\binom{p_{1}}{c_{1}-u_{1}p_{1}}\binom{c_{2}}{p_{2}}\dots \binom{c_{h}}{p_{h}}$, so $\mu_2=\min\{p_1,c_1-u_1p_1\}=c_1-u_1p_1$.

The formula for $c_j$, $j\geq 2$ holds by \eqref{eq:HN_c_j+1}. Fix $j\geq 2$. We perform the sequence of blowups described by $(\binom{c_j'}{p_j'})_{j'<j-1}$ and part of $\binom{c_{j-1}}{p_{j-1}}$ until the multiplicity of the cusp drops to $\mu_k=c_j$. We have $\mu_{k}|\mu_{k-1}$ and the intersection of the proper transform of $\chi$ with the last exceptional curve equals $\mu_{k-1}$. Then we do the $\mu_{k-1}/\mu_{k}$ remaining blowups, each time the multiplicity of the center is $\mu_k$, until the proper transform $\chi_{j}$ of $\chi$ does not meet the total exceptional divisor in a node. At this point, by the definition of HN pairs, we have a smooth germ $(Y_j,q_j)$ transversal to the last exceptional component $Z_j$,  and $c_j=\mu_k$, $p_j$ are respectively the intersections of $\chi_{j}$ with $Z_j$ and with $Y_j$. If $c_j>p_j$ then $\mu(q_j)=p_j<c_j=\mu_{k}$, so $p_j=\mu_{k+1}$ and hence $\mu_{k-1}/\mu_{k}=u_k$, so we are done. We may therefore assume $p_j>c_j$. After $u_k-\mu_{k-1}/\mu_{k}$ further blowups the multiplicity of the cusp drops to $\mu_{k+1}$. Because it drops, the intersection of the proper transforms of $\chi_{j}$ and $Y_{j}$, which is $\mu:=p_{j}-(u_{k}-\mu_{k-1}/\mu_{k})\mu_{k}$, is smaller than $c_j$. Since $Y_j$ is maximally tangent to $\chi_j$, we have $\mu>0$. Indeed, otherwise the image of any germ transversal to the last exceptional component and meeting the proper transform of the cusp would be a smooth germ with a bigger intersection with $\chi_j$ than $Y_j$. We infer that the multiplicity of the cusp is $\min\{c_j,\mu\}=\mu$, so $\mu=\mu_{k+1}$ and we are done. 
\end{proof}

We have the following useful description of the graph of $Q$ in terms of HN-pairs:

\begin{lem}[Dual graphs from HN-pairs, {\cite[3.5]{Palka-Coolidge_Nagata1}}] \label{lem:HN_for_chains}
Let $Q$ be the chain created by the characteristic pair $\binom{\alpha c}{\alpha p}$ with $\gcd(c,p)=1$ and let $L$ be the unique $(-1)$-curve of $Q$. Write $Q-L=A+B$, where $A$ and $B$ are disjoint and connected, with $d(A)\geq d(B)$. Put $B=0$ if $p=1$. Let $A_{L},B_{L}$ be the tips of $Q$ which are components of $A$ and $B$, respectively. Write $c=q\cdot p+r$ for some integers $q>0$ and $0\leq r<p$. Then:
\begin{equation*}
 d(A)=c, \quad d(B)=p, \quad d(A-A_{L})=c-p, \quad d(B-B_{L})=p-r.
\end{equation*}
\end{lem}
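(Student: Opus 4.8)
The plan is to reduce the four discriminant identities to a single statement about the blow-up process that produces $Q$, and then to run an induction driven by the Euclidean algorithm on the coprime pair $(c,p)$, using Lemma \ref{lem:discriminants} as the computational engine.

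First I would record the shape of $Q$. Since $\binom{\alpha c}{\alpha p}$ is a single HN-pair, $Q$ has no branching component, i.e.\ it is a chain, and it contains exactly one $(-1)$-curve $L$, the last exceptional curve of the process; removing $L$ gives the two connected twigs $A\sqcup B$ with outer tips $A_L,B_L$. The factor $\alpha$ only rescales the multiplicities along the resolution and does not change which blow-ups are inner or outer, so the dual graph of $Q$, and hence all the discriminants in question, depend only on $(c,p)$. As in the proof of Lemma \ref{lem:Hn_gives_multiplicities}(a), one blow-up over the singular point replaces $\binom{\alpha c}{\alpha p}$ by $\binom{\alpha(c-p)}{\alpha p}$ and adds one component to $Q$; iterating, the process is exactly the subtractive Euclidean algorithm on $(c,p)$. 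In particular $Q$ contracts to a smooth point, so $d(Q)=1$ by Lemma \ref{lem:discriminants}(a); the real tool, however, is the additivity formula Lemma \ref{lem:discriminants}(b), which computes the discriminant of a chain from those of its end-truncations. With $c=qp+r$, $0\le r<p$, the target identities $d(A)=c,\ d(A-A_L)=c-p$ and $d(B)=p,\ d(B-B_L)=p-r$ then say precisely that, read from their outer tips inward, $A$ and $B$ are the Hirzebruch--Jung continued-fraction chains of $c/(c-p)$ and of $p/(p-r)$.

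Next I would induct on $c+p$. In the base case $p=1$ one has $r=0$ and $B=0$; the process yields $Q=[(2)_{c-1},1]$, so $A=[(2)_{c-1}]$ gives $d(A)=c$ and $d(A-A_L)=d([(2)_{c-2}])=c-1=c-p$, while the statements about $B$ are vacuous as $d(0)=1=p$. For the inductive step in the \emph{subtractive} case $c\ge 2p$ the first blow-up builds $Q$ from the chain $Q'$ of $\binom{\alpha(c-p)}{\alpha p}$ by attaching a new $(-2)$-tip on the $A$-side, which becomes $A_L$; the $B$-side is untouched, so $d(B)$ and $d(B-B_L)$ are inherited, and the determinant recursion of Lemma \ref{lem:discriminants}(b) gives $d(A)=2d(A')-d(A'-A_L')=2(c-p)-(c-2p)=c$ and $d(A-A_L)=d(A')=c-p$, using the inductive values $d(A')=c-p$, $d(A'-A_L')=c-2p$ for $\binom{c-p}{p}$.

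The main obstacle is the remaining \emph{swap} case $p\le c<2p$ (so $q=1$, $r=c-p$): after one blow-up the pair becomes $\binom{\alpha r}{\alpha p}$ with $r<p$, the multiplicity drops, and the tangency of the cusp moves from the proper transform of the maximally tangent germ onto the freshly created exceptional curve. Consequently the recursively produced chain of $\binom{\alpha p}{\alpha r}$ attaches on the opposite side and the roles of $A$ and $B$ --- together with the distinction between the outer tip $A_L,B_L$ and the inner tip adjacent to $L$ --- get interchanged; this is exactly the Euclidean reciprocity $p_{\mathrm{new}}=c_{\mathrm{old}}$, $r_{\mathrm{new}}=p_{\mathrm{old}}$. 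To keep the orientation bookkeeping under control I would strengthen the inductive hypothesis so that it records the discriminants of \emph{both} end-truncations of each of $A$ and $B$; after the side-swap the data demanded for $\binom{p}{r}$ is then precisely what the strengthened hypothesis supplies. Writing $A_0$ for the tip of $A$ adjacent to $L$, the Wronskian-type identity $d(A)\,d(A-A_L-A_0)-d(A-A_L)\,d(A-A_0)=\pm1$ for the chain $A$ (a determinant identity, again via Lemma \ref{lem:discriminants}(b)) guarantees these data remain mutually consistent, which closes the induction. Finally, since $c\ge p$ throughout, $d(A)=c\ge p=d(B)$, matching the convention $d(A)\ge d(B)$ in the statement.
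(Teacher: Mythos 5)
The paper does not actually prove this lemma: it is imported verbatim from \cite[3.5]{Palka-Coolidge_Nagata1}, so there is no in-house argument to compare against. Judged on its own terms, your strategy --- induction along the subtractive Euclidean algorithm on $(c,p)$, with Lemma \ref{lem:discriminants}(b) doing the determinant bookkeeping --- is the right one, and the computations you actually carry out are correct: the irrelevance of $\alpha$, the base case $p=1$ with $Q=[(2)_{c-1},1]$, the recursion $d(A)=2(c-p)-(c-2p)=c$ in the subtractive step, and the tacit use of $(c-p)\bmod p=c\bmod p$ to inherit the $B$-side data.

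The gap is that the one genuinely geometric input is asserted rather than proved: you state that the first exceptional curve $E_1$ becomes a $(-2)$-tip attached at the \emph{outer} tip of the side of discriminant $c-p$ of the sub-chain (and, in the swap case, that the sub-chain ``attaches on the opposite side''). Since $A$ and $B$ are only distinguished by $d(A)\geq d(B)$, knowing which geometric end $E_1$ lands on is precisely what the induction must carry; if it attached to the other side the same recursion would return $p+r$ instead of $c$. What has to be added to the inductive hypothesis is the position of the auxiliary germs --- $Y$ ends up meeting $Q$ only in $A_L$ and $Z$ only in $B_L$ (in $L$ when $B=0$) --- together with the number of times $E_1$ is touched by the remaining blowups (once when $c>2p$; $\lceil p/(c-p)\rceil$ times when $p<c<2p$, which is what gives $-E_1^2$ and hence $d(A)=(-E_1^2)\,d(A-A_L)-d(A-A_L-\ftip{})\ldots=c$ via Lemma \ref{lem:discriminants}(b)). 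Your proposed strengthening --- recording the inner-truncation discriminants $d(A-A_0)$ and invoking the Wronskian-type identity --- is aimed at the wrong difficulty: the four discriminants of the sub-chain already suffice for every determinant computation, including the swap step, and no determinant identity can tell you which tip an exceptional curve is adjacent to. With the $Z$/$Y$-tracking added to the hypothesis the induction closes cleanly; without it, both the subtractive and the swap step rest on an unproved combinatorial assertion.
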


\begin{lem}[Equations for HN-pairs, {\cite[3.3]{Palka-Coolidge_Nagata1}}]\label{lem:HN-equations}
Let $\bar{E}\subseteq \P^{2}$ be a rational cuspidal curve with cusps $q_{1},\dots, q_{c}\in \bar{E}$, where $q_{j}$ is described by a sequence of HN-pairs $(\binom{c_{j}^{(k)}}{p_{j}^{(k)}})_{k=1}^{h_{j}}$. Let $(X,D)\to(\P^{2},\bar{E})$ be the minimal log resolution and let $E$ be the proper transform of $\bar{E}$ on $X$. Put $\gamma=-E^{2}$, $M(q_{j})=c_{j}^{(1)}+\sum_{k=1}^{h_{j}}p_{j}^{(k)}-1$ and $I(q_{j})=\sum_{k=1}^{h_{j}} c_{j}^{(k)}p_{j}^{(k)}$. 
Then \begin{enumerate}[(a)]
	\item $ \gamma-2+3\deg\bar{E}=\displaystyle\sum_{j=1}^{c} M(q_{j})$,
	\item $ \gamma+(\deg\bar{E})^{2}=\displaystyle\sum_{j=1}^{c} I(q_{j})$
	\item $ (\deg\bar{E}-1)(\deg\bar{E}-2)=\displaystyle\sum_{i=1}^{c}(I(q_{j})-M(q_{j}))$.
\end{enumerate}
Note that the last formula is the genus formula written in terms of HN-pairs.
\end{lem}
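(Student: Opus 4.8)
The plan is to reduce all three identities to two ingredients: an intersection-theoretic computation of $E^{2}$ and $K_{X}\cdot E$ on $X$, and a combinatorial comparison of the multiplicity sequence of each cusp with its HN-pairs. Write $d\de\deg\bar{E}$. Note first that (c) is simply the difference of (b) and (a), since $d^{2}-(3d-2)=(d-1)(d-2)$; equivalently (c) is the genus formula, because $\tfrac12(I(q_j)-M(q_j))$ will turn out to be the delta-invariant $\delta(q_j)$ of the cusp. Thus it suffices to establish (a) and (b).

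First I would set up the intersection theory. Write $\pi\colon X\to\P^{2}$ as a composition of $n$ blow-ups and let $m_1,\dots,m_n$ be the multiplicities of $\bar{E}$ (of its successive proper transforms) at the centers; by the construction of HN-pairs and Lemma \ref{lem:Hn_gives_multiplicities} the sequence $(m_i)$ is exactly the concatenation of the multiplicity sequences of $q_1,\dots,q_c$, trailing $1$'s included. Let $\mathcal{E}_i$ be the total transform on $X$ of the $i$-th exceptional curve, so $\mathcal{E}_i\cdot\mathcal{E}_j=-\delta_{ij}$ and $\mathcal{E}_i\cdot\pi^{*}(-)=0$. Then $E=\pi^{*}\bar{E}-\sum_i m_i\mathcal{E}_i$ and $K_X=\pi^{*}K_{\P^2}+\sum_i\mathcal{E}_i$. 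Expanding gives $E^{2}=d^{2}-\sum_i m_i^{2}$ and $K_X\cdot E=K_{\P^2}\cdot\bar{E}+\sum_i m_i=-3d+\sum_i m_i$. Since $E$ is smooth rational, adjunction yields $K_X\cdot E=-2-E^{2}$. Substituting $\gamma=-E^{2}$ we obtain $\gamma+d^{2}=\sum_i m_i^{2}$ and $\gamma-2+3d=\sum_i m_i$. Hence (a) and (b) are equivalent to the per-cusp identities $\sum m_i=M(q_j)$ and $\sum m_i^{2}=I(q_j)$, the sums ranging over the multiplicity sequence of $q_j$.

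For the combinatorial step I would fix a cusp and treat one HN-pair $\binom{c}{p}$ at a time. By Lemma \ref{lem:Hn_gives_multiplicities}(a) the corresponding block of the multiplicity sequence is $((m_1)_{u_1},\dots,(m_v)_{u_v})$, where $m_0=\max\{c,p\}$, $m_1=\min\{c,p\}$, the division algorithm reads $m_{k-1}=u_k m_k+m_{k+1}$, and $m_v=\gcd(c,p)$, $m_{v+1}=0$. Writing $u_k m_k=m_{k-1}-m_{k+1}$, two telescoping sums give $\sum_k u_k m_k^{2}=\sum_k m_k(m_{k-1}-m_{k+1})=m_0 m_1=cp$ and $\sum_k u_k m_k=\sum_k(m_{k-1}-m_{k+1})=m_0+m_1-m_v=c+p-\gcd(c,p)$. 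Summing the first over all HN-pairs of $q_j$ yields $\sum m_i^{2}=\sum_k c_j^{(k)}p_j^{(k)}=I(q_j)$ immediately. For the second, summing $c_j^{(k)}+p_j^{(k)}-\gcd(c_j^{(k)},p_j^{(k)})$ and invoking \eqref{eq:HN_c_j+1}, namely $\gcd(c_j^{(k)},p_j^{(k)})=c_j^{(k+1)}$ for $k<h_j$ together with $\gcd(c_j^{(h_j)},p_j^{(h_j)})=1$ (the last blow-up resolves the cusp), the terms $c_j^{(2)},\dots,c_j^{(h_j)}$ cancel telescopically and leave $\sum m_i=c_j^{(1)}+\sum_k p_j^{(k)}-1=M(q_j)$. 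This proves (a) and (b), and (c) follows by subtraction.

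The only genuinely delicate point is the bookkeeping in the last step: one must be certain that the multiplicity sequence of the whole cusp is precisely the concatenation of the per-pair blocks of Lemma \ref{lem:Hn_gives_multiplicities}(a) — so that the trailing $1$'s, which contribute to both $\sum m_i$ and to the count of blow-ups, are neither lost nor double-counted — and that the telescoping for $M$ uses the terminal identity $\gcd(c_j^{(h_j)},p_j^{(h_j)})=1$ correctly. For a non-standard HN-sequence \eqref{eq:HN_c_j+1} may fail; in that case I would first rewrite it in standard form, noting that by the equivalence \eqref{eq:HN-equivalence} neither $I(q_j)$ nor $M(q_j)$ changes, so the identities persist.
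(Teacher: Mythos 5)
Your proof is correct. The paper does not prove this lemma itself but defers to the citation \cite[3.3]{Palka-Coolidge_Nagata1}, and your argument is exactly the standard one behind that result: compute $E^{2}=d^{2}-\sum m_i^{2}$ and $K_X\cdot E=-3d+\sum m_i$ from the blow-up formulas, combine with adjunction $K_X\cdot E=-2-E^{2}$, and then reduce to the per-cusp identities $\sum m_i=M(q_j)$, $\sum m_i^{2}=I(q_j)$ via the Euclidean-algorithm telescoping supplied by Lemma \ref{lem:Hn_gives_multiplicities}(a) together with $\gcd(c_j^{(k)},p_j^{(k)})=c_j^{(k+1)}$ and $c_j^{(h_j+1)}=1$. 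Your closing caveats — that the concatenated blocks account for exactly the blow-ups of the minimal log resolution (trailing $1$'s included), and that non-standard HN-sequences are handled by the reduction \eqref{eq:HN-equivalence}, which preserves $M$ and $I$ — are precisely the points the paper's Remark \ref{HN:discussion} asks the reader to check, so nothing is missing.
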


\subsection{Some upper bounds on $E^2$.}

We have the following criterion:
\begin{lem}\label{lem:kappa<=1}For a rational cuspidal curve $\bar{E}\subseteq \P^{2}$ the following conditions are equivalent:\begin{enumerate}[(a)]
\item $\kappa(\P^{2}\setminus \bar{E})\neq 2$,
\item $\P^{2}\setminus \bar{E}$ is $\C^{1}$- or $\C^{*}$-fibered,
\item $\P^{2}\setminus \bar{E}$ contains a curve isomorphic to $\C^{1}$.
\end{enumerate}
\end{lem}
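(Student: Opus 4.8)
The final statement to prove is Lemma~\ref{lem:kappa<=1}, which asserts the equivalence of three conditions for a rational cuspidal curve $\bar E\subseteq\P^2$: (a) $\kappa(\P^2\setminus\bar E)\neq 2$; (b) $\P^2\setminus\bar E$ is $\C^1$- or $\C^*$-fibered; (c) $\P^2\setminus\bar E$ contains a curve isomorphic to $\C^1$.

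\medskip

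The plan is to prove the three implications (b)$\implies$(a), (a)$\implies$(c), and (c)$\implies$(b), closing the cycle. The easiest step is (b)$\implies$(a): a smooth surface admitting a $\C^1$- or $\C^*$-fibration has logarithmic Kodaira dimension at most $1$ (the general fiber is a curve of non-general type, and a standard computation with the Iitaka fibration or the additivity of $\kappa$ over such fibrations forces $\kappa\leq 1$, hence $\kappa\neq 2$). This is a classical fact about fibered surfaces. For (c)$\implies$(b), if $\P^2\setminus\bar E$ contains a curve $\Gamma\cong\C^1$, I would take its closure $\bar\Gamma$ in $\P^2$ and resolve to produce an embedded $\C^1$ on a completion; one then uses the closure together with the pencil of curves it generates to exhibit an explicit $\C^1$- or $\C^*$-fibration. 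More cleanly, the existence of an affine line in an affine surface is classically known (by results of Kojima--Miyanishi--Tsunoda type, or directly via the theory of $\C^1$-fibrations on affine surfaces) to force a $\C^1$- or $\C^*$-fibration: an affine line in a $\Q$-acyclic surface moves in a pencil whose general member is again $\C^1$ or $\C^*$. This is where I would invoke the structure theory rather than grind out coordinates.

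\medskip

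The crux of the argument — and the step I expect to be the main obstacle — is (a)$\implies$(c): showing that $\kappa\neq 2$ forces the existence of an affine line in the complement. The natural route is the contrapositive, combined with Lemma~\ref{lem:Qhp_has_no_lines}. Indeed, Lemma~\ref{lem:Qhp_has_no_lines} gives precisely that a smooth $\Q$-acyclic surface \emph{of log general type} (i.e.\ $\kappa=2$) contains no curve isomorphic to $\C^1$; since $\P^2\setminus\bar E$ is $\Q$-acyclic, the contrapositive (c)$\implies$($\kappa=2$) is immediate, which is the same as (a)$\implies$($\neg$(c))'s contrapositive. So one direction of the hard equivalence is already handed to us. What remains for a genuine (a)$\implies$(c) is the harder assertion that $\kappa\leq 1$ actually \emph{produces} an affine line, not merely fails to forbid one. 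Here I would invoke the classification of $\Q$-acyclic (more generally, affine rational) surfaces that are \emph{not} of log general type: by the structure theory cited in the excerpt (work summarized in \cite{Miyan-OpenSurf}, \S3.4), every smooth $\Q$-acyclic surface with $\kappa\leq 1$ is known explicitly, and in each case for a complement $\P^2\setminus\bar E$ one checks it contains a $\C^1$. The main work is this case analysis over $\kappa\in\{-\infty,0,1\}$.

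\medskip

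Assembling the cycle: Lemma~\ref{lem:Qhp_has_no_lines} gives (c)$\implies\neg$(a), equivalently (a)$\implies\neg$(c) is \emph{false} as stated unless one reads it correctly as a biconditional packaged into the cycle — so the clean logical skeleton is to prove (a)$\iff$(c) directly using Lemma~\ref{lem:Qhp_has_no_lines} for one direction and the classification of non-general-type $\Q$-acyclic surfaces for the other, then fold in (b) via the easy (b)$\implies$(a) and the structural (c)$\implies$(b). The single delicate point is guaranteeing that the affine line produced in the $\kappa\leq 1$ case indeed lies in $\P^2\setminus\bar E$ and spawns a $\C^1$- or $\C^*$-fibration of the \emph{whole} surface; I would handle this by passing to the minimal completion $(X,D)$, using that $D$ is connected and that an affine line extends to a $0$-curve or a fiber of a $\P^1$-fibration on $X$ meeting $D$ appropriately, so that restricting to $\P^2\setminus\bar E$ yields the desired fibration with general fiber $\C^1$ or $\C^*$.
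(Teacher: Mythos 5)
Your proposal closes the equivalence with the same two deep inputs as the paper — the logarithmic BMY inequality packaged in Lemma \ref{lem:Qhp_has_no_lines} for (c)$\implies$(a), and the structure theory of $\Q$-acyclic surfaces with $\kappa\leq 1$ — but arranges the cycle differently. The paper proves (a)$\implies$(b) by the classification (citing \cite[2.6]{Palka-minimal_models}), (b)$\implies$(c) by the Euler-characteristic count of Lemma \ref{lem:Suzuki}, and (c)$\implies$(a) by Lemma \ref{lem:Qhp_has_no_lines}; you instead prove (b)$\implies$(a) by easy addition (which is correct) and try to get (a)$\implies$(c) and (c)$\implies$(b) each directly from the classification. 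This is workable but has two soft spots. First, your (c)$\implies$(b) rests on the assertion that an affine line in a $\Q$-acyclic surface ``moves in a pencil''; that is not a standalone classical fact (an affine line may sit rigidly inside a degenerate fiber), and the honest justification is the composition (c)$\implies\kappa\leq1$ (BMY) followed by $\kappa\leq1\implies$(b) (classification) — i.e., exactly the paper's other two implications. Second, your (a)$\implies$(c) ends with ``one checks it contains a $\C^1$'', which for a surface that is only $\C^{*}$-fibered is not automatic: the general fiber is $\C^{*}$, and the existence of a contractible component in some degenerate fiber is precisely what Lemma \ref{lem:Suzuki} extracts from $\chi(\P^{2}\setminus\bar E)=1>0$. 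So the paper's choice of proving (b)$\implies$(c) rather than (b)$\implies$(a) is not cosmetic: that Euler-characteristic step is what actually manufactures the affine line. With these two points repaired your cycle closes correctly; note also the garbled sentence about contrapositives in your third paragraph — Lemma \ref{lem:Qhp_has_no_lines} gives (c)$\implies\kappa\neq2$, i.e., (c)$\implies$(a), not (c)$\implies\kappa=2$.
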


\begin{proof}
The implication (a)$\implies$(b) is obtained in \cite[2.6]{Palka-minimal_models} as a corollary from the existing structure theorems for affine surfaces which are not of log general type (an additional argument is required in case $\kappa=0$). The implication (b)$\implies$(c) follows from Lemma \ref{lem:Suzuki}. The implication (c)$\implies$(a), based on the logarithmic BMY inequality, follows from Lemma \ref{lem:Qhp_has_no_lines}.
\end{proof}
	
In fact there is a detailed classification of rational cuspidal curves $\bar E\subseteq \P^2$ with $\kappa(\P^2\setminus\bar E)\neq 2$. The case $\kappa=0$ was shown to be impossible in \cite{Tsunoda_cusps_complement_kod_0}. The case $\kappa=-\infty$ was treated in \cite{MiySu-curves_with_negative_kod_of_complement} and \cite{Kashiwara}. The case $\kappa=1$ was treated in \cite{Tono_1cusp_with_kod_1} and \cite{Tono_doctoral_thesis}.  We note that one could recover the classification in a more unified manner starting from the equivalence (a)$\iff$(b) above.

\begin{notation}\label{not:cusps}
Let $\bar{E}\subseteq \P^{2}$ be a rational cuspidal curve with $c$ cusps $q_{1},\dots, q_{c}$. By the above discussion we may and shall assume that $\kappa(\P^{2}\setminus\bar{E})=2$. We denote by $\pi\colon (X,D)\to(\P^{2},\bar{E})$ the minimal log resolution and by $E$ the proper transform of $\bar{E}$ on $X$. Then $Q_{j}\de \pi^{-1}(q_{j})\redd$ contains a unique $(-1)$-curve intersecting $E$, which we denote by $C_{j}$.
\end{notation}

\begin{lem}[Upper bounds on $E^2$, Tono]\label{lem:Tono_E2} With the notation as in \ref{not:cusps} we have the following bounds:
\begin{enumerate}[(a)]
\item (cf. \cite{Tono-on_Orevkov_curves}) If $c=1$ then $E^{2}\leq -2$.
\item (\cite[Thm.\ 1]{Tono_nie_bicuspidal}) If $c=2$ then $E^{2}\leq -1$. 
\item (\cite[Thm.\ 1]{Tono-cusps_self-intersections}) In general, $E^{2}\leq 7-3c$.
\end{enumerate}
\end{lem}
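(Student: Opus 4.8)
All three bounds are due to Tono and share a common engine, the logarithmic Bogomolov--Miyaoka--Yau inequality, so my plan is to re-derive them from it. For the common setup I would record that $e(\P^2\setminus\bar E)=3-e(\bar E)=1$, since $\bar E$ is rational with unibranch singularities and $\P^2\setminus\bar E$ is $\Q$-acyclic, and that the minimal log resolution $(X,D)$ of Notation \ref{not:cusps} is almost minimal by Lemma \ref{lem:Qhp_has_no_lines} (after contracting a superfluous $(-1)$-curve of $D$, if one is present), so that the strong peeled form $(K_X+D-\Bk(D))^2\le 3\,e(X\setminus D)=3$ applies, $\Bk(D)$ being the bark. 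One sees at once that a nef-type estimate cannot suffice: since $E\cong\P^1$ meets $D-E=\sum_j Q_j$ only in the $C_j$, transversally, we have $(D-E)\cdot E=c$ and hence $(K_X+D)\cdot E=(K_X+E)\cdot E+(D-E)\cdot E=c-2$, which is \emph{independent} of $E^2$. Thus intersecting $K_X+D$ with $E$ never sees $\gamma=-E^2$, and the self-intersection $(K_X+D-\Bk)^2$ must carry this information.

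For part (c) the plan is computational. Using that the self-intersections and branching pattern of each $Q_j$ are determined by its Hamburger--Noether pairs, I would express $(K_X+D-\Bk(D))^2$ as a sum of local cusp contributions plus a term recording $E^2$ and the $c$ incidences $E\cdot C_j=1$, the equations of Lemma \ref{lem:HN-equations} serving as the bridge between the global intersection product and the local data. Bounding each cusp contribution below by the appropriate constant and comparing against the bound $3$ rearranges to $3c+E^2\le 7$; the three-cuspidal quartic ($c=3$, $E^2=-2$, i.e. $\FZa(4,1)$) shows the inequality is sharp. This per-cusp bookkeeping is where essentially all of the (routine but lengthy) work sits.

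For (a) and (b) the estimate of (c) is far too weak, giving only $E^2\le 4$ and $E^2\le 5$, so a dedicated analysis is required and here I would follow Tono. The conceptual core is that the extremal values $E^2=-1$ for $c=1$ and $E^2=0$ for $c=2$ are incompatible with log general type. In the bicuspidal case $E$ is then a $0$-curve, and being linearly equivalent to a general fibre $f$ with $f\cdot D=f\cdot(Q_1+Q_2)=2$, it induces a $\C^*$-fibration of $\P^2\setminus\bar E$; in the unicuspidal case, contracting the $(-1)$-curve $C_1$ turns $E$ into a $0$-curve whose general fibre meets the image of $D$ in the (at most two) components of $Q_1$ adjacent to $C_1$, again giving a $\C^{1}$- or $\C^*$-fibration. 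Either outcome contradicts Lemma \ref{lem:kappa<=1}. Excluding these configurations, together with the sharpened twig/bark estimates in the BMY inequality, upgrades the generic bound by one unit to $E^2\le-2$ and $E^2\le-1$.

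The hardest part is the bicuspidal bound (b), which is essentially the content of an entire paper of Tono and demands an intricate combinatorial classification of the admissible dual graphs of $Q_1,Q_2$ and their maximal twigs rather than a single clean inequality; I expect this to be the main obstacle. A recurring subtlety, concentrated exactly in the extremal cases, is the interaction between the global BMY bound and snc-minimality: there the minimal log resolution need not be snc-minimal (the tip $E$ may itself become superfluous), so one must keep careful track of how contracting $E$ or $C_1$ alters the bark before the inequalities can be closed.
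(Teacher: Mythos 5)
Your instinct for (a),(b) --- exhibit a $\C^{1}$- or $\C^{*}$-fibration and contradict log general type --- matches the paper for the extremal values, but as written your plan has a genuine gap: you only rule out $E^{2}=-1$ (for $c=1$) and $E^{2}=0$ (for $c=2$), and delegate all remaining values to ``sharpened twig/bark estimates in the BMY inequality upgrading the generic bound by one unit''. That is not a proof and is arithmetically insufficient: for $c=1$ part (c) only gives $E^{2}\le 4$, so you must exclude $E^{2}\in\{-1,0,1,2,3,4\}$, not one value. The paper closes this uniformly with a move you are missing: assuming $E^{2}$ exceeds the claimed bound, blow up over $C_{1}\cap E$ until the proper transform $E'$ satisfies $(E')^{2}=-1$ (if $c=1$) or $(E')^{2}=0$ (if $c=2$); then $|E'+C_{1}'|$ (resp.\ $|E'|$) induces a $\P^{1}$-fibration whose general fiber meets the boundary in at most two points, and Iitaka's Easy Addition gives $\kappa\le 1$, a contradiction. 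In particular (b) does \emph{not} require Tono's intricate bicuspidal classification --- that machinery is only invoked later, in Lemma \ref{lem:Tono_E2=-1,-2}, to classify the boundary cases, whereas the inequality itself has the same three-line proof as (a).

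Your engine for (c) points in the wrong direction. Writing $D^{\#}=D-\Bk(D)$ and using $(K_{X}+D)\cdot\Bk(D)=\Bk(D)^{2}$ together with $p_{a}(D)=0$, one has $(K_{X}+D^{\#})^{2}=K_{X}\cdot(K_{X}+D)-2-\Bk(D)^{2}$, so the log BMY bound $(K_{X}+D^{\#})^{2}\le 3$ is an \emph{upper} bound on $K_{X}\cdot(K_{X}+D)$ and, after exactly the per-cusp bookkeeping you describe, rearranges to a \emph{lower} bound on $E^{2}$ (the dependence of $(K_{X}+D^{\#})^{2}$ on $E^{2}$ is through $-E^{2}$). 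No choice of per-cusp constants turns it into $3c+E^{2}\le 7$. The inequality the paper actually needs is the opposite one, $K_{X}\cdot(K_{X}+D)=\chi(\O_{X}(2K_{X}+D))=h^{0}(2K_{X}+D)\ge 0$, coming from Riemann--Roch and vanishing; combined with the blow-up formula $K_{Y'}\cdot(K_{Y'}+Q')=K_{Y}\cdot(K_{Y}+Q)+\mu-2$ and the count of at least $h_{j}$ outer blowups in the resolution of each cusp, this gives $0\le K_{X}\cdot(K_{X}+D)\le 7-3c-E^{2}$. (Also, a small slip: for $c=2$ part (c) gives $E^{2}\le 1$, not $E^{2}\le 5$.)
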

\begin{proof}(a),(b) Assume the contrary. Blow up over $C_{1}\cap E$ until the proper transform $E'$ of $E$ has self-intersection $-1$ if $c=1$ and $0$ if $c=2$. Call $C_{1}'$ the last exceptional curve of the blow-up, or put $C_{1}'=C_{1}$ if no blow-ups were needed. If $c=1$ then $E'+C_{1}'$ induces a $\P^{1}$-fibration which restricts to a $\C^{1}$- or $\C^{*}$-fibration on $\P^{2}\setminus \bar{E}$. Similarly, if $c=2$ then so does $E'$. Then $\kappa(\P^{2}\setminus \bar{E})\leq 1$ by Iitaka's Easy Addition Theorem; a contradiction. 

(c) Let $Q$ be a divisor on a smooth projective surface $Y$ and let $\rho\colon Y'\to Y$ be a blowup at a point of multiplicity $\mu$ on $Q$. Put $C=\Exc \rho$ and $Q'=(\rho^{-1}Q)\redd$. Then:
\begin{equation}\label{eq:K(K+Q)_change}
K_{Y'}\cdot (K_{Y'}+Q')=(\rho^{*}K_{Y}+C)\cdot (\rho^{*}K_{Y}+C+\rho^{*}Q-(\mu-1)C)=K_{Y}\cdot (K_{Y}+Q)+\mu-2.
\end{equation}
Recall that the blowups with $\mu=1$ and $\mu=2$ are called respectively outer and inner (for $Q$). Let $(\binom{c_{j}^{(k)}}{p_{j}^{(k)}})_{k=1}^{h_j}$ be the standard sequence of HN-pairs for the cusp $q_{j}\in \bar{E}$ and let $\floor{x}$ denote the biggest integer not greater than $x$. Blowups constituting the  log resolution of $q_j\in\bar E$ other than the first one are either inner or outer and there are exactly $r_j:=\floor{c_j^{(1)}/p_j^{(1)}}+ \sum_{k=2}^{h_j}(1+\floor{p_j^{(k)}/c_j^{(k)}})$ outer ones among them. Since $r_j\geq h_j$, by \eqref{eq:K(K+Q)_change} we get:
\begin{equation*}
\begin{split}
K_{X} \cdot (K_{X}+D)&=K_{X}\cdot( K_{X}+\sum_{j=1}^{c} Q_{j})+K_{X}\cdot E=K_{\P^{2}}^{2}-\sum_{j=1}^{c}(2+r_j)-2-E^{2}\leq \\
&\leq 7-\sum_{j=1}^{c}(2+h_{j})-E^{2} \leq 7-3c -E^{2}.
\end{split}
\end{equation*}
Vanishing theorems for cohomology imply that (cf.\ \cite[2.7]{Tono-cusps_self-intersections}) $$K_{X}\cdot (K_{X}+D)=\chi(\O_{X}(2K_{X}+D))=h^{0}(2K_{X}+D)\geq 0,$$ hence the above inequality gives $E^{2}\leq 7-3c$, as claimed.
\end{proof}

For the boundary values of $-E^{2}$ there is already a classification available.

\begin{lem}[Cases $E^2=-1, -2$, Tono]\label{lem:Tono_E2=-1,-2} With the notation as in \ref{not:cusps} we have the following bounds:
\begin{enumerate}[(a)]
 \item  If $\bar E$ has only one cusp and $E^{2}=-2$ then $\bar{E}\subseteq \P^2$ has HN-type $\ORa$ or $\ORb$.
 \item If $\bar E$ has exactly two cusps and $E^{2}=-1$ then $\bar{E}$ has HN-type $\cA$ - $\cD$ with $\gamma=1$.
\end{enumerate}
\end{lem}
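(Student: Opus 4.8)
The plan is to reduce both parts to Tono's explicit classifications and then translate their output into the language of sequences of HN-pairs. By Lemma~\ref{lem:Tono_E2}(a),(b), the values $E^2=-2$ for $c=1$ and $E^2=-1$ for $c=2$ are precisely the maximal ones allowed, so in each part we are exactly in the extremal situation that Tono analysed. The only genuinely new content is therefore the matching of his lists against the families of Theorem~\ref{thm:possible_HN-types}; the natural bridge is the dictionary between multiplicity sequences and HN-pairs provided by Lemma~\ref{lem:Hn_gives_multiplicities}, with Lemma~\ref{lem:HN-equations} used as a numerical cross-check.

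For part~(a), put $\gamma=-E^2=2$. Tono's analysis of unicuspidal rational plane curves with this self-intersection \cite{Tono-on_Orevkov_curves} yields the complete list of admissible cusps, equivalently their multiplicity sequences. First I would record those multiplicity sequences and feed each into Lemma~\ref{lem:Hn_gives_multiplicities}(b) to obtain the standard sequence of HN-pairs. The Fibonacci structure of the Orevkov cusps makes this conversion transparent: the Euclidean steps underlying Lemma~\ref{lem:Hn_gives_multiplicities} are exactly the recursion $F_{j+2}=F_{j+1}+F_{j}$, and one verifies that the two resulting families are $\binom{F_{4k+4}}{F_{4k}}\binom{3}{1}$ and $\binom{2F_{4k+4}}{2F_{4k}}\binom{6}{1}$, that is, $\ORa(k)$ and $\ORb(k)$. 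Substituting these pairs into Lemma~\ref{lem:HN-equations} should return $\deg\bar E=F_{4k+2}$ (resp.\ $2F_{4k+2}$) and $\gamma=2$, confirming the identification.

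For part~(b), put $\gamma=-E^2=1$ and invoke Tono's classification of bicuspidal rational curves attaining the bound $E^2=-1$ \cite[Thm.\ 1]{Tono_nie_bicuspidal}, which supplies the numerical data of both cusps for each such curve. Proceeding as above, I would translate each cusp through Lemma~\ref{lem:Hn_gives_multiplicities}, assemble the two HN-sequences, and compare the result with the $\gamma=1$ specialisations of the families $\cA$--$\cD$ in Theorem~\ref{thm:possible_HN-types}. Here one must respect the exclusions $(\gamma,p)\neq(1,2)$ built into $\cA$ and $\cB$ (forcing $p\geq 3$ when $\gamma=1$), while $\cC$ and $\cD$ allow $p\geq 2$; the genus and degree identities of Lemma~\ref{lem:HN-equations} again serve to pin down the parameters $p$ and $s$.

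The main obstacle is purely bookkeeping: guaranteeing that the passage from Tono's invariants to our HN-pairs is both exhaustive and injective, so that no curve on Tono's list is lost and none is produced twice. This is delicate because several of the relevant types are \emph{non-standard} in the sense of Remark~\ref{rem:special_HN_cases}, namely $\cA(1,p,s)$ and $\cB(1,p,s)$ in part~(b), and $\ORa(1)$, $\ORb(1)$ in part~(a). I would therefore normalise everything to standard HN-pairs via the reduction~\eqref{eq:HN-equivalence} before matching, and then check that the parameter ranges forced by Tono's constraints coincide exactly with those recorded in Theorem~\ref{thm:possible_HN-types}.
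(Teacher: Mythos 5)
Your proposal is correct and follows essentially the same route as the paper: both parts are obtained by quoting Tono's classifications (\cite{Tono-on_Orevkov_curves} for $c=1$, $E^2=-2$, and \cite[Thm.\ 2]{Tono_nie_bicuspidal} for $c=2$, $E^2=-1$) and converting the resulting multiplicity sequences into HN-pairs via Lemma \ref{lem:Hn_gives_multiplicities}. The only difference is cosmetic: the paper records the explicit matching of Tono's types 1--4 with $\cD(1,b,a)$, $\cC(1,b,a)$, $\cA(1,b,a+1)$, $\cB(1,b,a+1)$, while you describe the same bookkeeping (including the normalisation of the non-standard cases) without carrying it out.
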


\begin{proof}
(a) By \cite[Thm.\ 1]{Tono-on_Orevkov_curves} $\bar E\subset \P^2$ is projectively equivalent to one of the Orevkov curves with multiplicity sequences $(F_{4k},((F_{4l})_5,F_{4l}-F_{4l-4})_{l=k}^{1})$ or $(2F_k,((2F_{4l})_5,2F_{4l}-2F_{4l-4})_{l=k}^{1})$. Using Lemma \ref{lem:Hn_gives_multiplicities} we compute that their HN-type is $\ORa$ and $\ORb$, respectively.

(b) The possible singularity types of $\bar{E}\subseteq \P^{2}$ are classified in \cite[Thm.\ 2]{Tono_nie_bicuspidal} in terms of multiplicity sequences. Using Lemma \ref{lem:Hn_gives_multiplicities} one can check that the singularity types 1, 2 in loc.\ cit.\ are respectively $\cD(1,b,a)$ and $\cC(1,b,a)$. Similarly, the types 3, 4  in loc.\ cit.\ are respectively
$\cA(1,b,a+1)$, $\cB(1,b,a+1)$.
\end{proof}

\section{Possible types of cusps}\label{sec:classification_of_sing}
In this section we prove the following proposition. We use the notation of Section \ref{ssec:fibrations}.

\begin{prop}\label{prop:possible_cusp_types} Assume that $\bar{E}\subseteq \P^{2}$ is a rational cuspidal curve such that $\P^{2}\setminus \bar{E}$ is $\C^{**}$-fibered and of log general type. Then $\bar{E}$ is of one of the HN-types listed in Theorem \ref{thm:possible_HN-types}.\end{prop}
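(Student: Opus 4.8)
The plan is to take a $\C^{**}$-fibration $p\colon \P^2\setminus\bar E\to B$ of log general type, extend it to a $\P^1$-fibration $\bar p\colon X\to \bar B$ on the minimal log resolution $(X,D)$, and then carry out a combinatorial analysis of the degenerate fibers and of the position of $E$ and the boundary components $D-E$. The first step is to invoke the forthcoming Proposition \ref{prop:some_Cstst_fibration_extends} to arrange that the fibration has no base points on $X$, so that $\bar p$ is a genuine $\P^1$-fibration; this is crucial because it lets us use the fiber-structure results of Lemma \ref{lem:singular_P1-fibers} directly. By Corollary \ref{cor:Cstst-fibers} we then know $\nu\leq 1$, there are exactly $3-\nu$ degenerate fibers (each a disjoint union of $\C^*$'s in $X\setminus D$), and $\sum_F\sigma(F)=h+1$, where $h=\#D_h$ and $f\cdot D=3$ for the general fiber $f$.

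Next I would organize the argument by the discrete invariants $(\nu,h)$. Since $f\cdot D=3$ and $E$ is a component of $D$, the horizontal part $D_h$ must account for the three intersection points of $f$ with $D$, so $h\in\{1,2,3\}$ according to whether $D_h$ consists of a tri-section, a bi-section plus a section, or three sections. I would treat the structurally simplest configurations first: the case $\nu=1$ (a full fiber inside $D$, handled by Lemma \ref{lem:case_nu=1}) and the case where $D$ contains a multiple section, i.e.\ $h\leq 2$ (Lemma \ref{lem:case_h=2} and Corollary \ref{cor:case_h=2}). These extreme cases should force $\bar E$ into a small number of HN-types, and in the one-cusp subcases one can additionally appeal to Tono's classification (Lemma \ref{lem:Tono_E2=-1,-2}) together with the bound $E^2\leq -2$ from Lemma \ref{lem:Tono_E2}(a).

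The bulk of the work is the remaining case $\nu=0$, $h=3$: three sections in $D$, no full fiber, and hence three degenerate fibers with $\sum_F\sigma(F)=4$. Here I would first prove that each degenerate fiber is a rational \emph{chain} (not a fork), which is the content of Lemma \ref{lem:fibers_are_chains}; the fiber-shape dichotomy of Lemma \ref{lem:singular_P1-fibers}($\mathrm{b}_2$) shows special forks are the only alternative, and these must be excluded using the constraint that fiber components outside $D$ are smooth $\C^*$'s and the global count on $\sigma(F)$. With fibers being chains, I would then show that $E$ may be taken \emph{vertical} (Lemmas \ref{lem:E_is_vert_if_c=1} and \ref{lem:E_is_vert_for_c>=2}): if $E$ were horizontal it would be one of the at most three sections, forcing $E\cdot f\geq 1$ and severely constraining $c$; a separate argument handles $c=1$ versus $c\geq 2$. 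Once $E$ is vertical, it sits inside one degenerate fiber, and the cusps $q_j$ correspond to the way the chains $Q_j$ attach to $E$ and to the sections. At this point the analysis becomes purely combinatorial: using Lemma \ref{lem:HN_for_chains} to translate dual graphs back into HN-pairs, together with the discriminant identities of Lemma \ref{lem:discriminants} and the numerical equations of Lemma \ref{lem:HN-equations}, one reads off the admissible chain shapes and matches them to the families $\FZa$, $\cA$--$\cG$, $\ORa$, $\ORb$.

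The main obstacle, and the reason the proof is long, is precisely this final combinatorial enumeration in the $\nu=0$, $h=3$, $E$-vertical case: there are several ways to distribute the value $\sum_F\sigma(F)=4$ among three fibers, and for each distribution one must solve the resulting Diophantine constraints coming from $d(Q_j)$, the branching structure of each $Q_j$, and the genus/degree formulas, while keeping track of which components lie in $D$. Ensuring that the parameter ranges (e.g.\ $(\gamma,p)\neq(1,2)$, $p,s\geq 2$ in case $\cB$, and the inequalities $d-3\geq k\geq \tfrac12 d-1\geq 1$ in $\FZa$) are exactly the ones forced by log-general-type and by snc-minimality of $D$ is delicate. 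I expect the bookkeeping to split into a moderate number of subcases indexed by the multiplicities $\mu(C_j)$ of the $(-1)$-curves meeting $E$ and by whether a given section meets $E$'s fiber or another degenerate fiber, and the heart of the difficulty is verifying that no spurious solutions survive and that the listed families are mutually disjoint.
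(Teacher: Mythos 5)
Your outline follows the paper's proof essentially step for step: reduction to a base-point-free fibration via Proposition \ref{prop:some_Cstst_fibration_extends}, the case split on $\nu$ and $h$ handled by Lemma \ref{lem:case_nu=1}, Lemma \ref{lem:case_h=2} and Corollary \ref{cor:case_h=2}, and then the main case $\nu=0$, $h=3$ via Lemmas \ref{lem:fibers_are_chains}, \ref{lem:E_is_vert_if_c=1}, \ref{lem:E2=2_if_c=1} and \ref{lem:E_is_vert_for_c>=2}, followed by the combinatorial enumeration matched to the families via Lemmas \ref{lem:HN_for_chains} and \ref{lem:HN-equations}. The only small correction is that Lemma \ref{lem:fibrations-Sigma-chi} (with $\nu=0$) already rules out $h=1$, i.e.\ a tri-section, so the relevant values are $h\in\{2,3\}$; otherwise this is the same argument.
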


Note that if the HN-pairs are known then $E^{2}$ and $\deg \bar E$ are determined by Lemma \ref{lem:HN-equations}.

\subsection{Reduction to fibrations with no base points.}

The example below shows that a $\C^{**}$-fibration of $\P^{2}\setminus \bar{E}$ can have base points on the minimal log resolution $(X,D)\to (\P^{2},\bar{E})$. 

\begin{prz}[A $\C^{**}$-fibration with a base point.]\label{ex:Cstst_fibr_with_base_points}
Let $C_{4}\subseteq \P^{2}$ be the Orevkov curve of degree $F_{6}=8$ (type $\ORa(1)$ in Theorem \ref{thm:possible_HN-types}). The graph of $D$ is shown in Figure \ref{fig:exprzedl}. Denote by $A_1$ and $B_1$ the long and short maximal twigs of $D$ contained in $D-E$. Denote by $A_1'$, $A_1''$ respectively the first and the last tip of $A_1$ and by $B_1'$, $B_1''$ the first and the last tip of $B_1$. Assume that there exists a $(-1)$-curve $L_{0}\not \subseteq D$ with $D\cdot L_{0}=2$ meeting $D$ in $A_1'$ and in $A_1''$. Call $T$ the $(-2)$-chain of type $[(2)_{4}]$ in $D$ containing $A_1'$. Let $\psi$ be the blow up over $A_1''\cap C_{1}$. Consider a $\P^{1}$-fibration induced by the proper transform of $T+5L_{0}+A_1''$: it restricts to a $\C^{**}$-fibration of $X\setminus D$ which has base points on $X$.

\begin{figure}[h] \centering
	\includegraphics[scale=0.35]{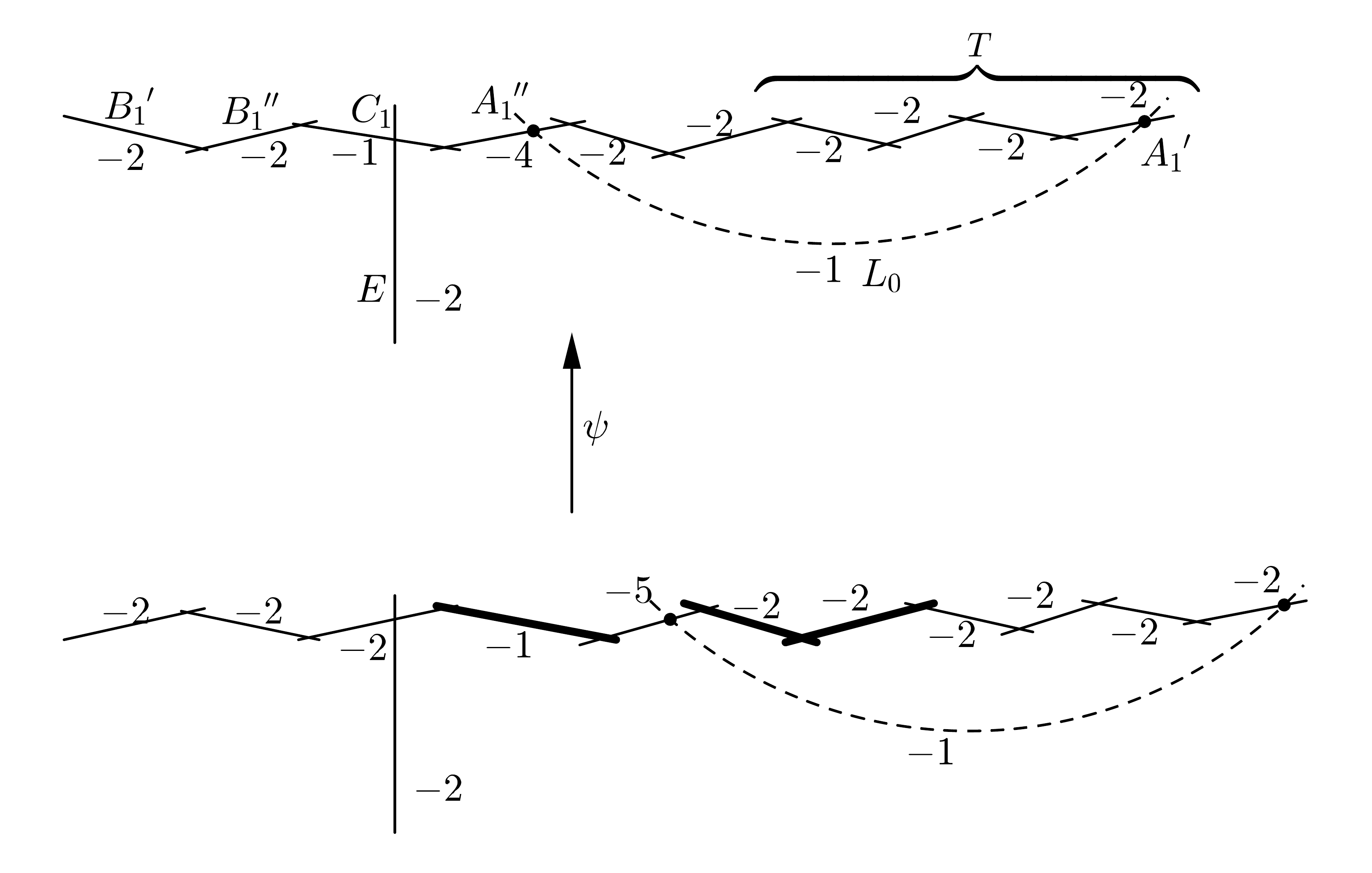}
	\caption{A $\C^{**}$-fibration of $X\setminus D$ with a base point on $D$. Bold curves denote the components which are horizontal for a completion of this fibration.}\label{fig:exprzedl}
\end{figure}

It remains to prove the existence of $L_{0}$. Let $\rho \colon X\to \P^{1}$ be the $\P^{1}$-fibration induced by $|E+2C_{1}+B_{1}''|$. Then $f\cdot A_1''=2$ and $f\cdot B_1''=1$ for any fiber $f$ of $\rho$. By Lemma \ref{lem:fibrations-Sigma-chi} the fiber $F_{A_1'}$ of $\rho$ containing $A_1'$ has at most two components not in $D_{v}$. Let $L$ be such a component. Since $F_{A_1'}\cap D_{v}$ is connected and $F_{A_1'}$ contains no loop, $L\cdot D_{v}\leq 1$. By Lemma \ref{lem:Qhp_has_no_lines} $L\cdot D\geq 2$, so we get $L\cdot D_{h}\geq 1$. If $L$ meets $B_1$ then $\mu(L)=1$ since $F_{A_1'}\cdot B_1=1$; similarly if $L$ meets $A_1''$ then $\mu(L)=1$ since $F_{A_1'}\cdot A_1''=2$ and $F_{A_1'}\cap D_{v}$ meets $A_1''$. By Lemma \ref{lem:singular_P1-fibers}(c) $F_{A_1'}=[1,(2)_{4},1]$. Let $L_{0}$ be the tip of $F_{A_1'}$ meeting $A_1'$. If $L_{0}$ meets $B_1$ then on $\P^{2}$ we have $(\pi_{*}L_{0})^{2}=10$, which is impossible. Hence $L_{0}$ meets $A_1''$ as claimed.
\end{prz}

The following reduction constitutes an important step in the proof of \ref{thm:possible_HN-types}.

\begin{prop}[Reduction to $\C^{**}$-fibrations with no base points]\label{prop:some_Cstst_fibration_extends}
Let $(X,D)$ be a smooth snc-minimal completion of a smooth $\Q$-acyclic surface of log general type. If $X\setminus D$ has a $\C^{**}$-fibration then it has a $\C^{**}$-fibration  without base points on $X$.
\end{prop}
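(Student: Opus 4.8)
The plan is to start from an arbitrary $\C^{**}$-fibration $p\colon X\setminus D\to B$ and a minimal completion $(X',D',\bar p)$ of it, then carefully compare $(X',D')$ with the given snc-minimal completion $(X,D)$. Since both complete the same surface $X\setminus D$, they differ only over the boundary: there is a birational map $X'\dashrightarrow X$ which is an isomorphism on the open part. The base points of the fibration on $X$ correspond precisely to points where the completion $(X',D',\bar p)$ must be blown up, beyond $(X,D)$, to resolve the fibration; equivalently, the fibration $\bar p$ does not descend to a morphism on $X$ because some fiber of $\bar p$ meets $D'$ in a configuration that, after the birational identification with $X$, forces indeterminacy. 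The goal is to modify the \emph{choice} of fibration so that no such indeterminacy arises.

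The key structural input is Corollary \ref{cor:Cstst-fibers}: any $\C^{**}$-fibration of such a surface has at most $3-\nu$ degenerate fibers with $\nu\le 1$, each a disjoint union of $\C^*$'s, and $\sum_F\sigma(F)=h+1$. First I would use Lemma \ref{lem:Qhp_has_no_lines} (the surface contains no $\C^1$) to control the completions of degenerate fibers: a fiber of $\bar p$ whose restriction to the open part is a disjoint union of $\C^*$'s, when completed, is a rational tree, and I would analyze via Lemma \ref{lem:singular_P1-fibers} exactly which $(-1)$-curves appear and whether they lie in $D'$ or meet the open part. The heart of the argument is that a base point on $X$ signals the presence, in the minimal completion $(X',D',\bar p)$, of a superfluous-like configuration or of a non-$D$ component that can be contracted; by running a controlled sequence of elementary transformations (blow up at a point on $D$, blow down a $(-1)$-curve) I would steer the fibration toward one whose completion agrees with $(X,D)$ over the boundary.

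Concretely, I would argue by contradiction: suppose every $\C^{**}$-fibration of $X\setminus D$ has a base point on $X$. Among all such fibrations choose one for which the associated completion $(X',D',\bar p)$ dominates $X$ with the \emph{fewest} additional blowups $X'\to X$. Let $\phi\colon X'\to X$ be that birational morphism; its first blown-down curve is a $(-1)$-curve $L$ in $X'$ lying over the base point. Using Notation \ref{not:fibrations_h_and_nu} and Lemma \ref{lem:fibrations-Sigma-chi} to bookkeep $h$, $\nu$ and the $\sigma(F_b)$, I would show that after contracting $L$ the fibration still restricts to a $\C^{**}$-fibration of the \emph{same} open surface (because $L$ meets $D'$, not the open part, by Lemma \ref{lem:Qhp_has_no_lines}), and that replacing $\bar p$ by an elementary transformation centered on a suitable component strictly decreases the number of extra blowups — contradicting minimality. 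The snc-minimality of $(X,D)$ enters to guarantee that the newly created configuration on $X$ has no superfluous $(-1)$-curve forcing us back.

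The hard part will be the combinatorial case analysis of the degenerate fibers: I must track, for each fiber, which components are horizontal (in $D_h$) versus vertical, how the unique or two $(-1)$-curves of Lemma \ref{lem:singular_P1-fibers}(b)--(c) interact with $E$ and with $D-E$, and verify that the elementary transformation I perform does not accidentally raise $\nu$ above $1$ or create a fiber contained in $D$ (which would violate Corollary \ref{cor:Cstst-fibers}(c)). Example \ref{ex:Cstst_fibr_with_base_points} shows the phenomenon is genuine, so the construction must be explicit enough to replace the bad fibration (the one through $T+5L_0+A_1''$) by a good one; managing the interplay between the contractible divisor $D-E$ and the horizontal sections, while preserving $f\cdot D=3$ throughout, is where the real work lies.
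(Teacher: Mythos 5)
Your overall skeleton---descend on the number of blowups needed to resolve the base points on $X$, use Lemma \ref{lem:Qhp_has_no_lines} to keep fiber components tied to the boundary, and Lemma \ref{lem:singular_P1-fibers} plus Corollary \ref{cor:Cstst-fibers} to control degenerate fibers---is exactly the descent the paper runs, so the strategy is sound. But the mechanism you name for the descent step would not work. An "elementary transformation centered on a suitable component" changes the completion, not the fibration: it induces the \emph{same} map $p$ on $X\setminus D$, and the base points of $p$ on $X$ are intrinsic to the pair $(p,X)$, so their number cannot drop. Likewise, "after contracting $L$ the fibration still restricts to a $\C^{**}$-fibration" is true but buys nothing: $L$ is horizontal (by minimality of the resolution its last exceptional curve must be horizontal), so contracting it makes the same pencil reacquire a base point at the image of $L$. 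What is needed is a genuinely \emph{new} pencil. The paper takes the last exceptional curve $H$ of the resolution $\tau$, contracts $(-1)$-curves inside a fiber $F$ and its images until some component $C$ meeting $H$ becomes a $(-1)$-curve, and then uses the linear system $|H'+C'|$: a different $\P^1$-fibration whose restriction to $X\setminus D$ is a new $\C^{**}$-fibration already defined without base points on the contraction of $H$, hence resolvable over $X$ with strictly fewer blowups.

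The second, and deeper, gap is the one you flag as "where the real work lies" without doing it: for $|H'+C'|$ to restrict to a $\C^{**}$-fibration one must show $(C'+H')\cdot D'\leq 3$. The paper isolates this as a standalone claim---for any partial fiberwise contraction $\phi$, every $(-1)$-curve $C'$ of $\phi_*F$ satisfies $C'\cdot D'_h\leq 2$, and $C'\cdot D'_h\leq 1$ if $C'\not\subseteq D'$---proved using $F\cdot D_h=3$, the connectedness of $D$ (affineness of the complement), the tree structure of fibers, Lemma \ref{lem:Qhp_has_no_lines}, and the minimality of $\tau$. Combined with $\beta_{D'}(H')\leq 2$ this gives $(C'+H')\cdot D'\leq 3$, and Iitaka's Easy Addition upgrades the inequality to an equality because $X\setminus D$ is of log general type. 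Without this quantitative bound your descent step has no reason to produce a $\C^{**}$-fibration rather than one whose general fiber meets $D$ four or more times, so the proof as proposed does not close.
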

\begin{proof}
Let $p$ be a $\C^{**}$-fibration of $X\setminus D$  and let $\tau\colon (X\s, D\s)\to (X,D)$ be the minimal resolution of base points of $p$ on $X$. Let $\bar{p}$ be a $\P^{1}$-fibration of $X\s$ which restricts to $p$ and let $F$ be a fiber of $\bar{p}$. Recall that by the Poincaré-Lefschetz duality $H_i(X,D;\Q)=H^i(X\setminus D;\Q)=0$, so the long exact sequence of homology for the pair $(X,D)$ shows that $H_1(D,\Q)=H_1(X,\Q)$, hence $D$ is a rational tree.

\begin{claim*}
Let $\phi\colon X\s\to X'$ be a composition of some number of contractions of $(-1)$-curves in $F$ and its images. Put $D'=\phi_{*}D\s$, $F'=\phi_{*}F$. Then for every $(-1)$-curve $C'$ of $F'$ we have $C'\cdot D'_{h} \leq 2$ and $C'\cdot D'_{h}\leq 1$ if $C'\not \subseteq D'$.
\end{claim*}

\begin{proof} Suppose that $C'\cdot D'_{h}=3$. We may assume $F$ is a degenerate fiber. Since $3=F'\cdot D'_{h}\geq \mu(C') C'\cdot D'_{h}$, we have $\mu(C')=1$. We have also $(F'\redd-C')\cdot D_{h}'=0$, so $\phi^{*}(F'\redd-C')\cdot D_{h}\s=0$. The divisor $\phi^{*}C'$ is a rational tree and the divisor $\phi^{*}(F'\redd-C')$ is a sum of rational trees. By Lemma \ref{lem:Qhp_has_no_lines} the tips of the latter divisor which do not meet $\phi^{*}C'$ are contained in $D\s$. Since $X\s \setminus D\s$ is affine, $D\s$ is connected  (see \cite[p.\ 244]{Hartshorne_AG}), which implies that $\phi^{*}(F'\redd-C')\redd \subseteq D\s$. Since $\mu(C')=1$, $F'\redd-C'$, and hence $\phi^{*}(F'\redd -C')$, contains a $(-1)$-curve. The latter is non-branching in $D\s$, so $\tau$ is not a minimal resolution of base points; a contradiction.

Now suppose that $C'\cdot D'_{h}=2$ and $C'\not\subseteq D'$. Then $(F'-\mu(C')C')\cdot D'_h\leq 1$. Again, $\mu(C')=1$, so by Lemma \ref{lem:singular_P1-fibers} $F'$ contains a $(-1)$-curve $L'\neq C'$. We can assume that $\phi$ consists of blow-ups over points of $C'$. Indeed, if $\Exc \phi=R_{1}+R_{2}$, where $R_{1}$ consists of all the connected components of $\Exc \phi$ which meet the proper transform $C$ of $C'$ on $X\s$, then we can decompose $\phi$ as $\phi=\phi_{2}\circ \phi_{1}$ in such a way that $\Exc \phi_{1}=R_{1}$ and $\Exc \phi_{2}=(\phi_{1})_{*}R_{2}$. Then $\phi_{2}$ does not touch $(\phi_{1})_{*}(C)$, hence if the claim holds for $\phi_{1}$, it holds for $\phi$ as well. Suppose $\phi$ touches $L'$. Then $L'\cdot C'\geq 1$, hence $F'=L'+C'=[1,1]$. We have $L'\subseteq D'$, because otherwise $L'\cdot D'=L'\cdot D'_h\leq 1$, contrary to Lemma \ref{lem:Qhp_has_no_lines}. The divisor $\phi^{-1}(L'\cap C')$ contains a $(-1)$-curve $\Gamma$ with $\mu(\Gamma)\geq 2$, hence $\Gamma\cdot D_{h}\s=0$. By the minimality of $\tau$ this $(-1)$-curve is not a component of $D\s$. We get $\beta_{(D\s+\Gamma)\redd}(\Gamma)\leq \beta_{F\redd}(\Gamma)\leq 2$ and $\Gamma \cdot D_{v}\s=\Gamma\cdot D\s\geq 2$ by Lemma \ref{lem:Qhp_has_no_lines}. Since $F$ is a tree, it follows that $F\cap D_{v}\s$ is not connected. But $X\setminus D\s$ is affine, so $D\s$ is connected, which implies that every connected component of $F\cap D_{v}\s$ meets $D_{h}\s$. Since $C\not\subseteq D'$, this means that $L'\cdot D_h'\geq (D_v\s\cap F')\cdot D'_h\geq 2$; a contradiction.

Therefore, $\phi$ does not touch $L'$. Suppose $L'\subset D'$. Then the proper transform of $L'$ on $X\s$ is a $(-1)$-curve in $D\s$, necessarily branching by the minimality of $\tau$, so $\beta_{F'\redd}(L')\geq \beta_{D'}(L')-L'\cdot D'_{h}\geq 3-L'\cdot D'_{h}\geq 2$. We get $\beta_{F'\redd}(L')=2$ and $\mu(L')\geq 2$ by Lemma \ref{lem:singular_P1-fibers}. Because $\mu(L')L\cdot D_h'\leq (F'-C')\cdot D'_h\leq 1 $, we get $L'\cdot D'_{h}=0$, so $L'$ is non-branching in $D'$; a contradiction. Thus $L'\not\subseteq D'$ and then $L'\cdot D'\geq 2$ by  Lemma \ref{lem:Qhp_has_no_lines}.  Since $D'$ is connected, every connected component of $F'\cap D'_{v}$ meets $D'_{h}$, so  $D'_h\cdot (F'\cap D'_v)\geq b_0(F' \cap D_v')$. Because $F'\redd$ is a tree, $L'$ meets each connected component of $F' \cap D_v'$ at most once, which gives $L'\cdot D_h'\geq 2-L'\cdot (F' \cap D_v')\geq 2-b_0(F' \cap D_v')$. We obtain $(F'-\mu(C')C')\cdot D'_h\geq (L'+F' \cap D_v')\cdot D'_h\geq 2$; a contradiction.
\end{proof}

Suppose that $\tau\neq \id$. Note that $\bar{p}$ has at least one degenerate fiber. Indeed, $\rho(X\s)-1\geq \rho(X)= \#D$ and we have $\#D\geq 2$, because otherwise $D$ is a planar curve of degree at most $2$, contrary to the assumption $\kappa(X\setminus D)=2$. Let $H$ be the last exceptional curve of $\tau$. Since $D$ is snc, $H$ is non-branching in $D\s$. By the minimality of $\tau$, $H$ is horizontal. Contract $(-1)$-curves in $F$ and its images until some component meeting $H$, call it $C$, becomes a $(-1)$-curve. Let $\phi\colon X\s\to X'$ be the resulting morphism. Put $D'=\phi_{*}D$, $H'=\phi_{*}H$ and $C'=\phi_{*}C$. 

Suppose $C'\cdot D'\geq 3$. We have $\mu(C')C'\cdot D_h'\leq F'\cdot D_h'=3$. If $C'$ is a component of $D'$ then by the claim above $C'\cdot D_h'\leq \min(2,3/\mu(C'))$, so $C'\cdot D'=C'\cdot D_h'+(C')^2+\beta_{D_v'}(C')\leq \min(2,3/\mu(C'))-1+\mu(C')<3$. Thus $C'\not\subseteq D'$ and by the claim above $C'\cdot D_h'=1$. Since $F'$ is a tree, $C'$ meets each connected component of $D_v'$ at most once, so $2\leq C'\cdot D'-1=C'\cdot D_v'\leq b_{0}(F'\cap D'_{v})$, which means that $C'$ meets two connected components of $F'\cap D_v'$, and hence that $\mu(C')\geq 2$. Since $D'$ is connected, every connected component of $F'\cap D'_{v}$ meets $D'_{h}$, hence $3=F'\cdot D'_{h}\geq b_{0}(F'\cap D'_{v})+\mu(C')C'\cdot D'_{h}\geq 2+\mu(C')\geq 4$; a contradiction.

So we proved that $C'\cdot D'\leq 2$. Then $(C'+H')\cdot D'\leq 2+(-1)+\beta_{D'}(H)\leq 3$, which means that the intersection of $D'$ with a general fiber $f$ of the $\P^{1}$-fibration of $X'$ induced by the linear system $|H'+C'|$ is at most three. Since $X'\setminus D'$ is an open subset of $X\setminus D$, it is of log general type, so by Iitaka's Easy Addition Theorem \cite[10.8]{Iitaka_AG} we have in fact $f\cdot D'=3$. Call $\psi$ the contraction of $H$. Then $(\psi(X\s),\psi_{*}D\s)$ is a smooth pair and the image of $\phi^{*}(H'+C')$ induces a $\P^{1}$-fibration of $\psi(X\s)$ which restricts to a new $\C^{**}$-fibration of $V$. For this $\C^{**}$-fibration the minimal resolution of base points on $X$ is shorter than $\tau$. Thus the lemma follows by induction on the length of $\tau$.
\end{proof}

\subsection{Cases with a fiber at infinity.}

From now on to the end of this section we assume that $\bar{E}\subseteq \P^{2}$ is a rational cuspidal curve, such that $\P^{2}\setminus \bar{E}$ is $\C^{**}$-fibered and of log general type. We keep the notation \ref{not:fibrations_h_and_nu} for $\P^{1}$-fibrations and \ref{not:cusps} for resolutions. By \ref{prop:some_Cstst_fibration_extends}  we can choose a $\C^{**}$-fibration $p$ of $\P^{2}\setminus \bar{E}$ which extends to a $\P^{1}$-fibration $\bar{p}$ of $X$. Moreover, by Lemma \ref{lem:Tono_E2} and Lemma \ref{lem:Tono_E2=-1,-2} we may and shall assume that:

\begin{equation}\label{eq:E2<=_assumption}\tag{$*$}
E^{2}\leq -2  \mbox{ and } E^{2}\leq -3 \mbox{ if } c=1.
\end{equation}

\begin{notation}[Figures of dual graphs]\label{not:graphs}
Throughout the proof we present weighted graphs of $D$ for curves of the HN-types listed in \ref{thm:possible_HN-types} together with $\P^{1}$-fibrations on $X$ restricting to $\C^{**}$-fibrations on $\P^{2}\setminus \bar{E}$. Verification of correctness of the weights is always a straightforward computation based on Lemma \ref{lem:HN_for_chains}. The convention for figures of these graphs is as follows. The horizontal line represents $E$. Bold lines represent horizontal components of $D$. The dashed lines represent components of degenerate fibers which do not lie in $D$. The numbers near the lines are the self-intersection numbers of respective curves. As before, we write $[(n)_{k}]$ for a chain consisting of $k$ $(-n)$-curves. To avoid the necessity of treating some special cases separately, we use the notation $[a,(2)_{-1},3]\de [a+1]$ and $[(2)_{-1},3]\de 0$. 
\end{notation}

We will now prove a few lemmas reducing the proof of \ref{prop:possible_cusp_types} to some special situations.  Let us first make a general remark. Namely, if the $(-1)$-curve $C_j$ is vertical for $\bar p$ then by Lemma \ref{lem:singular_P1-fibers} $\beta_{F\redd}(C_{j})\leq \min(2,\mu(C_j))$, so $C_j\cdot D_h\geq \beta_D(C_j)- \beta_{F\redd}(C_j)\geq 3-\min(2,\mu(C_j))=\max(1,3-\mu(C_j))$. Thus if $C_j$ is vertical for $\bar p$ then 
\begin{equation}\label{eq:C_jD_h}
2\leq \mu(C_j)C_j\cdot D_h\leq 3.\end{equation}
In particular, $C_j$ meets $D_h$.

\begin{lem}[Cases with a fiber at infinity]\label{lem:case_nu=1}
Assume that $D$ contains a fiber of $\bar p$ and that $\bar E\subseteq \P^2$ is not of HN-type $\cD$, $\cE$ or $\cF$. Then there is a $\C^{**}$-fibration of $\P^{2}\setminus \bar{E}$ which has no base points on $X$ and whose completion has no fiber in $D$.
\end{lem}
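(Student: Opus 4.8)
We are given that $D$ contains a full fiber $F_0$ of $\bar p$, so in the notation of Corollary \ref{cor:Cstst-fibers} we have $\nu\geq 1$, and by part (c) of that corollary in fact $\nu=1$, i.e.\ $F_0$ is the unique fiber contained in $D$. The goal is to replace $p$ by another $\C^{**}$-fibration whose completion has no fiber in the boundary; by Proposition \ref{prop:some_Cstst_fibration_extends} it then suffices to produce \emph{any} such $\C^{**}$-fibration, since once we have one we may re-resolve its base points minimally to arrive at a fibration with no base points on $X$ and still with $\nu=0$. So the real content is: exhibit an effective divisor $F'$ on $X$ with $F'^2=0$, $p_a(F')=0$, $F'\cdot D=3$, and which is \emph{not} contained in $D$ (so that the resulting $\P^1$-fibration, via the criterion recalled in Section \ref{ssec:fibrations}, has no fiber in the boundary and restricts to a $\C^{**}$-fibration on $\P^2\setminus\bar E$).

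\textbf{Producing the new fiber.}
First I would analyze the fiber $F_0\subseteq D$ together with the horizontal components. By Corollary \ref{cor:Cstst-fibers}(b), $\sum_F\sigma(F)=h+1$ where the sum is over degenerate fibers; since $\nu=1$ there are $3-\nu=2$ degenerate fibers, and $h=\#D_h$. Because $F_0\subseteq D$, it is a fiber all of whose components lie in $D$, hence a rational tree built from the boundary. The strategy is to find, in $D-E$ or among the configuration of boundary chains, a subchain $A$ together with the section structure so that $[A,1,A^*]$ (or a chain contractible to a $0$-curve, in the language introduced after Lemma \ref{lem:discriminants}) can be realized as a fiber meeting $D$ in exactly three points via a different pencil. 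Concretely, I expect to use that $\bar p$ has exactly two degenerate fibers, locate a degenerate fiber $F_1\neq F_0$, and use one of its $(-1)$-curves $L$ together with a section of $D$ to generate the new pencil: the divisor built from $L$, the appropriate multiplicities dictated by $\mu(L)$, and a horizontal component of $D$ should give an $F'$ with $F'\cdot D=3$ and $F'\not\subseteq D$. The inequality \eqref{eq:C_jD_h} and Lemma \ref{lem:singular_P1-fibers}, which constrain how the $(-1)$-curves $C_j$ and the multiplicities interact with $D_h$, are the tools that force the numerology $F'\cdot D=3$ to hold on the nose (recall $X\setminus D$ of log general type forces $\geq 3$, and the $\C^{**}$ condition forces $=3$, exactly as in the proof of Proposition \ref{prop:some_Cstst_fibration_extends}).

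\textbf{Eliminating the exceptional HN-types.}
The hypothesis excludes types $\cD$, $\cE$, $\cF$, which signals that the construction of a boundary-free fiber fails precisely for those; I would therefore, after setting up the general mechanism, verify that the new pencil $F'$ can be chosen not contained in $D$ \emph{unless} the configuration forces $\bar E$ into one of these types. Operationally this means: run the case analysis on how many sections $h$ there are and on the combinatorial type of $F_0$ and the second degenerate fiber, using Lemma \ref{lem:fibrations-Sigma-chi} to bound $\sigma(F)$ and $\#D_h$; in each surviving case explicitly write the dual graph (per Notation \ref{not:graphs}, with weights checked by Lemma \ref{lem:HN_for_chains}) and read off $F'$. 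The cases where \emph{every} candidate $F'$ turns out to be a sub-fiber of $D$ (so no replacement exists) are exactly those that must be the excluded HN-types, and I would identify them via their multiplicity sequences and Lemma \ref{lem:Hn_gives_multiplicities}.

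\textbf{Main obstacle.}
The hard part will be the combinatorial bookkeeping in the middle step: showing that the only obstruction to finding $F'\not\subseteq D$ with $F'\cdot D=3$ is the occurrence of $\cD$, $\cE$, or $\cF$. This requires a tight control of the possible shapes of the two degenerate fibers and of how the horizontal components $D_h$ attach to them, and the subtlety is that a naive choice of $F'$ may accidentally be supported on $D$ (giving again $\nu\geq 1$) or may meet $D$ in four points (violating log general type via Iitaka Easy Addition). Ruling these out uniformly, rather than case by case, is where the argument will be most delicate, and I expect to lean heavily on \eqref{eq:C_jD_h}, on the connectivity of $D$ (affineness of $X\setminus D$), and on Lemma \ref{lem:singular_P1-fibers}(b),(c) describing the precise chain/fork structure of degenerate fibers with one or two $(-1)$-curves.
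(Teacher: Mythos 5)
Your proposal is an outline rather than a proof: the entire substantive content of the lemma is deferred to steps you describe with ``I would'' and ``I expect to''. The paper's argument begins by pinning down the fiber $F_0\subseteq D$ very precisely -- using the standing assumption $E^2\leq -2$ (and $\leq -3$ if $c=1$), Lemma \ref{lem:singular_P1-fibers} and the HN-equations, it shows $F_0$ must contain $E+C_1$, must equal $[2,1,2]$, and forces $E^2=-2$, $c=2$. This rigidity is what makes the subsequent analysis of the other two degenerate fibers $F_A$, $F_B$ (containing the two connected components of $Q_2-C_2$) tractable; in each surviving configuration one either exhibits a new fibration explicitly (e.g.\ $|\phi^{*}\phi_{*}(B_0+C_2)|$ built from boundary components, for which $E$ becomes a $2$-section) or identifies $\bar E$ as $\cD(2,2,1)$, $\cE(k)$ or $\cF(k)$ via Tono's classification (Lemma \ref{lem:Tono_E2=-1,-2}). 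None of this case analysis appears in your plan, and it is precisely the ``combinatorial bookkeeping'' you flag as the main obstacle.

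Beyond incompleteness, two steps of your reduction are logically flawed. First, exhibiting a single effective divisor $F'$ with $F'^2=0$, $p_a(F')=0$, $F'\cdot D=3$ and $F'\not\subseteq D$ does \emph{not} imply that the induced fibration has no fiber in $D$: another fiber of the same pencil could still be supported on $D$. The correct mechanism, used repeatedly in the paper, is to arrange that the vertical part of $D$ for the new fibration is contained in $D-E$ (e.g.\ because $E$ becomes horizontal), hence is negative definite and cannot support a fiber. Second, your appeal to Proposition \ref{prop:some_Cstst_fibration_extends} to ``re-resolve base points \ldots and still with $\nu=0$'' is unjustified: that proposition replaces the fibration by a possibly different one and gives no control over whether the new completion acquires a fiber in $D$. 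The paper avoids this by constructing fibrations that are already base-point-free on $X$ (induced by complete linear systems of divisors supported on $X$). Both gaps would need to be repaired, and the case analysis actually carried out, before this could count as a proof.
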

\begin{proof}
Let $F$ be a fiber of $p$ contained in $D$.

\setcounter{claim}{0}
\begin{claim}\label{clA1}
$F$ contains $E+C_{1}$ and is of type $[2,1,2]$. In particular, $E^{2}=-2$ and $c=2$.
\end{claim}
\begin{proof}
By \eqref{eq:E2<=_assumption} $E^{2}\leq -2$, so $D$ contains no $0$-curves. It follows that $F$ is degenerate, so it has a $(-1)$-curve $L_{F}$. Since $C_{j}$'s are the only $(-1)$-curves in $D$, we have, say, $L_{F}=C_{1}$. Because $Q_{j}$'s are negative definite, $F$ contains $E$.

By \eqref{eq:C_jD_h} $\mu(C_j)C_j\cdot D_h\geq 2$ for every vertical $C_j$, so since $F\cdot D_{h}=3$, we infer that $C_{1}$ is the unique $(-1)$-curve of $F$ and hence that $\mu(C_1)\in\{2,3\}$. The latter implies that if $F$ is not a chain then $C_1$ is a tip of $F$, so $C_1\cdot D_h=\beta_D(C_1)-1=2$ and $\mu(C_1)C_1\cdot D_h\geq 4$, which is false. Thus $F$ is a chain. Since $\mu(C_1)\in\{2,3\}$, we get $F\redd=[2,1,2]$ or $[2,2,1,3]$.

Suppose that $E^{2}<-2$. Then $F=[2,2,1,3]$ by Claim \ref{clA1}, so $E^{2}=-3$. We have $C_1\cdot D_h\leq 3/\mu(C_1)<2$, so $D_{h}$ is a $3$-section, hence $D_{h}\cdot E=0$ and $c=\beta_D(E)=1$. Moreover, $F$ contains a tip of $Q_{1}$. It follows that the last HN-pair of $q_{1}$ is $\binom{3}{3u+1}$ for some $u\geq 0$, so (using the notation of Lemma \ref{lem:HN-equations}) $3|M(q_{1})$ and hence Lemma \ref{lem:HN-equations}(a) fails. Thus $E^{2}\geq -2$. By \eqref{eq:E2<=_assumption} $E^{2}=-2$ and $c\geq 2$. Then $F=[2,1,2]$ and $c=2$.
\end{proof}

The curve $H=D_{h}-C_{2}$ is a $2$-section meeting $C_{1}$. Let $A$, $B$ be the connected components of $Q_{2}-C_{2}$. Since they both meet the $1$-section $C_{2}$, they lie in different fibers $F_{A}$, $F_{B}$ respectively. By \ref{cor:Cstst-fibers}(a) $p$ has two degenerate fibers, so $\bar p$ has at most two degenerate fibers other than $F$. These are necessarily $F_A$ and $F_B$, so by \ref{cor:Cstst-fibers}(b) we have $\sigma(F_{B})=2$ and $\sigma(F_{A})=1$. Denote by $\Gamma_{1}$, $\Gamma_{2}$ and by $L_{F_{A}}$ the components of $F_{B}$ and $F_{A}$ respectively which are not contained in $D_{v}$. 

\begin{claim}\label{clA4}
$F_{B}$ is a chain and we may assume $H$ meets it in tips. \end{claim}
\begin{proof}
Since $F_B$ is a rational tree such that $F_B\cap (X\setminus D)$ is a disjoint union of two $\C^*$'s, the set $D\cap F$ has exactly three connected components (some of them may be points). Because of the connectedness of $D$, each of these connected components meets $D_h$, hence either belongs to (if it is a point) or contains a component of multiplicity $1$. Since $D_v\cap F_B$ contains no $(-1)$-curves, this can happen only if $F_B$ is a chain.

Call $B_{0}$ the component of $B$ meeting $C_{2}$. Assume $B_0$ is a tip of $F_B$. Say that $\Gamma_{1}$ meets $B$. Since $B_0$ is a tip we can perform inner contractions in $F_B$ starting with $\Gamma_1$ until the image of $B_0$ is a $(-1)$-curve. Call $\phi$ the resulting morphism. It contracts only components of multiplicity bigger than $1$, so it does not touch $D_h$. We get $$\beta_{\phi_{*}D}(\phi_{*}(B_{0}+C_{2}))=\beta_{\varphi_*D_v}(\varphi_*B_{0})+\beta_D(C_2)-B_0\cdot C_2=\beta_{\varphi_*D_v}(\varphi_*B_{0})+2\leq 3.$$ Then $|\phi^{*}(\phi_{*}(B_{0}+C_{2}))|$ induces a $\P^{1}$-fibration of $X$ which restricts to a $\C^{**}$-fibration of $X\setminus D$ (it cannot restrict to a $\C^*$-fibration because by assumption $\kappa(X\setminus D)=2$). For this new fibration $E$ is a section, so $\nu=0$ by Claim \ref{clA1}.

Hence we can assume that $B_{0}$ is not a tip of $F_{B}$. Then components of $F_{B}$ of multiplicity $1$ lie in $B$ or are tips of $F_{B}$, so $H$ meets $F_{B}$ in tips; see Fig.\ \ref{fig:fibr_case_nu=1}.
\end{proof}

\begin{figure}[h] \centering \includegraphics[scale=0.5]{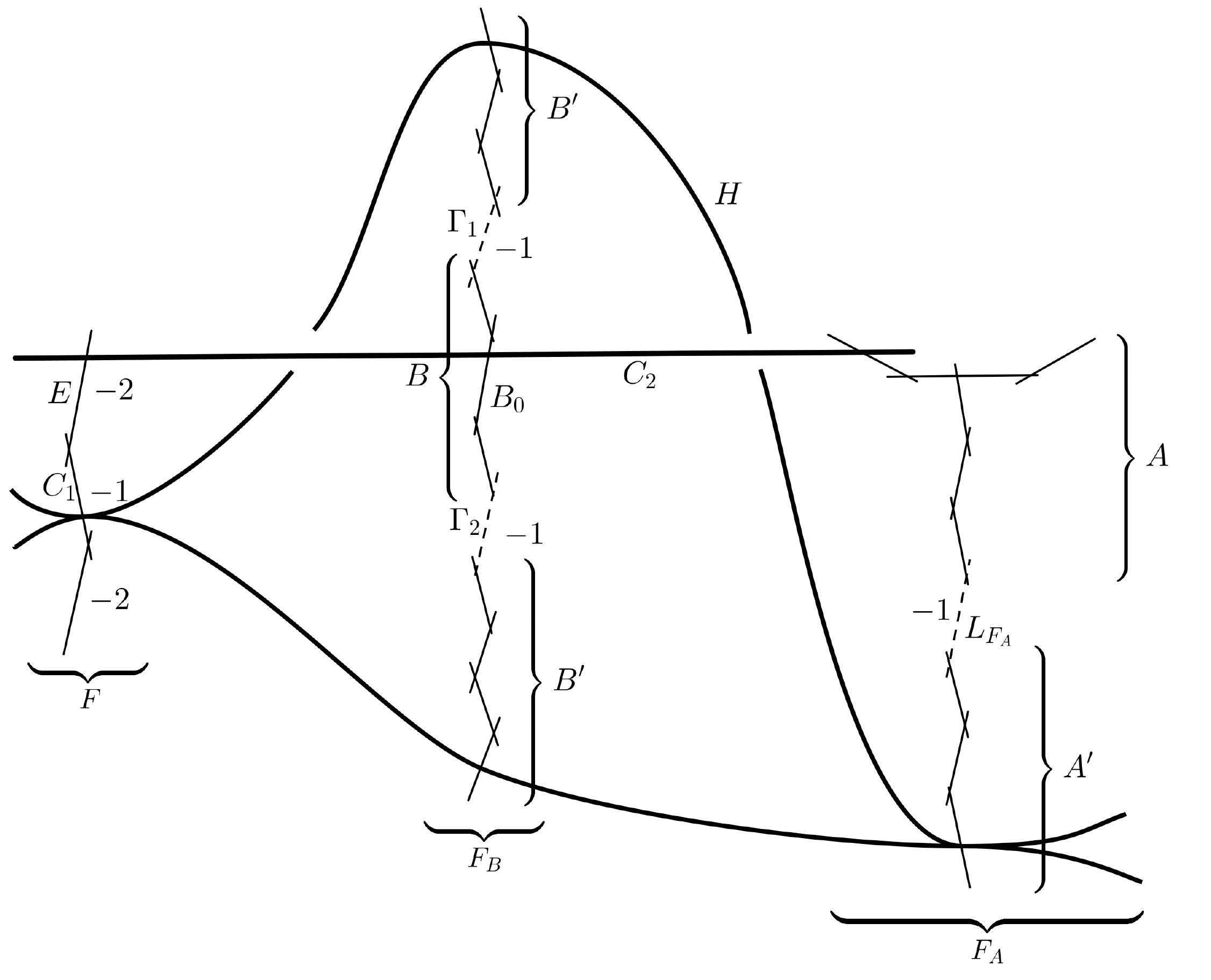} 
\caption{The structure of the $\P^{1}$-fibration in Lemma \ref{lem:case_nu=1}}\label{fig:fibr_case_nu=1}
\end{figure}

\begin{claim}\label{clA5}
We may assume $F_{A}\cap D_{v}$ is connected.
\end{claim}
\begin{proof}
Assume $F_{A}\cap D_{v}$ is not connected. Then it has a connected component $A'$ meeting $H$. By Lemma \ref{lem:singular_P1-fibers} $F_{A}$ is either a chain $[A,1,A']$ or a special fork. Since $\beta_{D}(H)\leq 3$, it follows that $H$ meets at least one of $\Gamma_i$'s, say $\Gamma_2$. Let $B'$ be the connected component of $D_v\cap F_B$ other than $B$ (there is only one, because $\beta_D(H)\leq 3$). If $B_{0}$ meets $\Gamma_{2}$ then $|C_{2}+(F_B-\Gamma_{2})|$ induces a $\P^{1}$-fibration of $X$ as needed. Hence we can assume that $B_{0}\cdot \Gamma_{i}=0$ for every $\Gamma_i$, $i=1,2$ meeting $H$. It follows that $B'\neq 0$. Indeed, if $B'=0$ then by Lemma \ref{lem:singular_P1-fibers}(c) $B$ consists of $(-2)$-curves, so by the negative definiteness of $Q_{2}$, $B_{0}$ is a tip of $B$, so some $\Gamma_i$ meets $H$ and $B_0$, contrary to the assumption.

Consider the case when $F_{A}$ is a chain. Then $A$ meets $C_{2}$ in a tip and $A'$ meets $H$ in a component $A'_{0}$ with $\mu(A'_{0})=2$. Suppose that $\beta_{D}(A'_{0})=3$. Because $Q_1$ contracts to a smooth point, $[1]+B'$ contracts to a smooth point too, so we have  $B'=[(2)_{b}]$ for some $b\geq 1$ and hence $B=[b+2,(2)_{r}]$ for some $r\geq 1$. Note that $B$ meets $C_{2}$ in a tip, for otherwise $B_{0}$ would be a branching $(-2)$-curve in $Q_{2}$ meeting $C_{2}$, which is impossible, as $Q_2$ contracts to a smooth point. Since $B_{0}\cdot \Gamma_{2}=0$, $B_{0}$ meets $\Gamma_{1}$, so $B_0=[b+2]$. Then he contractibility of $Q_{1}$ gives $A=[(2)_{b}]$ or $A=[(2)_s,r+3,(2)_{b}]$ for some $s\geq 0$. Thus $A'=A^{*}=[b+1]$ or $[s+2,(2)_{r},b+2]$. In any case, we check that $A'$ does not contain a component of multiplicity $2$ in $F_{A}$; a contradiction. Therefore, $\beta_{D}(A'_{0})=2$ and since $\mu(A'_{0})=2$, we have $A=[(2)_{u},3]$ and $A'=[2,u+2]$ for some $u\geq 0$. As before, $B_{0}$ is a tip of $B$, for otherwise the contractibility of $Q_{1}$ would imply that $A$ is a $(-2)$-chain, which is false. Hence $B_{0}$ meets $\Gamma_{1}$. Then $Q_{2}$ is a chain, so since it contracts to a smooth point, $B=[2]$ or $[(2)_{v},u+3,2]$ for some $v\geq 0$. From the form of $F_{B}$ we get $B'=[3,(2)_{u}]$. But since $Q_1$ contracts to a smooth point, $A'+[1]+B'$ should contract to a smooth point, and it does not, because $d(A')$ and $d(B')$ are not coprime; a contradiction.

Now consider the case when $F_{A}$ is a special fork and $A$ is a chain. Then $A=[2,b+2,2]$, $A'=[(2)_{b}]$ for some $b\geq 1$ and the component $A_0\subseteq A'$ meeting $H$ is a tip of $F_A$. Since $A$ is not a $(-2)$-chain, the contractibility of $Q_2$ implies that $B_{0}$ is non-branching in $D$, hence it is a tip of $B$ and then again it meets $\Gamma_{1}$. Then $Q_{2}$ is a chain, so we get $B=[(2)_{b-1},3]$ or $[(2)_{s},4,(2)_{b-1},3]$ for some $s\geq 0$. On the other hand, since $A'=[(2)_{b}]$, the contractibility of $Q_1$ implies that $B'=[(2)_{t},b+2]$ for some $t\geq 0$, so because $F_B$ contracts to a $0$-curve, we get $B=[(2)_{r},3,(2)_{b-1},t+2]$ for some $r\geq 0$; a contradiction.

Finally consider the case when $F_{A}$ is a special fork and $A$ is fork. Since $Q_2$ contracts to a smooth point, the long twig of $A$ is necessarily $[(2)_{v},3]$ for some $v\geq 0$. Then $A'=[v+2,3,(2)_{b}]$ and the branching component of $A$ is a $-(b+3)$-curve for some $b\geq 0$. The contractibility of $Q_2$ implies also that $B_{0}$ is a tip of $B$ and $B=[(2)_{b},3]$, so because $F_B$ contracts to a $0$-curve, we get $B'=[2]$. Since $A'+[1]+B'$ contracts to a smooth point, we get $b=v=0$. But then $B=B_{0}$, so $B_0\cdot \Gamma_2=1$; a contradiction.
\end{proof}

\begin{figure}[ht]
	\centering
\vspace{-2em}
\includegraphics[scale=1.2]{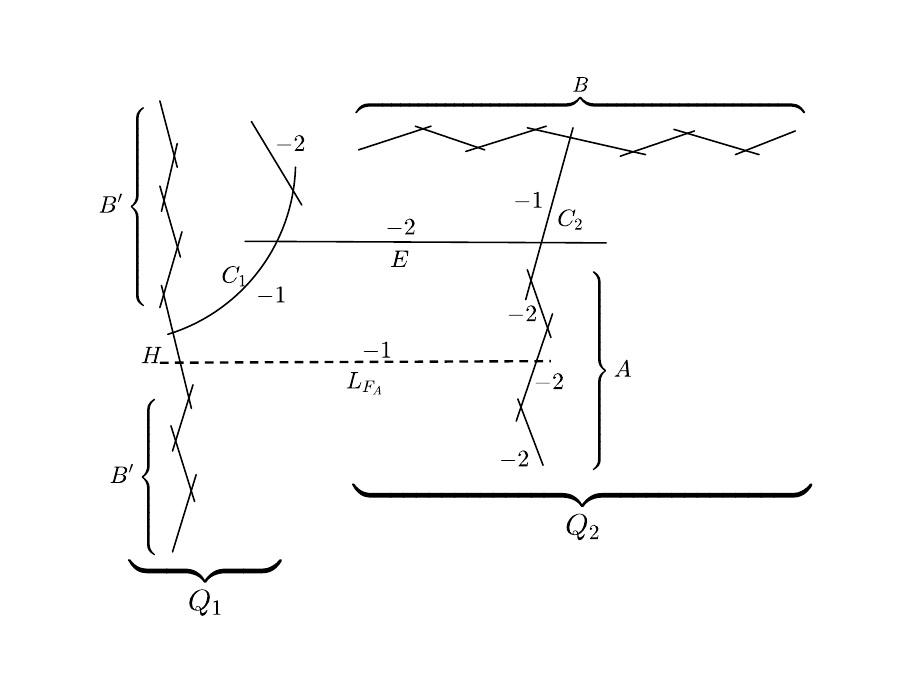}
\caption{The graph of $D$ in Lemma \ref{lem:case_nu=1}.}
\label{fig:graph_case_nu=1}
\end{figure}

By Claim \ref{clA5} and Lemma \ref{lem:singular_P1-fibers}(b), $F_{A}$ is a special fork and $A=F_{A}\cap D_{v}$ consists of $(-2)$-curves. Since $Q_2$ contracts to a smooth point, $A$ is necessarily a chain, so $A=[2,2,2]$; see Fig.\ \ref{fig:graph_case_nu=1}. Assume that $B':=F_B\cap D_v-B$ is connected. Then by Claim \ref{clA4} $Q_{1}$ is a chain, so the contractibility of $Q_{1}$ to a smooth point implies that $H^2=-3$ and $B'=[(2)_{b}]$ for some $b\geq 0$. Hence $F_{B}=[(2)_{b},1,b+2,(2)_{u},1]$ for some $u\geq 0$. Because $Q_{2}$ contracts to a smooth point, we have $B_{0}^{2}=-5$, so $b=3$. It follows that $\bar{E}\subset \P^2$ is of HN-type $\binom{9}{2}$, $\binom{4u+1}{4}$. Since $\gamma=2$, from Lemma \ref{lem:HN-equations} we get $u=1$. Thus $\bar{E}$ is of HN-type $\cD(2,2,1)$.

We can therefore assume that $B'$ is not connected. Write $\pi=\pi_2\circ\pi_1$, where $\pi_1\:(X,D)\to (X',D')$ is the contraction of $(F\redd-E)+C_2+A_0$ and $A_0$ is the tip of $F_A$ meeting $C_2$. Put $L'=\pi_1(L_{F_A})$. Because $\beta_{D}(H)=3$, the morphism $\pi_2\:(X',D')\to (\P^2,\pi(L_{F_A}))$ is a minimal log resolution of the bicuspidal rational curve $\pi(L_{F_A})=\pi_2(L')\subset \P^2$. Since $\pi_1$ does not touch $L_{F_A}$, we have $(L')^2=-1$. By Lemma \ref{lem:Tono_E2=-1,-2}, $\pi_2(L')\subseteq \P^2$ is of one of the HN-types $\cA$ - $\cD$ with $\gamma=1$. But one of the cusps of $\pi_2(L')$ has one HN-pair, because the respective exceptional divisor is a chain, and the second one has two HN-pairs, the second being $\binom{2}{1}$, because $\pi_1(A+C_2)=[2,1]$. It follows that $\pi_2(L')\subseteq \P^2$ is of HN-type $\cC(1,2,k)$ or $\cD(1,2,k)$ for some $k\geq 1$. The resulting HN-pairs are $\binom{4k+4}{2k+1}$ and $\binom{4k+2}{2k+2}\binom{2}{1}$ for HN-type $\cC$ and $\binom{4k}{2k+1}$ and $\binom{4k+2}{2k}\binom{2}{1}$ for HN-type $\cD$, $k\geq 1$ (for $k=1$ the latter sequence degenerates to $\binom{7}{2}$). Then the HN-pairs for $\bar E\subseteq \P^2$ are $\binom{8k+8}{4k+2}\binom{2}{1}$ and $\binom{8k+4}{4k+4}\binom{4}{1}$ for HN-type $\cC$ and $\binom{8k}{4k+2}\binom{2}{1}$ and $\binom{8k+4}{4k}\binom{4}{1}$ for HN-type $\cD$ (degenerating to $\binom{13}{4}$ for $k=1$), which are exactly the HN-pairs of $\cE(k)$ (Fig.\ \ref{fig:E}) and $\cF(k)$ (Fig.\ \ref{fig:F}).
\end{proof}

\begin{figure}[htbp]
	\centering
	\begin{minipage}{0.45\textwidth}
		\centering
		\includegraphics[scale=0.4]{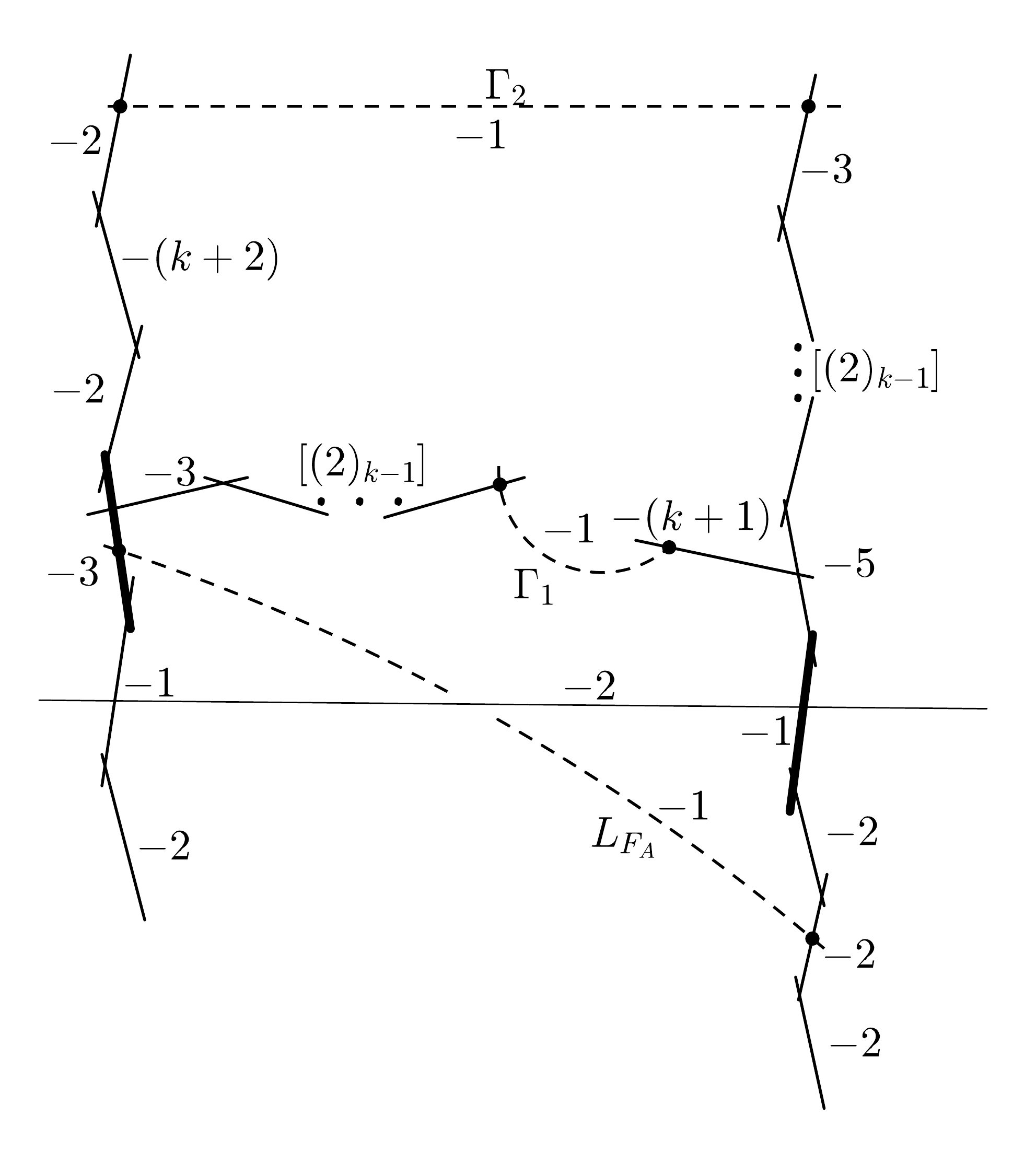}
		\caption{Type $\cE(k)$.}
		\label{fig:E}
	\end{minipage}\hfill
	\begin{minipage}{0.45\textwidth}
		\centering
		\includegraphics[scale=0.4]{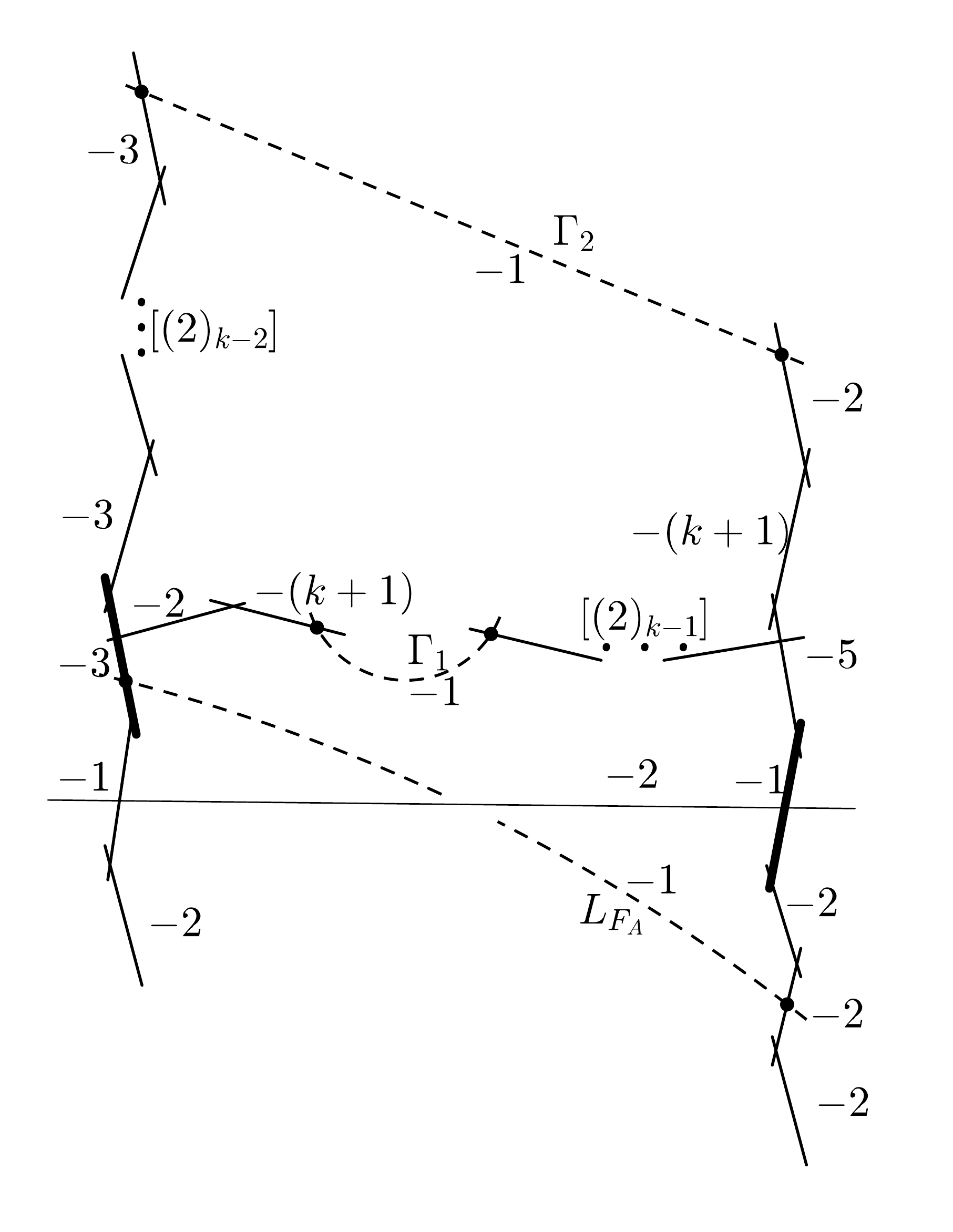}
		\caption{Type $\cF(k)$.}
		\label{fig:F}
	\end{minipage}
\end{figure}

\subsection{Cases with two horizontal components.}

By Lemma \ref{lem:case_nu=1} from now on we may and shall assume that $D$ contains no fiber of $\bar p$, that is, $\nu=0$. From Lemma \ref{lem:fibrations-Sigma-chi} we get $h\in\{2,3\}$.

\setcounter{claim}{0}
\begin{lem}[Cases with two horizontal components in $D$]\label{lem:case_h=2}If $h=2$ then we may choose $p$ so that it has no base points on $X$, $D$ contains no fiber of $\bar p$ and $E$ is a $2$-section of $\bar p$.\end{lem}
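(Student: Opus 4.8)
The plan is to start from a $\C^{**}$-fibration $p$ which, by Proposition \ref{prop:some_Cstst_fibration_extends}, we may assume extends to a $\P^1$-fibration $\bar p$ of $X$ with no base points, and by Lemma \ref{lem:case_nu=1} (having dealt with the types $\cD$, $\cE$, $\cF$ separately) we may assume $\nu=0$, so that Lemma \ref{lem:fibrations-Sigma-chi} forces $h=\#D_h=2$. Since $E\subseteq D$, either $E$ is vertical or $E$ is one of the two horizontal components. The goal is to show that, after possibly switching to a better $\C^{**}$-fibration, $E$ is horizontal and in fact a $2$-section. First I would record the numerical constraints: with $\nu=0$ and $h=2$, Corollary \ref{cor:Cstst-fibers} gives three degenerate fibers with $\sum_F \sigma(F)=h+1=3$, and each degenerate fiber is a disjoint union of $\C^*$'s whose closures are rational chains/trees meeting $D_h$ in the prescribed way.

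The main case analysis splits on whether $E$ is vertical or horizontal for $\bar p$. If $E$ is horizontal, then since $h=2$ the two horizontal components are $E$ and exactly one other curve $H$; I would use $f\cdot D_h=3$ together with the multiplicities to pin down $E\cdot f$. Writing $3=f\cdot D_h=E\cdot f+H\cdot f$ with both summands positive, the possibilities are $(E\cdot f,H\cdot f)\in\{(1,2),(2,1)\}$. The assumption $E^2\leq -2$ (recorded in \eqref{eq:E2<=_assumption}) should rule out $E\cdot f=1$: a $1$-section $E$ with $E^2\leq -2$ together with the constraints from the degenerate fibers and the contractibility of the $Q_j$'s to smooth points would be shown to be incompatible, essentially because a $1$-section can be contracted to make $E$ a fiber component or to lower the section number, contradicting either minimality or $\kappa=2$ via Iitaka's Easy Addition. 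This should leave $E\cdot f=2$, i.e.\ $E$ is a $2$-section, which is the desired conclusion.

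The harder case is when $E$ is vertical. Here I would aim for a contradiction or a re-choice of the fibration that makes $E$ horizontal. Since $E$ is vertical it lies in some fiber $F_E$; each $(-1)$-curve $C_j\subseteq D$ meeting $E$ is then either vertical, in which case inequality \eqref{eq:C_jD_h} applies and forces $C_j\cdot D_h\geq 1$ with $\mu(C_j)C_j\cdot D_h\in\{2,3\}$, or $C_j$ is itself horizontal and hence one of the two components of $D_h$. The strategy is to analyze the fiber $F_E$ containing $E$: since $E^2\leq -2$ and the $Q_j$ are negative definite, $F_E$ must contain a $(-1)$-curve not in $D$ or one of the $C_j$, and the snc-minimality of $D$ together with $\bar p$ being base-point-free tightly constrains its shape. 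The plan is to show that in every configuration one can perform a sequence of contractions inside fibers (touching only components of multiplicity $>1$, hence not touching $D_h$) to produce a new $0$-curve whose linear system induces a $\P^1$-fibration for which $E$ becomes horizontal, and then to verify via Iitaka's Easy Addition Theorem that the restriction to $\P^2\setminus\bar E$ is again a $\C^{**}$-fibration (it cannot be a $\C^*$- or $\C^1$-fibration since $\kappa=2$, by Lemma \ref{lem:kappa<=1} and Lemma \ref{lem:Qhp_has_no_lines}).

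The main obstacle I expect is the bookkeeping in the vertical-$E$ case: one must rule out, or convert away, each possible shape of the fiber $F_E$ and of the two horizontal sections meeting it, and repeatedly invoke that the exceptional divisors $Q_j$ contract to smooth points (so each is a rational tree with the specific structure of Figure \ref{fig:treeQ}, with the $C_j$ not a tip). As in the proof of Lemma \ref{lem:case_nu=1}, the delicate point will be ensuring that after re-choosing the fibration the new one still has no base points and still has $\nu=0$, and that the section number is genuinely reduced so that an inductive or terminating argument closes. I would structure this as a short sequence of claims paralleling Claims \ref{clA1}--\ref{clA5}: first locating the $(-1)$-curve(s) of $F_E$, then bounding $\sigma(F_E)$ and the branching of the sections via \eqref{eq:C_jD_h} and Lemma \ref{lem:singular_P1-fibers}, and finally exhibiting the contraction producing a $2$-section fibration, deferring the detailed combinatorial shapes to Corollary \ref{cor:case_h=2}.
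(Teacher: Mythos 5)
Your overall skeleton matches the paper's: assume $E$ is not a $2$-section, rule out the case that $E$ is the $1$-section, and in the vertical case either reach a contradiction or re-choose the fibration. But both branches contain genuine gaps. In the $1$-section branch, the mechanism you propose (``a $1$-section can be contracted to make $E$ a fiber component or to lower the section number, contradicting minimality or $\kappa=2$ via Iitaka's Easy Addition'') does not work: contractions never turn a horizontal curve into a vertical one, and Iitaka's Easy Addition gives nothing here because the fibration still has $f\cdot D=3$. The argument actually needed is specific: if $E=H_1$ then no $Q_j$ can be vertical (a vertical $C_j$ would have $\mu(C_j)\geq 2$ and could not meet a $1$-section), so $c=1$; one then shows $\bar p$ has a unique degenerate fiber (any other would force a multiplicity-one $(-1)$-curve not in $D$, against Lemma \ref{lem:singular_P1-fibers}(a)--(b)), contracts it to a Hirzebruch surface $\F_n$ without touching $E$ or $H_2$, and observes that a $1$-section disjoint from a smooth $2$-section forces $n=1$ and $E^2=-1$, contradicting $E^2\leq -3$ from \eqref{eq:E2<=_assumption} (note that $c=1$ here, so it is the stronger bound that must be quoted, not $E^2\leq -2$).

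In the vertical branch your stated goal --- ``show that in every configuration one can perform a sequence of contractions \dots to produce a new $\P^{1}$-fibration for which $E$ becomes horizontal'' --- is not what happens and would not succeed: in most configurations no such fibration exists because the configuration itself is impossible, and the bulk of the proof consists of eliminating these one by one using the contractibility of each $Q_j$ to a smooth point, the classification of degenerate fibers (Lemma \ref{lem:singular_P1-fibers}), discriminant computations, and Lemma \ref{lem:HN-equations}. Only at isolated points (e.g.\ when $c=2$ and $C_2$ meets the $(-2)$-chain $R\subseteq F_E$, so that $|L_{F_E}+R+C_2|$ induces the desired fibration, with $\nu=0$ following from negative definiteness of $D-E$) does one genuinely re-choose the fibration. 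Deferring this case analysis to Corollary \ref{cor:case_h=2} is not an option, since that corollary takes the present lemma as input. So while the case division is right, essentially all of the substantive content of the proof is missing, and the one mechanism you do spell out (for the $1$-section case) is not viable as stated.
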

\begin{proof}
Since $\nu=0$ and $h=2$, by Lemma \ref{lem:fibrations-Sigma-chi} every degenerate fiber $F$ has a unique component $L_{F}$ not in $D_{v}$. We can write $D_{h}=H_{1}+H_{2}$, where $H_{1}$ is a $1$-section and $H_{2}$ is a $2$ section of $\bar p$. Moreover, by Corollary \ref{cor:Cstst-fibers} $F\cap D$ has exactly two connected components for every degenerate fiber $F$ (one of them could be a point) and each of them has a unique common point with $D_h$.

Suppose $\bar E$ is not a $2$-section of $\bar p$.

\begin{claim}$E$ is vertical.\end{claim}
\begin{proof}
Suppose that $E=H_{1}$. Then $Q_{j}$ is not vertical. Indeed, otherwise $\mu(C_{j})\geq 2$, since $\beta_{Q_{j}}(C_{j})=2$, and then $C_{j}$ could not meet $E$, which is a $1$-section. Thus every $Q_{j}$ contains a component of $D_{h}$ and hence $c=1$. If there exists a degenerate fiber $F$ which does not contain $C_{1}$ then $L_F$ is its unique $(-1)$-curve and $F\cap D_{v}$ does not meet $E$, so $L_{F}$ meets $E$, which gives $\mu(L_{F})=1$, contrary to Lemma \ref{lem:singular_P1-fibers}(b). Hence $\bar p$ has a unique degenerate fiber $F$ and this fiber contains $C_1$. Because $C_{1}$ meets $E$, we have $\mu(C_{1})=1$, so $C_{1}$ is a tip of $F$  and by Lemma \ref{lem:singular_P1-fibers}(a) $L_{F}$ is a $(-1)$-curve. 

Contract $L_{F}$ and the new $(-1)$-curves appearing in the subsequent images of $F-C_1$. Call $\phi\colon X\to X'$ the resulting morphism. Since $\phi$ contracts the unique degenerate fiber $F$ of $\bar{p}$ to a $0$-curve, $X'$ is a Hirzebruch surface $\F_{n}=\P (\O_{\P^{1}}\oplus \O_{\P^{1}}(n))$ for some $n\geq 0$. Since $\beta_{D}(C_{1})=3$, we see that $C_{1}$ meets $H_{2}$, so $H_2$ meets $F$ in components on multiplicity $1$, it is therefore not touched by inner blow-downs in $F$ and its images. It follows that $\phi(H_{2})$ is a smooth rational $2$-section on $\F_{n}$ and, since $C_{1}$ is the only component of $F$ meeting $E$, $\phi(E)$ is a $1$-section disjoint from $\phi(H_{2})$. This is possible only if $n=1$ and $\varphi(E)^2=-1$. But $\varphi$ does not touch $E$, so $E^2=-1$; a contradiction with Lemma \ref{lem:Tono_E2}(a).
\end{proof}

Call $F_{E}$ the fiber containing $E$. If $(F_{E})\redd-L_{F_{E}}$ is not connected, denote by $R$ the connected component of $(F_{E})\redd-L_{F_{E}}$ not containing $E$, otherwise put $R=0$. Put $\gamma=-E^{2}$. 

\begin{claim}\label{cl:C1vert}We may assume that $C_{1}$ is vertical and $c=1$.\end{claim}
\begin{proof}
Suppose that all $C_{j}$'s are horizontal. Then $D_{v}$ contains no $(-1)$-curves, so for every degenerate fiber $F$, $L_{F}$ is the unique $(-1)$-curve in $F$. In particular, $\mu(L_F)\geq 2$. Since $E\cdot (D-\sum_jC_j)=0$, $E$ is a tip of $F_{E}$ and meets $L_{F_E}$. Let $R_0$ be the component of $R$ meeting $D_h$. We have $\mu(E)\leq 2$ and $\mu(R_0)\leq F_{E}\cdot D_{h}-\mu(E)=3-\mu(E)\leq 2$. We have also $c\leq h=2$.

Suppose that $\mu(E)=2$. By Lemma \ref{lem:singular_P1-fibers}(b) $F_{E}$ is a special fork and $E$ meets $D_{h}$ only in $H_{2}$, hence $c=1$. Therefore, by \eqref{eq:E2<=_assumption} $E^{2}<-2$. Then $R$ is a fork with two twigs of type $[2]$, one of which is $R_0$, and the third twig meeting $L_{F_E}$ in a tip. It follows that $L_{F_E}$ meets the last exceptional component of $Q_1$, so $\pi(L_{F_E})$ is a $0$-curve on $\P^{2}$; a contradiction.

Thus $\mu(E)=1$. Now $F_E=[\gamma,1,(2)_{\gamma-1}]$, so $R=[(2)_{\gamma-1}]$ and $R_0$ is a tip of $F_E$. If $c=2$ then some $C_i$, say $C_2$, meets $R_0$, so $|L_{F_E}+R+C_2|$ induces a $\P^{1}$-fibration of $X$ such that $E$ is a $2$-section. But then $D_{v}$, being contained in $D-E$, is negative definite, so $\nu=0$, hence this $\P^{1}$-fibration restricts to a $\C^{**}$-fibration of $X\setminus D$ as needed. 

We may therefore assume that $c=1$. If  $C_1$ is a $2$-section then $R_0$ meets both $H_{1}$ and $C_{1}$, so $R_{0}$ is a branching $(-2)$-curve in $Q_1$ which meets a $(-1)$-curve $C_1$. Since $Q_1$ contracts to a smooth point, this is possible only if $C_1$ is a tip of $Q_1$, and this is not the case. So $C_1$ is a $1$-section. Then $R_0$ meets the $2$-section $H_{2}$, so $\mu(R_0)=2$. If $\#R>2$ then $R_0$ meets two $(-2)$-twigs of $D$, which is impossible, as $Q_1$ is negative definite. Hence $F_{E}=[3,1,2,2]$. In particular, $\gamma=3$.

\begin{figure}[h] \centering 
	\includegraphics[scale=0.35]{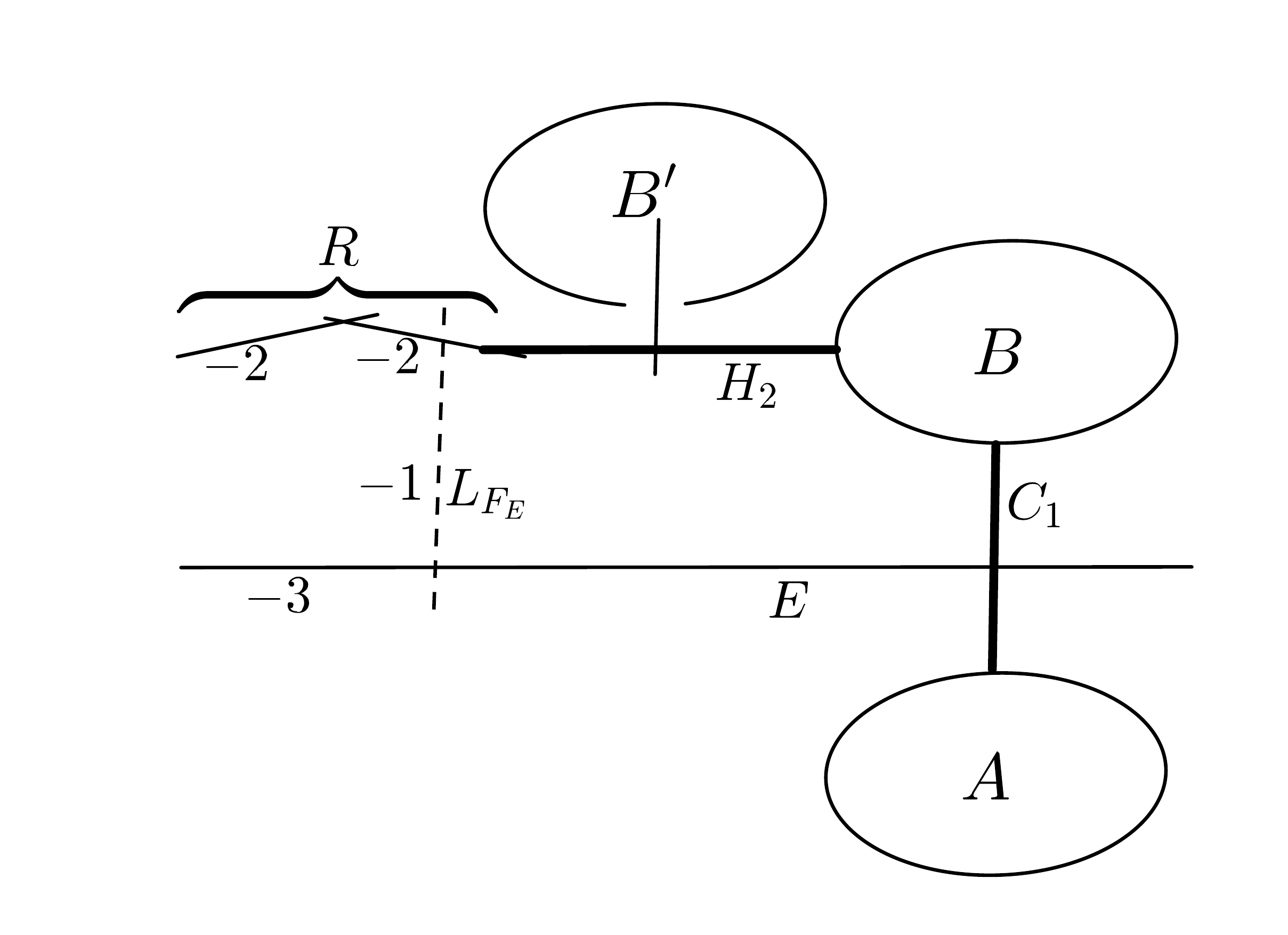}
\caption{The graph of $D$ in Lemma \ref{lem:case_h=2}, Claim \ref{cl:C1vert}.} \label{fig:35fig1}
\end{figure}

Let $A$ be the connected component of $Q_1-C_1$ not containing $H_2$. We can write $Q_1-C_1-A$ as $B+H_2+B'+R$ where $B$ and $B'$ disjoint and connected (possibly zero divisors), $B$ meets $C_1$ and $H_2$ and $B'$ meets $H_2$. Then $A, B, B'$ are connected components of $D_v$; see Fig.\ \ref{fig:35fig1}. Suppose $B\neq 0$. Then $H_{2}\cdot C_{1}=0$. Call $F_{B}$ the fiber containing $B$. Since $\mu(L_{F_{B}})\geq 2$, we have $2= F_{B}\cdot H_{2}\geq 1+2L_{F_{B}}\cdot H_{2}$, and hence $L_{F_{B}}\cdot H_{2}=0$, so $F_{B}$ contains $B'$. Then both $B$ and $B'$ intersect $H_{2}$ in components of multiplicity $1$, so by Lemma \ref{lem:singular_P1-fibers} $F_{B}$ is a chain of the form $B+[1]+B'$ and $H_2$ meets it in tips. Since $R+[1]+B'$ contracts to a smooth point, we have $B'=[(2)_{b},4]$ for some $b\geq 0$, hence $B=(B')^{*}=[b+2,2,2]$. The last tip of $B$ is the unique component of $B$ of multiplicity $1$, so it meets $C_1$. Then this tip of $F_B$ is a branching $(-2)$-curve of $Q_1$, so the contraction of $A+C_1$ makes it into a curve of non-negative self-intersection, which contradicts the negative definiteness of $Q_1$.

Hence $B=0$ and $H_{2}$ meets $C_{1}$. Suppose that $B'\neq 0$. Let $F_{B'}$ be the fiber containing $B'$. Since $\mu(L_{F_{B'}})\geq 2$, $L_{F_{B'}}\cdot C_{1}=0$ and hence $F_{B'}$ contains $A$. We have  $\beta_{D}(H_{2})=3$, so since $Q_1$ contracts to a smooth point, $A=[(2)_{a}]$ for some $a>0$. By Lemma \ref{lem:singular_P1-fibers}(b) $F_{B'}$ is a chain, so $B'=A^{*}=[a+1]$ and $\mu(B')=1$. But then $\mu(L_{F_{B'}})L_{F_{B'}}\cdot H_2=F_{B'}\cdot H_2-B'\cdot H_2=1$; a contradiction with $\mu(L_{F_{B'}})\geq 2$.

Hence $B'=0$. Then $\beta_{D}(H_{2})=2$. Call $F_{A}$ the fiber containing $A$. Since $F_{A}\cap D_{v}$ is connected, $H_2$ meets $L_{F_A}$. By Lemma \ref{lem:singular_P1-fibers} $F_{A}$ is a special fork and $A$ consists of $(-2)$-curves. Since $Q_1$ contracts to a point, $A=[2,2,2]$ and hence $Q_{1}=[2,2,2,1,5,2,2]$. Then $q_{1}\in \bar{E}$ is of type $\binom{13}{4}$ and Lemma \ref{lem:HN-equations}(c) fails; a contradiction.

Thus we may indeed assume that $C_1$ is vertical. It remains to show that $c=1$. If $C_j$ is vertical then it is contained in $F_E$ and by \eqref{eq:C_jD_h} $2\leq \mu(C_{j})C_{j} \cdot D_{h}\leq 3$. Since $F_E\cdot D_h=3$, we see that $C_1$ is the only vertical $C_j$. Since $C_1\cdot D_h\geq 1$, we get $c=E\cdot D_h+1\leq h$. But if $c=2$ then $(E+\mu(C_1)C_1)\cdot D_h\geq E\cdot C_2+2\geq 3$, so the connected component of $D\cap F_E$ not containing $E+C_1$ has no common points with $D_h$; a contradiction.
\end{proof}

By \eqref{eq:E2<=_assumption} $\gamma\geq 3$. Note also that $D_v$ contains no $(-1)$-curves other than $C_1$, so for every degenerate fiber $F\neq F_E$ the curve $L_F$ is the unique $(-1)$-curve in $F$. In particular, $\mu(L_F)\geq 2$ and hence $L_F\cdot H_1=0$. Recall that $R$ denotes the connected component of $(F_{E})\redd-L_{F_{E}}$ not containing $E$ and in case $R\neq 0$ we denote by $R_0$ the component of $R$ meeting $D_h$.

\begin{claim}\label{C1tip} $F_{E}=[2,1,\gamma,1,(2)_{\gamma-3}]$ with $C_1$ as the second curve.
\end{claim}

\begin{proof} Suppose $C_1$ is a tip of $F_E$. Because $\beta_D(C_1)=3$, $C_{1}$ meets both $H_{1}$ and $H_{2}$. Since $\gamma\geq 3$, after the contraction of $C_{1}$ the curve $L_{F_{E}}$ is the unique $(-1)$-curve in the image of $F_{E}$. It follows from Lemma \ref{lem:singular_P1-fibers} that $F_{E}=[1,\gamma,1,(2)_{\gamma-2}]$, so $R=[(2)_{\gamma-2}]$ and $R_{0}$ is a tip of $F_{E}$. 

Suppose that $\beta_{D}(H_{2})=3$. Let $B'\neq R$ be the connected component of $D_{v}$ meeting $D_{h}$ only in $H_{2}$ and let $F_{B'}$ be the fiber containing it. We have $\mu(L_{F_{B'}})L_{F_{B'}}\cdot H_i\leq (F_{B'}-B')\cdot H_i\leq 1$ for $i=1,2$. Since $L_{F_{B'}}$ is the unique $(-1)$-curve in $F_{B'}$, the multiplicity of $L_{F_{B'}}$ is at least $2$, so we get $L_{F_{B'}}\cdot D_h=0$. Then $B'$ intersects $H_{2}$ in a component $B_0$ of multiplicity $2$ and, since $H_1\cdot L_{F_{B'}}=0$, $F_{B'}$ contains a connected component $A$ of $D_v$, which meets $D_h$ only in $H_1$. Because of the contractibility of $Q_1$ to a smooth point, $H_1+A$ is a chain of $(-2)$-curves. It follows that $F_{B'}$ is not a chain, because otherwise $B'=A^*$ is irreducible, and hence $B_0=B'$ would not have multiplicity $2$, as required. By Lemma \ref{lem:singular_P1-fibers} $F_{B'}$ is a special fork. But then, since $A$ is a $(-2)$-chain, we have $(F_{B'})\redd=A+L_{B'}$, so $B'=0$; a contradiction.

Thus $\beta_{D}(H_{2})=2$. If $H_1$ does not meet $D_v-C_1$ then because of the contractibility of $Q_1$ we have either $H_2^2=-2$ and $Q_1=[\gamma,1,(2)_{\gamma-1}]$ or $H_2^2=-3$ and $Q_1=[2,1,3,(2)_{\gamma-2}]$. In the first case Lemma \ref{lem:HN-equations} fails and in the second $\pi_{*}L_{F_{E}}$ is a $0$-curve on $\P^2$, which is impossible. Hence there is a connected component $A$ of $D_v$ meeting $D_h$ only in the $1$-section $H_1$. If $\beta_D(H_1)=3$ there are in fact two such connected components but they lie in different fibers of $\bar p$. This means that $F_{A}\cap D_{v}$ is connected, so by Lemma \ref{lem:singular_P1-fibers} $F_{A}$ is a special fork and $A$ consists of $(-2)$-curves. Since $Q_1$ is contractible to a smooth point, it follows that $\beta_D(H_1)=2$ and that $A$ is a chain $[2,2,2]$ meeting $H_1$ in a tip. Moreover, one of $H_1$ or $H_2$ is a $(-2)$-curve. If $H_1^2=-2$ then $L_{F_E}$ is a $0$-curve on $\P^2$ and if $H_2^2=-2$ then $Q_{1}=[2,2,2,\gamma+1,1,(2)_{\gamma-1}]$ and $q_{1}\in\bar{E}$ is of HN-type $\binom{4\gamma+1}{\gamma}$, so Lemma \ref{lem:HN-equations} gives $\gamma=1$; a contradiction.

Thus we proved that $C_1$ is not a tip of $F_E$. We infer that there is a component $C$ of $(F_E)\redd-E$ meeting $C_1$ and hence that $\mu(C_{1})\geq 2$. By \eqref{eq:C_jD_h} $2\leq \mu(C_1)C_1\cdot D_h\leq 3$, which gives $C_{1}\cdot D_h=C_1\cdot H_2=1$ and $\mu(C_{1})=2$. Let $\phi$ be the contraction of $C_{1}$. Then the connected component of $(\phi_{*}F_{E})\redd - \phi_{*}L_{F_{E}}$ meeting $\phi_{*}H_{2}$ contains two components of multiplicity $1$ in $\phi_{*}F_{E}$, namely $\phi_{*}C$ and $\phi_{*}E$. we have $(\varphi_*E)^2=-\gamma+1\leq -2$, so if $\varphi_*C$ is not a $(-1)$-curve then $\phi_{*}L_{F_{E}}$ is a unique $(-1)$-curve in $\phi_{*}F_{E}$ and hence $\mu(L_{F_{E}})\geq 2$. But in the latter case $R$ meets $H_1$ and hence $R$ also contains a component of multiplicity $1$, which is impossible by Lemma \ref{lem:singular_P1-fibers}. Therefore, $\phi_{*}C$ is a $(-1)$-curve, necessarily unique in $\phi_{*}(F_{E}\cap D_{v})$. Since $\mu(\phi_{*}C)=1$, $\phi_{*}C$ is a tip of $\phi_{*}F_{E}$. The $1$-section $H_1$ meets some component of $R+L_{F_E}$ of multiplicity $1$. It follows now from  Lemma \ref{lem:singular_P1-fibers} that $\phi_{*}F_{E}=[1,\gamma-1,1,(2)_{\gamma-3}]$, and hence that $F_{E}=[2,1,\gamma,1,(2)_{\gamma-3}]$.
\end{proof}

As above, denote by $C$ the tip of $Q_{1}$ contained in $F_{E}$. Clearly, $H_2$ meets $F_E$ only in $C_1$ and $H_1$ meets $F_E$ in the tip contained in $L_{F_E}+R$, because it is the only component of $L_{F_E}+R$ of multiplicity $1$. Since $\beta_D(H_1)\leq 3$, there are at most three connected components of $D_v$ meeting $H_1$, call them $B$, $B'$ and $B''$ with $B$ meeting $H_2$ and $B'$, $B''$ meeting $D_h$ only in $H_1$. If $R\neq 0$ then $R$ is one of these connected components, that is $B''=R$. Now $D_v\setminus F_E$ may have one more connected component $A$, which meets $D_v$ only in $H_2$; see Fig.\ \ref{fig:35fig2}. As usually, we call $F_U$ the fiber of $\bar p$ containing $U$, where $U$ is one of  $A$, $B$, $B'$ or $B''$. 

\begin{figure}[h]\centering 
	\includegraphics[scale=0.3]{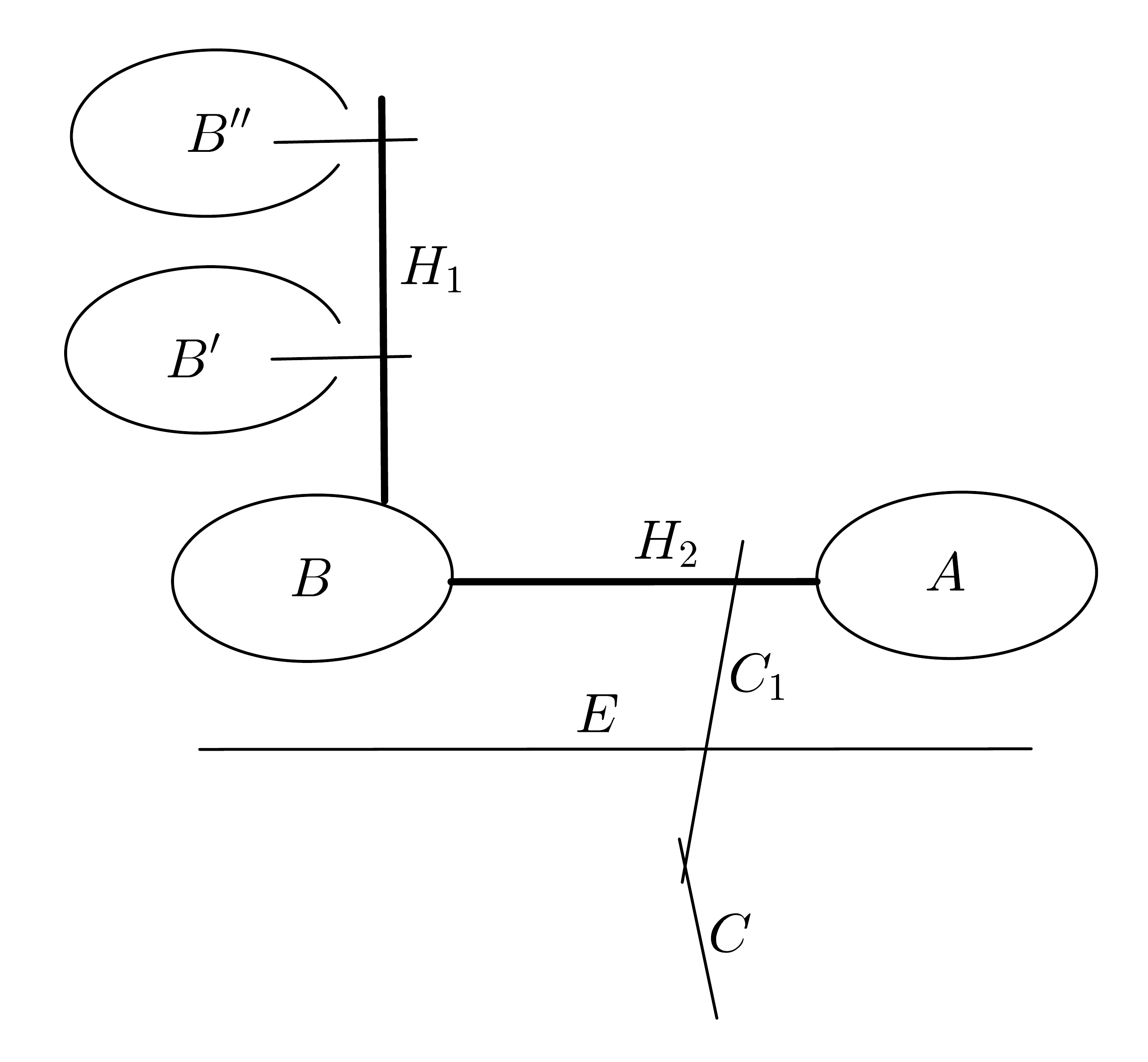}
\caption{The graph of $D$ in Lemma \ref{lem:case_h=2}, Claim \ref{B,B',B''}.} \label{fig:35fig2}
\end{figure}

\begin{claim}\label{B,B',B''} $B=B''=0$ and $B'\neq 0$. Moreover, $F_{B'}$ contains $A$. \end{claim}
\begin{proof}
Suppose $H_1\cdot H_2=0$, that is, $B\neq0$. Both $H_1$ and $H_2$ meet $B$, so $H_2\cdot (F_B-B)\redd\leq 1$. It follows that $H_2$ does not meet $L_B$ and hence $F_B$ contains $A$ and $H_2$ meets it in a component of multiplicity $1$. By Lemma \ref{lem:singular_P1-fibers} $F_{B}$ is a chain $B+[1]+A$ meeting $D_{h}$ in tips (which are unique components of multiplicity $1$). It follows that $H_1$ and $H_2$ meet the same component of $B$. If $B$ is irreducible then $A=B^{*}=[(2)_{-B^2-1}]$. On the other hand, if $B$ is reducible then the contractibility of $Q_1$ to a smooth point implies that $A=[(2)_a]$ for some $a\geq 1$ and hence that $B=(A)^{*}=[a+1]$. In both cases the contraction of $C_1+C_2$ maps $B+H_2+A+C_1+C_2\subseteq Q_1$ onto the chain $A+[1]+B$ of discriminant $0$. This is a contradiction, because $Q_1$ is negative definite.

Now suppose $B'=0$. In case $R=0$ we can assume, renaming $B'$ and $B''$ if necessary, that also $B''=0$. Now $H_1$ meets $D_h$ only in $H_2$ and $R$ and either $H_{1}$ is a tip of $D$ or $R\neq 0$ and $\beta_{D}(H_{1})=2$. We have $D_v=D_v\cap F_E+A$, so $F_A\cap D_v$ is connected. If $A\neq 0$ then by Lemma \ref{lem:Qhp_has_no_lines} $L_{F_{A}}\cdot D_h\neq 0$, so for some $i\in \{1,2\}$ we have $\mu(L_{F_A})\leq \mu(L_{F_A})L_{F_A}\cdot H_{i}\leq i-A\cdot H_{i}= 1$, which is in contradiction to Lemma \ref{lem:singular_P1-fibers}. So $A=0$ and hence $Q_1$ is a chain. Since $Q_1$ contracts to a smooth point we have $Q_1=[2,1,3,(2)_{\gamma-2}]$. Then $\pi_{*}L_{F_{E}}$ is a $0$-curve on $\P^2$; a contradiction.

The fibers $F_{B'}$ and $F_A$ have unique $(-1)$-curves and these $(-1)$-curves have multiplicity at least $2$. If $A\neq 0$ then, since $H_2$ meets $A$, we have $L_{F_A}\cdot D_h=0$, so $D_v\cap F_A$ is not connected, so in any case $(F_A)\redd=(F_{B'})\redd=B'+L_{F_{B'}}+A$. Note also that if $A\neq 0$ then $H_2\cdot A=1$, so since the affine part of the fiber $F_{B'}$ is isomorphic to $\C^*$, $H_2$ meets $F_{B'}$ in a unique point belonging to a component of $L_{B'}+A$ of multiplicity $2$.  Call this component $A_0$.

Suppose that $B''\neq0$, equivalently, that $\beta_D(H_1)=3$. The $1$-section $H_1$ meets $B'$ in some component of multiplicity $1$, so by Lemma \ref{lem:singular_P1-fibers} $F_{B'}$ is a chain or a special fork. Since $Q_1$ contracts to a smooth point, $A=[(2)_a]$ for some $a\geq 0$ and if $a>0$ then the component $A_0$ meeting $H_2$ is a tip of $A$. We observe that the $1$-section $H_1$ meets a $(-2)$-curve contained in $B''$. This is clear if $R\neq 0$, and if $R=0$ then the divisor $F_{B''}\cap D_{v}=B''$ is connected and $F_{B''}$ has only one $(-1)$-curve $L_{F_{B''}}$, so $F_{B''}$ is a special fork and $B''$ consists of $(-2)$-curves. It follows that $F_{B'}$ is a chain, because otherwise $F_{B'}$ is a special fork and then $B'$ meets $H_1$ in a $(-2)$-tip, which contradicts the contractibility of $Q_1$. Since $A_0$ is a tip of $A$, we get $a=2$, hence $B'=A^*=[3]$. If $R\neq 0$ then the contractibility of $Q_1$ gives $R=[2]$, so $\gamma=4$ and hence $q_{1}\in \bar{E}$ is of type $\binom{18}{12}\binom{6}{2}\binom{2}{1}$ and Lemma \ref{lem:HN-equations}(c) fails. Hence $R=0$ and, as we already argued, $F_{B''}$ is a special fork and $B''$ consists of (at least three) $(-2)$-curves. Then $Q_1$ does not contract to a smooth point; a contradiction.
\end{proof}

By Claim \ref{B,B',B''} $R=0$, so $\gamma=3$. As observed in the proof of the above claim, $F_{B'}$ is either a chain or a special fork. Suppose $F_{B'}$ is a special fork and $B'$ is a chain. Since $F_{B'}$ contracts to a $0$-curve, $B'=[2,a+2,2]$ and $A=[(2)_{a}]$ for some $a\geq 0$. In fact $a=0$, because otherwise $Q_1$ does not contract to a smooth point. Then $q_{1}\in \bar{E}$ is of type $\binom{11}{2}$ and Lemma \ref{lem:HN-equations} fails; a contradiction. Suppose $F_{B'}$ is a special fork and $B'$ is a fork. From the contractibility of $Q_{1}$ to a smooth point it follows that the long twig of $B'$ is of type $[(2)_{b},3]$ for some $b\geq 0$. Hence $A=[b+2]$ or $[(2)_{a},3,b+2]$ for some $a\geq 0$. In the latter case, the branching component of $B'$ is a $-(a+3)$-curve and so the contractibility of $Q_1$ implies that the chain $[2,-H_{1}^{2},1,b+2,3,(2)_{a}]$ contracts to a smooth point, which is impossible. Hence $A=[b+2]$ and now the branching component of $B'$ is a $(-2)$-curve. The contractibility of $Q_1$ gives $b=0$, so $q_{1}\in \bar{E}$ is of HN-type $\binom{12}{8}\binom{4}{10}\binom{2}{1}$. Then again Lemma \ref{lem:HN-equations} fails; a contradiction. We are left with the case when $F_{B'}$ is a chain. If the component $A_0\subseteq A$ meeting $H_2$ is branching in $D$ then $B'=A^*$ is not a $(-2)$-chain and we see easily that $Q_1$ does not contract to a smooth point. So $A_0$ is a tip of $A$ of multiplicity $2$, hence $F_{B'}=[2,2+b,1,(2)_b,3]$ for some $b\geq 0$. Then $B'=[3,(2)_b]$ and the contraction of $C+C_1$ maps $Q_1$ onto $[2,b+2,1,-H_1^2,3,(2)_b]$, which does not contract to a smooth point; a contradiction.
\end{proof}

\begin{wn}[Cases with two horizontal components in $D$]\label{cor:case_h=2}
If $h=2$ then $\bar{E}$ is of one of the HN-types $\FZa$, $\cD$ or $\cG$.
\end{wn}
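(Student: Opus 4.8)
The plan is to obtain the corollary from Lemma~\ref{lem:case_h=2} by classifying the configurations it leaves open. By that lemma I may assume $p$ has no base points on $X$, that $\nu=0$, and that $E$ is a $2$-section of $\bar p$; set $H\de D_h-E$, which is then a $1$-section since $f\cdot D=3$. By Corollary~\ref{cor:Cstst-fibers} there are exactly three degenerate fibers, and each has a single component $L_F\not\subseteq D_v$ whose affine part is $\C^*$, so that $F=L_F+(F\cap D_v)$ and $L_F\cdot D=2$. Since $E$ meets $D$ only in $C_1,\dots,C_c$, each transversally in one point, we have $\beta_D(E)=c$. Finally, $H$ is a component of $D-E=\sum_j Q_j$, so it lies in a single resolution tree, say $Q_1$; every other $Q_j$ is then vertical, and $Q_1$ has $H$ as its unique horizontal component.

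First I would read off the shape of each degenerate fiber. For a vertical $C_j$, equation~\eqref{eq:C_jD_h} gives $2\le \mu(C_j)C_j\cdot D_h\le 3$, and over each fiber the decomposition $E\cdot F=E\cdot L_F+E\cdot(F\cap D_v)=2$ controls how many of the $C_j$ can be vertical in it. Feeding these constraints into Lemma~\ref{lem:singular_P1-fibers}, and using that $F\cap D_v$ is built from pieces of the $Q_j$ that must contract to smooth points, constrains each degenerate fiber to one of the shapes described there. A bookkeeping of how the vertical trees $Q_j$ ($j\ge 2$) occupy the three fibers, combined with $\beta_D(E)=c$ and the bound $E^2\le 7-3c$ of Lemma~\ref{lem:Tono_E2}(c), forces $c\le 3$. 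I would then rule out $c=1$: under the standing hypothesis~\eqref{eq:E2<=_assumption} we have $E^2\le -3$, and tracking the single tree $Q_1$ against the three $\C^*$-fibers lands us in configurations where $\pi_*L_F$ becomes a line or conic on $\P^2$, or where the divisibility in Lemma~\ref{lem:HN-equations}(a) fails, exactly as in the parallel steps of Lemma~\ref{lem:case_h=2}.

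For the remaining cases $c\in\{2,3\}$ I would write the dual graph of $D$ explicitly and convert it to HN-pairs by Lemma~\ref{lem:HN_for_chains}. When $c=3$ the cusps $q_2,q_3$ are vertical and sit in two distinct fibers; the fiber analysis together with the contractibility of $Q_2,Q_3$ forces each to be a semi-ordinary cusp (a chain $[3,1,(2)_{\ast}]$, cf.\ Example~\ref{ex:ordinary_cusp}), while $Q_1$ is a chain carrying $H$, and fixing the free parameters by Lemma~\ref{lem:HN-equations} yields precisely $\FZa(d,k)$. When $c=2$ exactly one cusp, say $q_2$, is vertical and occupies a single fiber $F_{q_2}$; the two outcomes are distinguished by whether $F_{q_2}$ is a chain or a special fork in the sense of Lemma~\ref{lem:singular_P1-fibers}. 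The chain case makes $q_2$ semi-ordinary and gives, after applying Lemma~\ref{lem:HN-equations}, the pair $\binom{4\gamma-3}{\gamma-1}$, $\binom{2\gamma-1}{2}$ with $\gamma\ge 3$, i.e.\ type $\cG(\gamma)$; the special-fork case endows $q_2$ with a second HN-pair and produces type $\cD$.

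The hard part will be the branching of the case analysis in the last two steps rather than any single idea. For every way of distributing the vertical cusps among the three fibers, and every choice of which component of $Q_1$ carries the $1$-section $H$, one must simultaneously match an admissible fiber shape from Lemma~\ref{lem:singular_P1-fibers}, enforce that each $Q_j$ contracts to a smooth point through the discriminant identities of Lemma~\ref{lem:discriminants}, and discard the numerically impossible branches using the three relations of Lemma~\ref{lem:HN-equations}. As in Lemma~\ref{lem:case_h=2}, the overwhelming majority of candidates die because some $\pi_*L_F$ would be a $0$-curve on $\P^2$ or because the genus formula of Lemma~\ref{lem:HN-equations}(c) or the divisibility in Lemma~\ref{lem:HN-equations}(a) is violated; the difficulty is organizing this elimination cleanly, not any deep new input.
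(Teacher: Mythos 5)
Your overall skeleton does match the paper's proof: reduce to the situation of Lemma \ref{lem:case_h=2} (so $E$ is a $2$-section, $\nu=0$, and the $1$-section $H$ is a component of some $Q_1$), show that every $Q_j$ with $j\ge 2$ is vertical and that each $q_j$, $j\ge 2$, is semi-ordinary, bound $c$ by distributing these configurations among the three degenerate fibers of $p$, and pin down the parameters with Lemma \ref{lem:HN-equations}. Two smaller remarks on that part: the bound $c\le 3$ comes from the fact that all the $C_j$ are vertical and lie in pairwise distinct degenerate fibers of $p$ by \eqref{eq:C_jD_h}, together with Corollary \ref{cor:Cstst-fibers}(a); the inequality $E^2\le 7-3c$ of Lemma \ref{lem:Tono_E2}(c) bounds $E^2$ in terms of $c$, not the other way around, so it cannot by itself force $c\le 3$.

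The genuine gap is the mechanism you commit to for separating $\cG$ from $\cD$ when $c=2$. You attach the chain-versus-special-fork dichotomy to the fiber containing $Q_2$ and assert that the fork case ``endows $q_2$ with a second HN-pair and produces type $\cD$''. This cannot work. In every surviving configuration the fiber containing $C_2$ has two $(-1)$-curves, namely $C_2$ itself and the component $L_{F_2}\not\subseteq D$, so it is the chain $[1,(2)_{a_2-1},3,1,2]$ and is never a special fork; moreover $q_2$ is semi-ordinary of type $\binom{2a_2+1}{2}$ in all three outcomes (only the subfamily $\cD(\gamma,2,1)$ occurs here, and its second cusp has the single standard pair $\binom{2\gamma+5}{2}$, cf.\ Remark \ref{rem:special_HN_cases}). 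The dichotomy that actually decides the answer lives on the $Q_1$ side: writing $A$ for the union of the connected components of $Q_1-H$ not containing $C_1$, one shows that either $A=0$ or $A=[2,2,2]$, the latter sitting in a special fork $F_A$ whose affine part uses up one of the three $\C^*$-fibers. It is $A\neq 0$ that yields $\cG$ (then $q_1$ has HN-pair $\binom{4\gamma-3}{\gamma-1}$ and the occupied fiber $F_A$ forces $c=2$), while $A=0$ yields $\FZa$ for $c=3$ and $\cD(\gamma,2,1)$ for $c=2$ (then $q_1$ has the single pair $\binom{u+1}{u}$). So your case split is attached to the wrong cusp and the correspondence is reversed; executed as written, the fork alternative would never fire and the $\cD$ family would be missed.
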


\begin{proof}Choose $p$ as in Lemma \ref{lem:case_h=2}. By Lemma \ref{lem:fibrations-Sigma-chi} every degenerate fiber $F$ has a unique component $L_{F}$ not in $D_{v}$. Let $H_1$ be the $1$-section of $\bar p$ contained in $D$, say, $H_{1}\subseteq Q_{1}$. We now describe the sum of connected components of $Q_{1}-H_{1}$ not containing $C_{1}$, call it $A$.

Suppose $A'\neq 0$ is a connected component of $Q_{1}-H_{1}$ not containing $C_{1}$. Let $F_{A'}$ be the fiber containing $A'$. Since $H_{1}$ is a $1$-section, the connected components of $Q_{1}-H_{1}$ lie in different fibers, in particular $F_{A'}$ does not contain $C_{1}$. Suppose $L_{F_A'}$ meets $Q_j$ for some $j>1$. Then $F_{A'}$ contains $Q_j$, so by \eqref{eq:C_jD_h} $L_{F_{A'}}$ meets exactly one $Q_j$ for $j>1$. After the contraction of $Q_j$ the image of $F_{A'}$ only one $(-1)$-curve, namely the image of $L_{F_{A'}}$, which has therefore  multiplicity at least two and its intersection number with the image of $E$ is at least $2$. The latter is impossible, as $F_{A'}\cdot E=2$. Thus $L_{F_{A'}}$ does not meet any $Q_j$ for $j>1$. It follows that $L_{F_{A'}}$ is the unique $(-1)$-curve in $F_{A'}$ and it meets $E$. By Lemma \ref{lem:singular_P1-fibers} $F_{A'}$ is a special fork and ${A'}$ consists of $(-2)$-curves. Since $Q_1$ contracts to a smooth point, we have in fact $A'=[2,2,2]$ and $H_1$ meets $A'$ in a tip. It follows that every connected component of $Q_{1}-H_{1}$ not containing $C_{1}$ meets $H_{1}$ in a $(-2)$-curve. The contractibility of $Q_1$ to a smooth point implies in this case that $H_{1}\neq C_{1}$ and $\beta_{D}(H_{1})\leq 2$, hence $A$ is connected. Thus, if $A\neq 0$ then $A=[2,2,2]$ and $H_1$ meets $A$ in a tip. Because $H_1\subseteq Q_1$, we see that all $C_j$'s for $j>1$ are vertical.

Denote by $F_j$ the fiber containing $C_j$. Recall that $\mu(q_j)$ denotes the multiplicity of $q_j\in \bar E$. By \eqref{eq:C_jD_h} the fibers $F_j$ are different from each other. Since $D_h\subseteq Q_1$, we have $Q_j\subseteq F_j$ for $j\geq 2$.  It follows that $B_1=F_1\cap D_v$ is connected. By Corollary \ref{cor:Cstst-fibers}(a) $F_{1}\cap D$ has two connected components, so $L_{F_1}$ meets $D_h$. Then $(B_1-C_1)\cdot D_h\leq 2-\mu(C_1)C_1\cdot D_h$ and the inequality \eqref{eq:C_jD_h} gives $(B_1-C_1)\cdot D_h=0$ and $\mu(C_1)C_1\cdot D_h=2$. Since $H_1$ meets $B_1$, we get $H_1\cdot C_1=E\cdot C_1=\mu(C_1)=1$. In particular, $C_{1}$ is a tip of $F_{1}$. Moreover $\mu(L_{F_1})L_{F_1}\cdot E=1$, so $\mu(L_{F_1})=1$. By Lemma \ref{lem:singular_P1-fibers} $F_{1}=[1,(2)_{u-1},1]$ for some $u\geq 2$. It follows that $q_{1}\in \bar{E}$ is of HN-type $\binom{4u+1}{u}$ if $A\neq 0$ and $\binom{u+1}{u}$ otherwise.

Fix $j\neq 1$ and denote by $\varphi$ the contraction of $Q_j$. We have 
\begin{equation*}
2=\varphi_*F_1\cdot \varphi_*E\geq \mu(\varphi_*L_{F_j})\varphi_*L_{F_j}\cdot \varphi_*E\geq \mu(\varphi_*L_{F_j})\mu(q_j) \geq 2\mu(L_{F_j}),
\end{equation*}
where $\mu(q_j)$ is the multiplicity of $q_j\in \bar E$, so $\varphi_*L_{F_j}$ has multiplicity $1$ and hence is not a unique $(-1)$-curve in $\varphi_*F_j$. It follows that $\varphi_*L_{F_j}=[0]$ and $\mu(q_j)=2$, hence $q_{j}\in \bar{E}$ is of HN-type $\binom{2a_{j}+1}{2}$ for some $a_j\geq 1$ and $F_{j}=[1,(2)_{a_j-1},3,1,2]$. 

Consider the case $A\neq 0$. We compute $(\pi_{*}L_{F_1})^2=4$ and $\bar E\cdot \pi_{*}L_{F_1}=(4u+1)+1$, hence by the Bezout theorem $\deg \bar E=2u+1$. Since $F_A\cap (X\setminus D)$ and $F_j\cap (X\setminus D)$ are degenerate fibers of $p$, by Corollary \ref{cor:Cstst-fibers} $c\leq 2$. For $c=1$ Lemma \ref{lem:HN-equations}(c) gives $k=0$ and then $\kappa(X\setminus D)=-\8$, contrary to our assumptions. Thus $c=2$ and then Lemma \ref{lem:HN-equations} gives $\gamma=u+1=a_2+1$, so $\bar E\subseteq \P^2$ is of HN-type $\binom{4\gamma-3}{\gamma-1}$, $\binom{2\gamma-1}{2}$. This is the HN-type $\cG(\gamma)$ (Fig.\ \ref{fig:G}).
\begin{figure}[ht]
	\centering
	\includegraphics[scale=0.45]{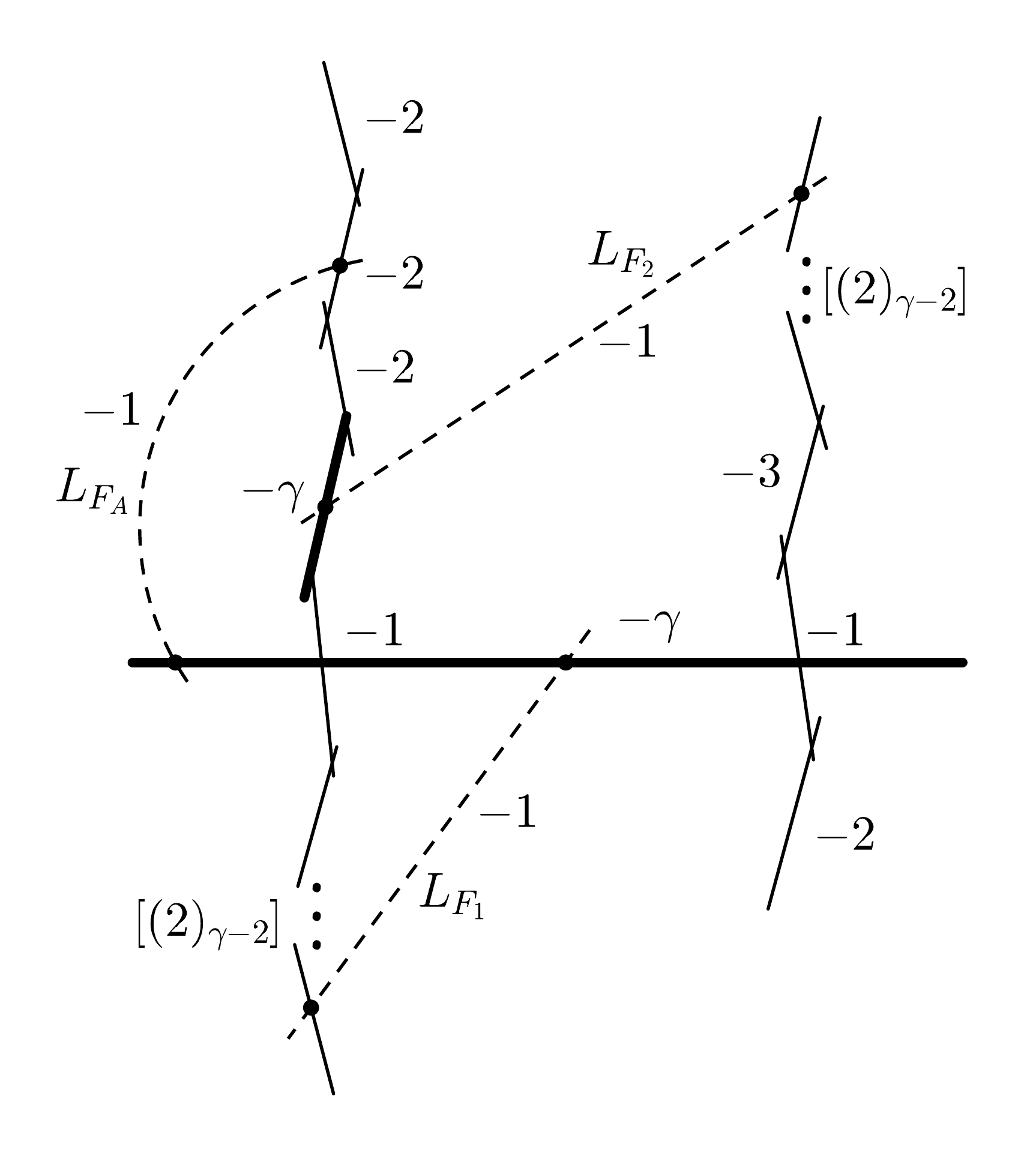}
	\caption{Type $\cG(\gamma)$}
	\label{fig:G}
\end{figure}

Consider the case $A=0$. We compute $(\pi_{*}L_{F_1})^2=1$ and $\bar E\cdot \pi_{*}L_{F_1}=(u+1)+1$, hence $\deg \bar E=u+2$. By Corollary \ref{cor:Cstst-fibers} $c\leq 3$. The equations Lemma \ref{lem:HN-equations} give $\deg \bar{E}=2+a_2+\ldots+a_c$ and $\gamma=\deg \bar{E}+2(c-4)$. For $c=1$ we have $\deg \bar{E}=2$ and then $\kappa(X\setminus D)=-\8$, contrary to our assumptions. For $c=3$ we obtain HN-type $\FZa(\deg\bar{E},a_2)$ if $a_{2}\geq \frac{1}{2}\deg \bar{E}-1$ and $\FZa(\deg\bar{E},\deg \bar{E}-2-a_2)$ otherwise. For $c=2$ the HN-types of the cusps of $\bar E\subseteq \P^2$ are $\binom{\gamma+3}{\gamma+2}$ and $\binom{2\gamma+5}{2}$, which is HN-type $\cD(\gamma,2,1)$ (Fig.\ \ref{fig:D1}): indeed, the second cusp can be described by a non-standard HN-type $\binom{2\gamma+4}{2}\binom{2}{1}$ and thus agrees with the formula in Theorem \ref{thm:possible_HN-types}. Note that $\gamma\geq 2$ by \eqref{eq:E2<=_assumption}. 
\begin{figure}[htbp]
	\centering
	\begin{minipage}{0.5\textwidth}
		\centering
		\includegraphics[scale=0.43]{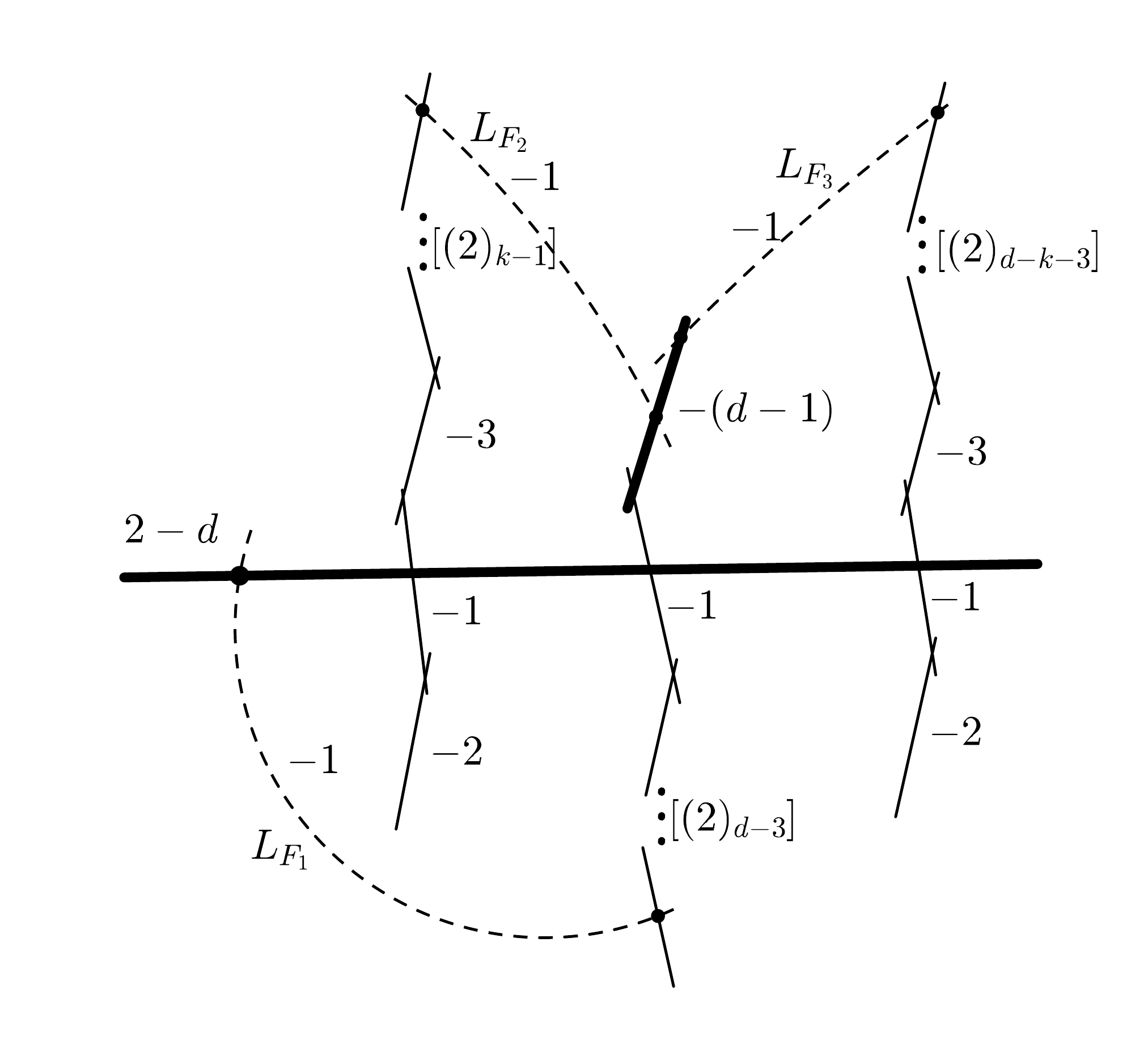}
		\caption{Type $\FZa(d,k)$}
		\label{fig:FZ1}
	\end{minipage}\hfill
	\begin{minipage}{0.5\textwidth}
		\centering
		\includegraphics[scale=0.45]{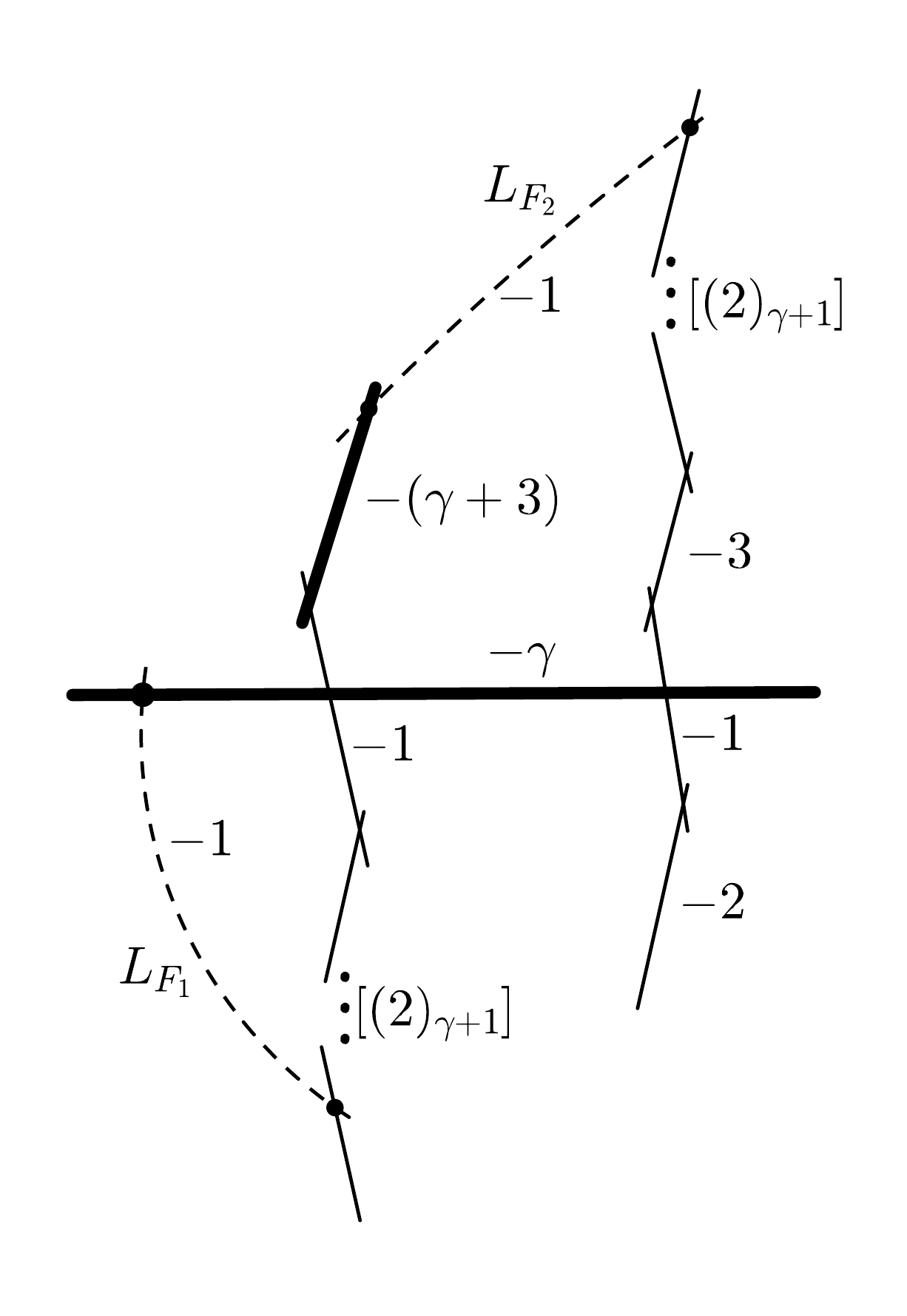}
		\caption{Type $\cD(\gamma,2,1)$.}
		\label{fig:D1}
	\end{minipage}
\end{figure}
\end{proof}

\subsection{Remaining cases with one cusp.}

\begin{notation}[Cases with three sections in $D$]\label{not:h=3}
By the results of previous sections (Proposition \ref{prop:some_Cstst_fibration_extends}, Lemma \ref{lem:case_nu=1}, Corollary \ref{cor:case_h=2} and \eqref{eq:E2<=_assumption}) from now on we may and shall assume that we have a $\C^{**}$-fibration $p$ of $\P^{2}\setminus \bar{E}$, such that $p$ extends to a $\P^{1}$-fibration $\bar{p}$ of $X$ with no fibers in $D$, the horizontal part $D_{h}$ consists of three $1$-sections $H_{1},H_{2},H_{3}$ and we have $E^{2}\leq -2$ and $E^{2}\leq -3$ if $c=1$. As before, we put $\gamma=-E^2$.

Now Lemma \ref{lem:fibrations-Sigma-chi} implies that there exists exactly one fiber $F_{0}$ of $\bar{p}$ containing exactly two components not in $D$, which we denote by $\Gamma_{1}$, $\Gamma_{2}$. Every other degenerate fiber $F$ of $\bar p$ has a unique component not in $D$, which we denote by $L_{F}$.  
\end{notation}

We have $F_0\cap D_v\neq 0$. Indeed, otherwise  by Corollary \ref{cor:Cstst-fibers}(a) $\Gamma_{i}\cdot D_h=\Gamma_i\cdot D\geq 2$, which is impossible, as $F_0\cdot D_h=3$.

\begin{lem}[Degenerate fibers, $h=3$]\label{lem:fibers_are_chains} Let $\bar p$ be as in Notation \ref{not:h=3} and let $F$ be a degenerate fiber of $\bar p$.
\begin{enumerate}[(a)]
	\item If $F$ contains some $C_j$ then $\sigma(F)=\mu(C_1)=1$ and $C_j$ meets two sections from $D$.
	\item Components of $F$ not contained in $D$ are $(-1)$-curves.
	\item Assume $\sigma(F)=1$. Then $D_h$ meets $F$ only in tips and if $F$ contains no $C_j$ then these tips are contained in $D$.
	\item $F$ is a chain.
	\item $F$ contains no subchain of the form $[2,2,2]$ meeting $D_{h}$ in the second component.
\end{enumerate}
\end{lem}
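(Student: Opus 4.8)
The plan is to extract from the hypothesis $h=3$ two rigid bookkeeping constraints and then run a case analysis on $\sigma(F)$. First, each $H_i$ is a $1$-section, so $H_i\cdot F=1$; hence $H_i$ meets $F$ transversally in a single point lying on a component of multiplicity $1$ (Fact A: every point of $F\cap D_h$ sits on a multiplicity-$1$ component, and there are exactly three section ``slots'' to distribute). Second, since $X\setminus D$ is affine, $D$ is connected, and within $F$ a component of $D_v\cap F$ is joined to the rest of $D$ only through $D_h$; thus every connected component of $D_v\cap F$ meets $D_h$, and by Fact A contains a multiplicity-$1$ component (Fact B). Third, by Corollary \ref{cor:Cstst-fibers}(a) a component $\Gamma\not\subseteq D$ of $F$ is homeomorphic to $\C^*$ and meets $D$ in exactly two points, so $\beta_{F\redd}(\Gamma)+\Gamma\cdot D_h=2$; in particular $\Gamma$ is non-branching and meets at most two sections (Fact C). Consequently every branching component of $F$ lies in $D_v$, hence has self-intersection $\leq-2$ and multiplicity $\geq2$, and—since $\gamma=-E^2\geq2$ by \eqref{eq:E2<=_assumption}—the only $(-1)$-curves of $D$ are the $C_j$. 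Facts A and B together give a \emph{budget principle}: the three slots must supply at least one section for each connected component of $D_v\cap F$ and exactly $2-\beta_{F\redd}(\Gamma)$ for each non-$D$ component $\Gamma$, so any configuration demanding four or more is impossible.

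For (a), assume $C_j\subseteq F$. Since $\beta_D(C_j)=3$, the number $C_j\cdot D_h$ lies in $\{1,2,3\}$. It cannot be $3$ (then $\beta_{F\redd}(C_j)=0$ and $C_j$ is an isolated $(-1)$-curve, impossible for a degenerate fiber), and it cannot be $1$ (a section $H$ through $C_j$ would give $H\cdot F\geq\mu(C_j)\,H\cdot C_j\geq\mu(C_j)\geq2$ by \eqref{eq:C_jD_h}, contradicting $H\cdot F=1$). Hence $C_j\cdot D_h=2$ and, by \eqref{eq:C_jD_h}, $\mu(C_j)=1$; so $C_j$ is a tip of $F$ meeting two sections. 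If $F$ were $F_0$, then $C_j$ alone would occupy two slots while $\Gamma_1,\Gamma_2$ (with Fact B) would demand more, breaking the budget; thus $\sigma(F)=1$. For (b) I argue by $\sigma(F)$. If $\sigma(F)=1$ the unique non-$D$ component $L_F$ is a $(-1)$-curve: if $F$ contains a $C_j$ it is a $(-1)$-tip, so Lemma \ref{lem:singular_P1-fibers}(a) produces a second $(-1)$-curve, necessarily $L_F$; if $F$ contains no $C_j$, all components of $D_v\cap F$ have self-intersection $\leq-2$, so the $(-1)$-curve every degenerate fiber carries can only be $L_F$. If $\sigma(F)=2$ then $F=F_0$, which by (a) contains no $C_j$, so its $(-1)$-curves lie among $\Gamma_1,\Gamma_2$; to exclude a \emph{single} one I use Lemma \ref{lem:singular_P1-fibers}(b): the fiber would be a chain $[U,1,U^*]$ or a special fork, and in either case the second non-$D$ component is interior, so between it and the unique $(-1)$-curve sits a nonempty connected component of $D_v\cap F_0$ of multiplicity $\geq2$ everywhere, which must meet $D_h$ by Fact B yet cannot by Fact A. Hence both $\Gamma_i$ are $(-1)$-curves.

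Parts (c) and (d) then follow from the fibre classification. For $\sigma(F)=1$, Lemma \ref{lem:singular_P1-fibers}(b),(c) give $F=[U,1,U^*]$ (no $C_j$) or $[1,(2)_\alpha*U,1,U^*]$ (one $C_j$), and Facts A,B force the section points onto multiplicity-$1$ tips; when $F$ carries no $C_j$ the curve $L_F$ is the interior maximal-multiplicity $(-1)$-curve, so these tips lie in $D$—this is (c). The special-fork alternative of Lemma \ref{lem:singular_P1-fibers}(b) is ruled out exactly as in (b): its long twig yields a multiplicity-$\geq2$ connected component of $D_v\cap F$ that must, yet cannot, meet $D_h$. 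For $F_0$ one has exactly two $(-1)$-curves by (b), and checking the hypotheses of Lemma \ref{lem:singular_P1-fibers}(c) via the same budget bookkeeping shows $F_0$ is a chain, which is (d).

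Finally, (e) is immediate when $\sigma(F)=1$ by (c), since the middle of a $[2,2,2]$ is interior while all section points are tips. For $F_0$ (a chain by (d)) a short propagation argument does the reduction: if the middle $M$ of a $[2,2,2]$ meets $D_h$ then $\mu(M)=1$ by Fact A, and $F_0\cdot M=0$ forces both neighbours to have multiplicity $1$; iterating, either one reaches a $D_v$-tip (of self-intersection $\leq-2$), which by Facts A,B cannot carry the section, or the multiplicity-$1$ $(-2)$-region runs between the two $(-1)$-curves, reducing the only surviving case to a reduced chain $[1,(2)_k,1]$ with non-$D$ tips $\Gamma_1,\Gamma_2$ and the third section on an interior $(-2)$; this last possibility I would eliminate by computing $\pi_*$ of the offending curve in $\P^2$. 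I expect the real obstacles to be precisely the single-$(-1)$-curve (and special-fork) exclusions in (b)/(c)/(d) and this final reduced-chain case of (e): all three are points where the multiplicity-$1$/section count of Facts A–C is tight and must be supplemented by the contractibility of the $Q_j$ or by a direct computation on $\P^2$.
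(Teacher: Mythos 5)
Your parts (a)--(c) follow essentially the paper's own route: the bookkeeping is that $F\cap D$ has $\sigma(F)+1$ connected components, each of which must meet $D_h$ in a multiplicity-$1$ component, against a budget of $F\cdot D_h=3$. Apart from minor slips (e.g.\ $C_j\cdot D_h=3$ is not excluded because ``$\beta_{F\redd}(C_j)=0$'' --- $C_j$ could still meet a non-$D$ component --- but because $F\cap D$ would then be connected, forcing $\sigma(F)=0$), these parts are fine. The genuine gaps are in (d) and (e). For (d) you propose to prove that $F_0$ is a chain by ``checking the hypotheses of Lemma \ref{lem:singular_P1-fibers}(c)'', but those hypotheses \emph{fail} for the fibers $F_0$ that actually occur: the lemma requires one of the two $(-1)$-curves to have multiplicity $1$ and $F\redd-(L_1+L_2)$ to have exactly two connected components, each containing a multiplicity-$1$ component. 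In the fiber $F_0=[(2)_{p-1},1,p+1,(2)_{s-2},3,(2)_{\gamma-1},1,\gamma+1]$ of type $\cA$ one computes $\mu(\Gamma_1)=p\geq 2$ and $\mu(\Gamma_2)=\gamma+1\geq 3$, and $D_v\cap F_0$ has \emph{three} connected components; so the lemma is simply inapplicable and no budget count will change that. What is needed is a separate argument for the absence of branching components: if $R$ were branching, some connected component $T$ of $F\redd-R$ contracts to a point; if $\mu(R)>1$ then every component of $T$ has multiplicity $>1$, so $T$ contains a connected component of $F\cap D$ disjoint from $D_h$, which is impossible; hence $\mu(R)=1$, and after contracting $T$ the image of $F$ has a unique $(-1)$-curve while the image of $R$ is a multiplicity-$1$ component with $\beta\geq 2$, contradicting Lemma \ref{lem:singular_P1-fibers}($\mathrm{b}_1$). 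You have no substitute for this step.

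Part (e) for $F_0$ is also not a proof. Your propagation is internally inconsistent: a multiplicity-$1$ tip whose neighbour has multiplicity $1$ satisfies $0=F\cdot T=T^2+1$, so it is a $(-1)$-curve, and the branch ``one reaches a $D_v$-tip of self-intersection $\leq -2$'' cannot occur; moreover the multiplicity-$1$ plateau around $M$ may also terminate at a $(\leq -3)$-curve with higher multiplicities beyond it, so the reduction to the single chain $[1,(2)_k,1]$ is unjustified, and that last case is only ``to be eliminated by computing $\pi_*$'', which is a plan rather than an argument. The intended mechanism is entirely different and uniform over all degenerate fibers: by (b) and (d) the subchain $[2,2,2]$ lies in $D$, and since $E$ meets only the $(-1)$-curves $C_i$, the subchain together with the section $H$ lies in a single $Q_j$; the contraction of $Q_j$ to a smooth point maps the subchain onto $[2,1,2]$, which is not negative definite --- contradicting the negative definiteness of every partial image of $Q_j$. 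The contractibility of the $Q_j$'s is the missing ingredient in your treatment of both (d) and (e).
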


\begin{proof}We have $h=3$ and $\nu=0$, so by Corollary \ref{cor:Cstst-fibers} $\sigma(F)\leq 2$ and $F\cap D$ has $\sigma(F)+1$ connected components. Since $D$ is connected, each of them meets some $1$-section contained in $D$. 

(a),(b) Assume $F$ contains some $C_j$. By \eqref{eq:C_jD_h} $F$ contains exactly one $C_{j}$ and, since $C_j\cdot D_h>0$, $\mu(C_j)=1$, so by \eqref{eq:C_jD_h} $C_j$ meets two sections and hence $\sigma(F)=1$. By Lemma \ref{lem:singular_P1-fibers} $L_F$ is a $(-1)$-curve. Assume now that $F$ contains no $C_j$ and suppose some component not in $D$ is not a $(-1)$-curve. This can happen only if $F$ has a unique $(-1)$-curve and $\sigma(F)=2$. Then $F\cap D$ has three connected components and each of them contains or belongs to a component of $F$ of multiplicity $1$. By Lemma \ref{lem:singular_P1-fibers}($\mathrm{b}_{1}$) $F$ is a chain and $D_h$ meets it only in tips, so one of its tips meets two sections from $D$ and hence is not contained in $D$, say it is $\Gamma_1$. Then $F\cap D_v$ is connected, so $\Gamma_2\cap X\setminus D$ is an affine line; a contradiction with Lemma \ref{lem:Qhp_has_no_lines}.

(c) Since $\sigma(F)=1$, $D\cap F$ has two connected components. If $F$ contains a unique $(-1)$-curve then the assertion follows from Lemma \ref{lem:singular_P1-fibers}($\mathrm{b}_{1}$). Otherwise $F$ contains some $C_j$ which meets two components of $D_h$. If $A$ denotes the connected component of $F\cap D$ containing $C_j$ then $F\redd-A$ has only one component of multiplicity $1$ and this component is a tip of $F$.

(d) Suppose $R$ is a branching component of $F$. Since $F$ contracts to a $0$-curve, there is a connected component $T$ of $F\redd-R$ which contracts to a point. Denote this contraction by $\phi$. Clearly, $T$ contains a $(-1)$-curve which is non-branching in $F$. If $\mu(R)>1$ then all components of $T$ have multiplicity bigger than $1$, so $T_0\cdot D_h=0$ and hence the $(-1)$-curve is not a component of $D$. But in the latter case, Corollary \ref{cor:Cstst-fibers} implies that there is a connected component of $F\cap D$ contained in $T$, which is impossible, as $D$ is connected. Thus $\mu(R)=1$ and hence $\varphi_*R$, having $\beta_{(\varphi_*F)\redd}(\varphi_*R)>1$, is not a $(-1)$-curve. Since $F$ has at most two $(-1)$-curves, $(\varphi_*F)\redd$ has at most one. But then Lemma \ref{lem:singular_P1-fibers}($\mathrm{b}_{1}$) implies that $\phi_{*}R$ is a tip of $\phi_{*}F$; a contradiction.

(e) Assume $T=[2,2,2]$ is a subchain of $F$ and suppose a section $H\subseteq D_h$ meets the middle component of $T$. By (d) and (b) $F$ is a chain and its components not contained in $D$ are $(-1)$-curves. It follows that $T$ is contained in $D$ and its tips are contained in different maximal twigs of $D$. Because $E$ does not meet any $(-2)$-curve from $D$, we may assume $T+H\subseteq Q_1$. Then successive contractions of $(-1)$-curves in the contractible divisor $Q_1$ map $T$ onto $[2,1,2]$, which is not negative definite; a contradiction.
\end{proof}

\begin{notation}[$F$-distance of sections]\label{not:F-distance}
For a given $\P^1$-fibration of smooth projective surface $Y$ and a $1$-section $H$ of $\bar p$ we denote by $\phi_{H}$ the composition of contractions of vertical $(-1)$-curves in $Y$ and its images which do not meet the image of $H$. Such a contraction exists by Lemma \ref{lem:singular_P1-fibers}(a) and $\phi_H(Y)$ is a $\P^1$-bundle. For two sections $H$, $H'$ of $\bar p$ and a fiber $F$ we define their \emph{$F$-distance} $d_F(H,H')$ as the number of components of multiplicity $1$ in the minimal subchain of $F$ meeting both $H$ and $H'$ minus $1$. In particular, the $F$-distance is zero if and only if $H$ and $H'$ meet a common component of $F$.
\end{notation}

\begin{lem}\label{lem:E_is_vert_if_c=1}
If $\bar p$ is as in Notation \ref{not:h=3} and $c=1$ then $E$ is vertical for $\bar p$.
\end{lem}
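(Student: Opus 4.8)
The plan is to argue by contradiction: suppose $E$ is horizontal for $\bar p$. Since by Notation \ref{not:h=3} all three horizontal components of $D$ are $1$-sections, $E$ is then a $1$-section, so $E\cdot F=1$ for every fiber $F$. First I would record the two structural facts that drive everything. Because $c=1$ we have $D=E+Q_1$, and $E$ meets $Q_1$ only in $C_1$, transversally; hence $E$ is a tip of $D$ with $E\cdot V=0$ for every component $V\neq C_1$ of $D$. Moreover, since $E^2\leq -2$, the curve $C_1$ is the unique $(-1)$-curve of $D$, so it is the unique vertical $(-1)$-curve of $D$ in case it is vertical at all.

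Next I would count degenerate fibers. Since $\nu=0$, Corollary \ref{cor:Cstst-fibers}(a) gives exactly three degenerate fibers, and Corollary \ref{cor:Cstst-fibers}(b) gives $\sum_F\sigma(F)=h+1=4$. As every fiber satisfies $\sigma(F)\geq 1$ (no fiber lies in $D$) and $F_0$ is the unique fiber with $\sigma(F_0)=2$, the remaining two degenerate fibers each have $\sigma=1$. By Lemma \ref{lem:fibers_are_chains}(a) a fiber containing $C_1$ has $\sigma=1$, so $C_1\notin F_0$; consequently $C_1$ lies in at most one of the two $\sigma=1$ degenerate fibers (in none of them, should $C_1$ turn out to be horizontal).

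The heart of the argument is then to pick a $\sigma=1$ degenerate fiber $F'$ with $C_1\notin F'$, which exists by the previous count. Since $F'$ contains no $C_j$, Lemma \ref{lem:fibers_are_chains}(b) makes its unique component $L\de L_{F'}$ not contained in $D$ a $(-1)$-curve, and $F'$ can contain no further $(-1)$-curve inside $D$ (the only candidates are the $C_j$, none of which lies in $F'$). Now $E\cdot F'=1$, while $E$ meets no component of $F'$ lying in $D$, all of which are different from $C_1$; hence $1=E\cdot F'=\mu(L)\,(E\cdot L)$, forcing $\mu(L)=1$. But then Lemma \ref{lem:singular_P1-fibers}(a) says $F'$ must contain a second $(-1)$-curve, which by $\sigma(F')=1$ is a component of $D$, hence some $C_j\in F'$ — a contradiction. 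This shows $E$ is vertical.

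The main point to get right is the bookkeeping of $\sigma$ together with the uniqueness of the vertical $(-1)$-curve $C_1$: the whole argument rests on having at least two $\sigma=1$ fibers while only one of them can house $C_1$, and on the fact that $E$, being a tip of $D$ meeting $D$ only along $C_1$, is forced to cross such a fiber through its single non-boundary $(-1)$-curve, which must then have multiplicity one. No delicate combinatorial analysis of the individual degenerate fibers is needed, which is precisely why, in contrast to the $h=2$ case, the hypothesis $c=1$ makes this reduction short.
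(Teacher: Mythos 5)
Your argument has a genuine gap: it conflates degenerate fibers of the $\C^{**}$-fibration $p$ on $\P^{2}\setminus\bar E$ with degenerate fibers of the $\P^{1}$-fibration $\bar p$ on $X$. Corollary \ref{cor:Cstst-fibers} counts the former, and as the paper notes right after that corollary, a degenerate fiber of $p$ (e.g.\ one isomorphic to $\C^{*}$) may very well complete to a \emph{non-degenerate} fiber of $\bar p$ — an irreducible $0$-curve through a point where two of the three sections of $D_h$ cross. For such a fiber $F'$ the unique component not in $D$ is the whole fiber, a $0$-curve of multiplicity $1$, so Lemma \ref{lem:singular_P1-fibers}(a) (which concerns degenerate fibers of a $\P^{1}$-fibration) gives no contradiction. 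Consequently the fiber $F'$ your argument hinges on — a degenerate fiber of $\bar p$ with $\sigma=1$ not containing $C_{1}$ — need not exist, and in fact the correct conclusion of your computation $1=E\cdot F'=\mu(L_{F'})\,E\cdot L_{F'}$ is only that \emph{every} degenerate fiber of $\bar p$ other than $F_{0}$ must contain $C_{1}$; since $C_{1}$ lies in a single fiber, $\bar p$ has at most two degenerate fibers. This is precisely the point the paper reaches early in its proof (arguing, equivalently, via Lemma \ref{lem:fibers_are_chains}(c) that such an $F'$ would meet the section $E$ in a tip contained in $D-C_1$, which $E$ does not meet).

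The bulk of the work is what comes after, and it is absent from your proposal. One must rule out the two remaining configurations: $F_{0}$ is the unique degenerate fiber of $\bar p$, or $\bar p$ has exactly two degenerate fibers $F_{0}$ and $F_{1}\ni C_{1}$. The paper does this by contracting degenerate fibers to reach a Hirzebruch surface, comparing self-intersections of the images of the sections via the $F$-distance of Notation \ref{not:F-distance} (using that differences of sections are numerically vertical, hence have square zero), and playing the resulting lower bounds $d_{F_0}(H_i,H_j)\geq 3$ (resp.\ $\geq 4$) against Lemma \ref{lem:fibers_are_chains}(e), which forbids a $[2,2,2]$ subchain of a fiber meeting $D_h$ in its middle component. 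Your closing claim that ``no delicate combinatorial analysis of the individual degenerate fibers is needed'' is exactly where the proof breaks down.
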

\begin{proof} Suppose that $E=H_{3}$. Suppose first that $F_{0}$ is a unique degenerate fiber of $\bar{p}$. By Lemma \ref{lem:fibers_are_chains} $F_0$ does not contain $C_1$. It has exactly two $(-1)$-curves and they are not components of $D$. Moreover, $D\cap F$ has three connected components (which may be points). We infer that there exists a contraction $\varphi\:X\to X'$ not touching $D_h$, such that $F'_0=\varphi_*F_0$ is of type $[1,2,\ldots,2,1]$ and both tips meet $\varphi_*D_h$. If $d_{F_0}(H_i,H_j)\geq 3$ for every $i\neq j$ then $F_0'$ contains a chain $[2,2,2,2,2]$ meeting $\varphi_*D_h$ in the middle, so because the base points of $\varphi^{-1}$ belong to the $(-1)$-curves of $F_0'$, $F_0$ contains a chain of type $[2,2,2]$ meeting $D_h$ in the middle component, contrary to Lemma \ref{lem:fibers_are_chains}(e). Thus, $d_{F_0}(H_i,H_i)\leq 2$ for some $i\neq j$. Then $\phi_{H_i}:X\to \F_n$ does not touch $H_i$ and touches $H_j$ at most twice. Because in $D_h$ there may be at most one section with self-intersection bigger than $(-2)$ (namely, $C_1$), we have $\varphi_{H_i}(H_i)^2+\varphi_{H_i}(H_j)^2\leq H_i^2+H_j^2+2<0$, hence $(\varphi_{H_i}(H_i)-\varphi_{H_i}(H_j))^2<0$. But on a Hirzebruch surface $\varphi_{H_i}(H_i)-\varphi_{H_i}(H_j)$ is numerically equivalent to a vertical divisor, so the latter inequality fails; a contradiction.

Thus there is a degenerate fiber $F_1$ of $\bar p$ other than $F_0$. If $F_1$ does not contain $C_1$ then by Lemma \ref{lem:fibers_are_chains} it has a unique $(-1)$-curve, so Lemma \ref{lem:singular_P1-fibers} implies that $D_h$ meets $F_1$ in tips and these tips are components of $D$. But $E$ does not meet $D-C_1$, so we infer that $F_1$ contains $C_1$, and hence $\bar p$ has exactly two degenerate fibers, $F_0$ and $F_1$. By Lemma \ref{lem:fibers_are_chains} $H_1$ meets $C_1$. By Corollary \ref{cor:Cstst-fibers} $p$ has three degenerate fibers, so one of them is contained in a non-degenerate fiber of $\bar{p}$. Hence the latter contains a point where two components of $D_{h}$ meet. This point does not lie on $E$, so $H_{1}$ meets $H_{2}$.

Now Lemma \ref{lem:fibers_are_chains} and Lemma \ref{lem:singular_P1-fibers}(c) imply that for $i=0,1$ $(F_{i})\redd=[1,U_{i},1,V_{i}]$, where the first tip meets $E$ and $U_{i}$, $V_{i}$ are either zero or connected components of $D_{v}$. Consider the contraction $\varphi_E\:X\to X'$ onto some Hirzebruch surface defined above. For a component $S$ of $D_{h}$ denote by $S'$ its image on $X'$. It follows from the definition of $\varphi_E$ that $E'\cdot H_i=0$ for $i=1,2$ and $E'^2=E^2=-\gamma$. Since $E'-H_{i}'$, $i=1,2$ are numerically equivalent to vertical divisors, they intersect trivially, so 
\begin{equation*}-E'^2=E'\cdot (H_2'-E')=H_1'\cdot (H_2'-E')=H_1'\cdot H_2'=\min\{d_{F_0}(H_{1},E),d_{F_0}(H_{2},E)\},
\end{equation*}
hence $d_{F_0}(H_{i},E)\geq \gamma\geq 3$ for $i=1,2$.

Let $R_1\subseteq F_1$ be the component of $F_1$ meeting the sum of $C_1$ and a maximal chain of $(-2)$-curves in $F_1$ meeting it and let $R_0\subseteq F_0$ be the component meeting $H_1$. The contraction of $C_1$ and a maximal chain of $(-2)$-curves in $F_1$ meeting it makes $H_1$ into a $(-1)$-curve. Since both $R_i$'s have multiplicity $1$, there is a contraction $\varphi\:X\to X''$ onto a Hirzebruch surface which contracts neither $R_0$ nor $R_1$. For a component $S$ of $D_{h}$ denote by $S''$ its image on $X''$. The contractions in $F_1$ touch $H_2$ at most once, so $(H_{2}'')^{2}\leq H_{2}^{2}+d_{F_0}(H_{1},H_{2})+1$. Since $H_1''-H_2''$ is a numerically vertical divisor, we have $(H_{1}''-H_{2}'')^{2}=0$, so
\begin{equation*}
2H_{1}''\cdot H_{2}''=(H_{1}'')^2+(H_{2}'')^2=-1+(H_{2}'')^2\leq H_{2}^{2}+d_{F_0}(H_{1},H_{2})\leq d_{F_0}(H_{1},H_{2})-2.
\end{equation*}
But $H_{1}''\cdot H_{2}''\geq H_1\cdot H_2\geq 1$, so $d_{F_0}(H_{1},H_{2})\geq 4$. Since we already proved that $d_{F_0}(H_{i},E)\geq 3$, we get a contradiction with Lemma \ref{lem:fibers_are_chains}(e) as above.
\end{proof}

\setcounter{claim}{0}
\begin{lem}[Reduction to the case $c\geq 2$]\label{lem:E2=2_if_c=1}
If $\bar p$ is as in Notation \ref{not:h=3} then $c\geq 2$.
\end{lem}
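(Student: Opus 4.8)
The plan is to argue by contradiction: assume $c=1$ and derive an impossible configuration. By Lemma \ref{lem:E_is_vert_if_c=1}, under the hypotheses of Notation \ref{not:h=3} the assumption $c=1$ forces $E$ to be vertical for $\bar p$; let $F_E$ denote the fiber of $\bar p$ containing $E$. Since $c=1$ we have $D=E+Q_1$, so all three $1$-sections $H_1,H_2,H_3$ are components of $Q_1$, and $\gamma=-E^2\geq 3$ by \eqref{eq:E2<=_assumption}. As $E^2<0$ the fiber $F_E$ is degenerate, hence a chain by Lemma \ref{lem:fibers_are_chains}(d), and $E$ occurs in it as a $(-\gamma)$-curve.

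First I would pin down the position of $C_1$ and the local shape of $F_E$. Since $C_1\cdot E=1$ and $E$ is vertical, either $C_1$ is vertical, in which case it lies in $F_E$ and Lemma \ref{lem:fibers_are_chains}(a) together with \eqref{eq:C_jD_h} gives $\sigma(F_E)=\mu(C_1)=1$ and that $C_1$ meets two of the sections, or $C_1$ is horizontal, in which case it is one of the $H_i$ and I would locate the multiplicity-one component of $F_E$ that it meets. In both situations Lemma \ref{lem:fibers_are_chains}(b),(c) controls how $D_h$ meets $F_E$ (only in tips, with the components not in $D$ being $(-1)$-curves), so that the attaching points of $H_1,H_2,H_3$ on the chain $F_E$ are determined up to the shape of its $(-2)$-twigs, and in particular at most one section can meet any given component.

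The core of the argument would then be the Hirzebruch-surface/$F$-distance technique used in the proof of Lemma \ref{lem:E_is_vert_if_c=1}, now applied to the three sections rather than to $E$. For a section $H_i$, the morphism $\phi_{H_i}$ of Notation \ref{not:F-distance} contracts $F_E$ onto a $0$-curve of a relatively minimal model $\F_n$; since any two images $H_i'$ and $H_j'$ are numerically equivalent modulo a vertical divisor, we get $(H_i')^2+(H_j')^2=2H_i'\cdot H_j'\geq 0$, and counting how many times $\phi_{H_i}$ touches $H_j$ expresses the left-hand side through $H_i^2$, $H_j^2$ and the $F_E$-distances $d_{F_E}(H_i,H_j)$. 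Because $E$ sits in $F_E$ as a $(-\gamma)$-curve with $\gamma\geq 3$, contracting $F_E$ to a $0$-curve forces the $(-2)$-part of the chain separating the sections to be long, yielding lower bounds $d_{F_E}(H_i,H_j)\geq 3$ for at least one pair (and a sharper bound for a second pair, exactly as in loc.\ cit.).

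Finally, such large $F_E$-distances force a subchain of type $[2,2,2]$ inside $F_E$ whose middle component is met by one of the sections $H_k\subseteq D_h$, which is forbidden by Lemma \ref{lem:fibers_are_chains}(e); this is the desired contradiction. The few extremal configurations that survive the distance estimate (short chains, or the case $\sigma(F_E)=2$ in which $F_E$ is the special fiber $F_0$) I would dispose of directly: each pins down the sequence of HN-pairs of $q_1$ up to one parameter, and one then checks that the resulting $\deg\bar E$ and $\gamma$ violate one of the relations in Lemma \ref{lem:HN-equations}, or else make $\pi_*$ of some fiber component a line or conic, i.e. $\kappa\neq 2$. The main obstacle is precisely this casework: reconciling the rigid requirement that $Q_1$ contract to a smooth point (so that $d(Q_1)=1$ and its twigs have prescribed shapes) with the demand that all three sections attach to $F_E$ at multiplicity-one components while $E$ remains a $(-\gamma)$-curve with $\gamma\geq 3$.
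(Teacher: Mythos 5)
There is a genuine gap in the core of your argument: you run the Hirzebruch-surface/$F$-distance estimate on the fiber $F_E$ containing $E$, but in the actual configuration the distances in $F_E$ are small and give no contradiction. Once one knows (as in the paper's Claim 1) that $C_1$ is vertical, \eqref{eq:C_jD_h} forces $F_E=[1,\gamma,1,(2)_{\gamma-2}]$ with $C_1$ a tip of multiplicity $1$ meeting \emph{two} of the sections, say $H_1,H_2$, and $H_3$ meeting the opposite tip. Hence $d_{F_E}(H_1,H_2)=0$ and, since the only multiplicity-one components of $F_E$ are $C_1$, $E$ and the last tip of $R=[(2)_{\gamma-2}]$, $d_{F_E}(H_i,H_3)=2$; your claimed bound $d_{F_E}(H_i,H_j)\geq 3$ is false, and the presence of the $(-\gamma)$-curve $E$ does not make the separating $(-2)$-part "long" in the sense of the distance (those $(-2)$-curves have multiplicity $>1$ except the last). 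The relation $2H_i'\cdot H_j'=(H_i')^2+(H_j')^2$ on the Hirzebruch surface aggregates touches over \emph{all} degenerate fibers, so the useful lower bounds live in the other degenerate fiber $F_0$ (the one with $\sigma=2$): the paper first proves that $\bar p$ has exactly two degenerate fibers and pins down how $H_1,H_2,H_3$ attach to $F_0$, and only then derives $d_{F_0}(H_1,H_2)\geq 4$, $d_{F_0}(H_1,H_3)\geq 4$, $d_{F_0}(H_2,H_3)\geq 2$ and invokes the forbidden $[2,2,2]$ subchain of Lemma \ref{lem:fibers_are_chains}(e).

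Reaching that point is not a matter of "disposing of a few extremal configurations directly": it occupies most of the paper's proof. One needs (i) the reduction to $C_1$ vertical, which in the horizontal case involves either replacing the fibration by a new one induced by $|\phi^*\phi_*(E+C_1)|$ or a delicate analysis of $F_E$ when $\sigma(F_E)=2$; and (ii) the structural Claims that $B$ (the component of $Q_1-C_1$ containing $H_2$) is not a $(-2)$-chain, that $B+C_1+H_1$ is a twig of $Q_1$, and that $H_1\cdot H_3=1$ — all resting on the contractibility of $Q_1$ to a smooth point. Even after the distance bounds, the final contradiction is not the $[2,2,2]$ criterion alone: one gets $H_2^2=H_3^2=-2$ and must again use contractibility of $Q_1$ to force $B$ to be a $(-2)$-chain, contradicting the earlier claim. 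So while your overall strategy points at the right tools, the key estimate is attached to the wrong fiber and the indispensable structural analysis of $Q_1$ is missing.
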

\begin{proof}
Suppose $c=1$. By Lemma \ref{lem:E_is_vert_if_c=1} $E$ is vertical. Denote by $F_{E}$ the fiber containing $E$.

\begin{claim}\label{cl:c=1_C1vertical}
We may assume $C_{1}$ is vertical.
\end{claim}
\begin{proof}
Suppose $C_1$ is horizontal, say $C_{1}=H_{3}$. Then $D_v$ contains no $(-1)$-curves and $E$ is one of the connected components of $F\cap D$. By Lemma \ref{lem:fibers_are_chains}(c) every degenerate fiber $F$ is a chain and degenerate fibers with $\sigma(F)=1$ meet $H_i$'s in tips contained in $D$.

Consider the case $\sigma(F_E)=1$. Since $E$ meets $C_1$, $E$ is a tip of $F_E$, so $F_E=[\gamma,1,(2)_{\gamma-1}]$ and the sections $H_1$, $H_2$ meet the tip $R_0$ of $F_E$ other than $E$ (recall that $\gamma\geq 3$). Then $(F_{E})\redd-R_0+C_1$ supports a fiber of a $\P^{1}$-fibration of $X$, for which $E+C_{1}$ vertical, so we replace $\bar p$ with this new fibration. Since $\beta_D(C)=3$, we still have $h=3$. Two components meeting $C_1$ which are now horizontal belong to different connected components of $D-C_1$, so we have also $\nu=0$, as required.

Now consider the case $\sigma(F_E)=2$. Then $\Gamma_1$, $\Gamma_2$ are the only $(-1)$-curves in $F_E$. Since $E^2\leq -3$, we see that $F_{E}\cap D_{v}$ contains more components than just $E$. So there is a $\Gamma_i$, say $\Gamma_{1}$, which meets $E$ and a connected component $A_{1}\neq 0$ of $D_{v}$. Let $A_2$ be the divisorial part of the third connected component of $F_E\cap D$. By Corollary \ref{cor:Cstst-fibers}(a) we have $(\Gamma_i+A_i)\cdot H_i=1$ for $i=1,2$.

Assume that $A_{1}$ is not a $(-2)$-chain. Let $\varphi$ be the contraction of $\Gamma_{1}$ and subsequent new $(-1)$-curves in the image of $A_1$ meeting the image of $E$. By assumption the image of $\Gamma_{1}+A_{1}$ is nonzero and contains no $(-1)$-curves. It follows that all contracted curves had multiplicity bigger than $1$, so $\varphi$ does not touch $D_{h}$. Since $\phi_{*}F_{E}$ meets $D_{h}$ in three different components, Lemma \ref{lem:singular_P1-fibers}($\mathrm{b}_{1}$) implies that $\phi_{*}F_E$ contains at least two $(-1)$-curves, so $\phi_{*}E$ is one of them, hence $\phi_*(E+C_1)=[1,1]$. Then $|\phi^{*}\phi_{*}(E+C_{1})|$ induces a $\P^{1}$-fibration of $X$ such that $E+C_{1}$ is vertical and the new $D_{h}$ also consists of three $1$-sections. Again, two components meeting $C_1$ which are now horizontal belong to different connected components of $D-C_1$, so we have also $\nu=0$, as required. This ends the proof of the claim in this case.

Assume $A_{1}$ is a $(-2)$-chain. $\Gamma_2$ does not meet $A_{1}$, because otherwise $F_E$ contains a proper subchain $\Gamma_{1}+A_{1}+\Gamma_{2}=[1,(2)_{r},1]$ for some $r>0$, which is impossible, as the chain is not negative definite. Hence $\Gamma_2$ meets $E$, so by the above argument we may in fact assume that both $A_i$'s are $(-2)$-chains (possibly empty). Call $\phi$ the contraction of $F_E-E$. Then $\phi$ contracts $F_E$ to a $0$-curve, it does not touch $C_{1}$ and touches $H_{1}$ and $H_{2}$ once each. In particular, self-intersections of all components of $\phi_*D_h$ are negative. By  Lemma \ref{lem:fibers_are_chains} every other degenerate fiber of $\bar p$ is a chain with a unique $(-1)$-curve and it meets $D_h$ in tips. In particular, such an $F$ can be contracted to a $0$-curve so that $D_{h}$ is touched exactly once. But by Corollary \ref{cor:Cstst-fibers}(b) $\bar{p}$ has at most three degenerate fibers, so we obtain a contraction of $X$ onto some Hirzebruch surface such that the image of $D_h$ contains one section with negative and one with non-positive self-intersection number. This is a contradiction.
\end{proof}

By Claim \ref{cl:c=1_C1vertical} we have $C_{1}\subseteq F_{E}$. From \eqref{eq:C_jD_h} we see that $F_E$ is a chain with $\sigma(F_E)=1$, $C_1$ is a tip of $F_E$ meeting $H_1$ and $H_2$, and $H_3$ meets the second tip of $F_E$. We infer that $F_{E}=[1,\gamma,1,(2)_{\gamma-2}]$. Since $\gamma\geq 3$, we have $R=[(2)_{\gamma-2}]\neq 0$. As before, we denote the unique degenerate fiber with $\sigma=2$ by $F_0$.

Denote by $A$ and $B$ the connected components of $Q_{1}-C_{1}$, with $A$ containing $H_1$ and $H_{3}$ and $B$ containing $H_2$. If $H_{1}$ meets $H_{3}$, put $A_1=0$. Otherwise, call $A_1$ the connected component of $D_{v}$ intersecting $H_{1}$ and $H_{3}$ and call $F_{A_1}$ the fiber containing $A_1$. Note that $((F_{A_1})\redd-A_1)\cdot D_{h}=1$, so $F_{A_1}\cap D$ has two connected components, hence Lemma \ref{lem:Qhp_has_no_lines} gives $\sigma(F_{A_1})=1$. By Lemma \ref{lem:fibers_are_chains} if $A_1\neq 0$ then there exists a connected component $A_1'\neq 0$ meeting $D_{h}$ only in $H_{2}$ such that $(F_{A_1})\redd=[A_1,1,A_1']$ meets $D_{h}$ in tips. In particular, $H_1$ and $H_3$ meet the same tip of $F_{A_1}$ which is a branching component of $D$, unless $A_1$ is irreducible.

\begin{claim}\label{cl:B_not-2chain}
$B$ is not a $(-2)$-chain.
\end{claim}
\begin{proof}
Suppose the contrary. Since $Q_1$ contracts to a smooth point, $B$ meets $C_1$ in a tip. 

We claim there is a vertical $(-1)$-curve $\Gamma\not\subseteq D$ such that $\Gamma\cdot H_{1}=0$ and $\Gamma$ meets the tip of $D$ contained in $B$. We have $B_v=B-H_2$. If $B_v=0$ then $B=H_{2}$ is a tip of $D$, so it does not meet $D-H_2-C_1$, hence by Lemma \ref{lem:fibers_are_chains}(b) for some $i\in \{1,2\}$ the curve $\Gamma=\Gamma_{i}\subseteq F_0$ meets $H_{2}$. By Corollary \ref{cor:Cstst-fibers} $\Gamma\cdot (D_h-H_2)=\Gamma\cdot (D-D_v)-1=\Gamma\cdot D-2=0$, so $\Gamma\cdot H_{1}=0$. We may therefore assume $B_v\neq 0$. Call $F_{B}$ the fiber containing $B_v$. By Lemma \ref{lem:fibers_are_chains} $\Gamma$ exists in case $\sigma(F_{B})=1$, so we may assume $\sigma(F_{B})=2$. The section $H_2$ meets $B_v$ in a component of multiplicity $1$. Suppose $B_v$ contains more than one component of multiplicity $1$. Then we have $F_{0}=\Gamma_{1}+B_{v}+\Gamma_{2}$, because $B_{v}$ is a $(-2)$-chain. The fibration $\bar p$ has only two degenerate fibers, $F_0$ and $F_E$, because for a third degenerate fiber $F_1$ we would have that $F_1\cap D_v$ is contained in $A$ and meets $H_2$, which is impossible. It follows that $Q_{1}=B+C_{1}+H_{1}+H_{3}+R$ is a chain and $L_{F_{E}}$ meets its last tip. Then $(\pi_{*}L_{F_{E}})^{2}=0$, which is impossible on $\P^{2}$. Thus there is only one component of multiplicity $1$ in $B_v$. Then by Lemma \ref{lem:singular_P1-fibers}(c) this component is a tip of $B_v$ and some $\Gamma=\Gamma_{i}$ meets $B_v$ in the other tip. Now $F_{0}\cap D_{v}$ is not connected and $\Gamma$ meets two components of $F\cap D_{v}$, so $\Gamma\cdot H_1=0$.

So $\Gamma$ does exist. By Corollary \ref{cor:Cstst-fibers}(a) $\Gamma\cdot D=2$, so $|\Gamma+B+C_{1}|$ induces a $\P^{1}$-fibration of $X$ such that $D_{h}$ consists of three $1$-sections, one of which is $E$. Then the vertical part of $D$ with respect to this new fibration is contained in $Q_1$, hence is negative definite, which gives $\nu=0$. This is a contradiction with Lemma \ref{lem:E_is_vert_if_c=1}.
\end{proof}

\begin{claim}\label{cl:B+C1+H1_twig}$B+C_1+H_1$ is a twig of $Q_{1}$ and $A_1=0$.
\end{claim}
\begin{proof}
We can write $B=H_2+B_1+B_2$, where $B_{1}, B_{2}\subseteq B$ are connected components of $D_{v}$ meeting $D_h$ only in $H_{2}$ (we allow $B_i=0$ if there is less then one such), see Fig.\ \ref{fig:3.11fig1}. Note that $B_{1}$, $B_{2}$ lie in different fibers $F_{B_{1}}$, $F_{B_{2}}$ of $\bar{p}$. 

\begin{figure}[h]\centering \vspace{-3em} \includegraphics[scale=1.8]{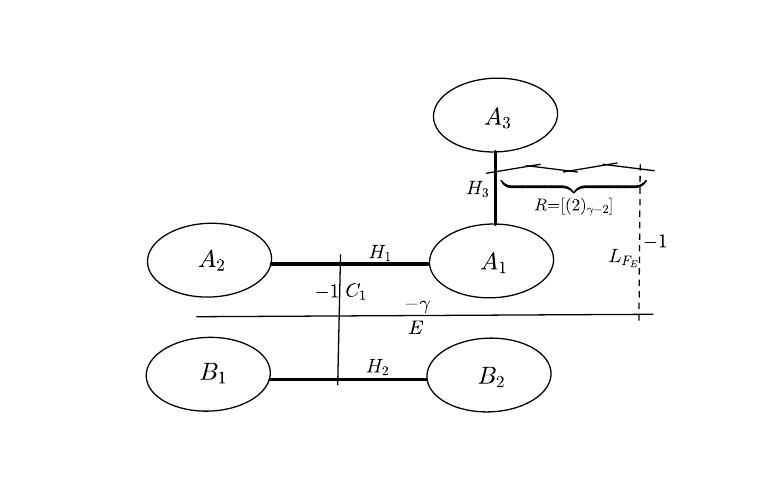} \vspace{-3em}
\caption{The graph of $D$ in Lemma \ref{lem:E2=2_if_c=1}, Claim \ref{cl:B+C1+H1_twig}.} \label{fig:3.11fig1}
\end{figure}

Suppose firstly that $\beta_{D}(H_{2})=3$. Then $B_1, B_2\neq 0$. The contractibility of $Q_{1}$ implies that $H_{2}+C_{1}+A$ contracts to a smooth point, so $A$ is a $(-2)$-chain and $A=H_1+A_1+H_{3}+R$. In particular $D_v-F_E\cap D_v=A_1+B_1+B_2$ and either $A_1=0$ or $A_1=[2]$. If $F_{B_i}$ does not contain $A_1$ then, $F_{B_{i}}\cap D_{v}$ is connected, so by Lemma \ref{lem:fibers_are_chains} and Lemma \ref{lem:singular_P1-fibers} $\sigma(F_{B_i})=2$ and $B_{i}$ is a $(-2)$-chain. This is also the case if $A_1\neq 0$ is contained in some $F_{B_{i}}$, because then $B_{i}=A_1^{*}=[2]$. But the contractibility of $Q_{1}$ implies that $B_{1}+[1]+B_{2}$ is negative definite, so $B_{i}$'s cannot both be $(-2)$-chains; a contradiction.

Thus $\beta_{D}(H_{2})\leq 2$ and we may assume $B_2=0$. By Lemma \ref{lem:fibers_are_chains} $B_1=B-H_2$, which is the vertical part of $B$, is a chain. Suppose it is nonzero and does not meet $H_2$ in a tip. By Lemma \ref{lem:fibers_are_chains}(c) $B_1\subseteq F_{0}$ and, since now $\beta_{D}(H_{2})=2$, $A_1'=0$ and hence $A_1=0$. The contractibility of $Q_{1}$ to a smooth point implies that $A$ is a chain, so $A=H_{1}+H_3+R$. Then $F_{0}\cap D_{v}=B_1$, which is connected. As before, it follows that $B_1$ is a $(-2)$-chain. Since by assumption $H_{2}$ does not meet it in a tip, $Q_{1}$ is not contractible to a smooth point; a contradiction.

Thus $B+C_1$ is a twig of $Q_{1}$. Then in fact $B+C_1+H_1$ is a twig of $Q_{1}$, because otherwise $H_{1}+C_{1}+B$ contracts to a smooth point, so $B$ is a $(-2)$-chain, contrary to Claim \ref{cl:B_not-2chain}. We have now $B=B_1$ and $A-H_1-H_3=A_1+R+A_3$, where $A_3$ is a connected component of $D_v$ meeting $D_h$ only in $H_3$. It remains to show that $A_1=0$. Suppose that $A_1\neq 0$. From the discussion preceding the claim we know that $F_{A_1}$ is a chain containing $A_1'=B_1$. Then $F_{0}\cap D_{v}$ equals $A_3$, so it is connected. Since $F_0$ is a chain with two $(-1)$-curves, this means that $A_3$ is a $(-2)$-chain. But then $H_{3}$ meets two $(-2)$-curves, one in $R$ and one in $A_3$. This is impossible, as $Q_{1}$ contracts to a smooth point; a contradiction.
\end{proof}

We have now $H_{1}\cdot H_{3}=1$ and $D_v-F_E\cap D_v=B_1+A_3$ with $A_3$ as above. Since $H_1\cdot(B_1+A_3)=0$, it follows from Lemma \ref{lem:fibers_are_chains}(c) that $\bar{p}$ has only two degenerate fibers, $F_{0}$ and $F_{E}$. From the above claim and Lemma \ref{lem:fibers_are_chains} in follows that $(F_{0})\redd$ is a chain $[1]+A_3+[1]+B_1$ or $[1]+B_{1}+[1]+A_{3}$. The contraction $\varphi_{H_2}\:X\to X'$ onto a Hirzebruch surface (see Notation \ref{not:F-distance}) does not touch $H_2$ and touches $H_1$ exactly $d_{F_0}(H_1,H_2)$ times. Put $H_i'=\varphi_{H_2}(H_i)$. The divisor $H_1'-H_2'$ is vertical, so we obtain 
\begin{equation*}
0=2H_1'\cdot H_2'=H_1'^2+H_2'^2=H_1^2+H_2^2+d_{F_0}(H_1,H_2),
\end{equation*} hence $d_{F_0}(H_{1},H_{2})=-H_{1}^{2}-H_{2}^{2}\geq 4$. Similarly for $j=1,2$
\begin{equation*}
2H_j\cdot H_3 \leq 2H_{j}'\cdot H_3'=H_{j}'^2+H_3'^2=H_{j}^2+H_3^2+2+d_{F_0}(H_{j},H_3)\leq d_{F_0}(H_{j},H_3)-2,
\end{equation*}
so $d_{F_0}(H_{1},H_{3})\geq 4$ and  $d_{F_0}(H_{2},H_{3})\geq 2$. If $d_{F_0}(H_{2},H_{3})\geq 3$ then $F_{0}$ contains a subchain of type $[2,2,2]$ meeting $H_{2}$ in the middle, which contradicts Lemma \ref{lem:fibers_are_chains}(e). Thus $d_{F_0}(H_{2},H_{3})=2$. Then the above equality gives $H_{2}^2=H_3^2=-2$. If $A_3=0$ then $F_{0}\cap D_{v}=B_{1}$ is connected, so since $F_0$ is a chain with two $(-1)$-curves, $B_{1}$, and hence $B$, is a $(-2)$-chain, contrary to Claim \ref{cl:B_not-2chain}.  Thus $A_3\neq 0$ and $H_{3}$ is a branching $(-2)$-curve in $D$. Because of the contractibility of $Q_{1}$ to a smooth point we see that $B_1+H_2+C_1+H_1+H_3$ contracts to a smooth point and $H_{3}$ contracts last. It follows that $B$ is a $(-2)$-chain; a contradiction with Claim \ref{cl:B_not-2chain}.
\end{proof}

\begin{wn}\label{cor:c=1}
Let $\bar E\subseteq \P^2$ be a rational unicuspidal curve whose complement $\P^2\setminus \bar E$ is of log general type. If the complement is $\C^{**}$-fibered then $\bar E$ is of HN-type $\ORa$ or $\ORb$.
\end{wn}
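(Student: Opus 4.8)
The plan is to deduce the corollary from the structural results already assembled in this section by splitting on the value of $E^2$. Since $\bar E$ is unicuspidal, Lemma \ref{lem:Tono_E2}(a) gives $E^2\leq -2$, so it suffices to handle the two regimes $E^2=-2$ and $E^2\leq -3$ separately. The corollary is essentially a bookkeeping statement that collects the preceding lemmas in the case $c=1$, so the real content lies in correctly checking that the hypotheses of each quoted result are met.

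In the boundary case $E^2=-2$ there is nothing left to prove: Lemma \ref{lem:Tono_E2=-1,-2}(a) already asserts that a unicuspidal curve with $E^2=-2$ is of HN-type $\ORa$ or $\ORb$, which is exactly the desired conclusion. This is the branch where the two Orevkov families actually arise; all the remaining work goes into excluding everything else.

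The substance is therefore to rule out $E^2\leq -3$. Here I would observe that this is precisely condition \eqref{eq:E2<=_assumption} in the case $c=1$, so the entire reduction chain of this section applies. By Proposition \ref{prop:some_Cstst_fibration_extends} we may assume the chosen $\C^{**}$-fibration has no base points on $X$; since a unicuspidal curve cannot be of the bicuspidal types $\cD$, $\cE$, $\cF$, Lemma \ref{lem:case_nu=1} lets us further assume $\nu=0$, and then Lemma \ref{lem:fibrations-Sigma-chi} gives $h\in\{2,3\}$. The possibility $h=2$ is excluded by Corollary \ref{cor:case_h=2}, which forces $\bar E$ to be of type $\FZa$, $\cD$ or $\cG$, each of which has at least two cusps. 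Hence $h=3$, placing us in the setup of Notation \ref{not:h=3}, and Lemma \ref{lem:E2=2_if_c=1} then yields $c\geq 2$, contradicting $c=1$. Thus $E^2\leq -3$ is impossible.

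I do not expect a genuine obstacle inside the corollary itself, as all the hard analysis has been front-loaded into Lemmas \ref{lem:case_nu=1}, \ref{cor:case_h=2} and especially \ref{lem:E2=2_if_c=1}. The only point requiring care is to verify that the unicuspidal hypothesis genuinely excludes the multicuspidal exceptional types appearing in Lemma \ref{lem:case_nu=1} and Corollary \ref{cor:case_h=2}, and that Lemma \ref{lem:Tono_E2}(a) makes the two regimes $E^2=-2$ and $E^2\leq -3$ exhaustive.
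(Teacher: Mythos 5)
Your proposal is correct and follows essentially the same route as the paper: the corollary is obtained there exactly by combining Lemma \ref{lem:Tono_E2}(a) and Lemma \ref{lem:Tono_E2=-1,-2}(a) to isolate the case $E^2=-2$, and then using the standing assumption \eqref{eq:E2<=_assumption} together with the reduction chain (Proposition \ref{prop:some_Cstst_fibration_extends}, Lemma \ref{lem:case_nu=1}, Corollary \ref{cor:case_h=2}, Lemma \ref{lem:E2=2_if_c=1}) to exclude $E^2\leq -3$. Your checks that the unicuspidal hypothesis rules out the multicuspidal exceptional types in Lemma \ref{lem:case_nu=1} and Corollary \ref{cor:case_h=2} are exactly the points that make the bookkeeping go through.
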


\subsection{Remaining cases with at least two cusps.}

We keep the assumptions from Notation \ref{not:h=3} and the notation from Section \ref{ssec:fibrations}. In particular, $h=3$ and $\nu=0$ (that is, $D$ contains no fiber and has exactly three horizontal components, the components are sections of $\bar p$). We have also $-E^2=\gamma\geq 2$.

\setcounter{claim}{0}
\begin{lem}\label{lem:E_is_vert_for_c>=2}
If $\bar p$ is as in Notation \ref{not:h=3} then $c\leq 2$. In case $c=2$ we may choose $\bar p$ so that $E$ is vertical and $C_1+C_2$ is horizontal.
\end{lem}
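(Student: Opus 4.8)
The plan is to read off the combinatorics of the three sections $H_1,H_2,H_3$ and of the curves $C_1,\dots,C_c$, and to exploit the identity $\beta_D(E)=c$ together with the fact that each $C_j$ is branching in $D$. First I would record the fiber bookkeeping from Corollary \ref{cor:Cstst-fibers}: since $h=3$ and $\nu=0$, the fibration $p$ has exactly three degenerate fibers whose $\sigma$-values are $2,1,1$, namely $F_0$ (carrying $\Gamma_1,\Gamma_2$) and two fibers with a single component off $D$. I would also fix the local constraints. By \eqref{eq:C_jD_h} a vertical $C_j$ satisfies $\mu(C_j)=1$, meets exactly two of the sections, and by Lemma \ref{lem:fibers_are_chains}(a) lies in a fiber with $\sigma=1$; moreover $\beta_D(C_j)\ge 3$, since $C_j$ is not a tip of $Q_j$ and meets $E$. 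Finally, since $E$ meets $Q_j$ only along $C_j$, the components of $D$ meeting $E$ are exactly $C_1,\dots,C_c$, so $\beta_D(E)=c$. Write $c_v$ and $c_h$ for the numbers of vertical and horizontal $C_j$.

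The heart of the argument is the case in which $E$ is vertical, lying in a fiber $F_E$. Here every vertical $C_j$ meets $E$ and hence lies in $F_E$; as each contributes at least $2$ to $F_E\cdot D_h=3$ by \eqref{eq:C_jD_h}, at most one $C_j$ is vertical, so $c_v\le 1$. If $c_v=1$, the unique vertical $C_j$ meets two sections whose only point on $F_E$ lies on $C_j$; these sections therefore miss $E$ and are not among the $C_j$, leaving at most one horizontal $C_j$, whence $c\le 2$. If $c_v=0$, all $C_j$ are sections meeting $E$, so $c\le 3$; to exclude $c=3$ I would use that $F_E\cap D$ has connected components each of which must meet $D_h$ because $D$ is connected. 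Since all three sections then meet $F_E$ along $E$, the component containing $E$ already absorbs all of $D_h$; for a $p$-degenerate $F_E$, where $F_E\cap D$ has $\sigma(F_E)+1\ge 2$ components, this leaves a component meeting no section — a contradiction, giving $c\le 2$.

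It then remains to reduce the general situation to this case, and this is where I expect the real work to lie. When $E$ is one of the sections I would replace $\bar p$ by a $\P^1$-fibration for which $E$ is vertical: I would locate a degenerate fiber in which $E$ is a tip meeting the unique $(-1)$-curve, and use the linear system of a suitably assembled $0$-curve through $E+C_j$, exactly as in Lemmas \ref{lem:case_nu=1} and \ref{lem:E2=2_if_c=1}, to induce the new fibration, checking that $h=3$ and $\nu=0$ persist because the two components formerly meeting $C_j$ land in different connected components of $D-C_j$. The main obstacle is twofold: to carry out this re-routing in every configuration, and, in the $c_v=0$ exclusion above, to rule out the residual possibility that $F_E$ is degenerate for $\bar p$ but already a $\C^{**}$ on the complement (so that $F_E\cap D$ is connected and the counting argument stalls). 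In that residual case all three sections meet $E$ forces $\mu(E)=1$ and $E\cdot(F_E-E)=\gamma\ge 2$; I would derive a contradiction from the multiplicity-one structure of $F_E$, using \eqref{eq:E2<=_assumption} and the contractibility of each $Q_j$ to a smooth point to show such an $E$ cannot occur in a $\P^1$-fiber.

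Finally, for the normal form when $c=2$, I would start from the fibration with $E$ vertical. If both $C_1,C_2$ are already horizontal there is nothing to prove; if one is vertical (the case $c_v=1$), I would perform one further modification of the same type, building a new fiber from $F_E$ so that $C_1$ and $C_2$ become the two sections meeting the new vertical $E$ while the third section persists. Throughout I would invoke \eqref{eq:E2<=_assumption} and the smoothness of the images of the $Q_j$ to exclude degenerate intermediate configurations, and I would end with a check that $\nu=0$ for the final fibration, so that it again restricts to a $\C^{**}$-fibration of $\P^2\setminus\bar E$, completing the proof that $c\le 2$ and that $c=2$ admits the asserted form with $E$ vertical and $C_1+C_2$ horizontal.
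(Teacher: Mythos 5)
Your analysis of the case where $E$ is already vertical is essentially the paper's, but the proposal has two genuine gaps. The serious one is the reduction from $E$ horizontal to $E$ vertical, which you flag as "where the real work lies" and then leave as a plan. The plan itself does not work as stated: if $E$ is a section it lies in no fiber, so "a degenerate fiber in which $E$ is a tip meeting the unique $(-1)$-curve" is not a meaningful starting point, and the new fiber is not assembled from $E+C_j$. The paper's argument runs in the opposite order. First, if $E=H_3$ then every $Q_j$ contains a horizontal component — either $C_j$ itself, or, when $C_j$ is vertical, one of the two sections it meets by Lemma \ref{lem:fibers_are_chains}(a), which must lie in $Q_j$ because $D$ is a tree and $C_j$ meets only $E$ and components of $Q_j$; since the $Q_j$ are disjoint this already forces $c\leq 2$ \emph{before} any change of fibration. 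Then, using $c=2$, one contracts the fibers containing vertical $C_j$'s and the $\sigma=2$ fiber $F_0$ down to a Hirzebruch surface without touching $E$, and computes $(E')^2=E'\cdot(H_1'+H_2')\geq 0$ because each $H_i'-E'$ is numerically vertical; hence some intermediate contraction turns $E$ into a $0$-curve, and its total transform (consisting of $E$, $\Gamma_1$ and a subchain of $F_0$) induces the new fibration, with $C_1$, $C_2$ and a component of $F_0$ as the three sections. This Hirzebruch computation uses $c=2$, so postponing the bound on $c$ until after the reduction, as you do, leaves the reduction unsupported.

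The second gap concerns the normal form for $c=2$. Your branch $c_v=1$ does give $c\leq 2$, but the subsequent "further modification" to make both $C_j$ horizontal is unspecified and unnecessary: that branch is vacuous. Once $E$ is vertical and $\nu=0$, $F_E\cap D$ has at least two connected components and each must meet $D_h$ (a component of $F_E$ not in $D$ meets $D$ at least twice by Lemma \ref{lem:Qhp_has_no_lines} but meets a connected piece of $F_E\cap D$ at most once, $F_E$ being a tree; and each connected component of $D_v\cap F_E$ can reach the rest of the connected divisor $D$ only through $D_h$). If some vertical $C_j$ met two sections while the remaining horizontal $C_k$ met $E$, then $E+C_j$ would already exhaust $F_E\cdot D_h=3$, leaving another connected component of $F_E\cap D$ with no access to $D_h$ — a contradiction. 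The same mechanism disposes of your "residual possibility" that $F_E\cap D$ is connected in the $c_v=0$ case; the correct conclusion there is not that "such an $E$ cannot occur in a $\P^1$-fiber" but that the configuration with all three sections absorbed by $E$ is incompatible with the connectedness of $D$.
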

\begin{proof}

For $c=1$ there is nothing to prove, so we assume $c\geq 2$. Suppose that $E$ is horizontal, say $E=H_{3}$. Let $j\in\{1,\ldots,c\}$. If $C_{j}$ is horizontal put $\phi^{j}=\id$, otherwise define $\phi^j$ as the contraction of the fiber containing $C_j$ to a $0$-curve which does not contract $C_j$. Note that in the second case by Lemma \ref{lem:fibers_are_chains}(a) $C_j$ meets some component of $D_h-E$, so $Q_j$ has a horizontal component. We obtain $c=2$. 

The fiber $F_{0}$ meets $E$ in a component which is not contained in $D_{v}$. Therefore, Lemma \ref{lem:fibers_are_chains} and Lemma \ref{lem:singular_P1-fibers} imply that $(F_{0})\redd$ is a chain of type $[1,U_{1},1,U_{2}]$ meeting $E$ in the first tip, say, $\Gamma_{1}$. Note that $U_{1}\cdot D_{h}=1$. Say that $F_{0}$ meets $H_{1}$ in a component $U_{1}'$ of $U_{1}$. Then $F_{0}$ meets $H_{2}$ and $E$ in different tips. Let $\psi$ denote the contraction of $F_{0}$ to a $0$-curve which does not touch $H_{1}$. Denote by $H_{1}'$, $H_{2}'$ and $E'$ the images of $H_{1}$, $H_{2}$ and $E$ on the Hirzebruch surface $\psi\circ \phi^{1}\circ\phi^{2}(X)$. The divisors $H_j'-E'$, $j=1,2$ are vertical, so they intersect trivially. We get
\begin{equation*}
(E')^{2}=E'\cdot (E'-H_{1}')+E'\cdot H_{1}'=H_{2}'\cdot(E'-H_{1}')+E'\cdot H_{1}'=E'\cdot (H_1'+H_2')\geq 0.
\end{equation*}
Since $\phi^{1}\circ \phi^{2}$ does not touch $E$, we actually have $(\psi_{*}E)^{2}\geq 0$. Because $E^{2}<0$, there is a decomposition $\psi=\psi^{1}\circ \psi^{0}$ such that $\psi^{0}_{*}E$ is a $0$-curve. Then the total transform $F$ of $\psi^{0}_{*}E$ on $X$ induces a $\P^{1}$-fibration $\bar{q}$ of $X$ for which $E$ is vertical. The divisor $F\redd$ consists of $E$ and a subchain of $(F_{0})\redd-U_{1}'$ containing $\Gamma_{1}$. By Lemma \ref{lem:singular_P1-fibers} the components of $F$ of multiplicity $1$ for $\bar q$ are tips of $F$. It follows that the horizontal part of $D$ for $\bar{q}$ consists of three $1$-sections, namely $C_{1}$, $C_{2}$ and a component of $F_{0}$. Moreover, $F\redd \not\subseteq D$ and $D-(F\cap D)$ is negative definite, so $D$ contains no fiber of $\bar{q}$. Replacing $\bar{p}$ with $\bar{q}$ we can assume that $E$ is vertical.

Denote by $F_{E}$ the fiber containing $E$. Since $\nu=0$, by Lemma \ref{lem:Qhp_has_no_lines} $F_{E}\cap D$ has at least two connected components and each of them meets $D_{h}$. Suppose some $C_j$ is vertical. Then $C_j\subseteq F_E$ and by Lemma \ref{lem:fibers_are_chains}(a) $C_1$ meets two sections contained in $D$. We infer that all other $C_j$'s are horizontal, so they meet $E$. This means that $E+C_j$ meets all sections of $D_h$; a contradiction. Thus $C_1+\ldots+C_c$ is horizontal. Since $F_{E}\cap D$ has at least two connected components, $E$ meets at most two horizontal components of $D$, so $c=2$.
\end{proof}

\vspace{0.5em}
\setcounter{claim}{0}

\begin{proof}[Proof of the Proposition \ref{prop:possible_cusp_types}]
By Corollary \ref{cor:c=1} if $c=1$ then $\bar{E}$ is of HN-type $\ORa$ or $\ORb$.  Hence we can assume that $c\geq 2$. By the results of the previous subsections we may assume that $p$ is as in Notation \ref{not:h=3}, that is, it extends to a $\P^1$-fibration $\bar p$ of $X$, $D$ has three horizontal components and contains no fiber, and $\gamma= -E^{2}\geq 2$. Then by Lemma \ref{lem:E_is_vert_for_c>=2} $c=2$ and we can assume that $D_{h}=C_{1}+C_{2}+H$ for some $H\subseteq Q_{1}-C_1$. 

We first give a rough description of degenerate fibers of $\bar p$ (see Fig.\ \ref{fig:proof3.1fig1}). 
\begin{figure}[h]\centering 
	\includegraphics[scale=0.45]{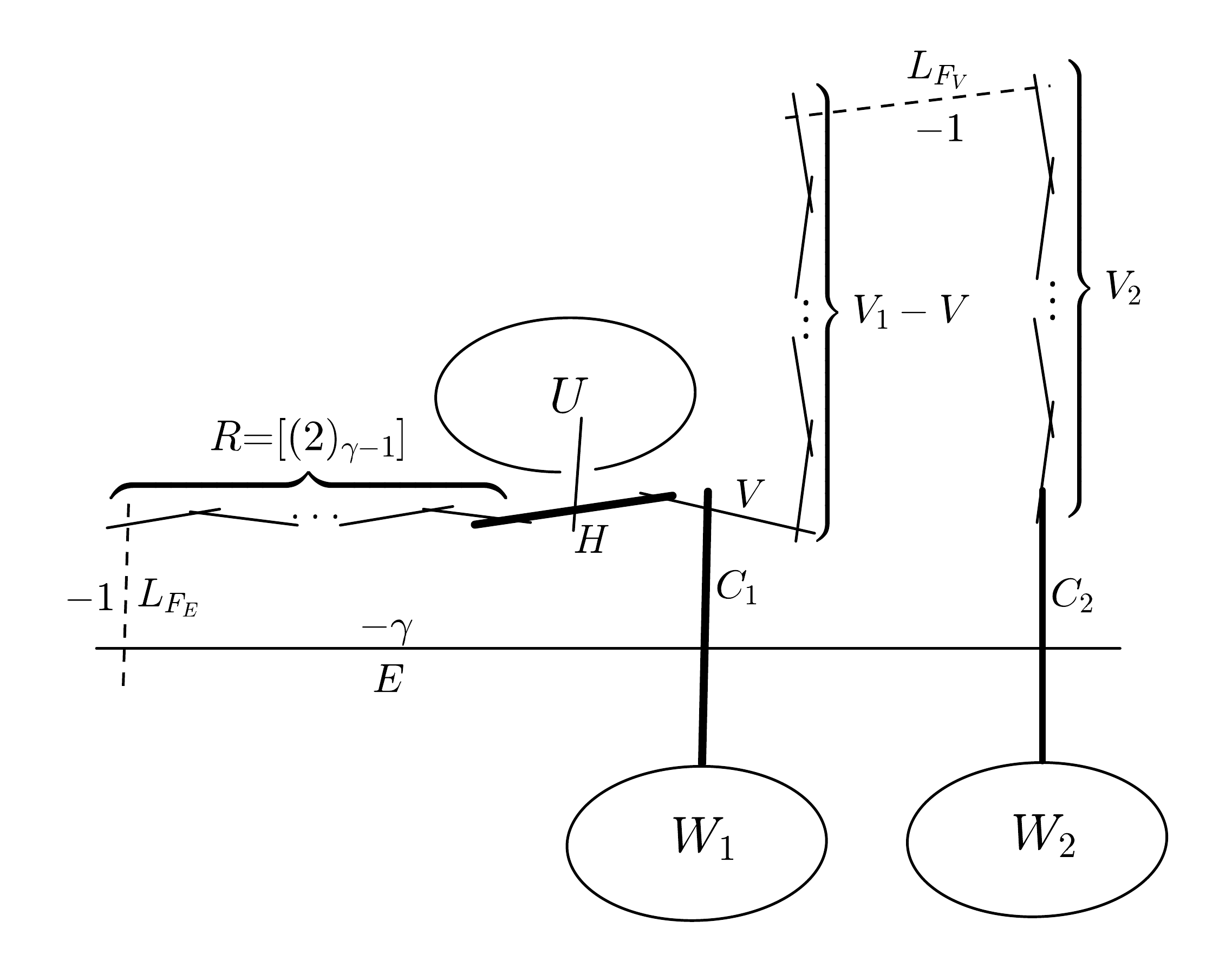}
\caption{The graph of $D$ in the proof of Proposition \ref{prop:possible_cusp_types}.} \label{fig:proof3.1fig1}
\end{figure}
By Lemma \ref{lem:fibers_are_chains} they are chains, there are at most three of them and their $(-1)$-curves are exactly the components not contained in $D$. Denote by $F_{E}$ the fiber containing $E$ and by $F_0$ the unique fiber with $\sigma=2$. Because $E\cdot D_{h}=2$, we see that $F_{E}$ is a chain $[\gamma, 1,(2)_{\gamma-1}]$ with a unique $(-1)$-curve $L_{F_{E}}\subseteq D$. Then $R=(F_{E})\redd-L_{F_{E}}=[(2)_{\gamma-1}]$ is a connected component of $D_{v}$ meeting $H$ in a tip of $F_E$. Denote by $V_{2}$, $W_{2}$ the connected components of $(Q_{2})_{v}=Q_{2}-C_{2}$. They both meet the $1$-section $C_{2}$, so they lie in different fibers $F_{V}$ and $F_{W}$, respectively. We may assume that $\sigma(F_V)\leq \sigma(F_W)$. By Lemma \ref{lem:fibers_are_chains} $F_{V}$ is a chain of the form $V_{1}+L_{F_V}+V_{2}$, where $V_{1}\neq 0$ is the connected component of $(Q_{1})_{v}=Q_1-C_1-H$ meeting both $H$ and $C_{1}$. Moreover, $H$ and $C_1$ meet the same component $V$ of $V_1$, which is a tip of $F_V$.  Call $W_{1}$ and $U$ the connected components of $(Q_{1})_{v}-R$ meeting $D_{h}$ respectively only in $C_1$ and only in $H$ (we allow $U=0$). We have $F_E\cap D_v=E+R$, $F_V\cap D_v=V_{1}+V_{2}$ and hence $F_0\cap D_v=W_{1}+W_{2}+U$. Since $F_0$ is a chain, $W_1$, $W_2$ and $U$ are chains. In $F_0$ we have two $(-1)$-curves $\Gamma_1$ and $\Gamma_2$ and we can choose their names so that $\Gamma_i\cdot W_i=1$ for $i=1,2$. Exactly one of $W_1$, $W_2$, $U$ meets both $\Gamma_i$'s, but for now we do not know which one.

Put $s=-H^{2}-1\geq 1$. Put $p=-V^{2}-1\geq 1$ if $V\subsetneq V_1$ and $p=-V^{2}$ if $V_1=V$. This notation is introduced in a way which makes the final result consistent with notation in Theorem \ref{thm:possible_HN-types} and in the Figures \ref{fig:A}-\ref{fig:D}
The contractibility of $Q_{1}$ to a smooth point implies that $p\geq 2$.

\begin{claim}\label{cl:order-F_0}
$\Gamma_{2}\cdot W_1=0$.
\end{claim}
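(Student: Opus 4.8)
The plan is to argue by contradiction. Suppose $\Gamma_2\cdot W_1\neq 0$, hence $\Gamma_2\cdot W_1=1$. Since $F_0$ is a chain (Lemma \ref{lem:fibers_are_chains}(d)) whose only components off $D$ are the two $(-1)$-curves $\Gamma_1,\Gamma_2$ (Lemma \ref{lem:fibers_are_chains}(b)), and since by the choice of names $\Gamma_1\cdot W_1=1$ as well, the chain $W_1$ is then the unique $D_v$-piece of $F_0$ meeting \emph{both} $(-1)$-curves. Consequently $W_1$ is the central piece and $U$, $W_2$ are the two end pieces, so $F_0=U+\Gamma_1+W_1+\Gamma_2+W_2$ (with $U$ possibly empty). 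I record where the three sections attach: $C_1$ meets $W_1$, $C_2$ meets $W_2$ and $H$ meets $U$, each in a component of multiplicity $1$, because a $1$-section meets a fiber in a single multiplicity-$1$ component. Note also $C_1\cdot H=C_2\cdot H=C_1\cdot C_2=0$, since $C_1$ and $H$ lie in $Q_1$ with $C_1$ adjacent only to $W_1$ and $V$, while $C_2\subseteq Q_2$.

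The engine will be the contraction $\phi_S$ to a Hirzebruch surface from Notation \ref{not:F-distance}. For a section $S$, $\phi_S$ does not touch $S$ and touches another section $T$ exactly $\sum_F d_F(S,T)$ times, so on the resulting $\P^1$-bundle the numerically vertical divisor $\phi_S{}_*S-\phi_S{}_*T$ has square $0$, giving $S^2+T^2+\sum_F d_F(S,T)=2\,\phi_S{}_*S\cdot\phi_S{}_*T\geq 2\,S\cdot T$. I first read off the distances in the two understood fibers. In $F_E=[\gamma,1,(2)_{\gamma-1}]$ the only multiplicity-$1$ components are the two tips $E$ (met by $C_1,C_2$) and the far tip of $R$ (met by $H$), so $d_{F_E}(C_1,H)=d_{F_E}(C_2,H)=1$ and $d_{F_E}(C_1,C_2)=0$; in $F_V=V_1+L_{F_V}+V_2$ the sections $C_1,H$ meet the common tip $V$, so $d_{F_V}(C_1,H)=0$. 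Feeding $C_1^2=C_2^2=-1$, $H^2=-(s+1)$ and the vanishing intersection numbers above into the relations for the pairs $(C_1,H)$, $(C_2,H)$, $(C_1,C_2)$ produces lower bounds, most usefully $d_{F_0}(C_1,H)\geq s+1\geq 2$.

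It then remains to contradict this. In the central configuration the shape of $F_0$ is rigid: the fiber multiplicity relations (Lemma \ref{lem:singular_P1-fibers}), the fact that $C_1$ lands on the middle piece $W_1$, and the contractibility of $Q_1$ and $Q_2$ to smooth points together with their negative definiteness pin down the weights and lengths of $U$, $W_1$, $W_2$, $V_1$, $V_2$ and $R$. With $Q_1,Q_2$ thus determined I read off the sequences of HN-pairs of $q_1,q_2\in\bar E$ and substitute them into the degree and genus identities of Lemma \ref{lem:HN-equations}; the outcome is either an arithmetic contradiction or the forced value $\gamma=1$, against $\gamma\geq 2$. Equivalently, the forced structure makes $d_{F_0}(C_1,H)$ too small (for instance equal to $1$ when $W_1$ is a single curve), contradicting $d_{F_0}(C_1,H)\geq s+1$, or it exhibits a subchain $[2,2,2]$ of $F_0$ meeting a section in its middle component, contradicting Lemma \ref{lem:fibers_are_chains}(e). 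The genuinely hard part is this last step's bookkeeping: several subcases arise according to whether $V_1=V$, whether $U$ is empty, and which component of $W_1$ carries $C_1$, and one must check that in each the contractibility of $Q_1$ forces a configuration incompatible with the distance estimate above. This case-by-case exclusion, rather than any single inequality, is the main obstacle.
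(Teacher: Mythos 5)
Your setup is correct: under the assumption $\Gamma_2\cdot W_1=1$ the order of $F_0$ is indeed $U,\Gamma_1,W_1,\Gamma_2,W_2$, the attachment points of the three sections are as you say, and the $F$-distance computation $d_{F_E}(C_1,H)=1$, $d_{F_V}(C_1,H)=0$ together with $(\phi_{C_1*}C_1-\phi_{C_1*}H)^2=0$ does yield $d_{F_0}(C_1,H)\geq -C_1^2-H^2-1=s+1\geq 2$. This is a legitimate use of the machinery of Notation \ref{not:F-distance}, in the spirit of Lemmas \ref{lem:E_is_vert_if_c=1} and \ref{lem:E2=2_if_c=1}.

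However, there is a genuine gap: everything after "It then remains to contradict this" is an unexecuted plan, not a proof. You assert that contractibility of $Q_1$ and $Q_2$ "pins down" the weights of $U,W_1,W_2,V_1,V_2$ and that substituting into Lemma \ref{lem:HN-equations} "produces either an arithmetic contradiction or $\gamma=1$," but you never determine those weights, never exhibit the HN-pairs, and never verify that each subcase actually fails; you explicitly flag the case-by-case exclusion as the main obstacle and leave it undone. This is precisely where the content of the claim lives. For comparison, the paper's argument does not use the distance inequality here at all: it first notes that $W_1$ cannot be a $(-2)$-chain (else $\Gamma_1+W_1+\Gamma_2=[1,(2)_r,1]$ would be a non-negative-definite proper subchain of $F_0$), then uses contractibility of $Q_1$ to force $V_1=V$, $V_2=[(2)_{p-1}]$ and $W_2=[(2)_r,p+1]$, splits on $U\neq 0$ versus $U=0$ (and, in the latter, on whether $Q_1$ is a chain), and in the surviving subcase derives $s=p$, $r=\gamma$ and kills it by showing that $\pi_*L_{F_V}$ would be a unicuspidal curve of HN-type $\binom{p\gamma+p+1}{p\gamma+1}$ and degree $p\gamma+2$ violating Lemma \ref{lem:HN-equations}(a). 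None of these steps follows formally from your inequality, and it is not clear that $d_{F_0}(C_1,H)\geq s+1$ alone can be contradicted without essentially redoing that analysis (your sample claim that $d_{F_0}(C_1,H)=1$ when $W_1$ is irreducible is itself unjustified, since $U$ may contribute further multiplicity-one components to the connecting subchain). To make this a proof you must actually carry out the classification of the possible shapes of $F_0$ and exhibit the contradiction in each case.
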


\begin{proof} Suppose $W_1$ meets $\Gamma_2$. Then the order of components in $F_0$ is $U$, $\Gamma_1$, $W_1$, $\Gamma_2$, $W_2$ and $C_2$ meets $W_2$ in a tip, hence $Q_2$ is a chain. Note that $W_{1}$ is not a $(-2)$-chain, for otherwise a proper subchain $\Gamma_{1}+W_{1}+\Gamma_{2}$ of $F_{0}$ would not be negative definite, which is impossible. The contractibility of $Q_{1}$ to a smooth point implies that $V_{1}=V$ and $s\geq 2$ if $U\neq 0$. We obtain $V_{2}=V_{1}^{*}=[(2)_{p-1}]$ and, because $Q_{2}$ contracts to a smooth point, $W_{2}=[(2)_{r},p+1]$ for some $r\geq 0$. Denote by $W_{1}'$ the component of $W_{1}$ meeting $C_{1}$.

Suppose $U\neq 0$. Then $H+V+C_{1}+W_{1}$ contracts to a smooth point and $H$ contracts last, so $W_{1}=[(2)_{s-2},3,(2)_{p-2}]$ and $W_{1}'$ is a tip of $W_1$, hence it meets $\Gamma_{i}$ for some $i\in \{1,2\}$. Because $\mu(W_{1}')=1$, Lemma \ref{lem:singular_P1-fibers}(c) implies that we can contract this $\Gamma_{i}$ and new $(-1)$-curves in the images of $F_{0}$ in such a way that the image of $W_{1}'$ is a tip of the image of $F_{0}$. Denote this morphism by $\phi$. Then the exceptional divisor of $\phi$ is $[1,2,\ldots,2]$ and, since $U,W_2\neq 0$, it is reducible. But $(\phi_{*}W_{1}')^{2}<0$ and $W_1'$ is a $(-2)$- or a $(-3)$-curve, so $(W_{1}')^{2}=-3$ and then $\phi_{*}(\Gamma_{1}+W_{1}+\Gamma_{2})=[1,2,\ldots,2,1]$ is a proper subchain of $\phi_{*}F_{0}$ which is not negative definite; a contradiction.

Thus $U=0$. Suppose $Q_1$ is not a chain. Then $\beta_{Q_{1}}(W_{1}')=3$, so the contractibility of $Q_{1}$ implies that $R+H+V+C_{1}+W_{1}'$ contracts to a smooth point and $W_{1}'$ contracts last. Thus $R+H+V$ is a $(-2)$-chain and $(W_{1}')^{2}=-\#(R+H+V)-2\leq -4$. In particular, $p=2$. Because $F_0$ contracts to a $0$-curve, we get $W_{1}=[(2)_{u},3,r+2]$ for some $u\geq 0$. But then $W_1'$ is a tip of $W_1$; a contradiction.

Hence $Q_{1}$ is a chain. Note that the component of $R$ meeting $L_{F_E}$ is not the last component of $Q_{1}$ contracted by $\pi$, because otherwise $\pi_{*}L_{F_{E}}$ would be a $0$-curve on $\P^{2}$, which is impossible. The contractibility of $Q_{1}$ gives $W_{1}=[(2)_{u},\gamma+2,(2)_{s-2},3,(2)_{p-2}]$ if $s\geq 2$ and $W_{1}=[(2)_{u},\gamma+1,(2)_{p-2}]$ otherwise. If the tip of $W_1$ met by $\Gamma_1$ is a tip of $D$ then $(F_{0})\redd=[1,(2)_{u},\gamma+2,(2)_{s-2},3,(2)_{p-2},1,(2)_{r},p+1]$ if $s\geq 2$ and $[1,(2)_{u},\gamma+1, (2)_{p-1},1,(2)_{r},p+1]$ if $s=1$. But we check easily that for $p,\gamma\geq 2$ these chains do not contract to $0$-curves, as they should.  Since $F_0$ is a chain, we infer that $\Gamma_{2}$ meets the chains $W_1$ and $W_2$ in tips of $D$ and hence $\pi_{*}\Gamma_{2}$ is a line. We have $\pi_{*}L_{F_{E}}\cdot \pi_{*}\Gamma_{2}=1$, so $\pi_{*}L_{F_{E}}$ is a line, and thus $u+1=(\pi_{*}L_{F_{E}})^{2}=1$, so $u=0$. In case $s=1$ we get $(F_{0})\redd=[1,(2)_{p-2},\gamma+1,1,(2)_{r},p+1]$ and, because $p\geq 2$, this chain does not contract to a $0$-curve. Hence $s\geq 2$ and $(F_{0})\redd=[1,(2)_{p-2},3, (2)_{s-2},\gamma+2,1, (2)_{r},p+1]$. This chain contracts to a $0$-curve if and only if $s=p$ and $r=\gamma$. Then $\pi_{*}L_{F_{V}}$ is a rational unicuspidal curve of HN-type $\binom{p\gamma+p+1}{p\gamma+1}$ and $\deg\pi_{*}L_{F_{V}} =\pi_{*}L_{F_{V}}\cdot \pi_{*} \Gamma_{2}=p\gamma+2$. Moreover, the self-intersection number of the proper transform of $\pi_{*}L_{F_{V}}$ on the minimal log resolution of $(\P^{2},\pi_{*}L_{F_{V}})$ equals $\gamma+1$. This is a contradiction with Lemma \ref{lem:HN-equations}(a).
\end{proof}

\begin{claim}\label{cl:U} $U\cdot\Gamma_1=0$. Either $U=0$ or  $U=[\gamma+1]$.
\end{claim}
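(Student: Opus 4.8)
The starting point is Claim \ref{cl:order-F_0}, which gives $\Gamma_2\cdot W_1=0$, together with the fact recorded just before it that exactly one of $W_1,W_2,U$ meets both $\Gamma_1,\Gamma_2$. Since $\Gamma_1\cdot W_1=1$ and $\Gamma_2\cdot W_1=0$, the block $W_1$ is an end block of the chain $F_0$ adjacent to $\Gamma_1$, and the central block (the one meeting both $(-1)$-curves) is either $W_2$ or $U$. Thus $(F_0)\redd$ is a chain in one of the two orders
\begin{equation*}
W_1,\ \Gamma_1,\ W_2,\ \Gamma_2,\ U \qquad\text{or}\qquad W_1,\ \Gamma_1,\ U,\ \Gamma_2,\ W_2,
\end{equation*}
the first with central $W_2$ and the second with central $U$. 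In the first order $U\cdot\Gamma_1=0$, while in the second $U\cdot\Gamma_1=1$; so the first assertion of the claim is exactly the exclusion of the second order, and the second assertion describes $U$ in the surviving order.

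\emph{Excluding the central-$U$ order.} Assume $(F_0)\redd=W_1,\Gamma_1,U,\Gamma_2,W_2$. Then $C_1$ meets the outer tip of $W_1$, $C_2$ meets the outer tip of $W_2$, the section $H$ meets $U$, and $U\neq 0$. The plan is to contract $F_0$ (and, if present, the remaining degenerate fiber) to a $0$-curve, obtaining a morphism of $X$ onto a Hirzebruch surface which does not touch one of the sections; comparing the images $C_1',C_2',H'$ and using that $C_i'-H'$ is numerically vertical, so $(C_i'-H')^2=0$, together with the self-intersection bounds produced by the $F_0$-distances of the sections, should force a sign contradiction exactly as in the proof of Lemma \ref{lem:E_is_vert_if_c=1}. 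I expect this to split into a few subcases according to whether $H$ meets $U$ in an interior component or in a tip and whether $W_1,W_2$ are $(-2)$-chains; the contractibility of $Q_2=C_2+V_2+W_2$ and of $F_V=V_1+L_{F_V}+V_2$ (via the $[A,1,A^{*}]$-criterion, i.e.\ $W_2=V_2^{*}$ and $V_2=V_1^{*}$) pins down the relevant chains, and each subcase is then closed either by the Hirzebruch-surface inequality or, when a suitable image $\pi_*\Gamma_i$ or $\pi_*L_{F_V}$ becomes a line or a low-degree cuspidal curve, by Lemma \ref{lem:HN-equations}.

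\emph{Determining $U$ in the surviving order.} Now $(F_0)\redd=W_1,\Gamma_1,W_2,\Gamma_2,U$, so $U\cdot\Gamma_1=0$, which proves the first assertion. If $U=0$ we are done, so assume $U\neq 0$; then $\Gamma_2$ meets one tip of $U$, the opposite tip of $U$ is a tip of $F_0$, and $H$ meets $U$ in a single component. In $Q_1$ the section $H$ is branching, meeting three divisors: the component $V$ (towards $C_1$), the $(-2)$-chain $R=[(2)_{\gamma-1}]=(F_E)\redd-E-L_{F_E}$ of the fiber $F_E=[\gamma,1,(2)_{\gamma-1}]$, and $U$. I would run the contraction of $Q_1$ to the smooth point $q_1$: once the $C_1$-side is removed, $H$ must become a $(-1)$-curve and absorb the chains attached to it, and the presence of the fixed $(-2)$-chain $R=[(2)_{\gamma-1}]$ forces the third branch $U$ to be irreducible with $U^2=-(\gamma+1)$, i.e.\ $U=[\gamma+1]$. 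Concretely this is a discriminant computation using Lemma \ref{lem:discriminants}(b) and the values in Lemma \ref{lem:HN_for_chains} together with the $[A,1,A^{*}]$-contraction criterion, the constraint $d(Q_1)=1$ leaving $U=[\gamma+1]$ as the only option.

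\emph{Main obstacle.} The delicate part is excluding the central-$U$ order: unlike in Claim \ref{cl:order-F_0}, here both end blocks $W_1,W_2$ carry a $1$-section (namely $C_1$ and $C_2$) and the branching of $Q_1$ at $H$ interacts with $U$, so the case analysis is longer and the final contradiction must be assembled either on a Hirzebruch surface or through the genus/degree identities of Lemma \ref{lem:HN-equations}. By contrast, once the contractibility of $Q_1$ is unwound, deducing $U=[\gamma+1]$ in the surviving order is comparatively routine.
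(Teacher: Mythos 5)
Your reduction to the two possible orders $W_1,\Gamma_1,W_2,\Gamma_2,U$ and $W_1,\Gamma_1,U,\Gamma_2,W_2$ is correct and matches the paper's starting point, but both halves of your argument have genuine gaps. For the exclusion of the central-$U$ order you only offer a plan ("should force a sign contradiction exactly as in the proof of Lemma \ref{lem:E_is_vert_if_c=1}"), and that plan is doubtful here: in Lemma \ref{lem:E_is_vert_if_c=1} the contradiction came from $\varphi(H_i)^2+\varphi(H_j)^2\leq H_i^2+H_j^2+2<0$, which used that at most one section has self-intersection $>-2$; in the present situation two of the three sections are the $(-1)$-curves $C_1$ and $C_2$, so $C_1^2+C_2^2+2=0$ and the inequality collapses. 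The paper's actual mechanism is different and purely about contractibility: knowing that $\Gamma_2$ meets $U$ in the component $U_0$ adjacent to $H$ (see below) and that $\mu(U_0)=1$, one contracts $F_0$ to a $0$-curve without contracting $U_0$, which forces $W_1=[(2)_{w_1},u+2]$ and $W_2=[(2)_{w_2}]$; the contractibility of $Q_2$ then gives $V_2=[(2)_v,w_2+2]$ and $V_1=V_2^{*}=[v+2,(2)_{w_2}]$, making $V$ a branching $(-2)$-curve meeting $C_1$, after which $Q_1$ cannot contract to a smooth point since $W_1\neq 0$.

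The second gap is more serious. Your derivation of $U=[\gamma+1]$ assumes that the constraint $d(Q_1)=1$ alone forces $U$ to be irreducible, and it does not: for instance for $\gamma=2$ the chain $[2,1,3,2]$ also contracts to a smooth point, so a reducible $U$ with $H$ meeting its tip and $\Gamma_2$ meeting its other end is not excluded by the discriminant computation. What actually forces $U=U_0$ (irreducible) is the first step of the paper's proof, which you omit entirely: contracting $Q_2$ to a point and then contracting in $Q_1$ until $U_0$ becomes a $(-1)$-curve, one sees that the images of $L_{F_E}$ and $\Gamma_1$ are fibers of a common $\P^1$-fibration, whence $\Gamma_2$ meets $U$ precisely in $U_0$; since $\mu(U_0)=1$ and multiplicities strictly increase from the tip of $F_0$ inward along a block of curves with self-intersection $\leq -2$, the component $U_0$ must be the tip of $F_0$, so $U=U_0$. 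Only then does the contraction of $R+[1]+U_0$ to a smooth point pin down $U_0=[\gamma+1]$. Without establishing where $\Gamma_2$ meets $U$, neither your exclusion step nor your final computation goes through.
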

\begin{proof}Assume $U\neq 0$ and denote by $U_{0}$ the component of $U$ meeting $H$. Consider a contraction of $Q_2$ to a point followed by contractions in $Q_1$ until $U_0$ becomes a $(-1)$-curve. The image of  $L_{F_E}$ is a $0$-curve and the image of $\Gamma_1$ is disjoint from it and has non-negative self-intersection, so they are in fact fibers of the same $\P^1$-fibration of the resulting surface. It follows that $\Gamma_2$ meets $U$ in $U_0$. 

Suppose that $U$ meets $\Gamma_{1}$ too. Since $\mu(U_{0})=1$, it is possible to contract $F_0$ to a $0$-curve without contracting $U_0$, which implies that $W_{1}=[(2)_{w_1},u+2]$ and $W_{2}=[(2)_{w_2}]$ for some $w_{1}\geq 0$, $w_{2}>0$ such that $w_{1}+w_{2}=\gamma+2$. Since $C_2+V_2$ is a chain, the contractibility of $Q_2$ implies now that $V_{2}=[(2)_v,w_2+2]$ for some $v\geq 0$, hence $V_1=V_2^{*}=[v+2,(2)_{w_2}]$. In particular, $V$ is a branching $(-2)$-curve in $D$ meeting $C_1$. Since $W_1\neq 0$, $Q_1$ does not contract to a smooth point in this case; a contradiction.

Thus $U\cdot \Gamma_{1}=0$, which means that the order of components in $F_0$ is $W_1$, $\Gamma_1$, $W_2$, $\Gamma_2$, $U$. Then $U_{0}$, having multiplicity $1$, is a tip of $F_{0}$ and meets $\Gamma_2$, so $U=U_0$ and $R+[1]+U_0$ contracts to a smooth point, hence $U_0=[\gamma+1]$.
\end{proof}

We infer that the order of components in $F_0$ is $W_1$, $\Gamma_1$, $W_2$, $\Gamma_2$, $U$. Then the tip of $W_1$ meeting $\Gamma_1$ is a tip of $D$. We have the following four cases.

\vspace{0.5em}
\textbf{Case $U=[\gamma+1]$, $V_{1}-V\neq 0$.}
\vspace{0.5em}

Since $V_{1}-V\neq 0$, $V$ is a branching component of $D$, so the contractibility of $Q_{1}$ implies that $V+C_{1}+W_{1}$ contracts to a smooth point and $V$ contracts last. Then $W_{1}=[(2)_{p-1}]$. Similarly, $H+[1]+(V_{1}-V)$ contracts to a smooth point and $H$ contracts last, so $V_{1}=[(2)_{s-1},p+1]$ and hence $s\geq 2$. We obtain $V_{2}=V_{1}^{*}=[s+1,(2)_{p-1}]$. Because of the contractibility of $Q_2$, we infer that $W_2$ meets $C_2$ in a $(-p-1)$-curve which is non-branching in $Q_2$. In particular, $Q_2$ is a chain.

Let $\phi$ be the contraction of $W_1+\Gamma_{1}$. If $\varphi_*\Gamma_2$ is a unique $(-1)$-curve in $\phi_{*}F_{0}$ then by Lemma \ref{lem:singular_P1-fibers} $F_{0}=[(2)_{p-1},1,p+2,(2)_{\gamma-1},1,\gamma+1]$, so $W_2$ does not contain a $(-p-1)$-curve, as it should. So $\phi_{*}F_{0}$ contains a $(-1)$-tip and then Lemma \ref{lem:singular_P1-fibers} gives  $F_{0}=[(2)_{p-1},1,p+1,(2)_{r},3,(2)_{\gamma-1},1,\gamma+1]$ for some $r\geq 0$. We have $\gamma,p\geq 2$, so since $C_2$ meets $W_2$ in a $(-p-1)$-curve which is non-branching in $Q_2$, $C_{2}$ meets the same component of $W_2$ as $\Gamma_1$. Then $Q_{2}=V_{2}+C_{2}+W_{2}=[s+1,(2)_{p-1},1,p+1,(2)_{r},3,(2)_{\gamma-1}]$ contracts to a smooth point, so $0\leq r=s-2$. Using Lemma \ref{lem:HN_for_chains} we check that this is the HN-type $\cA(\gamma,p,s)$ with $\gamma\geq 2$, see Fig.\ \ref{fig:A}.

\begin{figure}[h]
	\centering \hspace{-1.5cm}
	\begin{minipage}{0.43\textwidth}
		\centering
		\includegraphics[scale=0.4]{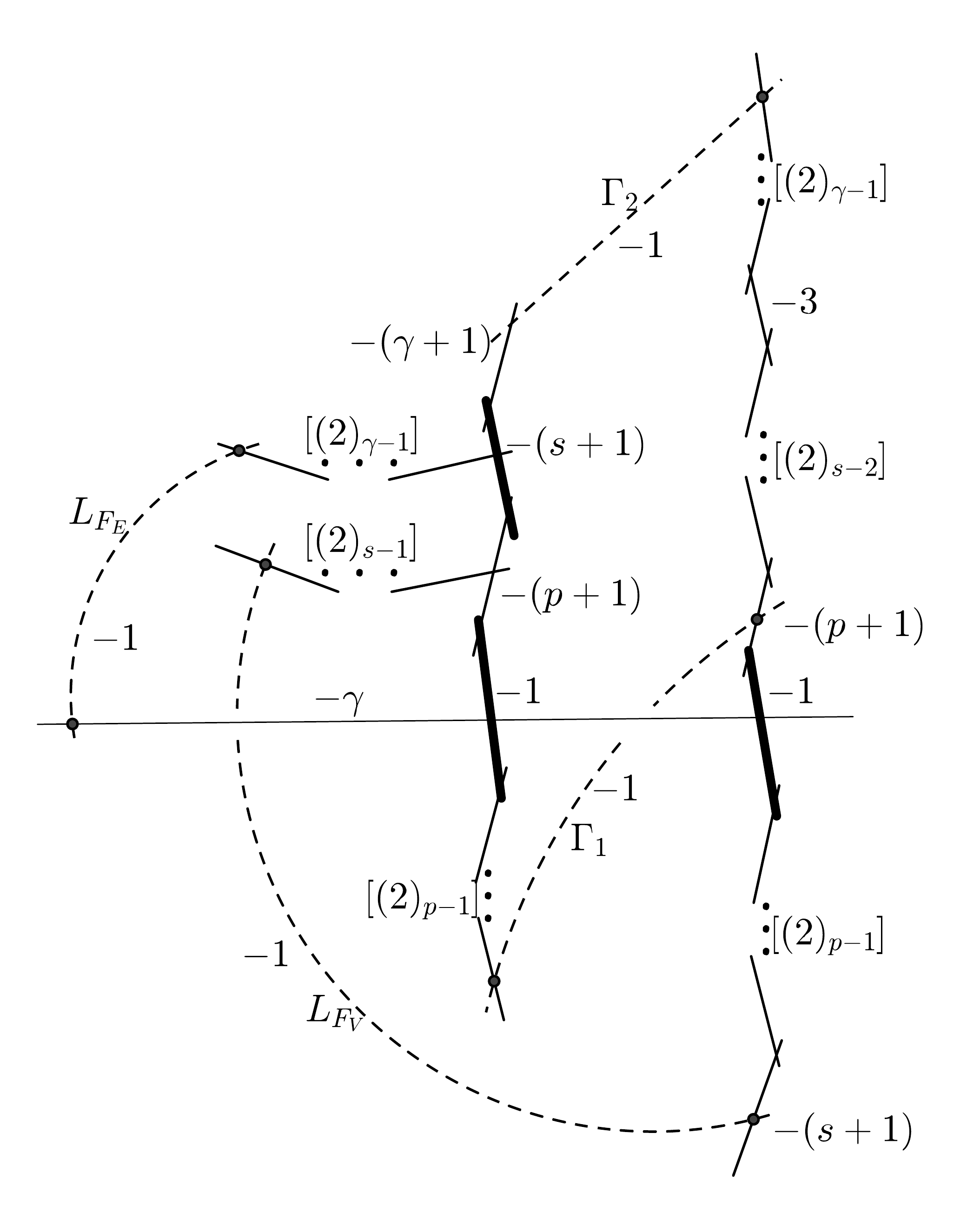}
		\caption{Type $\cA(\gamma,p,s)$.}
		\label{fig:A}
	\end{minipage}\hspace{1.5cm}
	\begin{minipage}{0.43\textwidth}
		\centering
		\includegraphics[scale=0.4]{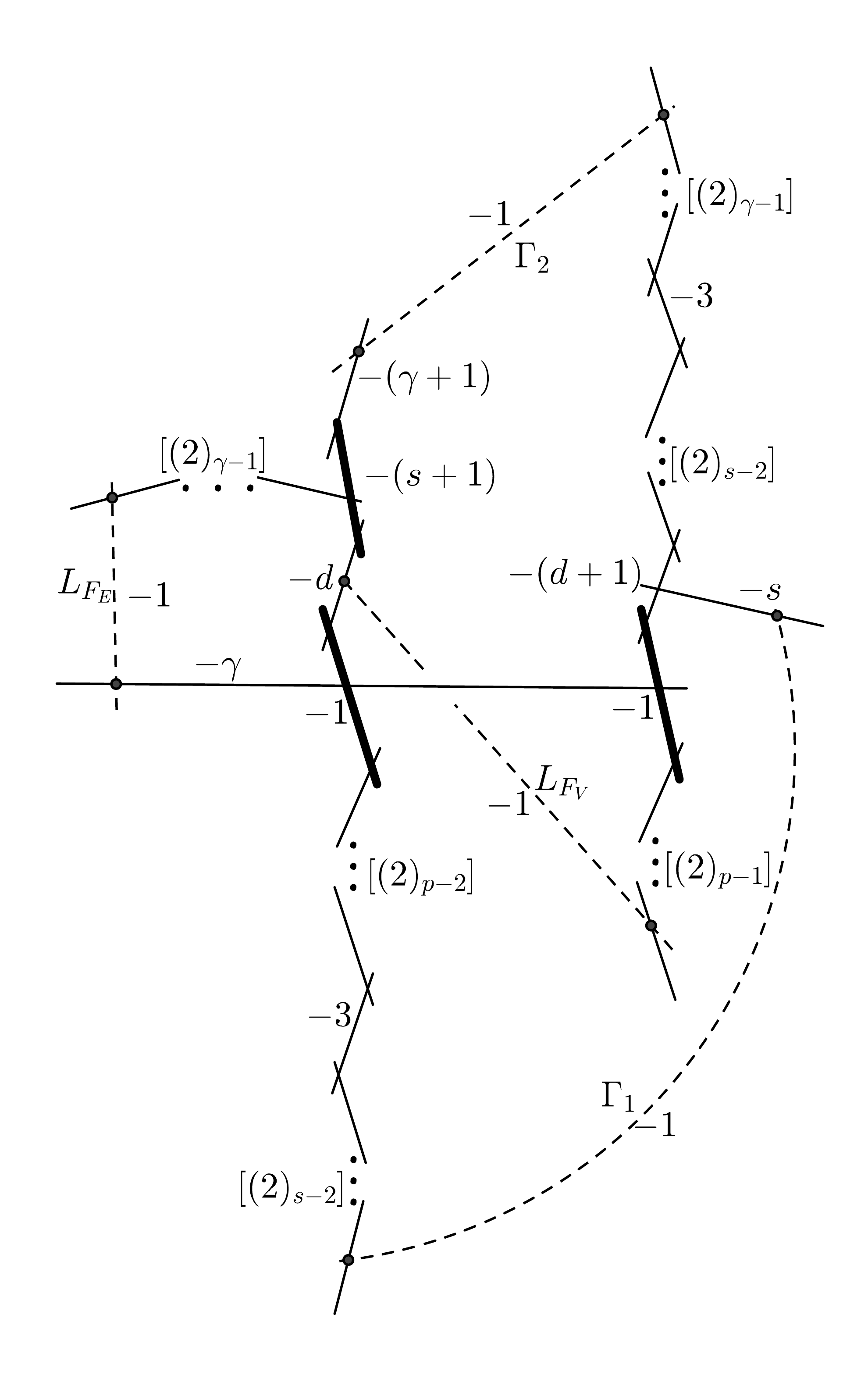}
		\caption{Type $\cB(\gamma,p,s)$.}
		\label{fig:B}
	\end{minipage}
\end{figure}

\vspace{0.5em}
\textbf{Case $U=[\gamma+1]$, $V_{1}=V$.}
\vspace{0.5em}

We have $V_{2}=V_{1}^{*}=[(2)_{p-1}]$, so $C_{2}+V_{2}$ contracts to a smooth point. Because $Q_{2}$ contracts to a smooth point too, $W_{2}$ meets $C_{2}$ in a $-(p+1)$-curve. The divisor $H+V+C_{1}+W_{1}$ contracts to a smooth point in such a way that $H$ contracts last. Thus either \emph{(i)} $W_{1}=[(2)_{s-2},3,(2)_{p-2}]$ if $s\geq 2$ or \emph{(ii)} $W_{1}=[(2)_{p-2}]$ if $s=1$. Since $W_1\neq 0$, we have $p\geq 3$ in case \emph{(ii)}.

Contract $\Gamma_{1}$ and the new $(-1)$-curves in the subsequent images of $F_{0}$ until the image of $\Gamma_2$ is the only $(-1)$-curve in the image of $F_{0}$ which is not a tip. The contracted curves are of multiplicity at least $2$ in $F_{0}$, so the resulting morphism $\phi$ does not touch $D_{h}$. Then $\phi_{*}F_{0}$ has three components of multiplicity one, so Lemma \ref{lem:singular_P1-fibers}($\mathrm{b}_{1}$) implies that $\phi_{*}F_{0}$ does contain a $(-1)$-tip. It follows from Lemma \ref{lem:singular_P1-fibers}(c) that $\phi_{*}F_{0}$ is of the form $[1,(2)_{u},3,(2)_{\gamma-1},1,\gamma+1]$. Depending on whether $u>0$ or not, we infer that there are the following possibilities for $F_{0}$:
\begin{equation*}\begin{split}
\mbox{case (i): either } & F_{0}=[(2)_{p-2},3,2_{s-2}, 1 , s,p+1, (2)_{u},3,(2)_{\gamma-1},1,\gamma+1] \\
\mbox{or } &                   F_{0}=[(2)_{p-2},3,2_{s-2}, 1 , s,p+2, (2)_{\gamma-1},1,\gamma+1],
\end{split}\end{equation*}
\begin{equation*}
\begin{split}
\mbox{case (ii): either } & F_{0}=[(2)_{p-2}, 1 , p,(2)_{u},3,(2)_{\gamma-1},1,\gamma+1] \\
\mbox{or } & F_{0}=[(2)_{p-2},1,p+1,(2)_{\gamma-1},1,\gamma+1].
\end{split}\end{equation*}
As noted before, $W_{2}$ meets $C_{2}$ in a $-(p+1)$-curve. In the second and third case this implies that $s=p+1$ and we check easily that $Q_{2}=V_2+C_2+W_2$ does not contract to a smooth point. Hence the first or the last case holds. The contractibility of $Q_{2}$ implies that in the first case the chain $[s,1,(2)_u,3,(2)_{\gamma-1}]$ contracts to a smooth point, so $u=s-2$. This is the HN-type $\cB(\gamma,p,s)$ for $\gamma, s\geq 2$, see Fig.\ \ref{fig:B}. The last case corresponds to HN-type $\cA(\gamma,p-1,1)$ with $\gamma\geq 2$.

\vspace{0.5em}
\textbf{Case $U=0$, $V_{1}-V\neq 0$.}
\vspace{0.5em}

We have now $\beta_{Q_{1}}(H)=2$ and $(F_0)\redd\cap D_v=W_1+W_2$. Since $V$ is a branching component of $Q_1$, $V+C_{1}+W_{1}$ contracts to a smooth point and $V$ contracts last. Then $W_{1}=[(2)_{p-1}]$, so $F_{0}$ is of the form $[(2)_{p-1},1,p+1,(2)_{r},1]$ for some $r\geq 0$ and $W_{2}=[p+1,(2)_{r}]$. If $W_{2}$ does not meet $C_{2}$ in a tip then $C_{2}+V_{2}$ contracts to a smooth point, so $V_{2}$ is a $(-2)$-chain and hence $V_{1}=V_{2}^{*}$ is irreducible, which is not the case. Hence $W_{2}$ intersects $C_{2}$ in a tip.

\begin{figure}[htbp]
	\centering \hspace{-1.5cm}
	\begin{minipage}{0.43\textwidth}
		\centering
		\includegraphics[scale=0.4]{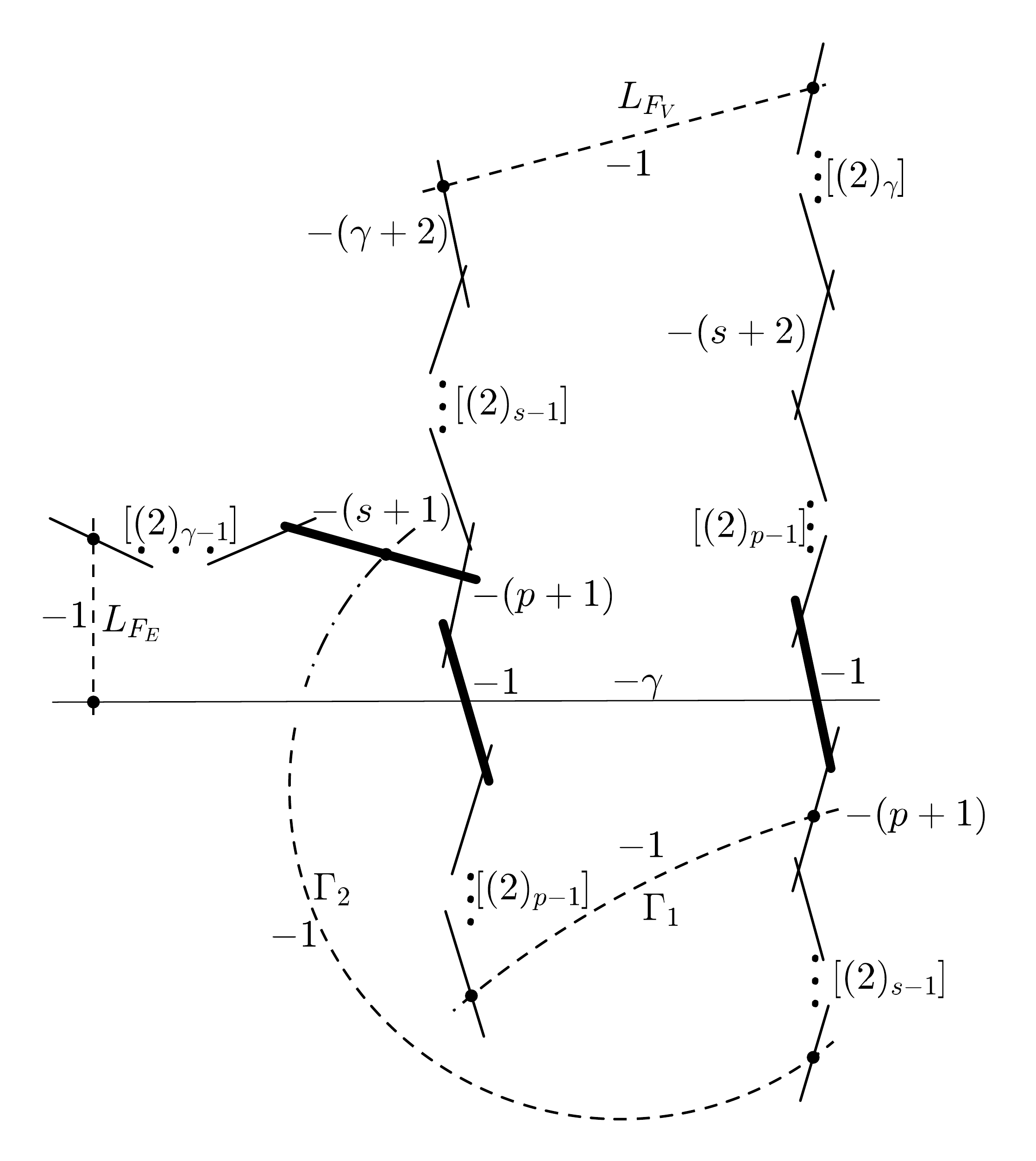}
		\caption{Type $\cC(\gamma,p,s)$.}
		\label{fig:C}
	\end{minipage}\hspace{1.5cm}
	\begin{minipage}{0.43\textwidth}
		\centering
		\includegraphics[scale=0.4]{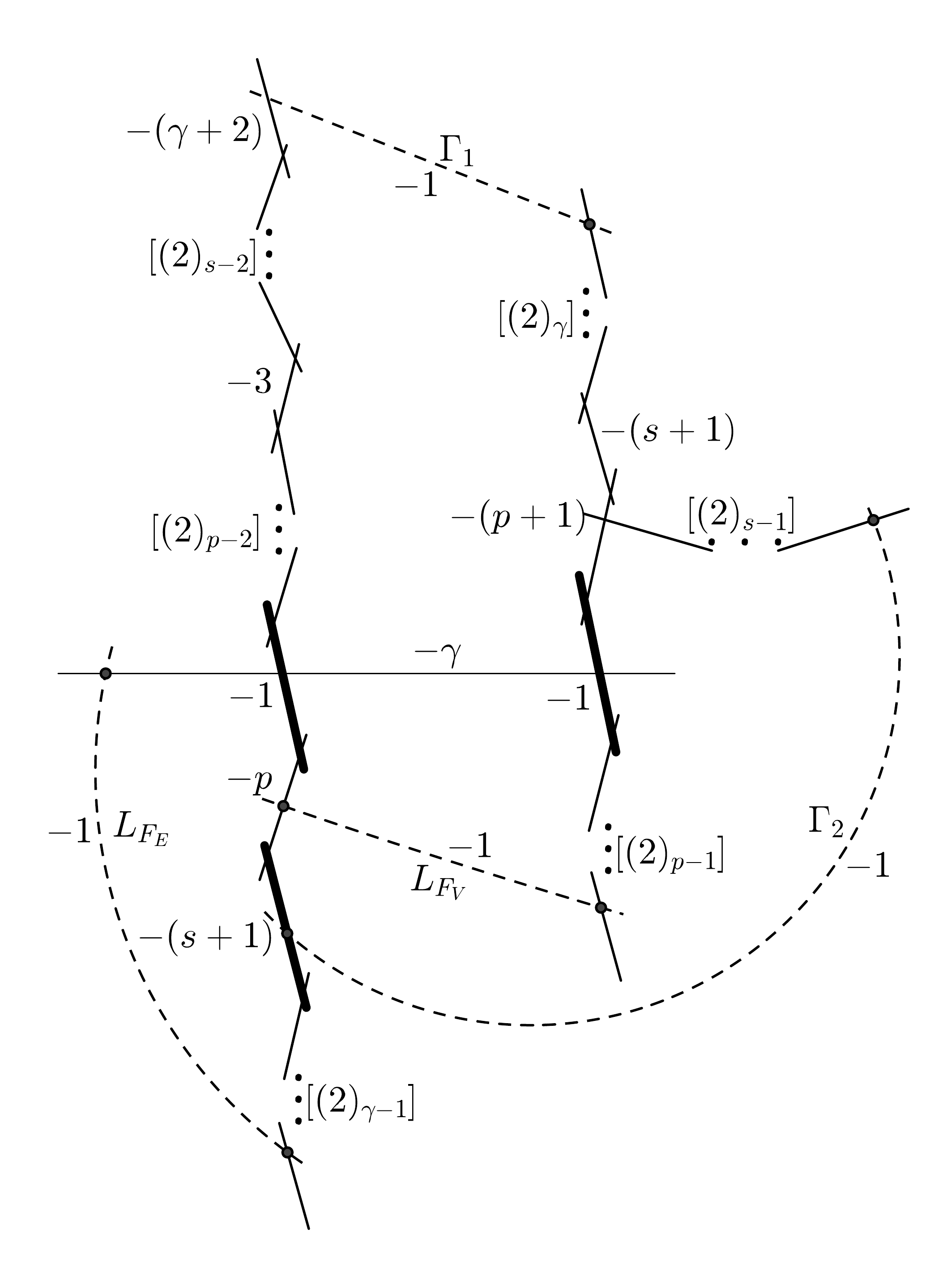}
		\caption{Type $\cD(\gamma,p,s)$.}
		\label{fig:D}
	\end{minipage}
\end{figure}

Recall that the component of $R$ meeting $L_{F_E}$ is not the last component of $Q_{1}$. Then the contractibility of $Q_1$ implies that $V_{1}=[(2)_{u},\gamma+2,(2)_{s-1},p+1]$ for some $u\geq 0$, hence $V_{2}=V_1^*=[u+2,(2)_{\gamma-1},s+2,(2)_{p-1}]$. In particular, the component of $V_2$ meeting $C_2$ is a $(-2)$-curve, because $p\geq 2$. The contractibility of $Q_2$ implies that the component of $W_2$ meeting $C_1$ is not a $(-2)$-curve, hence it is a $-(p+1)$-curve and then $r=s-1$ and $u=0$. This is the HN-type $\cC(\gamma,p,s)$ with $\gamma\geq 2$, see Fig.\ \ref{fig:C}.
\end{proof}

\vspace{0.5em}
\textbf{Case $U=0$, $V_{1}=V$.}
\vspace{0.5em}

In this case $Q_{1}$ is a chain. As in the proof of the Claim \ref{cl:order-F_0}, the fact that $Q_{1}$ contracts to a smooth point and $R$ is not contracted last implies that $W_{1}=[(2)_{u},\gamma+2,(2)_{s-2},3,(2)_{p-2}]$ if $s\geq 2$ and $W_{1}=[(2)_{u},\gamma+1,(2)_{p-2}]$ if $s=1$. Furthermore, $V_{2}=V_{1}^{*}=[(2)_{p-1}]$. Since $F_0$ contracts to a $0$-curve, we obtain $W_2=[(2)_{r},p+1,s+1,(2)_{\gamma-1},u+2]$ for some $r\geq 0$. Clearly, $C_2$ meets $W_2$ in a component of multiplicity one and these are contained in the initial subchain $[(2)_r,p+1]$. The contractibility of $Q_{2}$ to a smooth point implies that this component is a $-(p+1)$-curve, so the image of $W_{2}$ after the contraction of $C_{2}+V_{2}$ equals $[u+2,(2)_{\gamma-1},s+1,1,(2)_{r}]$ and this chain contracts to a smooth point. The latter is possible only if $r=s-1$ and $u=0$. This is the HN-type $\cD(\gamma,p,s)$ for $\gamma\geq 2$, see Fig.\ \ref{fig:D}.

\section{Realization of HN-types.} \label{sec:existence_of_fibrations}

In this section we show that each of the HN-types listed in Theorem \ref{thm:possible_HN-types} is realized by a rational, cuspidal planar curve which is unique up to a projective equivalence. We also prove that the complements of those curves are $\C^{**}$-fibered surfaces of log general type. 

The fact that curves of different HN-types in Theorem \ref{thm:possible_HN-types} are not projectively equivalent can be seen by a direct comparison either of standard HN-types (which are unique) or of multiplicity sequences or of weighted graphs of minimal log resolutions.

\subsection{Existence of $\C^{**}$-fibrations.}\label{ssec:existence_of_fibrations}
  
\begin{prop}\label{prop:Cstst_fibr_exists_and_k=2} Let $\bar{E}\subseteq \P^{2}$ be a rational cuspidal curve of one of the HN-types listed in Theorem \ref{thm:possible_HN-types}. Then $\P^{2}\setminus \bar{E}$ admits a $\C^{**}$-fibration and is of log general type. \end{prop}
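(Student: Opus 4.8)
The plan is to prove the two assertions — existence of a $\C^{**}$-fibration and $\kappa=2$ — for each HN-type in turn, working from the weighted dual graph of $D$. This graph is determined by the sequence of HN-pairs through Lemma \ref{lem:HN_for_chains} and is exactly the one drawn in the corresponding figure of Section \ref{sec:classification_of_sing}; I take the curve $\bar E$ of the given type as given (its existence and uniqueness are the subject of the later subsections), so that the minimal log resolution $(X,D)$ and all intersection numbers are available.

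For the families in which the graph already contains a suitable $0$-curve the fibration is immediate. In the bicuspidal families $\cA$--$\cD$ (with $\gamma=-E^2\ge2$) the divisor $F_E=[\gamma,1,(2)_{\gamma-1}]$ through $E$ is supported on $D$ and contracts to a $0$-curve, being of the form $[A,1,A^{*}]$ with $A=[(2)_{\gamma-1}]$ and $A^{*}=[\gamma]$; in the families $\cE$, $\cF$ (where $\nu=1$) the chain $[2,1,2]\subseteq D$ containing $E+C_1$ does the same. By the criterion recalled at the start of Section \ref{ssec:fibrations}, each such divisor is a fiber of a $\P^1$-fibration $\bar p$ of $X$. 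Since $f\cdot D=f\cdot D_h=3$ with three distinct reduced points for a general fiber $f$ (three $1$-sections $C_1,C_2,H$ for $\cA$--$\cD$; a $1$-section and a $2$-section for $\cE$, $\cF$), the intersection $f\cap(\P^2\setminus\bar E)\cong\C^{**}$, so $\bar p$ restricts to a $\C^{**}$-fibration.

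The remaining families — the tricuspidal $\FZa$, the new bicuspidal $\cG$, the unicuspidal Orevkov curves $\ORa$, $\ORb$, and the $\gamma=1$ specializations — are those in which $E$ is a $2$-section and no fiber of the wanted fibration lies inside $D$ (by Lemma \ref{lem:fibrations-Sigma-chi} every degenerate fiber then has a single component $L_F\not\subseteq D$). Here the plan is to produce this external $(-1)$-curve directly, following the pattern of Example \ref{ex:Cstst_fibr_with_base_points}: first exhibit an auxiliary effective divisor supported on $D$ with square $0$ and arithmetic genus $0$ (a combination such as $E+2C_1+\cdots$), giving a $\P^1$-fibration $\rho$; then use Lemmas \ref{lem:singular_P1-fibers}, \ref{lem:fibrations-Sigma-chi} and \ref{lem:Qhp_has_no_lines} to force a degenerate fiber of $\rho$ to contain a $(-1)$-curve $L_0\not\subseteq D$ with the prescribed intersections with $D$; finally combine $L_0$ with the appropriate chain of $D$ into a $0$-curve, which induces the desired $\bar p$ and restricts to a $\C^{**}$-fibration. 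Concretely $L_0$ is the proper transform of a member of an explicit linear pencil on $\P^2$ — lines through the cusp of multiplicity $d-2$ for $\FZa$, and the pencil attached to the line $\ell$ of Theorem \ref{thm:geometric}(c) for $\cG$ — whose general member meets $\bar E$ in exactly three points.

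It remains to prove $\kappa(\P^2\setminus\bar E)=2$. By Lemma \ref{lem:kappa<=1} this is equivalent to $\P^2\setminus\bar E$ containing no curve isomorphic to $\C^1$, i.e.\ to its not being $\C^1$- or $\C^*$-fibered. I would settle this by comparing each family with the complete classification of rational cuspidal curves of non-general type recalled after Lemma \ref{lem:kappa<=1} ($\kappa=0$ impossible, $\kappa=-\infty$ and $\kappa=1$ explicit short lists): for the admissible parameter ranges of Theorem \ref{thm:possible_HN-types} none of the listed HN-types appears there. Indeed the exclusions in the statement — $(\gamma,p)\neq(1,2)$ for $\cA$, $\cB$, the bound $\gamma\ge3$ for $\cG$, and the various lower bounds on $d,k,p,s$ — are precisely what remove the degenerate specializations whose complements drop to $\kappa\le1$, so verifying those constraints is what secures $\kappa=2$. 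The main obstacle is the third paragraph: for $\FZa$, $\cG$ and the Orevkov curves there is no fiber inside $D$, and one must genuinely construct the external $(-1)$-curve and check that it produces a pencil whose general member meets $\bar E$ in three distinct points (not fewer) — this is the delicate combinatorial point, whereas the easy families and the log-general-type bookkeeping are routine.
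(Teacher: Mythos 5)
There is a genuine gap, and it sits exactly where you placed the families you call ``immediate.'' For the types $\cA$--$\cD$ the chain $[\gamma,1,(2)_{\gamma-1}]$ through $E$ is \emph{not} supported on $D$: the only $(-1)$-curves of $D$ are $C_1$ and $C_2$, and (e.g.\ for $\cA(\gamma,p,s)$) the twig of $Q_1$ attached to $C_1$ coming from the last HN-pair $\binom{p}{1}$ is $[(2)_{p-1}]$, not $[(2)_{\gamma-1}]$; in the dual graphs of Figures \ref{fig:A}--\ref{fig:D} the chain $[(2)_{\gamma-1}]$ hangs off the component $H\subseteq Q_1$ and is not joined to $E$ inside $D$ at all. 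The middle $(-1)$-curve of that fiber is external, and the paper has to \emph{construct} it as the proper transform of the line tangent to $\bar E$ at $q_1$: one shows via Bezout together with Lemma \ref{lem:Qhp_has_no_lines} that this tangent line meets $\bar E\setminus\{q_1\}$ exactly once, and a multiplicity computation ($\mu\geq\gamma\mu'$) locates where its proper transform meets $D$. That argument requires already knowing $\kappa(\P^2\setminus\bar E)=2$, which is why the paper proves log general type \emph{first} (Lemma \ref{lem:log_general_type}) and the fibration second (Lemma \ref{lem:existence_of_fibrations}); your ordering (fibration first, $\kappa$ second) cannot be kept. The same ordering issue bites $\ORa$, $\ORb$, which you also mis-sort: there the fiber $C+2C_1+E$ \emph{is} inside $D$, but a priori it only yields a $\C^{(n*)}$-fibration with $n=\beta_D(C)\leq 2$, and Easy Addition plus $\kappa=2$ is needed to rule out $n\leq 1$. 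Your bucket of families genuinely needing an external curve should be $\cA$--$\cD$ and $\cG$ (and trivially $\FZa$), not $\ORa$, $\ORb$.

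On the log general type half, your route---checking that none of the listed HN-types occurs in the known classifications for $\kappa=-\infty$ and $\kappa=1$---is explicitly acknowledged by the paper as viable, but the paper instead gives a self-contained argument: it first proves an analogue of Proposition \ref{prop:some_Cstst_fibration_extends} for $\C^*$-fibrations (Proposition \ref{prop:some_Cst_fibration_extends}), then assumes $\kappa\leq 1$, extracts a base-point-free $\C^*$-fibration, and derives a contradiction from the structure of its degenerate fibers against the explicit dual graphs. As written, your version only asserts that the parameter exclusions in Theorem \ref{thm:possible_HN-types} ``are precisely what remove the degenerate specializations''; that claim is the entire content of this half and would have to be verified type by type against the lists of Kashiwara, Miyanishi--Sugie and Tono, which you have not done. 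So the proposal identifies the right ingredients in places but misattributes which families are easy, omits the key construction of the tangent-line $(-1)$-curve for $\cA$--$\cD$ and $\cG$, and leaves the $\kappa=2$ verification as an unexecuted comparison.
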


We first prove (Lemma \ref{lem:log_general_type}) that $\P^2\setminus \bar E$ is of log general type. As it was noticed in Lemma \ref{lem:kappa<=1}, this could be deduced from the available classification of cuspidal curves with $\kappa\neq 2$. However, we provide a direct argument. We have the following analogue of Proposition \ref{prop:some_Cstst_fibration_extends}:

\begin{prop}[Reduction to $\C^{*}$-fibrations with no base points]\label{prop:some_Cst_fibration_extends}
Let $(X,D)$ be a smooth snc-minimal completion of a smooth $\Q$-acyclic surface of non-negative Kodaira dimension. If $X\setminus D$ has a $\C^{*}$-fibration then it has a $\C^{*}$-fibration  without base points on $X$.
\end{prop}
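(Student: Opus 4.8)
The plan is to run the proof of Proposition~\ref{prop:some_Cstst_fibration_extends} almost verbatim, changing only two ingredients. First, the completing $\P^1$-fibration now satisfies $F\cdot D=2$ for every fiber $F$ instead of $F\cdot D=3$, so throughout the combinatorial estimates the threshold $3$ is replaced by $2$. Second, log general type entered that proof in exactly two places: through Lemma~\ref{lem:Qhp_has_no_lines} (absence of curves isomorphic to $\C^1$) and through Iitaka's Easy Addition at the very end. Both survive under the weaker hypothesis $\kappa\ge 0$: a curve isomorphic to $\C^1$ on a $\Q$-acyclic surface generates a $\C^1$-fibration and forces $\kappa=-\8$ (cf.\ \cite{Miyan-OpenSurf}), so $\kappa\ge 0$ already guarantees that $X\setminus D$ contains no $\C^1$; and the same Easy Addition \cite[10.8]{Iitaka_AG} now serves to exclude a general fiber meeting $D$ in a single point.

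Concretely, I would let $p$ be a $\C^*$-fibration of $V=X\setminus D$, let $\tau\colon(X\s,D\s)\to(X,D)$ be the minimal resolution of the base points of $p$ on $X$, let $\bar p$ be a $\P^1$-fibration of $X\s$ restricting to $p$, and let $F$ be a fiber of $\bar p$; as before $\Q$-acyclicity makes $D$ a rational tree and, $V$ being affine, $D\s$ connected. The first step is the analogue of the auxiliary Claim: for any contraction $\phi\colon X\s\to X'$ of $(-1)$-curves in $F$ and its images, every $(-1)$-curve $C'$ of $F'=\phi_*F$ satisfies $C'\cdot D'_h\le 1$, and $C'\cdot D'_h=0$ whenever $C'\not\subseteq D'$. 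The two cases ($C'\cdot D'_h=2$; and $C'\cdot D'_h=1$ with $C'\not\subseteq D'$) are dispatched by the multiplicity inequality $\mu(C')\,C'\cdot D'_h\le F'\cdot D'_h=2$ together with the connectedness of $D\s$, Lemma~\ref{lem:singular_P1-fibers}, and the absence of $\C^1$, exactly as in loc.\ cit.\ with $3$ replaced by $2$.

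For the main argument, assuming $\tau\ne\id$, I would take the last exceptional curve $H$ of $\tau$: it is a component of $D\s$, non-branching there, and, by minimality of $\tau$, horizontal. Contracting $(-1)$-curves of $F$ until a component $C$ meeting $H$ becomes a $(-1)$-curve produces $\phi\colon X\s\to X'$ which, by the stopping rule, does not touch $H$; hence $(H')^2=-1$, $H'\cdot C'=1$, and $H'+C'$ is a $0$-curve defining a $\P^1$-fibration of $X'$. Since $C'\cdot D'_h\ge 1$, the Claim forces $C'\subseteq D'$. Because $\phi$ contracts only curves meeting the boundary, $X'\setminus D'$ is an open, hence affine, subset of $V$ with $\kappa\ge 0$, so the new fibration cannot meet $D'$ in one point. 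The goal is to see that it meets $D'$ in exactly two points, whence it is the desired $\C^*$-fibration; contracting $H$ then yields a $\C^*$-fibration of $V$ with a strictly shorter base-point resolution, and the proposition follows by induction on the length of $\tau$.

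The hard part is precisely this last numerical point. Since $(H'+C')\cdot D'=\beta_{D'}(H)+\beta_{D'}(C')-2$ and $\beta_{D'}(H)\le 2$, one lands on a $\C^{**}$-fibration rather than a $\C^*$-fibration exactly when $C'$ is branching in $D'$ and $H$ is not a tip. Unlike the log-general-type case, where $(H'+C')\cdot D'\le 3$ suffices, here $\kappa\ge 0$ does not distinguish $\C^*$- from $\C^{**}$-fibrations (both force $\kappa\le 1$), so the Claim must be strengthened to exclude the branching of $C'$. The critical configuration to rule out is a multiplicity-$2$ $(-1)$-curve $C'$ meeting a $2$-section, which requires a finer analysis of the degenerate fiber $F'$ via Lemma~\ref{lem:singular_P1-fibers} and the multiplicity bookkeeping of the $\C^*$-fibration; showing that this configuration is incompatible with the minimality of $\tau$ and the connectedness of $D\s$ is the main obstacle, and everything else is a transcription of the proof of Proposition~\ref{prop:some_Cstst_fibration_extends} with $2$ in place of $3$.
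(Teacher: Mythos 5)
Your plan has two genuine gaps, and the second one you flag yourself without filling it. First, the assertion that ``$\kappa\ge 0$ already guarantees that $X\setminus D$ contains no $\C^1$'' is false: a single curve isomorphic to $\C^1$ does not generate a $\C^1$-fibration, and the no-lines theorem (Lemma \ref{lem:Qhp_has_no_lines}) is specifically a log-general-type statement resting on the logarithmic BMY inequality. $\Q$-homology planes of Kodaira dimension $0$ or $1$ can contain affine lines --- for instance inside degenerate fibers of their $\C^*$-fibrations, which Lemma \ref{lem:Suzuki}(a) explicitly allows to be homeomorphic to $\C^1$. Since the auxiliary Claim in the proof of Proposition \ref{prop:some_Cstst_fibration_extends} invokes Lemma \ref{lem:Qhp_has_no_lines} at essentially every step (to force $L'\cdot D'\ge 2$, $\Gamma\cdot D\s\ge 2$, etc.), the ``transcription with $2$ in place of $3$'' does not go through. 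Second, even granting the Claim, your concluding step needs $f\cdot D'=2$ for the new fibration, and Easy Addition under $\kappa\ge 0$ only excludes $f\cdot D'\le 1$; both a $\C^*$- and a $\C^{**}$-fibration are compatible with $0\le\kappa\le 1$, so nothing pins down the value $2$. You correctly identify this as ``the main obstacle'' but offer no argument for it, so the proof is incomplete precisely at its crux.

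The paper's actual proof takes an entirely different, more structural route that sidesteps both problems. Assuming $\tau\neq\id$, the last exceptional curve $H$ is horizontal and non-branching in $D\s$; since $\kappa\ge 0$ makes $K_{X\s}+D\s$ $\Q$-effective and this divisor is vertical for $\bar\rho$ (as $(K_{X\s}+D\s)\cdot f=0$), writing it as $\alpha f+N$ and intersecting with $H$ gives $\alpha\le H\cdot(K_{X\s}+D\s)=-2+\beta_{D\s}(H)\le 0$, hence $\kappa=0$. A counting argument then shows the fibration is twisted, and the classification of $\Q$-homology planes of $\kappa=0$ all of whose $\C^*$-fibrations are twisted (\cite[6.1(4)]{Palka-classification2_Qhp}) identifies $(X,D)$ explicitly as the unique snc-minimal completion with $D=[2,1,2]+[2,k,2]$ glued at the middle components; the chain $[2,1,2]$ then visibly supports a base-point-free twisted $\C^*$-fibration. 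If you want to salvage your approach you would need a substitute for the no-lines lemma valid for $\kappa\ge 0$ and a genuinely new argument for the final numerical step; as written, neither is available.
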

\begin{proof}
Let $\rho$ be a $\C^{*}$-fibration of $X\setminus D$ and let $\tau\colon (X\s,D\s)\to (X,D)$ be the minimal resolution of base points of $\rho$ on $X$. Denote by $\bar \rho$ the completion of $\rho$ and suppose $\tau \neq \id$. By the minimality of $\tau$, its last exceptional curve $H$ is horizontal for $\bar \rho$. Since $D$ is snc, $\beta_{D\s}(H)\leq 2$.

Because $\kappa(K_{X\s}+D\s)=\kappa(X\setminus D)\geq 0$, $K_{X\s}+D\s$ is linearly equivalent, as a $\Q$-divisor, to some effective $\Q$-divisor. For a general fiber $f$ of $\bar{\rho}$ we have $(K_{X\s}+D\s)\cdot f=0$ by the adjunction formula, so this divisor is vertical, hence it can be written as $\alpha f+N$, where $\alpha$ is rational and non-negative and $N$ is an effective vertical $\Q$-divisor with negative-definite intersection matrix. It follows that $\kappa(X\setminus D)=\kappa(\alpha f)$. Since $H$ is non-branching, we have $\alpha\leq H\cdot (K_{X\s}+D\s)=-2+\beta_{D\s}(H)\leq 0$, hence $\alpha=0$ and $\beta_{D\s}(H)=2$. In particular, $\kappa(X\setminus D)=0$. (Note that for now we did not use the fact that $X\setminus D$ is $\Q$-acyclic).

The fibration $\rho$ is necessarily twisted, that is $\#D_{h}\s=1$. Indeed, otherwise by  \cite[4.10]{Palka-classification2_Qhp} at least three fibers of $\bar \rho$ meet $H$ in a component of $D_{v}\s$, contrary to the fact that $\beta_{D\s}(H)\leq 2$. Thus we are left with the case when all $\C^{*}$-fibrations of $X\setminus D$ are twisted. By \cite[6.1(4)]{Palka-classification2_Qhp} this is possible only if there is an snc-minimal smooth completion $(X',D')$ of $X\setminus D$ such that $D'=T_{1}+T_{2}$, where $T_{1}=[2,1,2]$, $T_{2}=[2,k,2]$ for some $k\geq 1$ and these chains meet once in their middle components. Because all components of $D'$ have negative self-intersection numbers, the snc-minimal smooth completion is unique, so $(X,D)\cong (X',D')$. Now $T_{1}$ supports a fiber of a $\P^{1}$-fibration of $X$ which restricts to a (twisted) $\C^{*}$-fibration of $X\setminus D$ with no base points on $D$, which finishes the proof.  
\end{proof}

\begin{lem}[Complements are of log general type]\label{lem:log_general_type}
Let $\bar{E}\subseteq \P^{2}$ be a rational cuspidal curve of one of the HN-types listed in Theorem \ref{thm:possible_HN-types}. Then $\kappa(\P^{2}\setminus \bar{E})=2$.
\end{lem}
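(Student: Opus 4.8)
The plan is to prove the contrapositive using the trichotomy of Lemma \ref{lem:kappa<=1}: since $\P^{2}\setminus\bar E$ is $\Q$-acyclic, it suffices to show that it is neither $\C^{1}$- nor $\C^{*}$-fibered, for then $\kappa(\P^{2}\setminus\bar E)=2$. So I would assume that such a fibration $p$ exists and derive a contradiction from the explicitly known weighted dual graph of $D$, which is computed from the HN-type of $\bar E$ through Lemma \ref{lem:HN_for_chains} (these are exactly the graphs recorded in the figures accompanying Section \ref{sec:classification_of_sing}). Throughout, $(X,D)$ denotes the minimal log resolution; when $E^{2}=-1$ (which happens only for $\cA$--$\cD$ with $\gamma=1$, where $E$ is a superfluous $(-1)$-curve) I would first contract $E$ and the ensuing superfluous $(-1)$-curves to pass to an snc-minimal completion, so that the reduction propositions apply.

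First I would dispose of the $\C^{*}$-fibered alternative. A $\C^{*}$-fibration forces $\kappa\le 1$ by Iitaka's Easy Addition; if $\kappa=-\infty$ the $\Q$-acyclic surface carries a $\C^{1}$-fibration (is affine-ruled) and is treated below, so I may assume $\kappa\ge 0$. Then Proposition \ref{prop:some_Cst_fibration_extends} lets me choose $p$ with no base points on the snc-minimal model, extending to a $\P^{1}$-fibration $\bar p\colon X\to B$ with $h=\#D_{h}\in\{1,2\}$. By Lemma \ref{lem:fibrations-Sigma-chi} one has $\sum_{b}(\sigma(F_{b})-1)=h+\nu-2$, which tightly constrains the number and shape of degenerate fibers, and by Lemma \ref{lem:singular_P1-fibers} every degenerate fiber is a rational tree governed by its $(-1)$-curves not lying in $D$. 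Running the same bookkeeping as in the proof of Proposition \ref{prop:possible_cusp_types} — recording which $Q_{j}$ and which of $E$, $C_{j}$ are vertical, and using that each $Q_{j}$ is negative definite and contracts to a smooth point of $\P^{2}$ — one finds that no admissible vertical/horizontal decomposition of the given graph $D$ exists: in every configuration the image $\pi_{*}L$ of some fiber $(-1)$-curve $L$ becomes a line or a $0$-curve on $\P^{2}$ (impossible by the degree and self-intersection), or the resulting numerical data violate the genus relation Lemma \ref{lem:HN-equations}(c). The condition $h\le 2$, stronger than the $h\le 3$ available for $\C^{**}$-fibrations, shortens this check considerably.

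Next I would treat the $\C^{1}$-fibered alternative, where necessarily $\kappa=-\infty$. Here $h=1$, so Lemma \ref{lem:fibrations-Sigma-chi} gives $\nu\ge 1$, i.e.\ $D$ contains a full fiber $F$ of $\bar p$. Since each exceptional tree $Q_{j}$ is negative definite and a nonzero effective divisor supported on a negative-definite configuration has negative self-intersection, $F$ cannot be supported in $\sum_{j}Q_{j}$; as $F^{2}=0$ it must contain $E$. Thus $E$ is vertical and the unique horizontal $1$-section is a component of some $Q_{j}$, and analyzing $F$ together with the remaining degenerate fibers via Lemma \ref{lem:singular_P1-fibers}, exactly as above, again contradicts either the self-intersection of some $\pi_{*}L$ on $\P^{2}$ or Lemma \ref{lem:HN-equations}. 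Combining the two alternatives shows that $\P^{2}\setminus\bar E$ admits no $\C^{1}$- or $\C^{*}$-fibration, whence $\kappa(\P^{2}\setminus\bar E)=2$.

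The main obstacle is the breadth rather than the depth of the argument: one must run through each HN-type $\FZa$, $\cA$--$\cG$, $\ORa$, $\ORb$ and, for each, through the possible positions (vertical or horizontal) of $E$ and of the curves $C_{j}$, confirming in every instance that the negative-definiteness of the $Q_{j}$ and the explicit weights of $D$ obstruct a $\P^{1}$-fibration with $h\le 2$. What keeps each individual case a finite, essentially mechanical verification is precisely the reduction to base-point-free fibrations (Proposition \ref{prop:some_Cst_fibration_extends}) together with the already-tabulated dual graphs; the only genuinely delicate points are organizing these cases uniformly and correctly handling the non-snc-minimal configurations $E^{2}=-1$ by the initial contraction.
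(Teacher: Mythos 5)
Your opening moves coincide with the paper's: invoke the trichotomy of Lemma \ref{lem:kappa<=1}, rule out $\kappa=-\infty$, and for $\kappa\in\{0,1\}$ take a $\C^*$-fibration made base-point-free by Proposition \ref{prop:some_Cst_fibration_extends}. But from that point on your proposal defers the entire substance of the lemma to an unexecuted computation: the assertion that ``running the same bookkeeping as in the proof of Proposition \ref{prop:possible_cusp_types}\dots one finds that no admissible vertical/horizontal decomposition of the given graph $D$ exists'' is a promise, not a proof, and it must be kept uniformly in each of roughly ten \emph{infinite} families (parameters $d,k,\gamma,p,s$), so it is not a finite mechanical check. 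Moreover your claimed dichotomy for how each case fails (either $\pi_*L$ is a line or $0$-curve, or Lemma \ref{lem:HN-equations}(c) is violated) is not established and does not match where the actual contradictions come from. The paper avoids this breadth entirely by feeding in structural inputs you do not invoke: for $\kappa=-\infty$ it only needs that the fiber at infinity of an affine ruling is a $0$-curve while no snc boundary component has non-negative self-intersection; if $D$ contains a fiber of the $\C^*$-fibration it quotes \cite[7.5]{Fujita-noncomplete_surfaces} to force a twisted fibration with $F_\infty=[2,1,2]$; and in the untwisted case it quotes \cite[4.9(C)]{Palka-classification2_Qhp} to get $\beta_D(H_1)=\beta_D(H_2)=3$, which pins down the sections and reduces everything to $c=1$ and the types $\ORa$, $\ORb$, where only two concrete features of those dual graphs (the $(-2)$-tips adjacent to $C_1$) need checking. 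Without substitutes for these facts your case analysis has no uniform handle.

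Two smaller points. First, your treatment of $E^2=-1$ (types $\cA$--$\cD$ with $\gamma=1$) by contracting $E$ is a reasonable instinct, but after that contraction $C_1$ and $C_2$ become intersecting $0$-curves in the boundary, so the resulting completion is snc-minimal yet \emph{not} negative on all components and the images of $Q_1$, $Q_2$ are no longer negative definite; your subsequent appeals to ``negative-definiteness of the $Q_j$'' and to the tabulated graphs therefore need to be redone in this case rather than applied verbatim. Second, in the $\C^1$-fibered branch you correctly deduce $\nu\geq1$ and that the full fiber in $D$ contains $E$, but again the contradiction is only asserted; the paper's version of this step is a one-line consequence of the negativity of all boundary components, which is the cleaner route.
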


\begin{proof}
Note first that since all components of $D$ have negative self-intersection numbers, up to an isomorphism $(X,D)$ is the unique minimal snc-completion of $X\setminus D$ and hence all boundary components in all snc completions of $X\setminus D$ have negative self-intersection numbers.

If $\kappa(\P^{2}\setminus \bar{E})=-\infty$ then by \cite[3.4.3.1]{Miyan-OpenSurf} $\P^{2}\setminus \bar{E}$ admits a $\C^{1}$-fibration $\rho$ over $\C^1$. But the fiber at infinity of a minimal completion of such a fibration is a smooth $(0)$-curve, which contradicts the claim above. So we have $\kappa(X\setminus D)\in \{0,1\}$. 

By Lemma \ref{lem:kappa<=1} there is a $\C^*$-fibration $\rho$ of $\P^{2}\setminus \bar{E}$, and by Proposition \ref{prop:some_Cst_fibration_extends} we may assume that it has no base points on $X$. Denote by $\bar \rho\:X\to \P^1$ the completion of $\rho$. Suppose $D$ contains a fiber $F_\8$ of $\bar \rho$. Since $D$ contains no $0$-curve, \cite[7.5]{Fujita-noncomplete_surfaces} implies that $\rho$ is twisted, $F_\8=[2,1,2]$ and the horizontal component of $D$ meets the fiber once in the middle curve $D_0$, which is a branching $(-1)$-curve in $D$. Because $[2]$ is one of the connected components of $D-D_0$, we have $D_0\neq E$, hence $D_0=C_1$. Since $F_\8$ is not negative definite, it is not contained in $D-E$, so $E\subset F_\8$. It follows that $c=1$, so $\bar E\subset \P^2$ is of HN-type $\ORa$ or $\ORb$ (see Fig.\ \ref{fig:ORa}-\ref{fig:ORb}). But $C_1=D_0$ meets two $(-2)$-tips of $D$ contained in $F_\8$, which is false for the latter HN-types; a contradiction.

\begin{figure}[!htbp]
	\centering
	\includegraphics[scale=0.45]{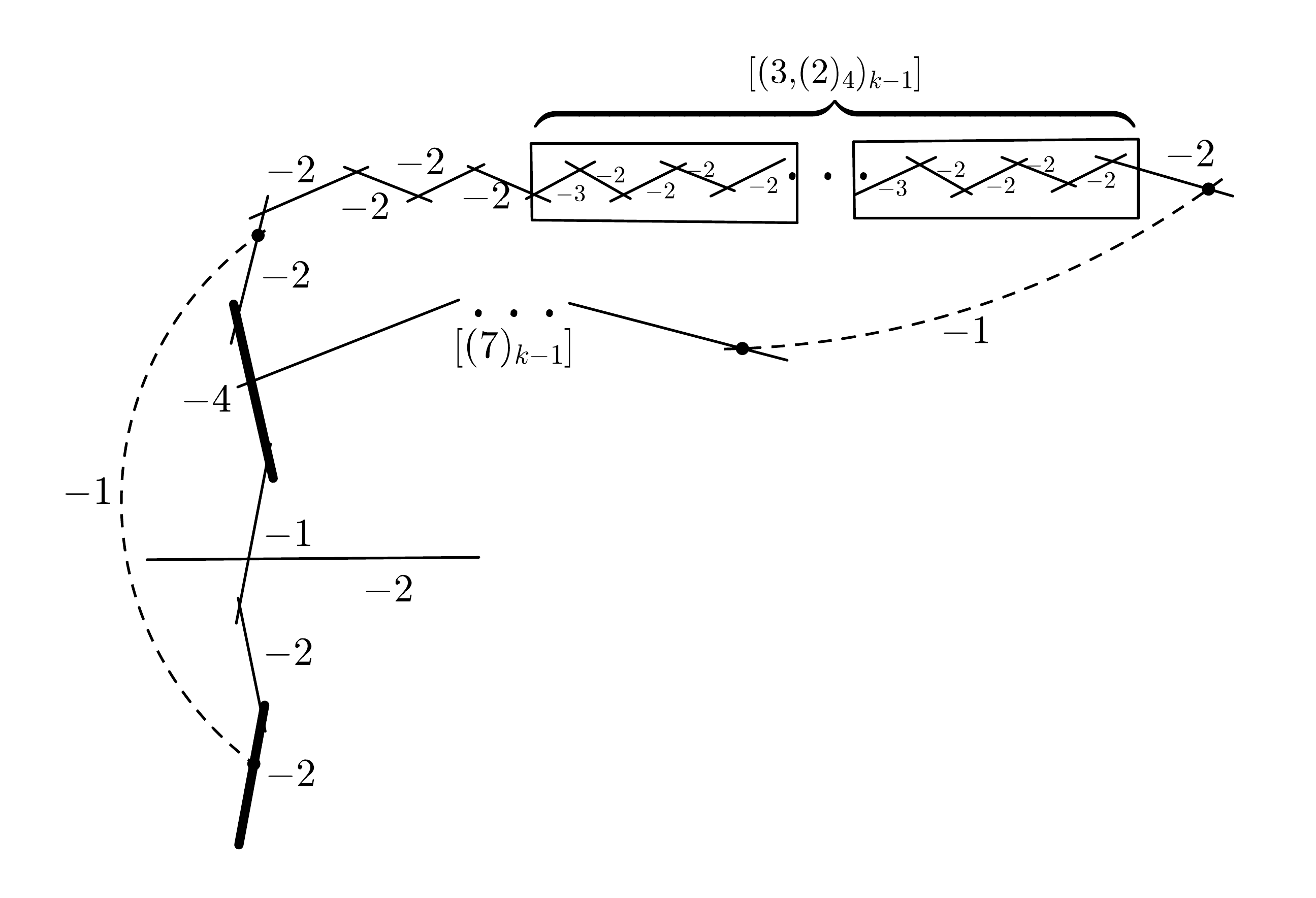}
	\caption{Type $\ORa(k)$.}
	\label{fig:ORa}
\end{figure}
\begin{figure}[!htbp]
	\centering
	\includegraphics[scale=0.45]{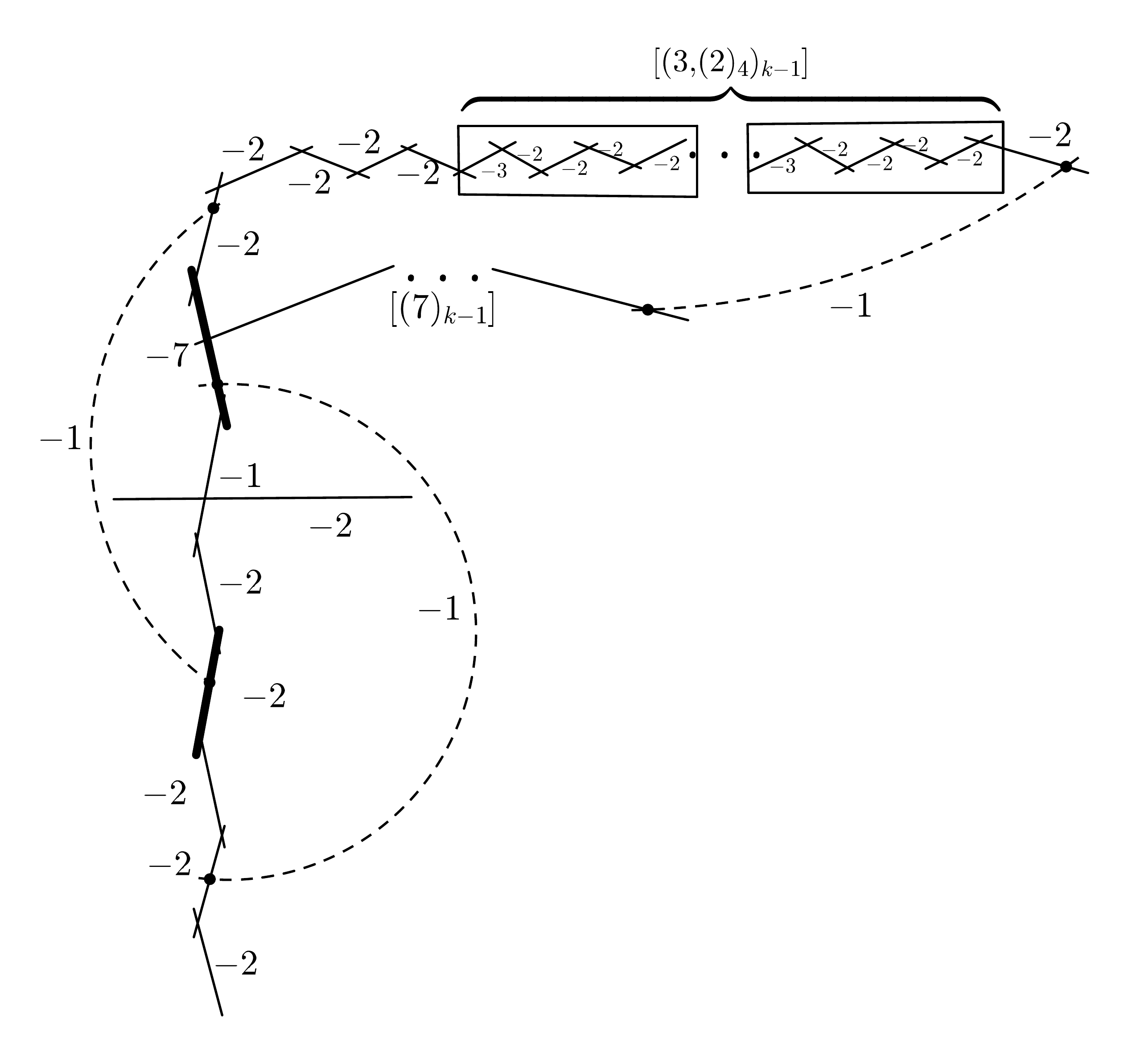}
	\caption{Type $\ORb(k)$.}
	\label{fig:ORb}
\end{figure}

Thus $D$ contains no fiber of $\bar \rho$. By Lemma \ref{lem:fibrations-Sigma-chi} the horizontal part of $D$ consists of two $1$-sections, $H_1$ and $H_2$, and every fiber of $\bar \rho$ has exactly one component not contained in $D$. Since $\kappa(X\setminus D)\geq 0$, by \cite[4.9(C)]{Palka-classification2_Qhp} $\beta_D(H_1)=\beta_D(H_2)>2$ (note that in this case $\beta_D(H_j)=1+n$, where $n$ is the number of 'columnar' fibers; see loc.\ cit.). Because all components of $D-E$ have $\beta_D\leq 3$, we get $\beta_D(H_1)=\beta_D(H_2)=3$. Suppose some $C_j$ is vertical. Since it is a branching $(-1)$-curve, at least one of the components of $D-C_j$ meeting it is horizontal. Then $\mu(C_j)=1$, so $C_j$ is a tip of the fiber containing it, hence $C_j\cdot H_1=C_j\cdot H_2=1$. Since $D$ contains no loop, there is at most one vertical $C_j$ and since all $C_j$'s are disjoint, we get in fact $c=1$. However, because at least one of the components of $Q_1$ meeting $C_1$ is non-branching, we have $H_1=E$ and hence $c=\beta_D(H_1)=3$; a contradiction.

Therefore, $C_1+\ldots+C_c$ is horizontal, and hence $c\leq 2$. Since $\beta_D(E)=c$, we infer that $E$ is vertical. Let $F_E$ be the fiber containing $E$. Assume $c=2$. Then $C_1+C_2$ is horizontal, and does not meet $D_v-E$. Due to the connectedness of $D$ we obtain $D_v\cap F_E=E$, so $F_E=E+L=[1,1]$ for some $(-1)$-curve $L$. But $L$ does not meet $D-E$, so its image on $\P^2$ is a $(-1)$-curve; a contradiction. Hence $c=1$ and the HN-type of $\bar E$ is one of $\ORa$, $\ORb$. For these HN-types $E^2=-2$ and there are at most two branching components of $D$, namely $C_1$ and one of the components of $D-E$ meeting it, call it $B$. The horizontal components of $D$ are branching, so $C_1+B$ is horizontal. Since $D$ is snc and connected, the fiber containing the point $C_1\cap B$ is smooth. By \cite[7.6(2)]{Fujita-noncomplete_surfaces} the reduction of $F_E$ is $[E,1,E^{*}]$. Since $E^2=-2$, we have $E^*=[2]$, so B meets a $(-2)$-tip. But the latter is false for HN-types $\ORa$, $\ORb$; a contradiction.
\end{proof}
	
In order to prove Proposition \ref{prop:Cstst_fibr_exists_and_k=2}, it remains to show the following
\begin{lem}[Complements are $\C^{**}$-fibered]\label{lem:existence_of_fibrations}
Let $\bar{E}\subseteq \P^{2}$ be a rational cuspidal curve of one of the HN-types listed in Theorem \ref{thm:possible_HN-types}. Then $\P^{2}\setminus \bar{E}$ is $\C^{**}$-fibered.
\end{lem}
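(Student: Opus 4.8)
The plan is to argue family by family, exploiting that the weighted dual graph of $D$ for each HN-type of Theorem \ref{thm:possible_HN-types} is completely determined (via Lemma \ref{lem:HN_for_chains}) and is exactly the one drawn in Figures \ref{fig:A}--\ref{fig:ORb}. Since $\kappa(\P^2\setminus\bar E)=2$ is already available from Lemma \ref{lem:log_general_type}, it suffices to produce, for each $(X,D)$, a $\P^1$-fibration $\bar p\colon X\to\P^1$ whose general fiber $f$ satisfies $f\cdot D=3$ and meets $D$ transversally: then $f\cap(X\setminus D)\cong\P^1\setminus\{3\text{ points}\}=\C^{**}$, and by Corollary \ref{cor:Cstst-fibers} (no fiber component can be a $\C^1$, as $X\setminus D$ is $\Q$-acyclic of log general type) the restriction of $\bar p$ to $X\setminus D=\P^2\setminus\bar E$ is automatically a $\C^{**}$-fibration. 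I would allow a single fiber at infinity ($\nu=1$) for types $\cE,\cF$, and aim for $\nu=0$ otherwise.

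To manufacture $\bar p$ I would invoke the criterion from Section \ref{ssec:fibrations}: an effective divisor $F$ with $F^2=0$ and $p_a(F)=0$ is a fiber of a $\P^1$-fibration, and such $F$ arises from any subconfiguration contractible to a $0$-curve, for instance a chain $[A,1,A^*]$. For types $\cE,\cF$ such an $F$ already sits inside $D$ --- the fiber $[2,1,2]$ built from $E$, the $(-1)$-curve $C_1$ and a $(-2)$-tip, exactly as in Lemma \ref{lem:case_nu=1} --- and yields $\bar p$ with a fiber at infinity at once. For the remaining families I would instead target a fibration with $\nu=0$, as found in Section \ref{sec:classification_of_sing}; since $Q_1,\dots,Q_c$ are negative definite, no fiber can then be supported on $D$, so each fiber must contain $(-1)$-curves lying outside $D$, namely the dashed curves of Figures \ref{fig:A}--\ref{fig:ORb}. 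Producing these external curves intrinsically from $(X,D)$ is the real content of the lemma, and the step I expect to be the main obstacle.

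I would split this step according to the source of the family. For $\FZa$ and for $\ORa,\ORb$ the curves together with the pencils cutting out the $\C^{**}$-fibration come from the original constructions, so the existence of the external $(-1)$-curves and of $\bar p$ follows from \cite{FlZa-rational_curves_and_singularities,OrevkovCurves}. For the remaining types $\cA$--$\cG$ I would reverse the analysis of Proposition \ref{prop:possible_cusp_types}: the contractions carried out there present $X$ as a blow-up of a Hirzebruch surface $\F_n$ in such a way that the ruling of $\F_n$ pulls back to $\bar p$, and each external $(-1)$-curve is recovered as the proper transform of an explicit plane curve. Concretely, the intersection computations in that proof already identify the images $\pi_*\Gamma_i$, $\pi_*L_{F_E}$, $\pi_*L_{F_V}$ as lines or low-degree rational curves with prescribed contact with $\bar E$ at the cusps; I would exhibit such a line or conic directly and verify, using B\'ezout's theorem and the equations of Lemma \ref{lem:HN-equations}, that its total transform supplies the $(-1)$-curve(s) with exactly the intersection pattern of the corresponding figure. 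The engine for this --- build an auxiliary $\P^1$-fibration from a divisor supported on $D$ (allowing multiplicities and base points), then pin the external $(-1)$-curve into the relevant degenerate fiber by means of Lemmas \ref{lem:fibrations-Sigma-chi} and \ref{lem:singular_P1-fibers} together with the negative definiteness of the $Q_j$ --- is precisely the computation illustrated in Example \ref{ex:Cstst_fibr_with_base_points}.

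Once $\bar p$ is in hand, the verification that it restricts to a $\C^{**}$-fibration is immediate: one reads off from the figure that $D_h$ meets $f$ in three points (three $1$-sections for $\cA$--$\cD$ and $\ORa,\ORb$; a $1$-section together with a $2$-section for $\FZa$, $\cE$, $\cF$, $\cG$), so $f\cdot D=3$, and the conclusion follows as in the first paragraph. I would expect type $\cG$ --- the curves of Theorem \ref{thm:geometric}(c), where $E$ is a $2$-section and the line through the two cusps meets $\bar E$ again --- to require the most care, since there the external geometry differs from the other bicuspidal families and a careful choice of the plane curve realizing the external $(-1)$-curve is needed.
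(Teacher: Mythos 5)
Your skeleton matches the paper's: argue case by case, exhibit an explicit effective divisor $F$ with $F^2=0$ and $p_a(F)=0$, and check that the induced $\P^1$-fibration meets $D$ three times on a general fiber. Your treatment of $\cE,\cF$ via the chain $[2,1,2]$ supported on $E+C_1+(\text{a }(-2)\text{-tip})$ is exactly the paper's. Two steps, however, do not go through as written.

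First, for $\ORa,\ORb$ you claim the fibration "comes from the original construction", but \cite{OrevkovCurves} constructs the curves, not $\C^{**}$-fibrations of their complements; the paper cites it only for existence and uniqueness in Proposition \ref{prop:existence_and_uniqueness}. Note that no pencil of lines can work here, since the cusp has multiplicity $F_{4k}$ while $\deg\bar E=F_{4k+2}$. The paper's fix is internal to $D$: contractibility of $Q_1$ provides a $(-2)$-curve $C$ meeting $C_1$ with $\beta_D(C)\le 2$, and $|C+2C_1+E|$ yields a $\C^{(n*)}$-fibration with $n\le 2$, which Iitaka's Easy Addition upgrades to $n=2$. (For $\FZa$ the pencil of lines through the cusp of multiplicity $d-2$ works immediately and needs no reference.) Second, for $\cA$--$\cD$ and $\cG$ you correctly isolate the production of the external $(-1)$-curve as the crux, but "reversing Proposition \ref{prop:possible_cusp_types}" is circular --- that proposition \emph{assumes} the fibration exists. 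The concrete missing ingredient is that the external curve is the proper transform of the tangent line $\ell$ at $q_1$, and for $\cA$--$\cD$ the decisive input is Lemma \ref{lem:Qhp_has_no_lines}: since the complement contains no affine line, $\ell$ must meet $\bar E\setminus\{q_1\}$, hence $\ell\cdot\bar E\ge\mu+\mu'+1=\deg\bar E$ and B\'ezout forces equality; only then is the proper transform of $\ell$ a $(-1)$-curve meeting $D$ in exactly the prescribed components, so that $E+L+T$ supports the fiber. Your sketch invokes B\'ezout and Lemma \ref{lem:HN-equations} but omits the no-affine-lines step, without which $(\ell\cdot\bar E)_{q_1}$ could a priori equal $\deg\bar E$ and the construction would collapse. (For $\cG$, B\'ezout alone suffices because $3(\gamma-1)>2\gamma-1$.)
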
	

\begin{proof}
If $\bar{E}\subseteq \P^{2}$ is of HN-type $\FZa$ then the multiplicity of $q_{2}\in \bar{E}$ equals $\deg \bar{E}-2$, so the pencil of lines through $q_{2}$ gives a $\C^{**}$-fibration of $\P^{2}\setminus \bar{E}$.

Assume that $\bar{E}\subseteq \P^{2}$ is of one of the HN-types $\cA$ - $\cD$. Using Lemma \ref{lem:Hn_gives_multiplicities} we see that the multiplicity sequence of $q_{1}\in \bar{E}$ is of the form $(\mu,(\mu')_{\gamma},\dots)$ with $\mu'=\deg \bar E-\mu-1<\mu$. Because $\kappa(\P^{2}\setminus \bar{E})=2$ by Lemma \ref{lem:log_general_type}, Lemma \ref{lem:Qhp_has_no_lines} implies that the line $\ell$ tangent to $\bar{E}$ at $q_{1}$ meets $\bar{E}\setminus \{q_{1}\}$ at least once, hence $\ell\cdot \bar{E}\geq \mu+\mu'+1=\deg \bar{E}$, so by the Bezout theorem we have an equality (cf.\ Section \ref{ssec:AD}). We check that $\mu\geq \gamma\mu'$, which implies that after $\gamma+1$ blowups over $q_{1}\in \bar E$ the reduced total transform of $\ell$ is a chain $[\gamma+1,1,(2)_{\gamma-1},1]$ (the last component is the proper transform of $\ell$), the proper transform of $\bar{E}$ meets the second component of this chain and it does not meet the third one. Hence there is a twig $T=[(2)_{\gamma-1}]$ of $Q_{1}$ such that the proper transform $L$ of $\ell$ on $X$ meets $D$ only in $\ftip{T}$ and $E$, each once and transversally. The chain $E+L+T=[\gamma,1,(2)_{\gamma-1}]$ supports a fiber of a $\P^{1}$-fibration of $X$. It is met by the remaining part of $D$ once in $\ltip{T}$ and twice in $E$ ($\bar E$ is bicuspidal), so the $\P^{1}$-fibration restricts to a $\C^{**}$-fibration of $X\setminus D=\P^{2}\setminus \bar{E}$.

Assume now that $\bar{E}\subseteq \P^{2}$ is of HN-type $\cE$ or $\cF$. The last HN-pair of $q_{1}\in \bar{E}$ is $\binom{2}{1}$, which by Lemma \ref{lem:HN_for_chains} means that there is a twig $T=[2]$ of $Q_{1}$ meeting the remaining part of $D$ in $C_{1}$. Since $\gamma=2$, the linear system $|T+2C_{1}+E|$ induces a $\P^{1}$-fibration of $X$, which restricts to a $\C^{**}$-fibration of $\P^{2}\setminus \bar{E}$.

For HN-type $\cG$ the multiplicity sequence of $q_{1}\in \bar{E}$ equals $(\gamma-1)_{4}$ and we have $\deg \bar{E}=2\gamma-1$, so the line $\ell$ tangent to $\bar{E}$ at $q_{1}$ satisfies $(\ell\cdot \bar{E})_{q_{1}}=2(\gamma-1)$ and meets $\bar{E}\setminus \{q_{1}\}$ in one point, transversally. Denote by $L$ be the proper transform of $\ell$ on $X$. We have $Q_{1}=[(2)_{3},\gamma,1,(2)_{\gamma-2}]$. Denote by $T_{i}$ the $i$-th component of $Q_{1}$. Since $(\ell\cdot \bar{E})_{q_{1}}=2(\gamma-1)$, $L$ is a $(-1)$-curve meeting $Q_{1}$ only in $T_{2}$, transversally. Then $|2L+2T_{2}+T_{1}+T_{3}|$ supports a fiber of a $\P^{1}$-fibration of $X$. Since $L$ meets $E$ once and transversally, this $\P^{1}$-fibration restricts to a $\C^{**}$-fibration of $\P^{2}\setminus \bar{E}$.

Finally, assume that $\bar{E}\subseteq \P^{2}$ is of HN-type $\ORa$ or $\ORb$. Then $E^{2}=-2$. Because $Q_{1}$ contracts to a smooth point, there is a $(-2)$-curve $C$ in $Q_{1}$ meeting $C_{1}$ and satisfying $\beta_{D}(C)\leq 2$. The linear system $|C+2C_{1}+E|$ induces a $\P^{1}$-fibration of $X$, which restricts to a $\C^{(n*)}$-fibration of $\P^{2}\setminus \bar{E}$, where $n=\beta_{D}(C)\leq 2$. Because $\kappa(\P^{2}\setminus \bar{E})=2$ by Lemma \ref{lem:log_general_type}, the Iitaka Easy Addition theorem implies that we actually have an equality, which finishes the proof.
\end{proof}

To prove Theorem \ref{thm:possible_HN-types} it remains to show the following Proposition.
\begin{prop}\label{prop:existence_and_uniqueness}
	Each HN-type from the list in Theorem \ref{thm:possible_HN-types} is realized by a rational cuspidal curve which is unique up to a projective equivalence.
\end{prop}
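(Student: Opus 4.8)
The plan is to treat the families of Theorem \ref{thm:possible_HN-types} separately and, for the bicuspidal ones, to reduce to the classification of embeddings of $\C^*$ into $\C^2$. For $\FZa$ existence and projective uniqueness are classical, being part of the Flenner--Zaidenberg description of the tricuspidal curves of the first kind \cite{FlZa-rational_curves_and_singularities, FLZa-_class_of_cusp}. For $\ORa$ and $\ORb$ existence is Orevkov's construction \cite{OrevkovCurves}, while uniqueness is already recorded in Lemma \ref{lem:Tono_E2=-1,-2}(a): a unicuspidal curve with $E^2=-2$ of one of these HN-types is projectively equivalent to an Orevkov curve. Thus the substantive work concerns $\cA$--$\cF$ and the new family $\cG$.

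For each HN-type among $\cA$--$\cF$ I would first attach to $\bar{E}$ an \emph{intrinsic} line $\ell$ determined by the cusps---the line joining the two cusps, or the line tangent to the cusp $q_1$ of highest multiplicity---and use Bézout together with the HN-data (as in the proof of Lemma \ref{lem:existence_of_fibrations}) to check that $\ell\cdot\bar{E}$ is concentrated at these points, so that $\bar{E}\setminus\ell$ is a (possibly singular) closed embedding of $\C^*$ into $\C^2=\P^2\setminus\ell$, with the prescribed affine cusp of multiplicity $2$ in the singular cases (cf.\ Theorem \ref{thm:geometric}). Reading the numerical invariants of this embedding off the HN-pairs of $\bar{E}$ via the formulas of Section \ref{sec:appendix}, one matches it against the complete lists of such $\C^*$-embeddings in \cite{CKR-Cstar_good_asymptote} (good-asymptote case) and \cite{KoPa-SporadicCstar2} (sporadic case). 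Each admissible parameter value corresponds to exactly one entry in those lists, which yields existence, and the uniqueness in the classifications yields uniqueness of the affine curve.

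Since $\ell$ is intrinsic to $\bar{E}$, any projective transformation carrying one curve of a fixed HN-type to another must carry the corresponding lines to each other; hence projective equivalence of the $\bar{E}$'s corresponds to equivalence of the affine $\C^*$-embeddings preserving the line at infinity, i.e.\ to affine-linear equivalence. Projective uniqueness then follows from the cited classifications, which are formulated together with the completion so that the passage from $\Aut(\C^2)$-equivalence to projective equivalence of the closures is controlled. The family $\cG$ is new and is handled directly in Lemma \ref{lem:type_G}: these curves are the closures of the embeddings \cite[(b)]{BoZo-annuli} with $m=k=\gamma-1$, but realizing the HN-type $\binom{4\gamma-3}{\gamma-1}$, $\binom{2\gamma-1}{2}$ requires a nonstandard choice of coordinates on $\C^2$ (cf.\ the discussion after Theorem \ref{thm:possible_HN-types}), so one writes the embedding explicitly, verifies the cusps of its closure, and again invokes the classification for uniqueness.

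The hard part will be the bookkeeping in the matching step. One must verify the intersection multiplicities along $\ell$ for every admissible value of $\gamma$, $p$, $s$, $k$, including the non-standard boundary cases of Remark \ref{rem:special_HN_cases} (such as $\cA(1,p,s)$, $\cA(\gamma,p,1)$, $\cD(\gamma,p,1)$ and $\cF(1)$), translate the HN-pairs into the invariants used in \cite{CKR-Cstar_good_asymptote, KoPa-SporadicCstar2}, and confirm that the parameter ranges and exclusions stated in Theorem \ref{thm:possible_HN-types} (e.g.\ $(\gamma,p)\neq(1,2)$) match the existence ranges of the classification bijectively, so that no curve is omitted or double-counted. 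The single conceptual point requiring care is the affine-to-projective passage above; the remainder is a lengthy but routine verification.
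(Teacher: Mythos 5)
Your overall architecture matches the paper's: dispatch $\FZa$ and $\ORa$, $\ORb$ to the literature, and realize the bicuspidal families as closures of $\C^*$-embeddings. But the step you defer as ``the single conceptual point requiring care'' is precisely the substantive content of the paper's proof, and you do not supply it. The classification in \cite{CKR-Cstar_good_asymptote} identifies $\bar E_1\setminus\ell_1$ and $\bar E_2\setminus\ell_2$ only up to an automorphism of $\C^2$, and a general element of $\Aut(\C^2)$ does \emph{not} extend to a projective transformation of $\P^2$; it is a birational map with base points on the line at infinity. Your assertion that the classifications are ``formulated together with the completion so that the passage \dots is controlled'' is not something you can read off from loc.\ cit.; it has to be proved. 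The paper does this in Lemma \ref{lem:type_AD} by passing to the minimal log resolutions, showing that the snc-minimalization of $D_j+L_j$ is uniquely determined by the weighted dual graph, that the minimal models $D_j'$ contain no non-branching component of non-negative self-intersection (this uses $\kappa=2$ via Iitaka's Easy Addition Theorem), and hence that the induced birational map $X_1'\dashrightarrow X_2'$ has no base points; only then does it descend to a projective equivalence. Without an argument of this kind your uniqueness claim for $\cA$--$\cD$ is unproven.

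Two further points. For $\cE$, $\cF$ you invoke \cite{KoPa-SporadicCstar2}, which is a forthcoming article; the paper deliberately avoids this by giving a self-contained uniqueness proof (Lemma \ref{lem:type_EF}), contracting $D$ to a configuration of a conic and three lines in $\P^2$ and showing that configuration is projectively rigid. More seriously, for $\cG$ your plan to ``again invoke the classification for uniqueness'' cannot work as stated: the paper itself shows (Lemma \ref{lem:type_G} and the remark following it) that the \emph{same} $\Aut(\C^2)$-class of $\C^*$-embedding, namely \cite[(b)]{BoZo-annuli} with $m=k=\gamma-1$, has two projectively non-equivalent cuspidal closures, of HN-types $\cG(\gamma)$ and $\cD(\gamma-3,2,1)$, depending on the choice of embedding $\C^2\into\P^2$. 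So uniqueness of the affine embedding does not imply uniqueness of the projective curve, and the relevant affine classification is conditional anyway. The paper instead constructs an explicit quadratic Cremona transformation identifying curves of HN-type $\cG(\gamma)$ with curves of HN-type $\cD(\gamma-3,2,1)$ bijectively on projective equivalence classes, and handles the base case $\cD(0,2,1)$ by a direct parametrization argument. You need either that reduction or some substitute for it.
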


The existence and uniqueness of the Flenner-Zaidenberg curves of the first kind, $\FZa(d,k)$, $d\geq 4$, $k\geq 1$, is shown in \cite[3.5]{FLZa-_class_of_cusp} by a direct computation of power series. The Orevkov curves $\ORa(k), \ORb(k)$, $k\geq 1$ are constructed in \cite[\S 6]{OrevkovCurves} by an inductive application of some (uniquely determined) Cremona transformations and \cite[Lemma 15]{Tono-on_Orevkov_curves} shows that any curve of HN-type $\ORa(k)$, $\ORb(k)$ can be constructed that way. It is easy to see from this construction that such curves are unique up to a projective equivalence.

Therefore, it remains to show the existence and uniqueness of curves of HN-types $\cA$ - $\cD$; $\cE$ - $\cF$ and $\cG$. This is done in Lemmas \ref{lem:type_AD}, \ref{lem:type_EF} and \ref{lem:type_G}, respectively.

\subsection{HN-types $\cA$ - $\cD$: closures of $\C^*$-embeddings with a good asymptote.}
\label{ssec:AD}

Let $\bar{E}\subseteq \P^2$ be a rational cuspidal curve of one of the HN-types $\cA$ - $\cD$. The multiplicities of the cusps of $\bar{E}$ add up to $\deg\bar{E}$, so by the Bezout theorem the line $\ell$ joining them does not meet $\bar{E}\setminus \ell$. Therefore, $\bar{E}\setminus \ell$ is the image of some smooth embedding $\C^{*}\into \C^{2}$. 

Denoting by $t_1$ the line tangent to $\bar{E}$ at $q_{1}$ we see that for the HN-types $\cA$ - $\cD$ this affine line is a so-called \emph{good asymptote} for this embedding, that is, $t_1\setminus \ell$ is isomorphic to $\C^1$ and meets $\bar{E}\setminus \ell$ at most once and transversally. Indeed, since the multiplicity sequence of $q_{1}$ is of the form $(\mu,\mu',\dots)$ with $\mu+\mu'=\deg\bar{E}-1$, the Bezout theorem implies that the latter holds. Such $\C^*$-embeddings into $\C^2$ have been classified in \cite[Thm.\ 8.2]{CKR-Cstar_good_asymptote}. In fact only part (ii) of the latter theorem is of interest for us, because the embeddings from part (i) either have only one place at infinity, and hence their closure on $\P^2$ is not cuspidal (8.2(i.2) for $a>b(k+1)$, $b>1$, see 6.8.1.3 loc.\ cit.) or they have two places at infinity but they admit a so-called \emph{very good asymptote}, which is an affine line contained in $\P^2\setminus \bar E$, and hence by \ref{lem:Qhp_has_no_lines} $\kappa(\P^{2}\setminus\bar{E})\leq 1$ (remaining cases of 8.2(i), see 6.8 and 7.4 loc.\ cit.). The authors give equations and sequences of HN-pairs computed with respect to the line at infinity, that is, the line gives the starting smooth germ $Z_1$, see Sec.\ \ref{sec:cusps}. As always, the multiplicity sequences can be computed using \eqref{lem:Hn_gives_multiplicities}. Those pairs can be also easily converted into HN-pairs in a standard form using \eqref{eq:HN-equivalence}. We describe the results. To avoid a conflict of notation we denote the parameters $s$ and $p$ from \cite[Thm.\ 8.2(ii)]{CKR-Cstar_good_asymptote} by $s'$ and $p'$ respectively.
\begin{enumerate}
	\item[$\cA(\gamma,p,s)$] By 6.9.1 loc.\ cit.\ this HN-type is realized by closures of embeddings $(\mathbf{gga+})(k=\gamma,p'=p,s'=s)$ given by 8.2(ii.1).
	
	\item[$\cB(\gamma,p,s)$] By 6.9.2 loc.\ cit, this HN-type is realized by closures of embeddings $(\mathbf{gga-})(k=\gamma,p'=p,s'=s)$ given by 8.2(ii.2).
	
	\item[$\cC(\gamma,p,s)$] If $s\geq 2$ then by 6.10.1 loc.\ cit.\ this HN-type is realized by closures of embeddings $(\mathbf{bga+})(k=\gamma+1,p'=p,s'=s-1)$ given by 8.2(ii.4). The HN-type $\cC(\gamma,p,1)$ is by 6.9.2 loc.\ cit realized by closures of embeddings $(\mathbf{gga-})(k=\gamma+1,p'=1,s'=p+1)$ given by 8.2(ii.2).
	
	\item[$\cD(\gamma,p,s)$] If $s\geq 2$ then by 6.10.2 loc.\ cit.\ this HN-type is realized by closures of embeddings
	$(\mathbf{bga-})(k=\gamma+1,p'=p,s'=s-1)$  given by 8.2(ii.5). The HN-type $\cD(\gamma,p,1)$ for $p\geq 3$ is by 6.10.1 loc.\ cit.\ realized by closures of embeddings $(\mathbf{bga+}) (k=\gamma+2, p'=1, s'=p-2)$ given by 8.2(ii.4). The HN-type $\cD(\gamma,2,1)$, again by 6.10.2 loc.\ cit, is realized by closures of embeddings $(\mathbf{bga-})(k=\gamma+2,p'=1,s'=1)$.
\end{enumerate}

This comparison shows that curves of each HN-type $\cA$ - $\cD$ do exist. 

\begin{lem}[Uniqueness for HN-types $\cA$ - $\cD$]\label{lem:type_AD} Up to a projective equivalence, for every admissible choice of parameters $\gamma$, $p$, $s$ as in Theorem \ref{thm:possible_HN-types} there exists a unique rational cuspidal curve of each of the HN-types $\cA(\gamma,p,s)-\cD(\gamma,p,s)$.
\end{lem}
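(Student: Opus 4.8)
The existence of curves of each HN-type $\cA$--$\cD$ has just been established by comparison with the classification of $\C^*$-embeddings with a good asymptote in \cite{CKR-Cstar_good_asymptote}. The remaining task is uniqueness up to projective equivalence. The strategy is to reduce the uniqueness of the projective curve $\bar E$ to the uniqueness of the corresponding affine $\C^*$-embedding, and then to invoke the classification result, which already provides the embeddings up to the natural equivalence of embeddings (affine automorphism of the source $\C^*$ and automorphism of the target $\C^2$).

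\begin{proof}[Proof of Lemma \ref{lem:type_AD} (plan)]
First I would fix an HN-type among $\cA$--$\cD$ and a curve $\bar E\subseteq \P^2$ of this type, and recover the canonical affine picture. As observed at the beginning of Section \ref{ssec:AD}, the line $\ell$ joining the two cusps $q_1,q_2$ meets $\bar E$ only at $q_1,q_2$, so $\bar E\setminus \ell$ is the image of a smooth embedding $\C^*\into \C^2=\P^2\setminus \ell$, and the tangent line $t_1$ to $\bar E$ at $q_1$ restricts to a good asymptote for this embedding. The key point is that all of this data is \emph{canonically determined by $\bar E$}: the two cusps are intrinsic to $\bar E$, hence so is the line $\ell$ through them, hence so is the affine chart $\P^2\setminus\ell$, and the asymptote is the tangent to the cusp of prescribed (higher) multiplicity. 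Thus any projective equivalence between two curves of the same HN-type necessarily carries $(\ell,t_1)$ of one to $(\ell,t_1)$ of the other, and so restricts to an equivalence of the associated $\C^*$-embeddings with good asymptote.

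Next I would run the comparison in the reverse direction. By \cite[Thm.\ 8.2(ii)]{CKR-Cstar_good_asymptote}, the $\C^*$-embeddings into $\C^2$ with a good asymptote are classified up to equivalence (an automorphism of the source $\C^*$ composed with an automorphism of $\C^2$ respecting the asymptote), and each admissible value of the discrete parameters is realized by a \emph{unique} such embedding. The dictionary laid out just above---matching $\cA(\gamma,p,s)$ with $(\mathbf{gga+})$, $\cB$ with $(\mathbf{gga-})$, $\cC$ with $(\mathbf{bga+})$ or $(\mathbf{gga-})$, and $\cD$ with $(\mathbf{bga-})$ or $(\mathbf{bga+})$, under the stated reindexing of parameters---shows that a fixed HN-type corresponds to a single equivalence class of embeddings. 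Therefore, given two curves $\bar E,\bar E'$ of the same HN-type, the induced embeddings are equivalent. The final step is to promote this equivalence of affine embeddings to a projective equivalence of the closures: an automorphism of $\C^2=\P^2\setminus\ell$ preserving the asymptote extends to a projective automorphism of $\P^2$ preserving $\ell$, and this automorphism carries $\overline{\bar E\setminus\ell}=\bar E$ to $\bar E'$. Since every curve of a given HN-type arises this way from the unique embedding class, all such curves are projectively equivalent, giving uniqueness.

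The main obstacle I anticipate is the bookkeeping in the correspondence, and in particular checking that the equivalence relation under which \cite{CKR-Cstar_good_asymptote} classifies embeddings is exactly the one induced by projective equivalence of the closures. Concretely, one must verify two things: that every projective automorphism carrying $\bar E$ to $\bar E'$ does fix the pair $(\ell,t_1)$ (which follows from the intrinsic characterization of the cusps and their multiplicities, together with the fact, used implicitly, that the two cusps of these curves have distinct multiplicities or are otherwise distinguishable so that $t_1$ is well-defined), and conversely that an abstract equivalence of embeddings---which a priori allows an affine automorphism of the source parameter together with an ambient automorphism of $\C^2$ fixing the good asymptote---extends to a projective automorphism of $\P^2$ preserving $\ell$. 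The latter is the delicate direction: one needs that the target automorphism in the definition of embedding-equivalence is affine (not merely polynomial in a weaker sense) so that it extends over $\ell$, and that no reparametrization of $\C^*$ obstructs this extension. Once the equivalence relations are matched, the uniqueness statement is immediate from the uniqueness clause in \cite[Thm.\ 8.2(ii)]{CKR-Cstar_good_asymptote}.
\end{proof}
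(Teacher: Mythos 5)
Your overall strategy --- reduce uniqueness of $\bar E$ to uniqueness of the associated $\C^*$-embedding and quote \cite[Thm.~8.2]{CKR-Cstar_good_asymptote} --- matches the first half of the paper's proof. The gap is in the final step, which you yourself flag as ``the delicate direction'' but do not resolve: you assert that an automorphism of $\C^2=\P^2\setminus\ell$ preserving the asymptote extends to a projective automorphism of $\P^2$ preserving $\ell$. This is false for a general automorphism of $\C^2$: the classification in loc.~cit.\ is up to \emph{polynomial} automorphisms of $\C^2$ (compositions of affine and de Jonqui\`eres maps), and such an automorphism extends to $\P^2$ only if it is affine. Nothing in the cited classification guarantees that the automorphism matching two embeddings of the same normal form is affine, so the extension over $\ell$ cannot simply be asserted. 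Your suggested fix (``one needs that the target automorphism \dots is affine'') is precisely what is unavailable.

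The paper closes this gap with a genuinely different argument at this point. Given the isomorphism $\psi\:(\P^{2}\setminus \ell_1, \bar{E}_1\setminus \ell_1)\to (\P^{2}\setminus \ell_2, \bar{E}_2\setminus \ell_2)$ supplied by the affine classification, it does \emph{not} try to extend $\psi$ over $\ell$ directly. Instead it passes to the minimal log resolutions $(X_j,D_j)$, adjoins the proper transform $L_j$ of $\ell_j$, performs the (uniquely determined) snc-minimization $\phi_j\:(X_j,D_j+L_j)\to (X_j',D_j')$, and shows that $\psi$, viewed as a birational map $X_1'\dashrightarrow X_2'$, has no base points: otherwise the last exceptional curve of a minimal resolution of its base locus would map to a non-branching boundary component of non-negative self-intersection in $D_2'$, and the existence of such a component is excluded using the hypothesis that $\P^2\setminus\bar E$ is of log general type (two non-disjoint non-branching $(-1)$-curves in the boundary would induce a $\C^1$- or $\C^*$-fibration, contradicting Iitaka's Easy Addition Theorem). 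Only then does $\psi$ become an isomorphism of pairs, which lifts to the log resolutions and descends to a projective equivalence. You would need to supply an argument of this kind (or an independent proof that the relevant automorphism of $\C^2$ is affine) to make your last paragraph correct.
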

\begin{proof}	
	Let $\bar{E}_j$, $j=1,2$ be rational cuspidal curves of the same HN-type, one of $\cA$ - $\cD$. Let $q_{1,j}$, $q_{2,j}$ be the cusps of $\bar{E}_j$. Recall  that their multiplicities add up to $\deg\bar{E_j}$, so by the Bezout theorem the line $\ell_j$ joining them does not meet $\bar{E_j}\setminus \ell_j$ and we have $(\ell_j\cdot \bar{E_j})_{q_{i,j}}=\mu(q_{i,j})$, $i,j\in \{1,2\}$. In particular, $\bar{E}_j\setminus \ell_j$ is the image of one of the $\C^*$-embeddings listed above. Thus \cite[8.2.(iii)]{CKR-Cstar_good_asymptote} implies that we may choose coordinates on $\C^2=\P^{2}\setminus \ell_j$ so that $\bar{E}_j\setminus \ell_j$ is given by one of the equations listed in loc.\ cit. Moreover, closures of different cases on those lists have different HN-types, so we have an isomorphism
	\begin{equation*}
	\psi\colon (\P^{2}\setminus \ell_1, \bar{E}_1\setminus \ell_1)\to (\P^{2}\setminus \ell_2, \bar{E}_2\setminus \ell_2).
	\end{equation*}
	
	Let $\pi_{j}\colon (X_{j},D_{j})\to (\P^{2},\bar{E}_{j})$ be the minimal log resolution and let $L_{j}$ and $E_{j}$ be the proper transforms on $X_j$ of $\ell_j$ and $\bar{E}_{j}$ respectively. The weighted dual graphs of $D_{1}$ and $D_{2}$, being determined by the HN-types of $\bar E_1$ and $\bar E_2$, are isomorphic and this isomorphism identifies vertices corresponding to $E_{1}$ and $E_{2}$. Moreover, since $\ell_j$ meets $\bar{E}_{j}$ only in cusps with the least possible multiplicity, $L_{j}$ meets $D_{j}$ only in the first exceptional curve over each cusps, transversally, and hence it is a $(-1)$-curve. In particular, $D_{j}+L_{j}$ is snc and the weighted dual graphs of $D_{1}+L_{1}$ and $D_{2}+L_{2}$ are isomorphic via an isomorphism mapping $L_{1}$ and $E_{1}$ to $L_{2}$ and $E_{2}$ respectively. 
	
	Let $\phi_{j}\colon X_{j}\to X_{j}'$ be some snc-minimization of $D_{j}+L_{j}$. Put $D_{j}'=(\phi_{j})_{*}(D_{j}+L_{j})$. By definition $\phi_j$ contracts successively non-branching $(-1)$-curves in $D_{j}+L_{j}$ and its images. The only non-branching $(-1)$-curves in $D_{j}+L_{j}$ are $L_{j}$ and possibly $E_{j}$, so we can assume that the latter is contracted first, in case it is a $(-1)$-curve. The curves $L_{j}$ and $E_{j}$ are not tips of $D_{j}+L_{j}$, so each blowdown within $\phi_{j}$ is inner with respect to the image of $D_{j}+L_{j}$. Suppose that after some number of blowdowns within $\phi_{j}$ the image of $D_{j}$ contains two non-branching $(-1)$-curves $\Gamma$, $\Gamma'$ which are not disjoint. Because $E_{j}$ meets only (two) branching $(-1)$-curves in $D_{j}$, the contraction of $E_{j}$ produces no such, so $\Gamma$ and $\Gamma'$ meet at the image of $L_{j}$. Then $|\Gamma+\Gamma'|$ induces a $\P^{1}$-fibration whose pullback to $X$ restricts to a $\C^{1}$- or a $\C^{*}$-fibration of $\P^{2}\setminus \bar{E}$. But then by Iitaka's Easy Addition Theorem the latter surface is not of log general type; a contradiction. 
	
	It follows that $\varphi_j$ is unique, determined by the weighted dual graph of $D_j$ and that in $D_j'$ there are no non-branching components $C$ with $C^2\geq 0$. On the other hand, if the base locus of $\psi$, treated as a map from $X_1'$ to $X_2'$, is nonempty, then denoting by $C$ the last exceptional curve in a minimal resolution of base points of $\psi$ we observe that by minimality the image of $C$ on $X_2'$ is a non-branching curve with non-negative self-intersection. Thus $\psi$ has no base points on $X_1'$ and hence gives an isomorphism of pairs $$\psi\: (X_{1}',D_{1}')\to (X_{2}',D_{2}').$$ The universal property of blowing up implies that $\psi$ lifts to an isomorphism of pairs $\psi\:(X_{1},D_{1}+L_{1})\to (X_{2},D_{2}+L_{2})$ which maps $L_{1}$ and $E_1$ to $L_{2}$ and $E_2$ respectively. Therefore, $\psi$ descends to an isomorphism of pairs $(\P^{2},\bar{E}_{1}+\ell)\to (\P^{2},\bar{E}_{2}+\ell)$, which means that $\bar{E}_{1}$ and $\bar{E}_{2}$ are projectively equivalent.
\end{proof}

\begin{rem}[Remaining cases from \cite{CKR-Cstar_good_asymptote}]
	The case \cite[Thm.\ 8.2(ii.3)]{CKR-Cstar_good_asymptote} does not appear in the above comparison, despite the fact that its closure is a bicuspidal curve with a complement of log general type. This is because in this exceptional case the complement of the cuspidal curve is not $\C^{**}$-fibered.
	
	An attentive reader will note also that the domain of parameters in \cite[Thm.\ 8.2(ii)]{CKR-Cstar_good_asymptote} is slightly bigger than the domain for corresponding cases in our Theorem \ref{thm:possible_HN-types}. Firstly, for curves $\cA$ - $\cD$ we do not allow $\gamma=0$, because the complement is then not of log-general type (cf. \cite[Thm.\ 7.5]{CKR-Cstar_good_asymptote}). Secondly, some of the cases listed in \cite[Thm.\ 8.2(ii)]{CKR-Cstar_good_asymptote} are not really distinct and our choice of parameters solves this problem. The differences are as follows. We do not allow $p=1$ because such HN-types have at the end one pair $\binom{1}{1}$ describing a blowup which is not a part of a minimal log resolution. The curve $\bar{E}$ obtained this way is of HN-type $\cC(\gamma-1,s,1)$ 
	in case $\cA$, of HN-type $\cC(\gamma-1,s-1,1)$ in case $\cB$, of HN-type $\cD(\gamma-1,s+1,1)$ in case $\cC$ and of HN-type $\cD(\gamma-1,s,1)$ in case $\cD$. We do not allow cases $(\gamma,p)=(1,2)$ for HN-types $\cA$, $\cB$ because of $\cA(1,2,s)$ is $\cD(1,2,s)$ and $\cB(1,2,s)$ is $\cC(1,2,s-1)$. Finally, we do not allow $s=1$ for the HN-type $\cB$, because $\cB(\gamma,p,1)$ is $\cA(\gamma,p-1,1)$. 
\end{rem}
	
\subsection{HN-types $\cE$ - $\cF$: closures of sporadic smooth $\C^*$-embeddings.}\label{ssec:EF}

Recall that if $\bar{E}\subseteq \P^{2}$ is of HN-type $\cE$ or $\cF$ then the multiplicities of its cusps add up to $\deg\bar{E}$, so the line $\ell$ joining these cusps does not meet $\bar{E}$ in any other point. Therefore, like in the case of HN-types $\cA$ - $\cD$ considered in the previous section, $\bar{E}\setminus \ell$ is the image of some embedding $\C^{*}\into \C^{2}$. However, such embeddings have a different geometry than the previous ones, namely they do not admit good asymptotes. They appear in the conditional classification \cite{BoZo-annuli}, see Remark \ref{rem:BoZo} and they will be classified in a forthcoming article \cite{KoPa-SporadicCstar2}. To show their existence consider the parametrization
\begin{equation*}
\C^{*}\ni t\mapsto (t^{2n}(t^{2}+t+\tfrac{1}{2}),t^{-2n-4}(t^{2}-t+\tfrac{1}{2}))\in \C^{2},
\end{equation*}
where $n$ is a positive integer. In \cite{BoZo-annuli} this is the rescaled family (s). A computation as in Example \ref{ex:BoZo_HN_computation} shows that the closure of the image of this embedding has HN-type $\cE(n/2)$ if $n$ is even and $\cF((n+1)/2)$ if $n$ is odd (another method is to compute the weighted dual graph of the log resolution and to check that it is as in Fig.\ \ref{fig:E} or \ref{fig:F}).
%

In order to prove their uniqueness we borrow the argument from loc.\ cit. The key step is a construction of a uniquely determined morphism $\psi\colon X\to \P^{2}$ such that $\psi_{*}D$ is a configuration of lines and conics. 
\begin{lem}[Existence and uniqueness of curves of HN-types $\cE$ - $\cF$]\label{lem:type_EF} Up to a projective equivalence, for every $k\geq 1$ there exists a unique curve of HN-type $\cE(k)$ and $\cF(k)$.
\end{lem}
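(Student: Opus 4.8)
The plan is to follow the overall strategy of Lemma \ref{lem:type_AD}: existence is already secured by the explicit parametrization computed above, so the whole task is to prove that two rational cuspidal curves $\bar{E}_1,\bar{E}_2\subseteq\P^2$ of the same HN-type $\cE(k)$ (resp.\ $\cF(k)$) are projectively equivalent. As in the case of $\cA$ - $\cD$ I would denote by $q_{1,j},q_{2,j}$ the two cusps of $\bar{E}_j$, by $\ell_j$ the line joining them, and by $\pi_j\colon (X_j,D_j)\to(\P^2,\bar{E}_j)$ the minimal log resolution, with $E_j$ the proper transform of $\bar{E}_j$. Since the multiplicities of the cusps add up to $\deg\bar{E}_j$, the Bezout theorem gives $\ell_j\cdot\bar{E}_j=\mu(q_{1,j})+\mu(q_{2,j})$ with no further intersection point, so $\bar{E}_j\setminus\ell_j$ is the image of a $\C^*$-embedding and the proper transform $L_j$ of $\ell_j$ is a $(-1)$-curve meeting $D_j$ transversally in the first exceptional curve over each cusp. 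The weighted dual graphs of $D_1+L_1$ and $D_2+L_2$ are therefore isomorphic, the isomorphism matching the marked vertices $E_1\leftrightarrow E_2$ and $L_1\leftrightarrow L_2$.

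The difficulty, and the reason the proof of Lemma \ref{lem:type_AD} does not transfer verbatim, is that these $\C^*$-embeddings admit no good asymptote, so the rigidity input of \cite{CKR-Cstar_good_asymptote} is unavailable. Instead I would borrow the device of \cite{BoZo-annuli} and replace the auxiliary line $\ell_j$ by a richer distinguished divisor. Concretely, the aim is to construct a birational morphism $\psi_j\colon X_j\to\P^2$, uniquely determined by the weighted graph of $D_j$, whose image $\psi_{j*}(D_j+L_j)$ is a configuration $\mathcal{C}_j$ consisting of a bounded number of lines and conics with a prescribed incidence/tangency pattern read off from the HN-type. As in the snc-minimization step of Lemma \ref{lem:type_AD}, each contraction within $\psi_j$ should be forced: one successively contracts the non-branching $(-1)$-curves dictated by the graph, using Lemma \ref{lem:Qhp_has_no_lines} and Iitaka's Easy Addition Theorem to rule out the appearance of two meeting non-branching $(-1)$-curves, which would make $\P^2\setminus\bar{E}_j$ fail to be of log general type. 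The point of passing to conics rather than to lines is precisely that after these contractions the images of the degenerate fibers of the $\C^{**}$-fibration become smooth conics, and the incidences they inherit from the blow-up centres are what will let us pin down $\mathcal{C}_j$.

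With $\psi_1,\psi_2$ in hand, the two configurations $\mathcal{C}_1,\mathcal{C}_2\subseteq\P^2$ are combinatorially identical, since both are determined by the common weighted dual graph. I would then verify that a configuration of this many lines and conics subject to the recorded incidences and tangencies is projectively rigid, i.e.\ unique up to the action of $\mathrm{PGL}_3(\C)$; this yields a projective equivalence $\Phi\colon\P^2\to\P^2$ carrying $\mathcal{C}_1$ onto $\mathcal{C}_2$. By the universal property of blowing up and the uniqueness of the $\psi_j$, the automorphism $\Phi$ lifts to an isomorphism of pairs $(X_1,D_1+L_1)\cong(X_2,D_2+L_2)$ sending $E_1$ to $E_2$, which then descends through $\pi_1,\pi_2$ to a projective equivalence $(\P^2,\bar{E}_1)\cong(\P^2,\bar{E}_2)$, as required.

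I expect the main obstacle to be the construction and the rigidity of $\mathcal{C}_j$: unlike lines, conics carry genuine moduli, so the incidence pattern alone need not determine them, and one must check that the particular tangency conditions forced by the centres of $\pi_j$ (together with the two distinct normalizations $\cE$ versus $\cF$, and the degenerate behaviour at the small parameter values flagged in Remark \ref{rem:special_HN_cases}) cut these moduli down to a single $\mathrm{PGL}_3$-orbit. Showing that $\psi_j$ is genuinely forced at every step — in particular that no choice enters the contraction of the fibre components — is the delicate bookkeeping on which the whole uniqueness argument rests.
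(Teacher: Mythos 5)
Your proposal follows the paper's proof of this lemma in all essentials: the paper likewise takes existence from the explicit parametrization and, borrowing the device of \cite{BoZo-annuli}, constructs a uniquely determined contraction $\psi\colon X\to\P^2$ sending $D$ to a configuration of lines and conics, whose projective rigidity then yields uniqueness. The two obstacles you flag at the end are genuine, and it is worth recording how the paper closes them. First, besides the proper transform of the line through the two cusps (your $L_j$, the paper's $\Gamma_2$) one needs two further $(-1)$-curves $\Gamma_1$ and $L_{F_A}$ not contained in $D$; their existence and intersection pattern with $D$ are extracted from a fiber-by-fiber analysis of the $\P^1$-fibration induced by $|E+2C_1+T|$, where $T$ is the $(-2)$-tip of $Q_1$ meeting $C_1$. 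Second, the moduli problem for conics that worries you is made to disappear by the choice of $\psi$: it contracts $D-(H+E+C_2+A_2)$, so that the image configuration consists of lines through one common point together with a \emph{single} conic $\psi_*H$ carrying the three marked points $\psi(C_1)$, $\psi(L_{F_A})$, $\psi(\Gamma_1)$, and a conic with an ordered triple of distinct points on it is a single $\mathrm{PGL}_3(\C)$-orbit. Combined with the check that every blowup center of $\psi^{-1}$ is forced (no two meeting superfluous $(-1)$-curves can occur, by the log general type assumption), your outline becomes exactly the paper's argument.
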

\begin{proof}
The existence part was proved above. Let $\bar{E}\subseteq \P^{2}$ of HN-type $\cE(k)$ or $\cF(k)$ for some $k\geq 1$ and, as before, let $\pi\colon (X,D)\to (\P^{2},\bar{E})$ be a minimal log resolution. Using Lemma \ref{lem:HN_for_chains} one computes that the graph of $D$ is as in Figures \ref{fig:E}-\ref{fig:F}.In particular,  $E^{2}=-2$ (see Lemma \ref{lem:HN-equations}) and the resolution $q_{1}$ of $Q_{1}$ contains a $(-2)$-tip, say, $T$ meeting the $(-1)$ curve $C_{1}$. We now show the existence of $(-1)$-curves $\Gamma_{1}$, $\Gamma_{2}$ and $L_{F_{A}}$ on $X$ meeting $D$ as in those figures. They will be found in some degenerate fibers of a $\P^{1}$-fibration induced by $F_{\infty}\de E+2C_{1}+T$.
	
Recall that the line joining the cusps of $\bar{E}$ meets $\bar{E}$ only at its cusps and with the least possible multiplicity. It follows that the proper transform $\Gamma_{2}$ of this line has the required properties, i.\ e.\ $\Gamma_{2}\cdot D=2$ and $\Gamma_{2}$ meets both divisors $Q_{j}$ in the curve which is contracted last by $\pi$.
	
The horizontal part of $D$ is $D_{h}=H+C_{2}$, where $H$ is a $2$-section and $C_{2}$ is a $1$-section. Let $(D+\Gamma_{2})_{v}$ be the vertical part of $D+\Gamma_{2}$. Denote by $R$ its connected component containing $\Gamma_{2}$ and denote by $F$ the fiber containing $R$. Using the description of the graph of $D$ (see Fig.\ \ref{fig:E}-\ref{fig:F}) we see that $R$ is a chain $[2,(k+2),2,1,3,(2)_{k-1},5,(k+1)]$ if $\bar{E}$ is of HN-type $\cE(k)$; it is a chain $[3,(2)_{k-2},3,1,2,k+1,5,(2)_{k-1}]$ in case $\cF(k)$ with $k\geq 2$ and $[4,1,2,2,5]$ in case $\cF(1)$. It meets $H$ in the first tip and $C_{1}$ in the $(-5)$-curve. Let $\phi$ be the contraction of $\Gamma_{2}$ followed by the snc-minimization of the image of $D$. This map does not touch $D-R$ and $\phi_{*}R$ equals $[2,k+1]$ if $\bar{E}$ is of HN-type $\cE(k)$ and $[3,(2)_{k-1}]$ otherwise. This chain meets both components of $\phi_{*}D_{h}$ in the first tip. It follows that the fiber $\phi_{*}F$ containing it has a $(-1)$-curve $\Gamma$. Because $D_{v}-(F_{\infty})\redd$ contains no such, $\Gamma\not\subseteq \phi_{*}D$. Moreover, by Lemma \ref{lem:fibrations-Sigma-chi}, $\Gamma$ is the unique component of $\phi_{*}F$ not contained in $\phi_{*}D$. Therefore, $\Gamma$ is the unique $(-1)$-curve in $\phi_{*}F$, so $\mu(\Gamma)\geq 2$ by Lemma \ref{lem:singular_P1-fibers}. The fiber $\phi_{*}F$ meets $\phi_{*}D_{h}$ in two components of multiplicity $1$ which lie in different connected components of $(\phi_{*}F)\redd-\phi_{*}\Gamma_{1}$. Thus by Lemma \ref{lem:singular_P1-fibers} $\phi_{*}F$ is a chain meeting $\phi_{*}D_{h}$ in tips. It follows that the proper transform $\Gamma_{1}$ of $\Gamma$ on $X$ has the required properties, i.\ e.\ $\Gamma_{1}\cdot D=2$, it meets $Q_{1}$ in a tip which is contracted second-to-last by $\pi$, and it meets $Q_{2}$ in the $(-5)$-curve in case $\cF(1)$ and in the second-to-last tip otherwise.

Now denote by $A$ the connected component of $D_{v}$ not contained in $F+F_{\infty}$ and by $F_{A}$ the fiber containing it. Then $\sigma(F_{A})=1$ by Lemma \ref{lem:fibrations-Sigma-chi}, so $(F_{A})\redd=A+L_{F_{A}}$, where $L_{F_{A}}$ is the unique $(-1)$-curve in $F_{A}$. Because $A=[2,2,2]$, we see that $F_{A}$ contracts to a $0$-curve if and only if $L_{F_{A}}$ meets $A$ in its middle component. Then $\mu(L_{F_{A}})=2$, so the equality $F_{A}\cdot D_{h}=3$ implies that $L_{F_{A}}$ meets $D_{h}$ only in $H$, transversally. 

Thus we have shown the existence of $\Gamma_{1}$, $\Gamma_{2}$ and $L_{F_{A}}$ as indicated in Fig.\ \ref{fig:E}-\ref{fig:F}. We now use them to find a contraction onto $\P^2$ mentioned in the introduction. Let $A_{2}$ be the tip of $A$ meeting $C_{2}$. Define $\psi\colon X\to X'$ as the contraction of $D-(H+E+C_{2}+A_{2})$ and put $D'\de \psi_{*}D$. Now $X'$ is a smooth surface with Picard rank $\rho(X')=\rho(X)-\#\Exc \psi=\#D-(3+\#D-\#D')=1$, where in the second equality we used the fact that $\rho(X)=\#D$. Thus $X'\cong \P^{2}$. Computing the changes of self-intersection numbers of the components of $D'$ we infer that the images of $F$, $F_{\infty}$ and $F_{A}$ are lines meeting at $\psi(C_{1})$ and $\psi_{*}H$ is a conic which is tangent to both $\psi_{*}F_{\infty}$ and $\psi_{*}F_{A}$ and meets $\psi_{*}F$ in two distinct points, one of which is $\psi(\Gamma_{1})$. Thus to each curve $\bar{E}$ of HN-type $\cE(k)$ or $\cF(k)$, $k\geq 1$, we can associate a pair $(\psi_{*}\bar{E},(\psi(C_{1}),\psi(L_{F_{A}}),\psi(\Gamma_{1})))$ consisting of a conic and a triple of distinct points on it. Such pair is unique up to a projective equivalence.

The morphism $\psi$ consists of the contraction of superfluous $(-1)$-curves in the reduced total transform of $D'$ followed by the contraction of the image of $H$. We can assume that $\psi$ first contracts the components of $F$, then of $F_{\infty}$ and then of $F_{A}$. If the image $F_{0}$ of some fiber contains two $(-1)$-curves which are not disjoint then $F_{0}=[1,1]$, so since $F_{0}'\cdot D_{h}=3$ for the total transform $F_{0}'$ of $F_{0}$ on $X$, one of these $(-1)$-curves is not superfluous. It follows that the center of each blowup constituting $\psi^{-1}$ is uniquely determined. Therefore, the above correspondence shows that up to a projective equivalence for each $k\geq 1$ there exist unique rational cuspidal curves of HN-type $\cE(k)$ and $\cF(k)$.
\end{proof}

\subsection{Type $\cG$ (new bicuspidal curves).} \label{sec:construction_(h)}

Below we construct rational bicuspidal curves of HN-type $\cG$. The construction shows in particular that given a closed $\C^*$-embedding into $\C^2$, by choosing different embeddings $\C^{2}\into \P^{2}$ or, equivalently, different coordinates on $\C^{2}$, we may obtain bicuspidal closures which are not projectively equivalent. 

Our classification results have been presented during the talk of the first author in 2015 in Bangalore, India (conference \emph{Algebraic surfaces and related topics}). In 2016 we were told by M. Zaidenberg that, although the curves of HN-type $\cG$ do not appear in literature, they were known to T.\ tom Dieck, who listed their multiplicity sequences in a private correspondence with H.\ Flenner in 1995. When we were preparing the final version of our article another proof of their existence, but not of uniqueness, was given by Bodnar (who credits A. Némethi) in \cite[3.1]{Bodnar_type_G_and_J}: the family (b) in loc.\ cit.\ is of HN-type $\cG(k-1)$. 

\begin{lem}[Existence and uniqueness of curves of HN-type $\cG$]\label{lem:type_G} Up to a projective equivalence, for every $\gamma\geq 3$ there exists a unique rational cuspidal curve of HN-type $\cG(\gamma)$.
\end{lem}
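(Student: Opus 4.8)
The plan is to follow the template of Lemma~\ref{lem:type_EF}: attach to $\bar E$ a distinguished conic, run the induced $\C^{**}$-fibration, and contract the minimal log resolution onto a configuration of a conic and lines which is rigid up to projective equivalence. Write $d=\deg\bar E=2\gamma-1$ and keep the notation $\pi\colon(X,D)\to(\P^2,\bar E)$, $E$, $C_1$, $C_2$, $Q_1$, $Q_2$ of Notation~\ref{not:cusps}. By Lemma~\ref{lem:HN_for_chains} the weighted graph of $D$ is the one of Figure~\ref{fig:G}; in particular $Q_1=[(2)_3,\gamma,1,(2)_{\gamma-2}]$, $Q_2=[3,1,(2)_{\gamma-2}]$, and $E^2=-\gamma$ by Lemma~\ref{lem:HN-equations}. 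The multiplicity sequence of $q_1$ is $((\gamma-1)_4,\dots)$, so the line tangent to $\bar E$ at $q_1$ realizes Theorem~\ref{thm:geometric}(c), while the unique conic $\Theta$ maximally tangent to $\bar E$ at $q_1$ has $(\Theta\cdot\bar E)_{q_1}=4\gamma-3$ and meets $\bar E$ once more transversally. For the $\C^{**}$-fibration $\bar p$ of Corollary~\ref{cor:case_h=2} the curve $E$ is a $2$-section, $H_1\subseteq Q_1$ is a $1$-section, and by Corollary~\ref{cor:Cstst-fibers} and Lemma~\ref{lem:fibrations-Sigma-chi} there are exactly three degenerate fibres $F_1,F_2,F_A$; their components $L_{F_1},L_{F_2},L_{F_A}$ not lying in $D$ are $(-1)$-curves meeting $D$ as dictated by Figure~\ref{fig:G} and Lemma~\ref{lem:singular_P1-fibers}, and $\pi_*L_{F_1}=\Theta$.

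For existence I would run this picture backwards. Starting from a Hirzebruch surface I would lay down the two sections $E$, $H_1$ together with the three fibres that are to carry $F_1,F_2,F_A$, blow up the centres prescribed by the weights of Figure~\ref{fig:G} (infinitely near points forced by the chain decompositions above), and arrive at a smooth pair with the required dual graph; contracting $D-E$ then produces a plane curve whose multiplicity sequences, computed by Lemma~\ref{lem:Hn_gives_multiplicities}, are exactly those of $\cG(\gamma)$. Alternatively, existence already follows from the explicit family \cite[(b)]{BoZo-annuli} with $m=k=\gamma-1$ (or from \cite[3.1]{Bodnar_type_G_and_J}): its projective closure, taken with the tangent line $\ell$ as line at infinity, has HN-type $\cG(\gamma)$, as one checks by computing the multiplicity sequence as in Example~\ref{ex:BoZo_HN_computation}.

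For uniqueness, let $\bar E_1,\bar E_2$ be of HN-type $\cG(\gamma)$ with minimal log resolutions $(X_i,D_i)\to(\P^2,\bar E_i)$. Their weighted graphs coincide, giving a graph isomorphism which matches $E_i$, $C_{1,i}$, $C_{2,i}$, and hence also the fibre curves $L_{F_1,i},L_{F_2,i},L_{F_A,i}$, all pinned down uniquely by their incidence with $D_i$. Following Lemma~\ref{lem:type_EF} I would define $\psi_i\colon X_i\to X_i'$ as the contraction of $D_i$ down to the few components whose images form the configuration; a self-intersection count gives $\rho(X_i')=1$, so $X_i'\cong\P^2$, and $\psi_{i*}D_i$ is the union of the conic $\psi_{i*}E_i$ (the image of the $2$-section) and three lines concurrent at $\psi_i(C_{1,i})$ (the images of $F_1,F_2,F_A$), marked by a triple of distinguished points on the conic. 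Since $\mathrm{PGL}_3$ acts transitively enough on such marked conic-and-line configurations, the configuration is rigid, so $(X_1',\psi_{1*}D_1)\cong(X_2',\psi_{2*}D_2)$; the universal property of blow-ups lifts this isomorphism to $(X_1,D_1)\cong(X_2,D_2)$ respecting $E_i$, which descends to a projective equivalence $\bar E_1\cong\bar E_2$.

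The main obstacle is to show that the contraction $\psi_i$ is \emph{forced}, i.e.\ that at every step the centre of the next blow-down is uniquely determined and the final configuration carries no moduli. As in Lemma~\ref{lem:type_EF}, the crucial mechanism is to forbid, along the contraction, two non-disjoint non-branching $(-1)$-curves in the image of $D_i$: such a pair would span a pencil restricting to a $\C^1$- or $\C^*$-fibration of $\P^2\setminus\bar E_i$, contradicting $\kappa=2$ (Lemma~\ref{lem:log_general_type}) through Iitaka's Easy Addition Theorem \cite[10.8]{Iitaka_AG} and Lemma~\ref{lem:Qhp_has_no_lines}. Carrying this bookkeeping through the three degenerate fibres, and verifying that the images of the $2$-section, the sections $H_1$, and the fibre $(-1)$-curves are exactly the asserted conic and lines, is the delicate case-free computation on which both existence and uniqueness ultimately rest.
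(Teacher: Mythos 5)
Your plan takes a genuinely different route from the paper. The paper does not contract $(X,D)$ onto a conic-and-lines configuration at all: it produces curves of HN-type $\cG(\gamma)$ by applying an explicit quadratic Cremona transformation $\sigma$ to the already-classified curves of HN-type $\cD(\gamma-3,2,1)$ (with the quartic \eqref{eq:param_D0} supplying the degenerate case $\gamma=3$, where $\cD(0,2,1)$ is not covered by Lemma \ref{lem:type_AD} and has complement not of log general type), and it gets uniqueness for free by showing that $\sigma$ and its inverse $\tilde\sigma$ have uniquely determined centers, so they induce a bijection on projective equivalence classes. The key geometric input is locating, via the $\C^{**}$-fibration by lines through $q_1$ and Lemma \ref{lem:Suzuki}, the unique smooth point $p\in\bar C$ at which some line through $q_1$ is tangent to order $2$; this point is the proper base point of $\sigma$.

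There are concrete gaps in your version. For existence, neither suggestion closes the argument: the standard projective closure of the Borodzik--\.{Z}o\l\c{a}dek family (b) with $k=m=\gamma-1$ has HN-type $\cD(\gamma-3,2,1)$, not $\cG(\gamma)$ (see Remark \ref{rem:BoZo}), so invoking that family requires exhibiting the specific non-standard embedding $\C^2\into\P^2$ whose closure is $\cG(\gamma)$ --- which is precisely the Cremona construction you would be trying to avoid; and the ``run the fibration backwards from a Hirzebruch surface'' route never specifies the centers of the blowups nor verifies that contracting $D-E$ lands on $\P^2$ with $E$ mapping to a curve of the claimed HN-type. For uniqueness, the analogue of Lemma \ref{lem:type_EF} is only asserted: you must actually prove the existence and uniqueness of the vertical $(-1)$-curves $L_{F_1},L_{F_2},L_{F_A}$ with the stated incidences (this occupies most of the proof of Lemma \ref{lem:type_EF}), identify the contracted configuration precisely, and prove it carries no moduli. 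As stated your configuration is already suspect: for type $\cG$ the curve $C_1$ is \emph{vertical} (a tip of the fiber $F_1$, by Corollary \ref{cor:case_h=2}), not a section as in types $\cE$, $\cF$, so the images of the three degenerate fibers have no reason to be concurrent at $\psi(C_1)$; they each meet the $1$-section $H_1$ in distinct points, and whether their images become concurrent lines depends entirely on which components you choose not to contract --- a choice you have not made. Until that bookkeeping is done, the ``rigidity of the marked configuration'' step has no object to apply to.
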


\begin{proof}
The curves of HN-type $\cG(\gamma)$, $\gamma\geq 3$ are constructed by applying certain quadratic Cremona map to the curves of HN-type $\cD(\gamma-3,2,1)$. By a curve of HN-type $\cD(0,2,1)$ we mean, by definition, a rational bicuspidal curve with cusps $q_1$, $q_2$ having standard HN-pairs $\binom{3}{2}$ and $\binom{5}{2}$ respectively. The existence of such curves for $\gamma>3$ was shown in a previous section. For $\gamma=3$ we can take the quartic parametrized by
\begin{equation}\label{eq:param_D0}
\P^{1}\ni [t:s]\mapsto [t^{2}s^{2}:s^{3}(s-t):t^{4}]\in \P^{2},
\end{equation}
with $q_1=[0:0:1]$ and $q_{2}=[0:1:0]$. By Lemma \ref{lem:log_general_type} for $\gamma\neq 3$ we have $\kappa(\P^{2}\setminus \bar{C})=2$. For $\gamma=3$, the complement contains the affine line $\{[x,1,0]:x\neq 0\}$, so by Lemma \ref{lem:Qhp_has_no_lines} it is not of log general type.

Let then $\bar C$ be a curve of HN-type $\cD(\gamma-3,2,1)$ for $\gamma\geq 3$ and let $t_1$ be the line tangent to $\bar C$ at $q_1$. The cusps $q_{1}$, $q_{2}$ have multiplicity sequences
\begin{equation*}
(\gamma-1)\text{\ \ and\ \ } (2)_{\gamma-1}.
\end{equation*}
By Lemma \ref{lem:HN-equations} $\deg\bar{C}=\gamma+1$. Since $\gamma-1=\mu(q_{1})<(t_{1}\cdot \bar{C})_{q_{1}}<\deg \bar C$, we get  
\begin{equation*}
t_1\cdot (\bar C\setminus \{q_1\})=1.
\end{equation*}
The Bezout theorem implies that a general line passing through $q_{1}$ meets $\bar{C}\setminus \{q_{1}\}$ twice, so the pencil of lines through $q_{1}$ gives a $\C^{**}$-fibration of $\P^{2}\setminus \bar{C}$. Reductions of at least two of its fibers are isomorphic to $\C^{*}$: namely, the line tangent to $q_{1}$ and the line passing through $q_{2}$. By Lemma \ref{lem:Suzuki} there is another fiber whose reduction is isomorphic to $\C^{*}$, so there is a line $\ell\subseteq \P^{2}$ passing through $q_{1}$ which meets $\bar{C}\setminus \{q_{1}\}$ at some smooth point $p\in \bar{C}$ with multiplicity two. This point is uniquely determined by $\bar C$. We infer that $\bar C\setminus \ell \subseteq \P^{2}\setminus \ell$ is the image of a singular embedding $\phi\colon \C^{*}\to \C^{2}$ (it is the case \cite[(b)]{BoZo-annuli} for $m=k=\gamma-1$). 

\begin{figure}[htbp]
	\centering
	\begin{subfigure}{0.3\textwidth}
		\centering
		\includegraphics[scale=0.2]{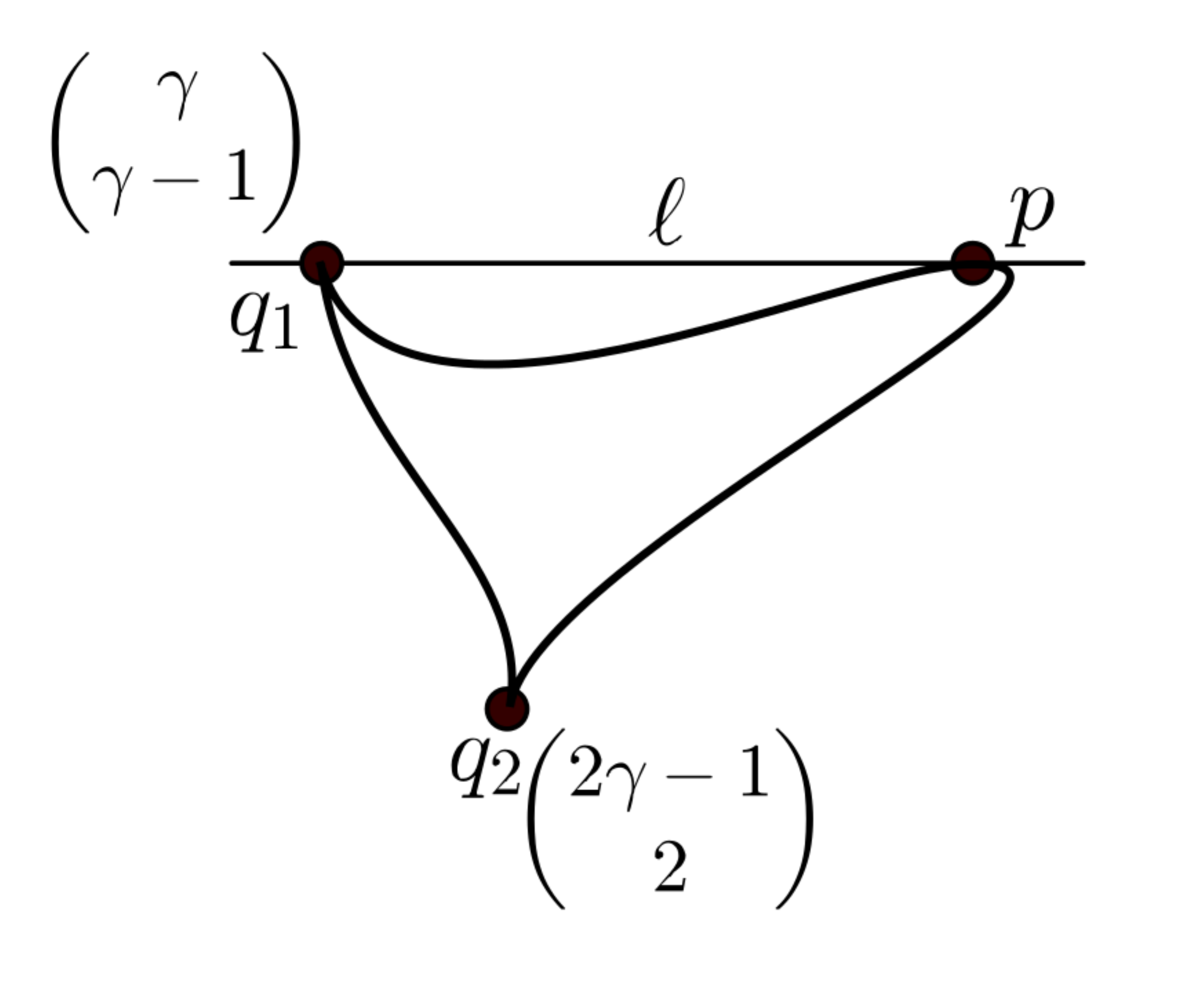}
		\caption{$\cD(\gamma-3,2,1)$.}
	\end{subfigure}$\longleftarrow$\hfill
	\begin{subfigure}{0.3\textwidth}
		\centering
		\includegraphics[scale=0.2]{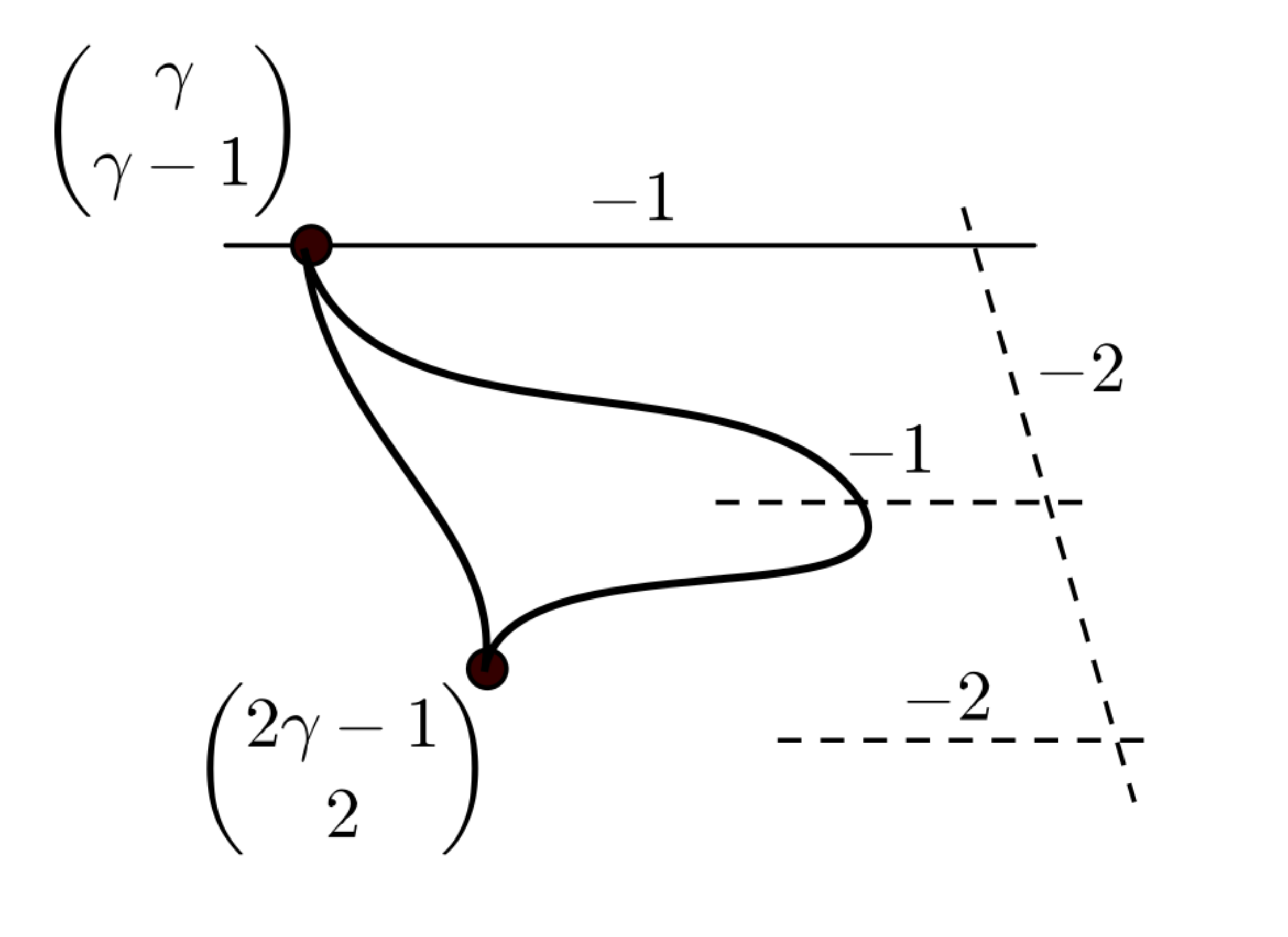}
	\end{subfigure}$\longrightarrow$\hfill
	\begin{subfigure}{0.3\textwidth}
		\centering
		\includegraphics[scale=0.2]{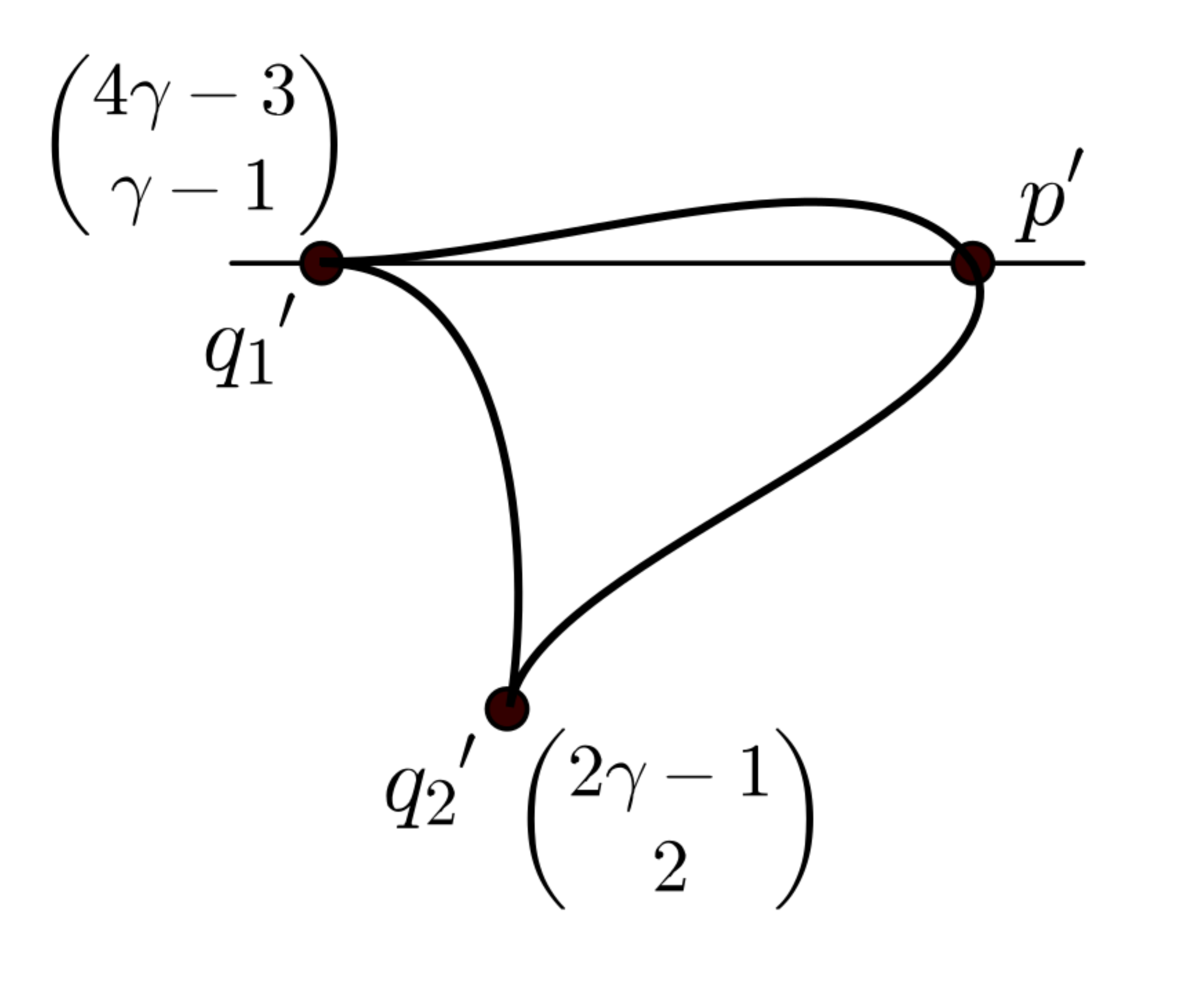}
		\caption{$\cG(\gamma)$.}
	\end{subfigure}
\caption{A quadratic transformation of $\P^{2}$ mapping a curve of HN-type $\cD(\gamma-3,2,1)$ onto a curve of HN-type $\cG(\gamma)$.}
\label{fig:differentCst}
\end{figure}

Let $\sigma\colon \P^{2}\dashrightarrow \P^{2}$ be the composition $\sigma=\sigma_{2}\circ\sigma_{1}^{-1}$, where (see Fig.\ \ref{fig:differentCst}) $\sigma_{1}$ consists of blowups at $p$ and its two infinitely near points on the proper transforms of $C$ and $\sigma_2$ contracts the proper transforms of $\ell$ and the proper transforms of the first and second exceptional curve of $\sigma_{1}$. This (quadratic) Cremona transformation has one proper base point and can be written as a composition of four standard quadratic transformations (see \cite[5.3.5]{Moe-cuspidal_MSc}). Put $\bar E=\sigma_*C$. Since $\sigma$ is an isomorphism off $\ell$, it restricts to an automorphism of $\C^{2}=\P^{2}\setminus \ell$. At infinity it contracts subsequently three curves meeting the germ of $C$ at $q_{1}$ with multiplicity $\gamma-1$, hence the multiplicity sequence of the cusp $q_1$ is augmented by the sequence $(\gamma-1)_{3}$. The germ at $p$ is touched only by $\sigma_{1}$, so it remains smooth and meets the total transform of $\ell$ transversally. Thus the multiplicity sequences of $q_1, q_2\in \bar E$ are
$$(\gamma-1)_{4} \text{\ \ and\ \ } (2)_{\gamma-1}.$$
By Lemma \ref{lem:Hn_gives_multiplicities} $\bar E\subset \P^2$ is of HN-type $\cG(\gamma)$. 

For the proof of uniqueness let $\bar E$ be of HN-type $\cG(\gamma)$ with $\gamma\geq 3$ and let $q_1$, $q_2$ be the cusps. The degree of $\bar E$ is $2\gamma-1$. Denote by $t_1$ the line tangent to $\bar E$ at $q_{1}$. Since the multiplicity sequence of $q_1$  is $(\gamma-1)_4$, the intersection of the proper transforms of $\bar E$ and $t_1$ after two blowups over $q_1$ equals $\deg \bar E-2(\gamma-1)=1<\gamma-1$, and hence it is necessarily zero. Then $(t_1\cdot \bar{E})_{q_{1}}=2\gamma-2$, so by the Bezout theorem $t_1$ meets $\bar{E}\setminus \{q_{1}\}$ in a unique point $p'$, transversally. Let $\tilde{\sigma}\colon \P^{2}\dashrightarrow \P^{2}$ be a Cremona transformation with one proper base point as above, with $(q_1,t_1,\bar E)$ replacing $(p,\ell,C)$. Put $\bar C=\tilde{\sigma}_*\bar E$, and let $p$, $q_{2}'$, $q_{1}'$ be the images of $t_{1}$, $q_{2}$ and of the common point of $C$ and the image of the last exceptional curve over $q_{1}$. Then the curve $\bar{C}$ with cusps $q_{1}'$, $q_{2}'$ is of HN-type $\cD(\gamma-3,2,1)$ and the point $p\in \bar{C}$ has the same property as before, i.\ e.\ the line joining $q_{1}'$ and $p$ meets $\bar{C}$ only in $q_{1}'$ and $p'$ with multiplicities $\gamma-1$ and $2$, respectively. We see that $\tilde{\sigma}$ is inverse to $\sigma$ up to a projective equivalence. Since the centers of blowups and blowdowns constituting $\sigma$ and $\tilde{\sigma}$ are uniquely determined by $\bar{C}$ and  $\bar{E}$ respectively, by the universal property of blowups the transformations are well defined on the classes of projective equivalence of rational cuspidal curves of HN-types $\cD(\gamma-3,2,1)$ and $\cG(\gamma)$, respectively.

Thus it remains to show that, up to a projective equivalence, for every integer $\gamma\geq 3$ there is only one curve of HN-type $\cD(\gamma-3,2,1)$. For $\gamma>3$ this was done in Lemma \ref{lem:type_AD}. So assume $\bar C$ is of HN-type $\cD(0,2,1)$. Let $\phi\colon \P^{1}\to \P^{2}$ be a parametrization of $\bar{C}$. Put $\phi=[\phi_{x}:\phi_{y}:\phi_{z}]$, where $\phi_{x}$, $\phi_{y}$, $\phi_{z}\in \C[s,t]$ are homogeneous polynomials of degree $4$. Up to a choice of coordinates on $\P^2$ we can assume that $\{x=0\}$ is the line joining $q_{1}=\phi[1:0]$ with $q_{2}=\phi[0:1]$ and the lines $\{y=0\}$, $\{z=0\}$ are tangent to $\bar{C}$ at $q_{1}$ and $q_{2}$, respectively. As noticed above, $\{y=0\}$ meets $\bar{C}\setminus \{q_{1}\}$ once and transversally. We may assume the point of intersection is $\phi[1:1]$. Now $\{x=0\}$ meets $\bar{C}$ at $t=0$ and $s=0$ with multiplicity $2$, so $\phi_{x}=at^{2}s^{2}$ for some $a\in \C^{*}$. Similarly, the line $\{z=0\}$ meets $\bar{C}$ with multiplicity $4$ at $t=0$, so $\phi_{z}=bt^{4}$ for some $b\in \C^{*}$. Finally, the line $\{y=0\}$ meets $\bar{C}$ at $s=0$ with multiplicity $3$ and at $t=s$ with multiplicity $1$, so $\phi_{z}=cs^{3}(s-t)$ for some $c\in \C^{*}$. Thus, composing with a diagonal automorphism of $\P^{2}$ we can assume that $\phi$ is given by \eqref{eq:param_D0}. 
\end{proof}

\begin{rem}[Equivalent $\C^*$-embeddings with projectively non-equivalent closures] The construction shows that equivalent $\C^*$-embeddings into $\C^2$ may, by choosing different embeddings $\C^{2}\to \P^{2}$ and taking closures, give projectively non-equivalent bicuspidal curves. In the above case those are $\cG(\gamma)$ and  $\cD(\gamma-3,2,1)$. Their degrees are $2\gamma-1$ and $\gamma+1$ respectively. For $\gamma>3$ both closures have complement of log general type, however, for $\gamma=3$ the complement of the first closure is of log general type while of the second one is not.
\end{rem}

Our next example shows that, conversely, non-equivalent injective morphisms $\C^{*}\to \C^{2}$ can have projectively equivalent closures.

\begin{prz}[Non-equivalent $\C^*$-embeddings with projectively equivalent closures]	\label{ex:different_Cst2}
Let $(\bar{C}, (q_1,q_2,p))$ be a pair as in the proof of Lemma \ref{lem:type_G}, i.\ e.\ $\bar C$ is of HN-type $\cD(\gamma-3,2,1)$ with cusps $q_1$, $q_{2}$ and the line $\ell$ joining $q_{1}$ and $p$ is tangent to $\bar{C}$ at $p$ with multiplicity $2$. The line $\ell'$ joining the cusps $q_{1}$ and $q_{2}$ meets $\bar{C}$ only at these cusps. Hence $\bar{C}\setminus \ell'\subseteq \P^{2}\setminus \ell'$ is the image of a closed smooth embedding $\C^{*}\to \C^{2}$ and $\bar{C}\setminus \ell\subseteq \P^{2}\setminus \ell$ is the image of a singular embedding $\C^{*}\to \C^{2}$. Therefore, $\bar{C}$ can be realized as a closure of images of two non-equivalent embeddings $\C^{*}\to \C^{2}$.
\end{prz}

\subsection{Comparison with other results from the literature.}

Some families or special subfamilies of rational cuspidal curves listed in \ref{thm:possible_HN-types} appear also in other places in literature. Here we collect the examples which we are aware of.

\begin{rem}[Borodzik-\.{Z}o\l\c{a}dek list]	\label{rem:BoZo}
	In \cite[Main Theorem]{BoZo-annuli} Borodzik and \.{Z}o\l\c{a}dek classified possible injective morphisms $\C^*\to\C^2$ up to a change of coordinates, assuming certain regularity conditions stated in 2.40 loc.\ cit.\ (for definitions of $ext$, $Curv$ and $\overline{Curv}$ see respectively 2.39, 2.37 and 2.34 loc.\ cit). The Commentary following the Main Theorem of loc.\ cit.\ provides singularity types of closures of these curves in terms of (slightly modified) Puiseux expansions. The list enables one to compute the Puiseux characteristic sequence, and thus, using Lemma \ref{lem:HN_vs_Puiseux}, to compare them with our list in Theorem \ref{thm:possible_HN-types}. Let us remind the reader (see Section \ref{sec:construction_(h)}) that sometimes from one embedding $\C^{*}\to \C^{2}$ one can obtain non-equivalent cuspidal projective closures by taking different embeddings $\C^{2}\to \P^{2}$. In what follows we consider the cuspidal curves obtained as closures of the embeddings listed in loc.\ cit.\ via the standard embedding $\C^{2}\ni (x,y)\mapsto [x:y:1]\in \P^{2}$.  
	
	The closures of $\C^*$-embeddings (k), (o)-(r), (u)-(v) from \cite[Main Theorem]{BoZo-annuli} have unique points at infinity, so are not cuspidal. The embeddings (a) have bicuspidal closures but they admit a very good asymptote, namely the line $\{x=0\}$ tangent to the first cusp at infinity (see Section \ref{ssec:AD}), and hence the complement of its closure is not of log general type. This is also the case if $|l-n|\in \{0,1\}$ in (l) or (n) or if $|l-n|\in \{0,1,p\}$ in (m) (the line tangent to the cusp contained in the affine part is the very good asymptote). In other cases of (l)-(n) the closure is not cuspidal. We now compare closures in $\P^2$ of the remaining families with the list in Theorem \ref{thm:possible_HN-types}. We give below an explicit computation in case (c).

	\begin{enumerate}
		\item[(c)-(f)] The closures are respectively of HN-types $\cC(k,n,m)$, $\cD(k,n,m)$, $\cA(k,n,m)$, $\cB(k,n,m)$.
		\item[(s)] If $2\mid n$ then the closure is of HN-type $\cE(n/2)$. If $2\nmid n$ then the closure is of HN-type $\cF((n+1)/2)$.
		\item[(b)] If $k>m$ then the closure is of HN-type $\FZa(k+2,m)$ and if $k=m$ then it is of HN-type $\cD(m-2,2,1)$. Otherwise its has only one place at infinity, so it is not cuspidal.
		\item[(j), (w)] The closures are of HN-type $\FZa(m+n+2,n)$ and $\FZa(4,2)$ respectively.
		\item[(g)-(i), (t)] The complements of the closures are of log general type, but do not admit $\C^{**}$-fibrations.
	\end{enumerate}
	Cuspidal curves of the remaining HN-type $\cG(\gamma)$ can be realized  as a closures of curves (b) from loc.\ cit.\ for $m=k=\gamma-1$ via a non-standard embedding $\C^{2}\to \P^{2}$ described in the proof of Lemma \ref{lem:type_G}. 
\end{rem}

\begin{prz}[\cite{BoZo-annuli}(c) is HN-type $\cC$]\label{ex:BoZo_HN_computation}
	Let us make an explicit comparison between the case (c) of \cite{BoZo-annuli} and our HN-type $\cC$, leaving an analogous analysis in the remaining cases to the reader. By definition, a $\C^*$-embedding (c) is the image of the morphism $\C^{*}=\Spec \C[t,t^{-1}]\to \Spec \C[x,y]=\C^{2}$ given by:
		\begin{equation*}
		x = t^{mn}(t-1),\ y=S_{k}(t^{-1})(1-t), \mbox{ where } 
		 S_{0}(u)=u^{n},\ S_{k+1}(u)=(S_{k}(u)-S_{k}(1))\tfrac{u^{mn+1}}{u-1}.
		\end{equation*}
	for $k,m \geq 1$, $n\geq 2$. (We will disregard the additional condition $(m,n)\neq (1,2)$, which is imposed to assure the embedding is different than the ones in (e).) The image is a smooth curve isomorphic to $\C^*$. In order to compute the HN-type of its closure we use the local description provided by the Commentary to the Main Theorem in loc.\ cit. For $t_0\in \P^1=\C^1\cup \{\8\}$ and for two Laurent series $f,g$ in variable $t$ we write $f\sim_{t_0} g$ if $\lim_{t\to t_0}f(t)/g(t)=c$ for some $c\in \C^{*}$.
	
	The local description of $q_{1}\in \bar{E}$, which corresponds to $t\to 0$, is 
	\begin{equation*}
	x\sim_{0}t^{mn},\quad y\sim_{0} x^{-k-(1/m)}(1+\alpha x^{1/mn})\quad \mbox{for some } \alpha\in \C^{*}.
	\end{equation*}
	We have $(x,y)=(X/Z,Y/Z)$, were $[X:Y:Z]$ are homogeneous coordinates on $\P^{2}$. Choose local coordinates $(u,v)=(Z/Y,X/Y)$ near $q_{1}=[0:1:0]\in \P^{2}$. The first condition above means that $X/Z=t^{mn}\phi_{v}(t)$, where $\phi_{v}(t)$ is some invertible power series. Substituting it to the second condition we get $u=t^{kmn+n}(1+\alpha t)^{-1}\phi_{u}(t)$ for some other invertible power series $\phi_{u}$. Using the implicit function theorem we treat $t$ as an analytic function of $u^{1/(kmn+n)}$
	Therefore, we can write a Puiseux series like \eqref{eq:Zariski_pairs_definition}:
	\begin{equation*}
	v=(X/Z)\cdot u=u^{\frac{km+m+1}{km+1}}
	\cdot (1+\alpha u^{\frac{1}{(km+1)n}})^{-1}\cdot \phi(u^{\frac{1}{(km+1)n}}).
	\end{equation*}
	for some invertible power series $\phi$.
	Therefore, the Zariski pairs (see Section \ref{sec:appendix}) of $q_{1}\in \bar{E}$ are $(km+1,km+m+1)$, $(1,n)$, so by Lemma \ref{lem:HN_vs_Puiseux}, $q_{1}\in \bar{E}$ is of HN-type $\binom{n(km+m+1)}{n(km+1)}\binom{n}{1}$. 
	
	The local description of $q_{2}\in \bar{E}$, which corresponds to $t\to \infty$, is 
	\begin{equation*}
	x\sim_{\8} t^{mn+1},\ \  y\sim_{\8} \alpha_{1}x^{-1}+\alpha_{2}x^{-k-n/(mn+1)} \text{\ \ \ for some \ } \alpha_{1},\alpha_{2}\in \C^{*}.
	\end{equation*}
	Choose local parameters $(u,v)=(Z/X,Y/X)$ near $[1:0:0]\in \P^{2}$. As before, we use the first condition and the implicit function theorem to treat the local parameter $t^{-1}$ near $\infty\in \P^{1}$ as an analytic function of $u^{1/(mn+1)}$. Now the second condition implies that
	\begin{equation*}
	v=u^{\frac{(k+1)(mn+1)+n}{mn+1}}\cdot \phi(u^{\frac{1}{mn+1}}) \quad \mbox{ for some invertible power series } \phi,
	\end{equation*} 
	so $q_{2}\in \bar{E}$ has one Zariski pair $(mn+1,(k+1)(mn+1)+n)$ and thus by Lemma \ref{lem:HN_vs_Puiseux} it is of HN-type $\binom{(k+1)(mn+1)+n}{mn+1}$. Therefore, $\bar{E}$ is of HN-type $\cC(k,n,m)$.
\end{prz}

\begin{rem}	\label{rem:Fenske-Tono}
	Independently from the study of planar $\C^*$-embeddings, rational cuspidal curves of HN-types $\cB$ - $\cD$ in case $s=1$ were constructed by Fenske (cases respectively 4, 5 and 6 in \cite[Thm.\ 1.1]{Fenske_1and2-cuspidal_curves}; case 8 there is a subcase of 6). Similarly, curves of HN-types $\cA$ - $\cD$ in case $\gamma=1$ were constructed by Tono (see the proof of Lemma \ref{lem:Tono_E2=-1,-2}).
\end{rem}

\begin{rem}	\label{rem:Liu}
	The HN-types $\ORa(1)$ and $\ORb(1)$ written in a standard way are $\binom{22}{3}$ and $\binom{43}{6}$, respectively. The main result of \cite{FLMN_one_pair} asserts that the Orevkov curves $C_{4}$ and $C_{4}^{*}$, which realize these HN-types, are the only unicuspidal planar curves with complement of log general type and with one HN-pair. Using the bounds obtained by Borodzik and Livingstone in \cite{BoroLivi-HeegaardFloer_and_cusps} via Heegard-Floer homology methods, Liu  \cite[Thm.\ 1.1]{Liu-thesis} extended this result by describing possible types of singularities of unicuspidal curves under the assumption that they have at most two Puiseux pairs (see Section \ref{sec:cusps} for definitions). Those which are realized by curves with complements of log general type correspond exactly to the HN-types $\ORa$ - $\ORb$. Unfortunately, the assumption that the number of Puiseux pairs is at most two is still strong and it seems unlikely one could go much further this way.
\end{rem}

\subsection{Proof of the implication \ref{thm:possible_HN-types}$\implies$\ref{thm:geometric}.}\label{sec:1.2=>1.1}

\begin{proof}[Proof of \ref{thm:possible_HN-types}$\implies$\ref{thm:geometric}]
	
	Assume $\bar{E}\subseteq \P^{2}$ has at least three cusps. By Theorem \ref{thm:possible_HN-types} it is of HN-type $\FZa$, hence by Proposition \ref{prop:existence_and_uniqueness}
	it is projectively equivalent to one of the Flenner-Zaidenberg curves of the first kind. The Bezout theorem implies that a line $\ell$ joining the unique non semi-ordinary cusp of $\bar E$ with a semi-ordinary one does not meet $\bar E$ in any other point, so $\bar E\setminus \ell$ is the image of some singular embedding of $\C^{*}$ into $\C^{2}$, with one semi-ordinary cusp. This proves Theorem \ref{thm:geometric}(d).
	
	Assume $\bar{E}\subseteq \P^{2}$ is bicuspidal. By Theorem \ref{thm:possible_HN-types} $\bar{E}$ is of one of the HN-types $\cA$ - $\cG$. Let $\mu_{j}$ denote the multiplicity of the cusp $q_{j}\in \bar{E}$. If $\bar{E}\subseteq \P^{2}$ is not of HN-type $\cG$ then $\deg \bar{E}=\mu_{1}+\mu_{2}$, so the line joining $q_{1}$ and $q_{2}$ is the line $\ell$ from Theorem \ref{thm:geometric}(b). Assume $\bar{E}\subseteq \P^{2}$ is of HN-type $\cG(\gamma)$. Denote by $\ell$ the line tangent to $\bar{E}$ at $q_{1}$. The multiplicity sequence of $q_{1}\in \bar{E}$ is $(\gamma-1)_{4}$, so $2\gamma-1=\deg \bar{E}\geq (\bar{E}\cdot \ell)_{q_{1}}\geq 2(\gamma-1)$. If the last inequality is strict then $(\bar{E}\cdot \ell)_{q_{1}}\geq 3(\gamma-1)$, which is impossible. Thus $(\bar{E}\cdot \ell)_{q_{1}}=2(\gamma-1)$ and hence $\ell$ meets $\bar{E}\setminus \{q_{1}\}$ transversally in one smooth point, which gives Theorem \ref{thm:geometric}(c).
	
	Finally, assume $\bar{E}\subseteq \P^{2}$ is unicuspidal. By Theorem \ref{thm:possible_HN-types} it is of HN-type $\ORa(k)$ or $\ORb(k)$ for some $k\geq 1$, so by Proposition \ref{prop:existence_and_uniqueness} it is projectively equivalent to one of the Orevkov curves $C_{4k}$, $C_{4k}^{*}$. This proves Theorem \ref{thm:geometric}(a).
\end{proof}

\bigskip
\section{Appendix: Comparison of other numerical characteristics of cusps}\label{sec:appendix}

As before let $\bar E\subseteq \P^2$ denote a rational cuspidal curve and let $q\in \bar{E}$ be a cusp. In our approach we relied on the description of cusps in terms of Hamburger-Noether pairs or multiplicity sequences. Lemma \ref{lem:Hn_gives_multiplicities} shows how to compute one from another. We now explain how to compute other numerical characteristics and how they are related to HN-pairs.

There is a classical notion of the \emph{Puiseux characteristic sequence} $(\beta_{0};\beta_{1},\dots, \beta_{g})$ (see \cite[3.1]{Wall_singular_curves}), also known as a sequence of \emph{characteristic exponents} \cite[p.\ 204]{GLS_Intro_to_singularities} defined as follows. By the Newton-Puiseux theorem \cite[2.1]{Wall_singular_curves}, one can take local analytic coordinates $(x,y)$ at $q\in \P^{2}$, such that the germ of $\bar{E}$ at $q$ is described by 
\begin{equation}\label{eq:Puiseux_param}
x=t^{\beta_{0}},\quad y=\sum_{k=\beta_0}^{\infty} d_{k}t^{k}.
\end{equation}
Put $e_0\de\beta_0$ and note that $e_0$ is the multiplicity of the cusp. Now for we $i>0$ define inductively:
\begin{equation} \label{eq:charseq}
\beta_{i}\de \min \{k: d_{k}\neq 0\mbox{ and } e_{i-1}\nmid k \},\quad e_{i}=\gcd(e_{i-1},\beta_{i}), \quad  i=1,\dots g,
\end{equation}
where $g$ is the smallest integer such that $e_{g}=1$. Therefore, $\beta_i$ is the least exponent in the series which does not belong to the additive group generated the preceding $\beta_j$.

Instead of the characteristic sequence one can work with \emph{Puiseux pairs} $(m_i,n_i)=(\frac{\beta_{i}}{e_{i}},\frac{e_{i-1}}{e_{i}})$, $i\geq 1$ (see \cite[3.7]{Wall_singular_curves}). Letting $m_0=1$, the characteristic sequence can be recovered from them as 
\begin{equation*}
\beta_{i}=m_{i}(n_{i+1}n_{i+2}\dots n_{g}), \quad i=0,\ldots, g.
\end{equation*}
By an analytic change of coordinates $y\mapsto y-\sum_{\beta_{0}|k}d_{k}x^{k/\beta_{0}}$ we may assume that in the expansion \eqref{eq:Puiseux_param} $d_k=0$ if $\beta_0\mid k$. Then we can write $y$ as a Puiseux series:
	\begin{equation}\label{eq:Zariski_pairs_definition}
	y=x^{\frac{a_{1}}{b_{1}}}\left(
	d_{\beta_{1}}'+x^{\frac{a_{2}}{b_{1}b_{2}}}\left(
	d_{\beta_{2}}'+ x ^{\frac{a_{3}}{b_{1}b_{2}b_{3}}}\left(
	\ldots\left(
	d_{\beta_{g-1}}' +d_{\beta_{g}}'x^{\frac{a_{g}}{b_{1}b_{2}\dots b_{g}}} 
	\right)\cdots
	\right)
	\right)
	\right),
	\end{equation}
	where for $i=1\dots, g$, $\gcd (b_{i},a_{i})=1$ and $d_{\beta_{i}}'$ is an invertible power series in the variable $x^{a_{i}/(b_{1}\dots b_{i})}$.
The pairs $(b_{1},a_{1}),\dots, (b_{g},a_{g})$ are called \emph{Zariski pairs} \cite[3.7]{Wall_singular_curves} or \emph{Newton pairs} \cite[p.\ 49]{EN_Newton_pairs}. They are determined by, and can be recovered from, the Puiseux pairs $(m_{i},n_{i})$ by means of the formulas
\begin{equation*}
b_{i}=n_{i}\mbox{ for }i=1,\dots g\quad \mbox{ and } \quad a_{1}=m_{1},\ a_{i}=m_{i}-n_{i}m_{i-1}\mbox{ for }i=2,\dots, g.
\end{equation*}

The Zariski pairs arise naturally in the course of the Newton-Puiseux algorithm \cite[2.1]{Wall_singular_curves} (see also \cite[I.3.6]{GLS_Intro_to_singularities}) used to obtain the expansion \eqref{eq:Zariski_pairs_definition} from the equation of $\bar{E}$, say, $f(x,y)=0$ ($q$ has $(x,y)$-coordinates $(0,0)$). This algorithm defines a sequence $f_0,f_{1}\ldots $ of polynomials in variables $x,y$ starting with $f_{0}=f$. Having $f_i\in \C[x,y]$ for some $i\geq 0$ let $-b_{i}/a_{i}$ be the slope of the steepest facet of the Newton polygon $\Gamma$ of $f_i$. Let $f^0_i$ be the sum of monomials corresponding to the points of $\Gamma$ lying on this facet, let $r_i=\deg f_i^0(x^{b_i},y^{a_i})$ and let $u_i$ be some root of $f_i^{0}(1,y)=0$. Put
\begin{equation*}
f_{i+1}(x,y)=f_{i}(x^{a_i},x^{b_i}(u_i+y))/x^{r_i}, 
\end{equation*}
Then $f_i\in \C[x,y]$. One can show that the Puiseux series $x^{a_{1}/b_{1}}(u_{1} +x^{a_{2}/b_{2}}(u_{2}+\dots ))$ obtained this way has the form \eqref{eq:Zariski_pairs_definition} (that is, the denominators of the exponents are bounded) and converges to the solution of $f(x,y)=0$.

Note that the same slope $-(b_{i}/a_{i})$ can occur as the steepest slope of several subsequent $f_{i}$'s, which corresponds to the fact that the coefficients $d_{\beta_i}'$ in \eqref{eq:Zariski_pairs_definition} are non-constant (but for $i\neq g$ they are polynomials). Equivalently, it corresponds to the fact that after multiplying out \eqref{eq:Zariski_pairs_definition} to get \eqref{eq:Puiseux_param}, some coefficients $d_{k}$ in \eqref{eq:Puiseux_param} are nonzero for $k\neq \beta_{1},\dots, \beta_{g}$. Those multiple occurrences of $x^{a_{i}/(b_{1}\dots b_{i})}$ can be ignored if one is interested only in the topology of singularities. More precisely, by a \emph{topological type} of a cusp $q\in \bar{E}$ we mean the homotopy type of a pair $(S,S\cap \bar{E})$, where $S$ is a small $3$-sphere in $\P^{2}$ around $q$. The knot $S\cap \bar{E}$ is called the \emph{link} of $q\in \bar{E}$. The topological type determines and is determined by the Zariski pairs, see \cite[Appendix to ch.\ I]{EN_Newton_pairs}. In other words, a cusp  with Zariski pairs $(b_{1},a_{1}),\dots, (b_{g},a_{g})$ has the same topological type as the one locally described by
\begin{equation}\label{eq:Zariski_topologically}
y=x^{\frac{a_{1}}{b_{1}}}\left(
1+x^{\frac{a_{2}}{b_{1}b_{2}}}\left(
1+x^{\frac{a_{3}}{b_{1}b_{2}b_{3}}}\left(
\ldots\left(
1+x^{\frac{a_{g}}{b_{1}b_{2}\dots b_{g}}}
\right)\cdots
\right)
\right)
\right).
\end{equation}

\smallskip
\begin{uw}
	The above definitions of numerical invariants seem to be the most common ones. However, one may encounter other definitions in the literature. For example, \cite[p.\ 204]{GLS_Intro_to_singularities} uses the term \emph{Puiseux pairs} for what we have defined as Zariski pairs.
\end{uw}

\begin{lem}[Puiseux characteristic sequence from HN-pairs]\label{lem:HN_vs_Puiseux}
	Let $q\in \bar{E}$ be a cusp of standard HN-type $\binom{c_{1}}{p_{1}},\dots, \binom{c_{h}}{p_{h}}$. If $(\beta_0,\beta_1,\ldots,\beta_g)$ denotes its Puiseux characteristic sequence then $g=h$ and
	\begin{equation}\label{eq:char_seq_using_HN}
	\beta_0=p_1, \text{\ \ and \ \ } \beta_i=c_1+p_2+\ldots+p_i,\ \ i=1,\ldots,h.
	\end{equation}
	Therefore, the Puiseux pairs of $q\in \bar E$ are:
	$\left(c_{1}/c_{2},p_{1}/c_{2}\right),\ ((c_{1}+p_2+\ldots+p_k)/c_{k+1},c_{k}/c_{k+1})_{k=2}^{h}$
	 and the Zariski pairs are
$\left(p_{k}/c_{k+1},c_{k}/c_{k+1}\right)_{k=1}^{h}$, where $c_{h+1}=1$.
\end{lem}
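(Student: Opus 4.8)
The plan is to translate the inductive construction of standard HN-pairs from Section~\ref{sec:cusps} directly into the Newton--Puiseux framework, tracking how each HN-pair $\binom{c_k}{p_k}$ corresponds to one characteristic exponent. Recall that the standard HN-sequence is built from a tower of smooth germs $Z_j, Y_j$ with $c_j = (Z_j\cdot\chi_j)_{q_j}$, $p_j=(Y_j\cdot\chi_j)_{q_j}$, and that by \eqref{eq:HN_c_j+1} we have $c_{j+1}=\gcd(c_j,p_j)$ together with $p_1\nmid c_1$, $p_1\le c_1$ and $c_j\ne p_j$ from \eqref{eq:HN_our_convention}. On the other side, the characteristic sequence satisfies $e_0=\beta_0$, $e_i=\gcd(e_{i-1},\beta_i)$, where $e_g=1$ first occurs at $i=g$. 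The key observation is that the multiplicity of the cusp is $\beta_0=e_0$, while \eqref{eq:HN_c_j+1} gives $\min\{c_1,p_1\}=\mu(q_1)=\beta_0$; since $p_1\le c_1$ this forces $\beta_0=p_1$, establishing the base case.

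First I would set up the correspondence between the two recursions on greatest common divisors. The sequence $e_0,e_1,\dots,e_g$ obtained by successively taking gcd's with the $\beta_i$ is exactly mirrored by the sequence $p_1=c_2\cdot(\text{cofactor})$, more precisely by the chain $c_1, c_2=\gcd(c_1,p_1), \dots, c_{h+1}=1$. I would prove by induction on $k$ that $e_{k}=c_{k+1}$ for $k=0,\dots,h$, with $e_0=\beta_0=p_1$ and $c_1$ playing an auxiliary role through $\gcd(c_1,p_1)=c_2$. The content here is that each HN-pair genuinely introduces a \emph{new} exponent, i.e.\ $e_k$ strictly decreases at each step: this follows from $c_j\ne p_j$ (so the gcd is a proper divisor) and from the convention \eqref{HN:convention} ensuring $Y_j$ is maximally tangent precisely when $c_j=\mu(q_j)$. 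Granting that $g=h$ then amounts to checking that $e_h=c_{h+1}=1$ is the first index where the gcd becomes $1$, which is the defining property of the standard HN-sequence terminating.

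The main computational step is the formula $\beta_i=c_1+p_2+\dots+p_i$. I would establish this by relating the intersection multiplicity $(Y_i\cdot\chi_i)$ after the blow-ups encoded by the first $i-1$ pairs back to the intersection with a maximally tangent germ at $q$, which computes $\beta_i$. The cleanest route is to use the standard fact (available through the identification of HN-pairs with the resolution data in Lemma~\ref{lem:HN_for_chains} and the multiplicity bookkeeping in Lemma~\ref{lem:Hn_gives_multiplicities}) that the total intersection number of $\chi$ with the germ realizing $\beta_i$ accumulates the contribution $c_1$ from the first pair and one contribution $p_j$ from each subsequent pair up to $j=i$. Equivalently, I would verify that $\beta_i-\beta_{i-1}=p_i$ for $i\ge 2$ and $\beta_1=c_1$, reading off the increments directly from how the exceptional divisor grows at each stage. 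Once \eqref{eq:char_seq_using_HN} is in hand, the Puiseux pairs $(m_i,n_i)=(\beta_i/e_i,\,e_{i-1}/e_i)$ and Zariski pairs follow by pure substitution of $e_i=c_{i+1}$: indeed $m_i=\beta_i/c_{i+1}=(c_1+p_2+\dots+p_i)/c_{i+1}$, $n_i=c_i/c_{i+1}$, and $a_i=m_i-n_im_{i-1}$ collapses by a telescoping cancellation to $p_i/c_{i+1}$, with $a_1=m_1=c_1/c_2=p_1/c_2$ absorbing the base pair.

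The hard part will be the increment computation $\beta_i-\beta_{i-1}=p_i$, since this is where the geometry of the iterated blow-ups must be converted into the arithmetic of the characteristic exponents; everything else is a transcription of one recursion into another. The cleanest way to avoid a delicate direct Newton-polygon argument is to reduce to the topological normal form \eqref{eq:Zariski_topologically}, compute the HN-pairs of that model explicitly via Lemma~\ref{lem:Hn_gives_multiplicities} (which only depends on the topological type), and match the two lists term by term; because both the HN-type and the characteristic sequence are topological invariants, this suffices. I would close by noting that the strict monotonicity of the $e_k$ together with $e_h=1$ and $e_{h-1}>1$ gives $g=h$, and that \eqref{eq:char_seq_using_HN} then determines the remaining pairs uniquely, completing the proof.
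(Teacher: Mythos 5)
Your skeleton is right — $\beta_0=p_1$, $\beta_1=c_1$, the identification $e_k=c_{k+1}$ (for $k\geq 1$; note your range ``$k=0,\dots,h$'' is off, since $e_0=p_1\neq c_1$), and the observation that the Puiseux and Zariski pairs then follow by substitution and telescoping (modulo the slip ``$a_1=m_1=c_1/c_2=p_1/c_2$'', which conflates the two entries of the first Zariski pair $(p_1/c_2,\,c_1/c_2)$). But the one thing you yourself flag as the hard part, the increment $\beta_i-\beta_{i-1}=p_i$, is never actually established. Your first suggestion (``reading off the increments directly from how the exceptional divisor grows'') restates the claim rather than proving it: $p_i$ is by definition an intersection number $(Y_i\cdot\chi_i)_{q_i}$ computed \emph{after} the blowups encoded by the earlier pairs, and relating it to a characteristic exponent of the \emph{original} germ requires a precise rule for how the characteristic sequence transforms under a single blowup. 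That is exactly what the paper's proof supplies: it inducts on the number of blowups in the resolution, uses the transformation law for $(\beta_0;\beta_1,\dots)$ under blowup (Wall 3.5.5), and in the delicate case $c_1-p_1\mid p_1$ identifies the intersection $\mu$ of the proper transforms of $\chi$ and of a maximally tangent germ $Y$ with $p_2$ by a tangency analysis, yielding $\beta_2-\beta_1=p_2$.

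Your fallback — reduce to the normal form \eqref{eq:Zariski_topologically} and ``match term by term'' — is a legitimate alternative route, and indeed the paper notes that the lemma can be obtained by formally combining Lemma \ref{lem:Hn_gives_multiplicities} with Lemma \ref{lem:mult_seq}. However, as proposed it does not close the gap: Lemma \ref{lem:Hn_gives_multiplicities} converts between HN-pairs and \emph{multiplicity sequences}, not characteristic sequences, so you additionally need Lemma \ref{lem:mult_seq} to pass from $(\beta_0;\beta_1,\dots,\beta_g)$ to the multiplicity sequence of the model, and then you must actually compose the two Euclidean-algorithm bookkeepings and verify they invert to \eqref{eq:char_seq_using_HN} — which is the same computation you were trying to avoid, in a different guise. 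You also implicitly use that the standard HN-type is a topological invariant; this is true (via Lemma \ref{lem:Hn_gives_multiplicities}(b) and the topological invariance of the multiplicity sequence) but should be said. So: right statement, viable strategy, but the core computation is deferred twice and performed zero times.
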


\smallskip
Note that it follows that the Zariski pairs are the standard HN-pairs whose terms are divided by their greatest common divisor and written in the opposite order.

\begin{proof}
	Lemma \ref{lem:Hn_gives_multiplicities} shows how to compute the multiplicity sequence from the HN-type, and  \ref{lem:mult_seq} below shows how to compute the characteristic sequence from the multiplicity sequence. Therefore,  this result can be obtained by formally combining those two. However, we provide here a direct argument.
	
	Let $(\beta_{j})_{j=0}^{g}$ be the characteristic sequence of $q\in \bar{E}$ and let $(x,y)=(t^{\beta_0},\sum_{k=\beta_{1}}^{\infty} d_{k}t^{k})$ be a corresponding parametrization. The first two terms of \eqref{eq:char_seq_using_HN} and of the Puiseux characteristic sequence of $q\in \bar E$ are equal by definitions. To prove the equality of the remaining terms we argue by induction on the minimal number of blowups needed to resolve $(\chi,q)$, where $\chi$ is the germ of $\bar E$ at $q$.
	
	Assume that $c_{1}-p_{1}\nmid p_{1}$. After one blowup at $q$ the first HN-pair of the proper transform of the cusp is  $\binom{c_{1}-p_{1}}{p_{1}}$ or $\binom{p_{1}}{c_{1}-p_{1}}$, and the other ones remain unchanged. Computing the parametrization of the proper transform we get (see \cite[3.5.5]{Wall_singular_curves}) that the characteristic sequence of its cusp is $(p_{1};c_{1}-p_{1},(\beta_{j}-p_{1})_{j=2}^{g})$ in the first case  and  $(c_{1}-p_{1};p_{1},(\beta_{j}-(c_{1}-p_{1}))_{j=2}^{g})$ in the second case. Therefore, in these cases the inductive hypothesis gives the desired formula.
	
	Assume that $c_{1}-p_{1}|p_{1}$. Then $c_{1}-p_{1}=\gcd (c_{1},p_{1})$, because $p_{1}\nmid c_{1}$. Again, a computation  shows (see loc.\ cit.) that the characteristic sequence of the proper transform is $(c_{1}-p_{1};(\beta_{j}-(c_{1}-p_{1}))_{j=2}^{g})$. Choose a smooth germ $Y$ at this cusp such that its intersection multiplicity with the proper transform of $\chi$ is maximal possible; by the definition of the characteristic sequence, this multiplicity equals $\beta_{2}-(c_{1}-p_{1})$. After $p_{1}/(c_{1}-p_{1})$ further blowups, the multiplicity of the cusp drops to $m=\min(c_{2},p_{2})$. The proper transform $T'$ of $T$ meets the cusp, and since $T$ is smooth, $T'$ is transversal to the last exceptional curve. Put $\mu=(T'\cdot \chi')_{q'}$, where $(\chi',q')$ is the proper transform of $(\chi,q)$. We have $\mu>m$, that is, $T'$ is tangent to $\chi'$ if and only if the last exceptional curve is not, which happens if and only if $p_{2}>c_{2}$. In this case, by the definition of $T'$ the number $(T'\cdot \chi')_{q'}$ is maximal possible, that is, $\mu=p_{2}$. Otherwise, $\mu=m$, which in this case equals $p_{2}$ by definition. Thus $\mu=p_{2}$. We have $\beta_{2}-(c_{1}-p_{1})=p_{1}+\mu$, and combining this with the equality $c_{1}=\beta_{1}$ we obtain the desired formula for $\beta_{2}-\beta_{1}$. The formulas for $\beta_{j}-\beta_{j-1}$ for $j\geq 3$ are given by the inductive hypothesis.
	
	This ends the computation of the Puiseux sequence. The remaining formulas follow from the discussion preceding the Lemma and the fact that by \eqref{eq:HN_c_j+1} for standard HN-pairs we have $c_{j}=\gcd(c_{j-1},p_{j-1})$, $j=2,\dots, h+1$.
\end{proof}

The \emph{multiplicity sequence} of $q_{j}\in \bar E$, that is, the sequence of multiplicities of proper transforms of $q_{j}\in \bar{E}$ under consecutive blowups, can be easily computed from the characteristic sequence using the Euclidean algorithm. 

\begin{lem}[Multiplicity sequence vs characteristic sequence]\label{lem:mult_seq}
	Let $q\in \bar{E}$ be a cusp with the characteristic Puiseux sequence $(\beta_{0};\beta_{1},\dots, \beta_{g})$. Put $m_{1,1}\de \beta_{0}$, $m_{i,0}=\beta_i-\beta_{i-1}$ and subsequently for each $i=1,\dots, g$ define $s_{i,k}$, $m_{i,k}$ and $n_{i}$ by induction on $k$ as the unique positive integers satisfying:
	\begin{equation*}\begin{split}
	\quad m_{i,k}&=s_{i,k+1}m_{i,k+1}+m_{i,k+2}\text{\ \ and\ \ }  m_{i,k+2}<m_{i,k+1} \text{\ \ for\ \ } k=0,\dots, n_{i}-2,  \\
	m_{i,n_{i}-1}&=s_{i,n_{i}}m_{i,n_{i}}, \\
	m_{i+1,1}&=m_{i,n_{i}}.
	\end{split}\end{equation*}
	Then the multiplicity sequence of $q\in \bar{E}$ equals
	\begin{equation*}
	(m_{1,1},(m_{1,1})_{s_{1,1}},(m_{1,2})_{s_{1,2}},\dots, (m_{g,n_{g}})_{s_{g,n_{g}}}).
	\end{equation*}
\end{lem}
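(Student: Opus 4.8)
The plan is to deduce Lemma \ref{lem:mult_seq} by composing the two results that precede it in this appendix: Lemma \ref{lem:HN_vs_Puiseux}, which converts between the Puiseux characteristic sequence and the standard HN-pairs, and Lemma \ref{lem:Hn_gives_multiplicities}(a), which produces the multiplicity sequence from the HN-pairs by running a Euclidean algorithm on each pair. This route is legitimate and non-circular: the proof of Lemma \ref{lem:HN_vs_Puiseux} given above is direct and never invokes Lemma \ref{lem:mult_seq}. (One could instead argue directly by induction on the number of blow-ups, transcribing the proof of Lemma \ref{lem:HN_vs_Puiseux}, but the composition is cleaner.)

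First I would invert \eqref{eq:char_seq_using_HN}. Writing $e_{j}=\gcd(\beta_0,\ldots,\beta_j)$, Lemma \ref{lem:HN_vs_Puiseux} gives $h=g$, and solving for the standard HN-pairs $\binom{c_k}{p_k}$ yields
\begin{equation*}
p_1=\beta_0,\quad c_1=\beta_1,\quad p_k=\beta_k-\beta_{k-1}\ (k\geq 2),\quad c_k=e_{k-1}\ (k\geq 2).
\end{equation*}
The last identity follows by induction from $c_{k}=\gcd(c_{k-1},p_{k-1})$ for standard HN-pairs (equation \eqref{eq:HN_c_j+1}), using that $e_{k-2}\mid\beta_{k-2}$ gives $\gcd(e_{k-2},\beta_{k-1}-\beta_{k-2})=\gcd(e_{k-2},\beta_{k-1})=e_{k-1}$.

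Next I would recognize the recursion defining the blocks in Lemma \ref{lem:mult_seq} as the very same Euclidean algorithm. The $i$-th block is seeded by $m_{i,0}=\beta_i-\beta_{i-1}$ and $m_{i,1}=e_{i-1}$ (with $m_{1,1}=\beta_0=e_0$ and the linkage $m_{i+1,1}=m_{i,n_i}$), so it processes the unordered pair $\{c_i,p_i\}=\{e_{i-1},\beta_i-\beta_{i-1}\}$, exactly the pair fed to Lemma \ref{lem:Hn_gives_multiplicities}(a). Moreover $m_{i,n_i}=\gcd(\beta_i-\beta_{i-1},e_{i-1})=\gcd(\beta_i,e_{i-1})=e_i$, so the linkage $m_{i+1,1}=m_{i,n_i}=e_i=c_{i+1}$ faithfully reproduces the running gcd, and the $g$ blocks concatenate in the same way the $g$ chunks of Lemma \ref{lem:Hn_gives_multiplicities}(a) do.

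The hard part will be the purely arithmetic term-by-term matching, where two points need care. In Lemma \ref{lem:Hn_gives_multiplicities}(a) the first division is $\max\{c_i,p_i\}$ by $\min\{c_i,p_i\}$, whereas the block recursion always divides $m_{i,0}$ by $m_{i,1}$; when $m_{i,0}<m_{i,1}$ the leading quotient $s_{i,1}$ is $0$ (so the phrase ``positive integers'' in the statement should read ``non-negative''), and the resulting empty group $(m_{i,1})_{s_{i,1}}$ makes the two orderings of the Euclidean output coincide. Secondly, the single extra leading entry $m_{1,1}=\beta_0$ in \ref{lem:mult_seq} must be absorbed: it combines with $(m_{1,1})_{s_{1,1}}$ to give $\beta_0$ repeated $1+s_{1,1}=\floor{\beta_1/\beta_0}$ times (subtracting one $\beta_0$ lowers the quotient by exactly one and leaves the remainder unchanged), which is precisely the leading group $(\min\{\beta_1,\beta_0\})_{u_1}=(\beta_0)_{\floor{\beta_1/\beta_0}}$ produced for the first pair $\binom{\beta_1}{\beta_0}$. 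After these checks, the chunk of $\binom{c_1}{p_1}$ equals $m_{1,1}$ followed by the first block, and the chunk of $\binom{c_k}{p_k}$ equals the $k$-th block for $k\geq 2$; concatenating over $k$ and applying Lemma \ref{lem:Hn_gives_multiplicities}(a) gives the claimed multiplicity sequence.
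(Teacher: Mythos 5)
Your proposal is correct, but it is not the paper's argument: the paper disposes of this lemma in one line by citing Wall's \emph{Singular points of plane curves} (3.5.6) applied inductively, together with Tono's account, whereas you derive it internally by composing Lemma \ref{lem:HN_vs_Puiseux} with Lemma \ref{lem:Hn_gives_multiplicities}(a). Your route is legitimately non-circular for exactly the reason you give: although the proof of Lemma \ref{lem:HN_vs_Puiseux} opens by remarking that it \emph{could} be obtained by combining \ref{lem:Hn_gives_multiplicities} and \ref{lem:mult_seq}, the argument actually supplied there is a direct induction on blow-ups, so you are free to run the composition in the opposite direction. The arithmetic matching is also sound: the identification $c_k=e_{k-1}$ for $k\geq 2$, the observation that each block runs the Euclidean algorithm on $\{c_i,p_i\}$ with the linkage $m_{i+1,1}=\gcd(m_{i,0},m_{i,1})=e_i=c_{i+1}$, the absorption of the extra leading $m_{1,1}=\beta_0$ into the first group via $1+\floor{(\beta_1-\beta_0)/\beta_0}=\floor{\beta_1/\beta_0}$ with unchanged remainder, and the treatment of a vanishing leading quotient when $m_{i,0}<m_{i,1}$ (which does occur, e.g.\ for $\ORa(k)$ where $p_2=1<3=c_2$) all check out. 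Your flag that ``positive integers'' must be read as ``non-negative'' for the leading quotients $s_{i,1}$ is a genuine, if minor, correction to the statement. What the paper's citation buys is brevity and an external guarantee; what your composition buys is a self-contained appendix in which all three conversion recipes (\ref{lem:Hn_gives_multiplicities}, \ref{lem:HN_vs_Puiseux}, \ref{lem:mult_seq}) are visibly consistent with one another, at the cost of the term-by-term Euclidean bookkeeping you carry out.
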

\begin{proof}
	The result follows by applying \cite[3.5.6]{Wall_singular_curves} inductively. See also \cite[2.2]{Tono_nie_bicuspidal} and references there.
\end{proof}

Another important invariant of a cusp $q\in\bar E$ is the semigroup $G(q)$ of (a germ of) $\bar{E}$ at $q$: it is the set of positive integers $(\bar E\cdot C)_{q}$ for all germs $C$ of curves passing through $q$. It is a sub-semigroup of $(\N,0,+)$ and it can be easily computed from the characteristic sequence. Namely, the minimal set of generators $\bar{\beta}_{0},\dots, \bar{\beta}_{n}$ of this semigroup is given by (see \cite[4.3.5, (4.5), (4.4)]{Wall_singular_curves}):
\begin{equation*}\label{eq:semigroups}
\bar{\beta}_{0}=\beta_{0}, \quad \bar{\beta}_{l+1}=\frac{1}{e_{l}}(\beta_{0}\beta_{1}+\sum_{i=1}^{l}e_{i}(\beta_{i+1}-\beta_{i})) \text{\ \  for\ \  } l=0,\dots n-1,
\end{equation*}
where $\beta_{l}$, $e_{l}$ are given by \eqref{eq:charseq}. Let us recall that the Alexander polynomial of the link of $q\in \bar{E}$ is given by $\Delta(t)=1+(t-1)\cdot \sum_{k\not\in G(q)}t^{k}$ (see \cite[2.3]{BoroLivi-HeegaardFloer_and_cusps}).

All the above numerical invariants can be computed one from another (see \cite[4.3.8]{Wall_singular_curves}). We note that if a cusp has only one HN-pair then this HN-pair is equal to the Puiseux pair, to the characteristic sequence and to the minimal set of generators of the semigroup.

\begin{prz}[Computations of characteristics from a parametrization]\label{ex:bicuspidal_(e)}We now compute the Puiseux characteristic sequence for the planar bicuspidal curve $\bar{E}$ of degree $7$ which is the image of  $\phi\colon\P^1\to \P^2$ given by
	\begin{equation*}
	[s:t]\mapsto [s^{3}t^{4}:2s^{6}t+s^{7}:t^{6}s-t^{7}].
	\end{equation*}
	One checks easily that this parametrization is injective and that the rank of the matrix $[\frac{\d \phi}{\d s},\frac{\d \phi}{\d t}]$ is not maximal exactly at $\{[1:0], [0:1]\}$. The singular locus of $\bar E$ consists of two points, $q_{1}\de \phi[1:0]=[0:1:0]$ and $q_{2}\de \phi[0:1]=[0:0:1]$, which are cusps of multiplicities $4$ and $3$, respectively. Choose local analytic parameters $\xi=t/(2t+1)^{1/4}$, $\eta=s/(s-1)^{1/3}$ near, respectively, $[1:0]\in \P^{1}$ and $[0:1]\in \P^{1}$. In the affine charts $\{y\neq 0\}$ and $\{z\neq 0\}$ of $\P^{2}$ choose the standard coordinates $(\frac{x}{y},\frac{z}{y})$ and $(\frac{x}{z},\frac{y}{z})$, respectively. Then one can write $\phi$ near $[1:0]\in \P^{1}$ as $\xi \mapsto (\xi^{4},\xi^{6}(2t+1)^{1/2}-\xi^{7}(2t+1)^{3/4})$, where, using the inverse function theorem, we treat $2t+1$ as an analytic function of $\xi$. A similar formula can be found also for the other cusp. One computes this way that the cusps $q_{1},q_{2}\in \bar{E}$ can be locally analytically parametrized as:
	\begin{equation*}
	(\xi^{4},\xi^{6}-\tfrac{3}{2}\xi^{8}-\tfrac{9}{8}\xi^{9}+\dots) \text{\ \ and\ \ } (\eta^{3},-2\eta^{6}+\eta^{7}+\dots).
	\end{equation*}
	For $q_{1}\in \bar{E}$, this parametrization is as in the definition of the Puiseux characteristic sequence. Using the notation introduced there, for $q_{1}\in \bar{E}$ we have $\beta_{0}=4$, $\beta_{1}=6$, and hence $e_{1}=2$. Because $2|8$, the next term does not contribute to this sequence, so $\beta_{2}=9$. Thus $e_{2}=1$ and therefore the Puiseux sequence has length $3$ and equals $(4;6,9)$. Near $q_{2}\in \bar{E}$ perform a local change of coordinates $(x,y)\mapsto (x,y+2x^{2})$. Then $q_{2}$ is given by $(\eta^{3},\eta^{7}+\dots)$, so $\beta_{0}=3$, $\beta_{1}=7$, hence $e_{1}=1$ and thus the characteristic sequence of $q_{2}\in \bar{E}$ equals $(3;7)$. 
	
	\begin{figure}[h]
		\centering 	\hspace{-1cm} 
		\begin{subfigure}{0.45\textwidth} \includegraphics[scale=0.35]{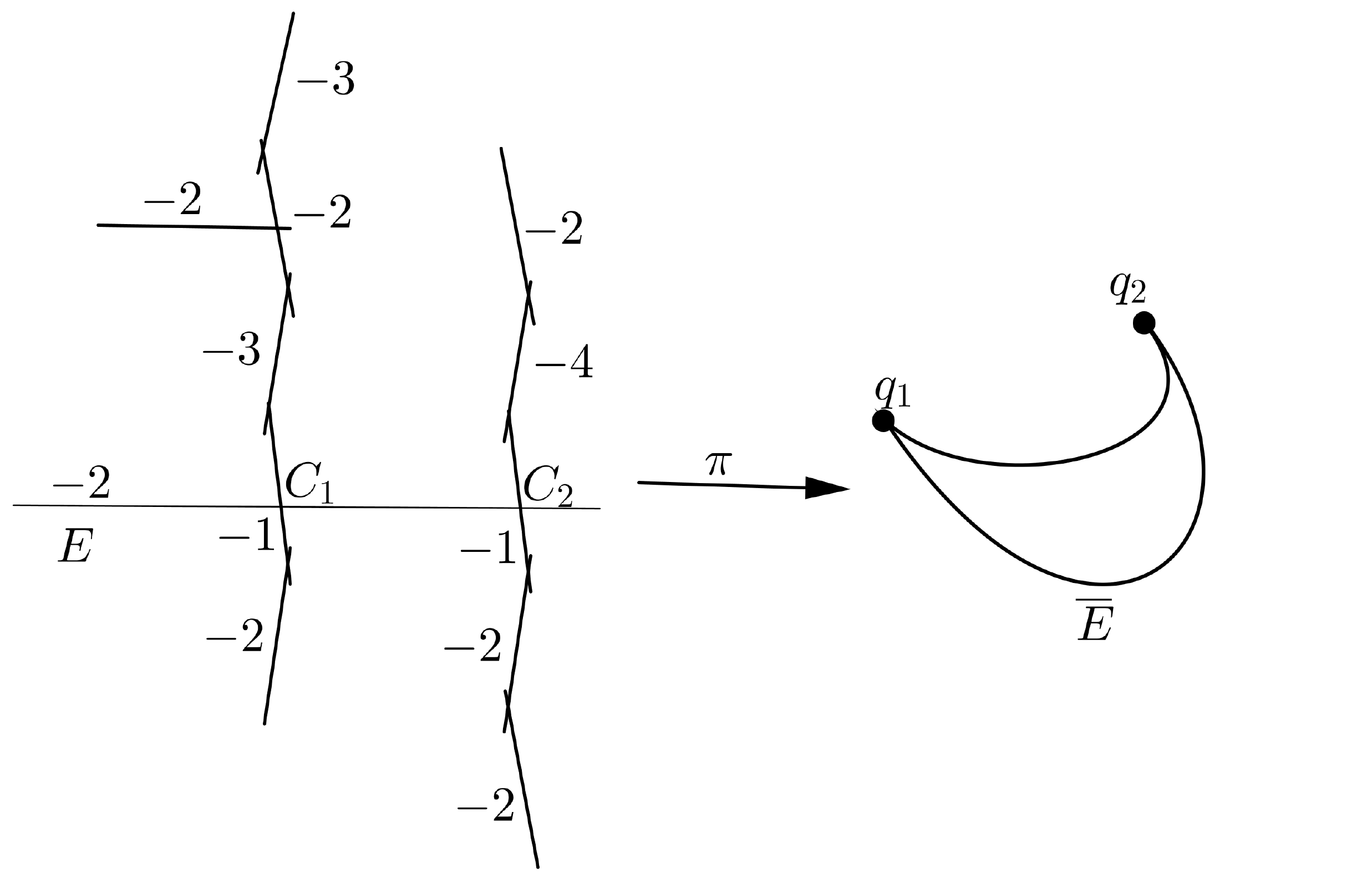} 
		\end{subfigure}
		\hspace{1.6cm}
		\begin{subfigure}{0.3\textwidth} 
			{\footnotesize {\renewcommand{\arraystretch}{1.5}
					\begin{tabular}{r | >{$}c<{$} | >{$}c<{$} }
						& q_{1} & q_{2} \\ \hline 
						Multiplicity seq.    &  (4,(2)_{3}) & ((3)_{2})  \\ \hline 
						HN pairs           &  \binom{6}{4}\binom{2}{3} & \binom{7}{3}  \\ \hline 
						Puiseux seq. &  (4; 6,9)  & (3; 7)   \\ \hline 
						Puiseux pairs & (3,2), (9,2)  &  (3,7)   \\ \hline
						Zariski pairs & (2,3), (2,3)  &  (7,3)   \\ \hline 
						Semigroups     &  \langle 4,6,15 \rangle & \langle 3, 7 \rangle    \\ \hline
					\end{tabular}
				}}
			\end{subfigure}
			\caption{The curve from Example \ref{ex:bicuspidal_(e)}.}	\label{fig:example}
		\end{figure}
		
		The Lemma \ref{lem:HN_vs_Puiseux} implies that $\bar{E}$ is of HN-type $\cA(2,2,1)$. Other invariants and the graphs of the resolution can be computed easily using the results above. Their values are given in the table in Fig.\ \ref{fig:example}.
		
		Looking more carefully at the resolution of $q_1$ we see that the first three blowups (the second one is outer and the third one is inner with respect to the reduced preimage of $q_{1}$) are described by the HN pair $\binom{6}{4}$. The next blowup is outer. After that the proper transform of $\bar{E}$ is smooth, but we need another outer and inner blowup to make the total transform snc. These three blowups are described by the second HN-pair $\binom{2}{3}$. Note that after the first three blowups, the cusp meets the last exceptional curve with multiplicity $2$. Therefore (see the discussion preceding \eqref{eq:HN-equivalence}), if we used HN-pairs as in \cite{KoPaRa-SporadicCstar1} instead of the standard ones, namely, if we required that $p_{2}\leq c_{2}$ and thus chose some general smooth germ $Y_{2}$ transversal to the last exceptional curve instead of the maximally tangent one, then the HN-type would be $\binom{6}{4}\binom{2}{2}\binom{2}{1}$. The fact that we have two Puiseux pairs (equivalently, that the Puiseux sequence is of length $3$) corresponds to the fact that the exceptional divisor over $q_1$ has one branching component.
	\end{prz}

\begin{prz}[Computation of other characteristics from HN-pairs]\label{ex:invariants_from_HN} Let $\gamma,s\geq 1$, $p\geq 2$. Consider a rational cuspidal curve of HN-type $\cC(\gamma,p,s)$. Its standard HN-pairs are
	\begin{equation*}
	\binom{p(\gamma  s+s+1)}{p(\gamma s+1)}\binom{p}{1}
	\text{\ \ and\ \ }\binom{(\gamma +1)(ps+1)+p}{ps+1}.
	\end{equation*}
	Lemma \ref{lem:HN_vs_Puiseux} gives Puiseux sequences
	\begin{equation*}
	(p(\gamma s+1);p(\gamma s+s+1),p(\gamma s +s+1)+1)  \text{\ \ and\ \ } (ps+1;(\gamma+1)(ps+1)+p),
	\end{equation*}
	Puiseux pairs
	\begin{equation*}
	(\gamma s +s+1,\gamma s+1),(p(\gamma s+s+1)+1,p) \text{\ \ and\ \ } ((\gamma+1)(ps+1)+p, ps+1)
	\end{equation*}
	and Zariski pairs
	\begin{equation*}
	(\gamma s +1,\gamma  s+s+1),(1,p) \text{\ \ and\ \ } (ps+1,(\gamma+1)(ps+1)+p).
	\end{equation*}
	Topologically the cusps of $\bar{E}$ are the same as the ones with Puiseux parametrization
	\begin{equation*}
	y=x^{\frac{\gamma s+s+1}{\gamma s +1}}\left(1+x^{\frac{p}{\gamma s +1}} \right)
	\quad \mbox{ and }\quad
	y=x^{\frac{(\gamma+1)(ps+1)+p}{ps+1}}.
	\end{equation*}
	
	To compute multiplicity sequences we either apply Lemma \ref{lem:Hn_gives_multiplicities} or we simply follow the Euclidean algorithm using the fact when passing from a cusp to its proper transform under a blowup the initial HN-pair $\binom{c}{p}$ is replaced by $\binom{\max\{c-p, p\}}{\min \{c-p,p\}}$. Let us make the computation. The multiplicity of $q_{1}\in \bar{E}$ equals $p(\gamma s+1)$. The first blowup replaces the first HN-pair with $\binom{p(\gamma s+1)}{p}$. The next $\gamma s+1$ blowups are centered at a point of multiplicity $p$ on the proper transforms of $\bar{E}$. After they are performed, the first HN-pair becomes $\binom{p}{0}$, which means that the cusp of the proper transform of the germ of the curve is not at the intersection point of proper transforms of local coordinates used to define this HN-pair, so we move on to the next HN-pair $\binom{p}{1}$. The latter describes $p$ blowups at a point of multiplicity $1$ on the proper transform of $\bar{E}$. Therefore, the multiplicity sequence of $q_{1}\in \bar{E}$ is
	\begin{equation*}
	(p(\gamma s+1),(p)_{\gamma s+1},(1)_{p}).
	\end{equation*}
	
	For $q_2\in \bar E$ the first $\gamma+1$ blowups are centered at a point of multiplicity $ps+1$ on the proper transforms of $\bar{E}$, and they replace the HN-pair by $\binom{p}{1}$. As we have seen before, this HN-pair describes $p$ blowups at smooth points of the proper transforms of $\bar{E}$. Thus the multiplicity sequence of $q_{2}\in \bar{E}$ is
	\begin{equation*}
	((ps+1)_{\gamma+1},(p)_{s},(1)_{p}).
	\end{equation*}

\end{prz}

\begin{landscape}
\begin{tiny}
\begin{table}
\addcontentsline{toc}{section}{TABLE: Classification}
{\renewcommand{\arraystretch}{2.5}
\begin{tabular}{>{$}r<{$}|>{$}c<{$}|>{$}c<{$}|>{$}c<{$}|>{$}c<{$}|>{$}c<{$}|>{$}c<{$}|c}

   & c & \mbox{degree} & -E^{2} & \parbox{5.5 cm}{\centering HN pairs (standard, except ($^{*}\!$) for $s=1$, $k=1$ or ($^{\dagger}\!$) for $\gamma=1$, see Rem.\ \ref{rem:special_HN_cases})} & \mbox{Multiplicity sequences} & \mbox{Parameters} & References \\  \hline 

\FZa(d,k) & 3 & d & d-2 & \binom{2k+1}{2},\quad \binom{d-1}{d-2}, \quad \binom{2(d-2-k)+1}{2} & ((2)_{k}),\quad (d-2),\quad ((2)_{d-2-k}) & d-3\geq k \geq \frac{1}{2}d-1 \geq 1 & \cite{FLZa-_class_of_cusp} \\ \hline 

\multirow{2}{*}{$\cA(\gamma,p,s)$} 
& \multirow{2}{*}{$2$} 
& \multirow{2}{*}{$(\gamma +1)ps+1$}  
& \multirow{2}{*}{$\gamma $} 
& \binom{(\gamma +1)ps}{\gamma ps}\binom{ps}{p}\binom{p}{1} ^{*\dagger}
& (\gamma ps,(ps)_{\gamma },(p)_{s})
& p\geq 2,\ \gamma,s\geq 1
& \cite[(ii.1)]{CKR-Cstar_good_asymptote}, \cite[(e)]{BoZo-annuli}  \\

&&&& \binom{\gamma (ps+1)+p(s-1)+1}{ps+1} 
& ((ps+1)_{\gamma },p(s-1)+1,(p)_{s-1}) 
& \mbox{and } (p,\gamma)\neq (2,1)
& \cite[(3)]{Tono_nie_bicuspidal} \\ \hline

\multirow{2}{*}{$\cB(\gamma,p,s)$} 
& \multirow{2}{*}{$2$} 
& \multirow{2}{*}{$(\gamma+1)ps-\gamma$} 
& \multirow{2}{*}{$\gamma$} 
& \binom{(ps-1)(\gamma +1)}{(ps-1)\gamma }\binom{ps-1}{p} ^{\dagger}
& (\gamma ps-\gamma ,(ps-1)_{\gamma },(p)_{s-1},p-1) 
& p,s\geq 2,\ \gamma\geq 1
& \cite[(ii.2)]{CKR-Cstar_good_asymptote}, \cite[(f)]{BoZo-annuli} \\
&&&& \binom{p(\gamma s+s-1)}{ps}\binom{p}{1} 
& ((ps)_{\gamma },p(s-1),(p)_{s-1})  
& \mbox{and } (p,\gamma)\neq (2,1)
& \cite[(4)]{Fenske_1and2-cuspidal_curves}, \cite[(4)]{Tono_nie_bicuspidal}  \\ \hline 

\multirow{2}{*}{$\cC(\gamma,p,s)$} 
& \multirow{2}{*}{$2$} 
& \multirow{2}{*}{$(\gamma s+s+1)p+1$} 
& \multirow{2}{*}{$\gamma $} 
& \binom{p(\gamma  s+s+1)}{p(\gamma s+1)}\binom{p}{1}, 
&  (\gamma ps+p,(ps)_{\gamma },(p)_{s}) 
& \multirow{4}{*}{$p\geq 2,\ \gamma,s\geq 1$}
&  \cite[(ii.4)]{CKR-Cstar_good_asymptote}, \cite[(c)]{BoZo-annuli}  \\ 
&&&& \binom{(\gamma +1)(ps+1)+p}{ps+1} 
& ((ps+1)_{\gamma +1},(p)_{s}) 
&
&  \cite[(5)]{Fenske_1and2-cuspidal_curves}, \cite[(2)]{Tono_nie_bicuspidal} \\ \cline{1-6}\cline{8-8}
 
\multirow{2}{*}{$\cD(\gamma,p,s)$} 
& \multirow{2}{*}{$2$} 
& \multirow{2}{*}{$(\gamma s+s+1)p-\gamma $} 
& \multirow{2}{*}{$\gamma $} 
& \binom{(\gamma +1)(ps-1)+p)}{\gamma (ps-1)+p}, 
& (\gamma (ps-1)+p,(ps-1)_{\gamma },(p)_{s-1},p-1)  
&
& \cite[(ii.4)]{CKR-Cstar_good_asymptote}, \cite[(d)]{BoZo-annuli}  \\ 
&&&& \binom{p(\gamma s+s+1)}{ps}\binom{p}{1} ^{*}
& ((ps)_{\gamma +1},(p)_{s}) 
&
&  \cite[(6)]{Fenske_1and2-cuspidal_curves}, \cite[(1)]{Tono_nie_bicuspidal} \\ \hline 

\multirow{2}{*}{$\cE(k)$} 
& \multirow{2}{*}{$2$} 
& \multirow{2}{*}{$8k+6$} 
& \multirow{2}{*}{$2$} 
& \binom{8k+8}{4k+2}\binom{2}{1} 
& ((4k+2)_{2},(4)_{k},(2)_{2})
& \multirow{4}{*}{$k\geq 1$}
& \cite[(s)]{BoZo-annuli},  \\
&&&& \binom{8k+4}{4k+4}\binom{4}{1}
& (4k+4,4k,(4)_{k}) 
& 
& \cite[(a)]{KoPa-SporadicCstar2}  \\ \cline{1-6}\cline{8-8}

\multirow{2}{*}{$\cF(k)$} 
& \multirow{2}{*}{$2$} 
& \multirow{2}{*}{$8k+2$} 
& \multirow{2}{*}{$2$} 
& \binom{8k}{4k+2}\binom{2}{1} 
& (4k+2,4k-2,(4)_{k-1},(2)_{2})
& 
& \cite[(s)]{BoZo-annuli},  \\
&&&& \binom{8k+4}{4k}\binom{4}{1} ^{*}
& ((4k)_{2},(4)_{k}) 
& 
&  \cite[(b)]{KoPa-SporadicCstar2}   \\ \hline

\cG(\gamma) & 2 & 2\gamma-1 &  \gamma & \binom{4\gamma-3}{\gamma-1},\quad   \binom{2\gamma-1}{2} & ((\gamma-1)_{4}), ((2)_{\gamma-1}) & \gamma\geq 3 &  \cite[VII]{tomDieck_letter}, \cite[(b)]{Bodnar_type_G_and_J} \\ \hline

\ORa(k) & 1 & F_{4k+2} & 2 & \binom{F_{4k+4}}{F_{4k}}\binom{3}{1}^{*} &  (F_{4k},((F_{4l})_{5},F_{4l}-F_{4l-4})_{l=k}^{1}) & \multirow{2}{*}{$k\geq 1$} &\cite{OrevkovCurves} \\ \cline{1-6}\cline{8-8}

\ORb(k) & 1 & 2F_{4k+2} & 2 & \binom{2F_{4k+4}}{2F_{4k}}\binom{6}{1}^{*} &  (2F_{4k},((2F_{4l})_{5},(2F_{4l}-2F_{4l-4})_{l=k}^{1}) & & \cite{OrevkovCurves} \\ \hline

\multicolumn{8}{c}{where $F_{j}$ are the Fibonacci numbers defined by $F_{0}=0$, $F_{1}=1$, $F_{j}=F_{j-1}+F_{j-2}$.} \\ \hline
\end{tabular}
}
\vspace{2em}
\caption{Numerical data for rational cuspidal curves $\bar{E}\subseteq \P^{2}$ such that $\kappa(\P^{2}\setminus \bar{E})= 2$ and $\P^{2}\setminus \bar{E}$ is $\C^{**}$-fibered.}
\label{table:fibrations}
\end{table}
\end{tiny}
\end{landscape}

\bibliographystyle{amsalpha}
\bibliography{bibl2016}

\end{document}